\def\newaliasedtheorem#1[#2]#3{
  \newaliascnt{#1@alt}{#2}
  \newtheorem{#1}[#1@alt]{#3}
  \expandafter\newcommand\csname #1@altname\endcsname{#3}
}
\numberwithin{equation}{section}
\newtheoremstyle{slanted}{\topsep}{\topsep}{\slshape}{}{\bfseries}{.}{.5em}{}
\theoremstyle{plain}
\newtheorem{theorem}{Theorem}[section]
\theoremstyle{definition}
\theoremstyle{remark}
\newcommand{\setQ}{\mathbb{Q}}
\newcommand{\setR}{\mathbb{R}}
\let\altphi\phi
\let\phi\varphi
\let\varphi\altphi
\let\altphi\undefined
\newcommand{\di}{\mathop{}\!\mathrm{d}}
\newcommand{\bs}{{\rm bs}}
\DeclareMathOperator{\supp}{supp}
\newcommand{\dist}{\mathsf{d}}
\newcommand{\meas}{\mathfrak{m}}
\newcommand{\Algebra}{{\mathscr A}}
\DeclareMathOperator{\RCD}{RCD}
\newfont{\tmpf}{cmsy10 scaled 2500}
\begin{document}
\title{Ricci curvature and Orientability}
\author{
Shouhei Honda
\thanks{Tohoku University, \url{shonda@m.tohoku.ac.jp}}} 
\maketitle

\begin{abstract}
In this paper we define an orientation of a measured Gromov-Hausdorff limit space of Riemannian manifolds with uniform Ricci bounds from below.
This is the first observation of orientability for metric measure spaces.
Our orientability has two fundamental properties.
One of them is the stability with respect to noncollapsed sequences. 
As a corollary we see that if the cross section of a tangent cone of a noncollapsed limit space of orientable Riemannian manifolds is smooth, then it is also orientable in the ordinary sense, which can be regarded as a new obstruction for a given manifold to be the cross section of a tangent cone.  
The other one is that there are only two choices for orientations on a limit space.
We also discuss relationships between $L^2$-convergence of orientations and convergence of currents in metric spaces.  
In particular for a noncollapsed sequence, we prove a compatibility between the intrinsic flat convergence by Sormani-Wenger, the pointed flat convergence by Lang-Wenger, and the Gromov-Hausdorff convergence, which is a generalization of a recent work by Matveev-Portegies to the noncompact case. Moreover combining this compatibility with the second property of our orientation gives an explicit formula for the limit integral current by using an orientation on a limit space.
Finally dualities between de Rham cohomologies on an oriented limit space are proven.
\end{abstract}

\tableofcontents

\section{Introduction}
\subsection{Main results}
In this paper we discuss orientability of Ricci limit spaces.
A pointed metric measure space $(X, x, \meas)$ is said to be a \textit{Ricci limit space} if there exist $n \in \mathbb{N}$, a sequence of pointed Riemannian manifolds $(X_i, x_i)$ such that $\mathrm{Ric}_{X_i} \ge -(n-1)$ and that $(X_i, x_i, \mathcal{H}^n/\mathcal{H}^n(B_1(x_i)))$ measured Gromov-Hausdorff (written by mGH, for short) converge to $(X, x, \meas)$, (denoted by $(X_i, x_i, \mathcal{H}^n/\mathcal{H}^n(B_1(x_i))) \stackrel{GH}{\to} (X, x, \meas)$, for short), where $\mathcal{H}^n$ is the $n$-dimensional Hausdorff measure (we usually fix $n$ as the dimension of a manifold).
Our goals are to define an \textit{orientation} of $(X, x, \meas)$ and to establish nice properties.

First let us recall the definition of an orientation of a smooth $n$-dimensional Riemannian manifold $M$.
We say that $M$ is \textit{orientable} if there exists a top-dimensional differential form $\omega \in L^{\infty}(\bigwedge^nT^*M)$ with the following two conditions;
\begin{enumerate}
\item{(Normalization)} $|\omega| \equiv 1$ on $M$,
\item{(Smooth regularity)} $\omega$ is smooth.
\end{enumerate}   
Then $\omega$ is said to be an orientation of $M$.
Of course there are only two choices for orientations, let us call this property the uniqueness of orientations for short.

It is well-known that this definition is equivalent to the ordinary one on a smooth manifold, i.e. there exists a smooth atlas $\{(U_i, \phi_i)\}_i$ of $M$ such that the Jacobi matrix $J(\phi_j \circ (\phi_i)^{-1})$ of each transition map $\phi_j \circ (\phi_i)^{-1}$ is positive determinant. 

Next let us discuss the Ricci limit case, $(X, x, \meas)$.
Cheeger-Colding proved in \cite{CheegerColding3} that $X$ is $\meas$-rectifiable.
This allows us to consider the vector bundle $\bigwedge^lT^*X$. In particular $l$-dimensional differential forms $\eta(z) \in \bigwedge^lT^*_zX$ on $X$ make sense for a.e. $z \in X$. Note that each fiber $\bigwedge^lT_z^*X$ has the canonical inner product.

Recently Colding-Naber proved in \cite{ColdingNaber} that there exists $k \in \{0, 1, \ldots, n\}$, denoted by $\mathrm{dim}\,X$, such that for a.e. $z \in X$ all tangent cones at $z$ of $(X, x, \meas)$ are isometric to $\mathbf{R}^k$.
This allows us to define top-dimensional differential forms $\omega(z) \in \bigwedge^kT^*_zX$ for a.e. $z \in X$.
In particular there is a top-dimensional differential form $\omega \in \bigwedge^kT^*X$ such that $|\omega(z)|=1$ for a.e. $z \in X$. Note that there are uncountably many such differential forms.
For example for any $a \in [0, \pi]$, a differential 1-form $(1_{[0, a]}-1_{(a, \pi]})\dist t$ on $([0, \pi], \mathcal{H}^1/\pi)$ gives such an example, where $1_A$ denotes the indicator function of $A$. See also subsection 6.1.

The difficulty to define the orientablity of $(X, x, \meas)$ is to find a condition of a kind of smooth regularity (2) above for such differential forms because since each fiber of $\bigwedge^kT^*X$ is well-defined only on a Borel subset of the regular set of $(X, x, \meas)$, we can not discuss the \textit{continuity} of a differential form in the ordinary sense. For example it is known that there is a noncollapsed GH-limit space of Riemannian manifolds whose setional curvature bounded below by $0$ such that the singular set of the limit space is dense. See Example (2) in page 632 of \cite{OtsuShioya} by Otsu-Shioya.

In order to overcome the difficulty we use \textit{test functions} as follows; let us denote by $\mathrm{Test}F(X)$ the set of bounded Lipschitz functions $f$ such that $f \in H^{1, 2}(X)$ and that $f$ is in the domain of the Dirichlet Laplacian $\Delta$ on $X$ with $\Delta f \in H^{1, 2}(X)$, where $H^{1, 2}(X)$ is the Sobolev space for functions on $X$. Note that $\mathrm{Test}F(X)$ is dense in $H^{1, 2}(X)$ (in particular it is also dense in $L^2(X)$).
This is a key notion in the theory of $RCD$-spaces (c.f. \cite{Gigli} by Gigli). 

We are now in a position to give the definition of the orientability of $(X, x, \meas)$ as follows;
\begin{definition}[Orientation, Definition \ref{def:ori}]
We say that a top-dimensional differential form $\omega \in L^{\infty}(\bigwedge^kT^*X)$ is an \textit{orientation} if the following two conditions hold;
\begin{enumerate}
\item{(Normalization)} $|\omega (z)|=1$ for a.e. $z \in X$, 
\item{(Regularity)} $\langle \omega, f_0\dist f_1 \wedge \cdots \wedge \dist f_k \rangle \in H^{1, 2}(X)$ for any $f_i \in \mathrm{Test}F(X)$.
\end{enumerate}
\end{definition}
The regularity condition above plays a role of a kind of the smooth regularity (2) above.
In fact we prove the following uniqueness;
\begin{theorem}[Uniqueness]\label{thm:unique}
If $\omega_1, \omega_2$ are orientations of $(X, x, \meas)$,
then we have either $\omega_1(z)=\omega_2(z)$ for a.e. $z \in X$, or $\omega_1(z)=-\omega_2(z)$ for a.e. $z \in X$.
\end{theorem}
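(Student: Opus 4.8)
The plan is to reduce the statement to the elementary fact that a Sobolev function with vanishing gradient on the connected space $X$ is constant. Since $\dim X = k$, for a.e.\ $z$ the fiber $\bigwedge^kT_z^*X$ is one-dimensional, so the two unit covectors $\omega_1(z)$ and $\omega_2(z)$ satisfy $\omega_1(z) = u(z)\,\omega_2(z)$ with $u(z) := \langle \omega_1(z), \omega_2(z)\rangle \in \{-1,+1\}$. Thus it suffices to prove that the $\{\pm1\}$-valued function $u$ is $\meas$-a.e.\ constant. First I would record the pointwise identity $\langle \omega_1, \alpha\rangle = u\,\langle \omega_2, \alpha\rangle$, valid for every test $k$-form $\alpha = f_0\,\dist f_1\wedge\cdots\wedge\dist f_k$, which holds simply because $u$ is scalar and $\omega_1 = u\,\omega_2$ in the one-dimensional fiber.

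Next I would produce, up to a $\meas$-null set, a countable Borel partition $\{A_j\}$ of the regular set of $X$ together with, for each $j$, test functions $f_1^j,\dots,f_k^j \in \mathrm{Test}F(X)$ and a good cutoff $f_0^j$ equal to $1$ on $A_j$, such that the $k$-form $\beta_j := f_0^j\,\dist f_1^j\wedge\cdots\wedge\dist f_k^j$ satisfies $|\beta_j|\ge c_j>0$ on $A_j$. This is where Gigli's differential calculus enters: test functions generate the cotangent module, and since the pointwise dimension equals $k$ on the regular set, at a.e.\ point one can select $k$ test functions (for instance harmonic $\delta$-splitting maps in the sense of Cheeger--Colding) whose differentials are almost orthonormal, so that $\beta_j$ is nonvanishing there; a measurable selection then produces the sets $A_j$ with a uniform lower bound on each.

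On each $A_j$ I set $g_i := \langle \omega_i, \beta_j\rangle$; by the regularity condition $g_i \in H^{1,2}(X)$, and since $\omega_i(z)$ is a unit covector and $\beta_j(z)/|\beta_j(z)|$ spans the one-dimensional fiber, $g_i = s_i\,|\beta_j|$ with $s_i\in\{\pm1\}$. Hence $g_2^2 = |\beta_j|^2 \ge c_j^2 > 0$ and $u = g_1/g_2$ on $A_j$. As a quotient of bounded $H^{1,2}$ functions with denominator bounded away from zero, $u \in H^{1,2}(A_j)$; because $u^2\equiv 1$ the chain rule gives $0 = \nabla(u^2) = 2u\,\nabla u$, and $|u|=1$ forces $|\nabla u| = 0$ $\meas$-a.e.\ on $A_j$. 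Summing over the countable cover and invoking the locality of the minimal weak upper gradient yields $u \in H^{1,2}_{\rm loc}(X)$ with $|\nabla u| = 0$ $\meas$-a.e.\ on $X$. Finally, since $X$ is connected and supports a local Poincar\'e inequality, a locally Sobolev function with vanishing gradient is $\meas$-a.e.\ constant, so $u\equiv 1$ or $u\equiv -1$, which is the claim.

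I expect the main obstacle to be the second step: constructing test $k$-forms that are genuinely nonvanishing, with quantitative lower bounds, on a measurable exhaustion of the regular set, rather than merely dense in $L^2(\bigwedge^kT^*X)$. This is precisely the point where the abstract regularity condition (2) must be married to the concrete structure theory of Ricci limits, namely the existence of almost-orthonormal splitting maps built from harmonic (hence test) functions. Once such local frames are available, the remaining analysis is soft: it is the standard calculus of $H^{1,2}$ functions, i.e.\ the chain rule, locality of the gradient, and connectedness on an infinitesimally Hilbertian space with a Poincar\'e inequality.
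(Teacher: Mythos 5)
Your reduction to the $\{\pm 1\}$-valued function $u$ with $\omega_1=u\,\omega_2$ is exactly how the paper begins, and your construction of almost-orthonormal frames $\dist\mathbf{b}_1\wedge\cdots\wedge\dist\mathbf{b}_k$ from harmonic splitting maps is also the right ingredient. The genuine gap is in the step you call ``soft'': passing from ``$u=g_1/g_2$ on each Borel piece $A_j$ with $\nabla u=0$ a.e.\ on $A_j$'' to ``$u\in H^{1,2}_{\mathrm{loc}}(X)$ with vanishing minimal weak upper gradient, hence constant.'' The sets $A_j$ are only Borel subsets of the regular set, not open sets: the lower bound $|\beta_j|\ge c_j$ holds only on $A_j$, and on the complement of $A_j$ inside any ball the denominator $g_2=\langle\omega_2,\beta_j\rangle$ is not bounded away from zero, so the quotient $g_1/g_2$ is not an $H^{1,2}$ function on any open set. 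What you actually obtain is $u\in\Gamma_1(A_j)$ with a.e.\ differential zero, and this notion does \emph{not} glue: the indicator function of an arbitrary Borel set $A$ lies in $\Gamma_1(X)$ with a.e.\ differential zero (take the partition $\{A,X\setminus A\}$), yet it is neither Sobolev nor constant. Locality of the minimal weak upper gradient lets you restrict a genuine Sobolev function to Borel sets, but it does not let you assemble one from countably many Borel pieces. This obstruction is precisely the ``no continuity in the ordinary sense'' difficulty the paper highlights in the introduction, and it is why the regularity condition on $\omega$ is imposed on the scalar pairings $\langle\omega,\eta\rangle$ rather than on $\omega$ itself.

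The paper circumvents this by staying quantitative and never forming the quotient: it applies the Poincar\'e inequality to the honest $H^{1,2}$ functions $g_i=\langle\omega_i,\dist\mathbf{b}_1\wedge\cdots\wedge\dist\mathbf{b}_k\rangle$, controls $|\nabla g_i|$ via the smallness of $\frac{t^2}{\meas(B_t)}\int_{B_t}|\mathrm{Hess}_{\mathbf{b}_l}|^2$ on regular balls (Theorem \ref{thm:quantitative bound hess}) together with $\nabla^{g_X}\omega_i=0$, and deduces that the $L^1$-average of $|f-c(t)|$ over $B_t(p)$ is small for some sign $c(t)$. Connectedness is then exploited not through a global Sobolev argument but by propagating these near-constant signs along the interior of a limit minimal geodesic joining Lebesgue points of $\{f=1\}$ and $\{f=-1\}$, using the Colding--Naber uniform Reifenberg property to keep the whole geodesic inside $\mathcal{R}^k_{\epsilon,r}$ and Bishop--Gromov to chain overlapping balls. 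If you want to repair your argument, you would need to replace the gluing step by some version of this ball-averaging and geodesic-propagation scheme; the chain rule on Borel pieces alone cannot close the proof.
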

Moreover we will prove that our orientability is compatible with the smooth case. 
For example if $(X, x, \meas)$ satisfies that $X$ is isometric to a $k$-dimensional smooth Riemannian manifold with $\meas= \int e^f\dist \mathcal{H}^k$ for some locally $H^{1, 2}$-Sobolev function $f$ on $X$, then $(X, x, \meas)$ is orientable in the sense above if and only if $X$ is orientable in the ordinary sense. See Propositions \ref{prop:comp1} and \ref{prop:comp2}. 

Let us discuss two examples of metric spaces with probability measures;
\begin{enumerate}
\item $\left( [0, \pi], \frac{1}{\pi}\mathcal{H}^1\right)$,
\item $\left( [0, \pi], \frac{1}{2}\int \sin t \dist t \right)$.
\end{enumerate}
It is easy to check that these are (collapsed) mGH-limit spaces of sequences of Riemannian metrics on the $2$-dimensional sphere $\mathbf{S}^2$ with canonical probability measures, whose sectional curvature bounded below.
In particular these are (non-pointed) Ricci limit spaces.
We will check that these are orientable in the sense above, in fact, the canonical 1-form $\omega:=\dist t$ gives an orientablity in both cases. However their proofs are different. For the first example, Fourier expansion plays a key role in the proof.
For the second one,  a key point in the proof is a fact that the capacity of the singular set, $\{0, \pi\}$, is zero (note that the capacity of $\{0, \pi\}$ in the first example is \textit{not} zero). See Remarks \ref{100} and \ref{1}.

Next we introduce the stability of orientability.
For that let us start to observe following two examples of noncollapsing/collapsing sequences; 
\begin{enumerate}
\setcounter{enumi}{1}
\setcounter{enumi}{2}
\item $(\mathbf{R}P^2, rg_{\mathbf{R}P^2}, p, \frac{1}{\mathcal{H}^2(B_1^{r\dist}(p))}\mathcal{H}^2) \stackrel{GH}{\to} (\mathbf{R}^2, g_{\mathbf{R}^2}, 0_n, \frac{1}{\mathcal{H}^2(B_1(0_n))}\mathcal{H}^2)$ as $r \uparrow \infty$, where $g_{\mathbf{R}P^2}, g_{\mathbf{R}^2}$ are canonical Riemannian metrics on $\mathbf{R}P^2, \mathbf{R}^2$, respectively,
\item let  $\{ \pm 1\}$ act on $\mathbf{S}^2(1) \times \mathbf{S}^2(1)$ by $(-1) \cdot (z, w):=(-z, -w)$, let $M:=(\mathbf{S}^2(1) \times \mathbf{S}^1(1))/\{\pm 1\}$ and let $g_{M, r}:=(g_{\mathbf{S}^2(1)}+r^2g_{\mathbf{S}^1(1)})/\{\pm 1\}$ be the canonical quotient Riemannian metric on $M$ for any $r \in (0, \infty)$, where $\mathbf{S}^2(1):=\{x \in \mathbf{R}^3;|x|=1\}$, $g_{\mathbf{S}^2(1)}$ is the canonical Riemannian metric on $\mathbf{S}^2(1)$.
Then
$(M, g_{M, r}, \frac{1}{\mathcal{H}^4(M)}\mathcal{H}^4) \stackrel{GH}{\to} (\mathbf{R}P^2, \dist_{\mathbf{R}P^2}, \frac{1}{2\pi}\mathcal{H}^2)$ as $r \downarrow 0$.
\end{enumerate}
The example 3 tells us that in general the limit space of a sequence of non-orientable spaces is \textit{not} non-orientable, i.e. the non-orientability is not stable under mGH-convergence even if the sequence is noncollapsed.
The final example tells us that if the sequence is collapsed, then in general the orientability is not stable under mGH-convergence.

The remaining case about the possible stability for orientability is that the sequence is noncollapsed, and consists of orientable spaces. The second main result is to give a positive answer to this question.
In order to give the precise statement, we recall the following; for any mGH-convergent sequence of Ricci limit spaces $(Y_i, y_i, \meas_i) \stackrel{GH}{\to} (Y, y, \meas)$, their dimensions are lower semicontinuous, i.e. $\liminf_{i \to \infty}\mathrm{dim}\,Y_i \ge \mathrm{dim}\,Y$, which was proven in \cite{Honda2}.
This allows us to define the sequence $(Y_i, y_i, \meas_i)$ to be \textit{noncollapsed} by satisfying $\lim_{i \to \infty}\mathrm{dim}\,Y_i=\mathrm{dim}\,Y$.
Note that this formulation is well-known and is equivalent to satisfy $\liminf_{i \to \infty}\mathcal{H}^n(B_1(y_i))>0$ if the sequence $(Y_i, y_i, \meas_i)$ consists of Riemannian manifolds with canonical normalized measures.

For example it was proven in \cite{KL} by Kapovitch-Li that along the interior of any limit geodesic on any Ricci limit space, same scale tangent cones gives a noncollapsed (H\"older) continuous sequence with respect to the mGH-convergence in this sense. 

The stability result is stated as follows;
\begin{theorem}[Stability, Theorem \ref{thm:stability}]\label{ah}
Let $(Y_i, y_i, \meas_i)$ be a mGH-convergent sequence of Ricci limit spaces to $(Y, y, \meas)$.
Assume that this is a noncollapsed sequence and that each $(Y_i, y_i, \meas_i)$ is orientable.
Then $(Y, y, \meas)$ is also orientable.
\end{theorem}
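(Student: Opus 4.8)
The plan is to obtain an orientation on $(Y, y, \meas)$ as a limit, in the sense of $L^2$-convergence of tensor fields along the mGH-convergent sequence, of the given orientations $\omega_i$ on $(Y_i, y_i, \meas_i)$; the noncollapsing hypothesis enters twice, to keep the degree of the forms fixed and to prevent the orienting sign from being washed out in the limit. First I would record the consequences of noncollapsing: since $\dim$ is lower semicontinuous and $\lim_i \dim Y_i = \dim Y =: k$, we have $\dim Y_i = k$ for all large $i$, so that each $\omega_i$ and the prospective limit are sections of top-degree bundles $\bigwedge^k T^*Y_i$, $\bigwedge^k T^*Y$ of the \emph{same} rank --- this is precisely what fails in the collapsed Example 4, where the limiting degree drops. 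By Cheeger--Colding volume convergence the renormalized measures converge, $\meas_i \to \meas$, and $\int_{B_R}|\omega_i|^2\di\meas_i = \meas_i(B_R(y_i)) \to \meas(B_R(y))$ for a.e.\ $R$, so the $\omega_i$ are uniformly $L^2$-bounded on bounded sets and, within the established theory of $L^2$-convergence of tensor fields over a noncollapsed mGH-convergent sequence of $\RCD$-spaces, admit a subsequential limit.

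The technical heart of the argument is the observation that the regularity condition (2) forces an orientation to be \emph{parallel on the regular set}. Indeed, on the $k$-regular set $\omega_i$ equals, up to a measurable sign $\sigma_i$, the parallel Riemannian volume form; testing (2) against coordinate-type test functions (splitting maps) shows that $\sigma_i$ lies in $H^{1,2}_{\loc}$ there, and an $H^{1,2}$-function valued in $\{\pm 1\}$ is locally constant, whence the covariant derivative of $\omega_i$ vanishes a.e. This has two consequences I would exploit: locally $\omega_i = \pm\,\mathrm{vol}_i$ is as regular as the metric on the regular part, and for any test form $\eta_i := f_0^i\dist f_1^i\wedge\cdots\wedge\dist f_k^i$ the pairing $g_i := \langle \omega_i, \eta_i\rangle$ (which lies in $H^{1,2}(Y_i)$ by orientability of $Y_i$) obeys the pointwise bound $|\nabla g_i| = |\langle \omega_i, \nabla\eta_i\rangle| \le |\nabla\eta_i|$ a.e.

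I would then verify the two defining conditions for the limit $\omega$. For the normalization (1): since $\omega_i = \pm\,\mathrm{vol}_i$ locally on the regular set, and the regular parts converge under the noncollapsed mGH-convergence (a.e.\ convergence of tangent cones to $\mathbf{R}^k$ and local smooth convergence by Cheeger--Colding), the local signs stabilize on a countable basis of balls of regular points after passing to a further subsequence, forcing $\omega_i \to \omega$ a.e.\ (hence in $L^2_{\loc}$ by dominated convergence); as $|\omega_i|\equiv 1$ this gives $|\omega(z)| = 1$ for a.e.\ $z\in Y$. For the regularity (2): fix $f_0,\dots,f_k \in \mathrm{Test}F(Y)$ and approximate them by $f_j^i \in \mathrm{Test}F(Y_i)$ converging strongly in $H^{1,2}$ with uniformly $L^2$-bounded Hessians and Laplacians (heat-flow approximation together with the Mosco convergence of the Cheeger energies), so that $\eta_i \to \eta := f_0\dist f_1\wedge\cdots\wedge\dist f_k$ strongly in $L^2$. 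Then $g_i \to \langle\omega,\eta\rangle =: g$, while the previous step makes $\|g_i\|_{L^2}\le\|\eta_i\|_{L^2}$ and $\|\nabla g_i\|_{L^2}\le\|\nabla\eta_i\|_{L^2}$ uniform in $i$; lower semicontinuity of the Cheeger energy under mGH-convergence then yields $g \in H^{1,2}(Y)$, which is condition (2).

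I expect the main obstacle to be the normalization step --- ensuring that the orienting sign does not average out in the limit --- carried out in the presence of a singular set that may be dense, as in the Otsu--Shioya example cited in the introduction. The mechanism that saves us is that the singular set has zero capacity, so it is invisible to the $H^{1,2}$-regularity of the pairings and carries no $L^2$-mass, while the parallel-on-the-regular-set structure localizes the sign to genuinely regular regions, where the noncollapsed convergence is rigid. Finally, the Uniqueness theorem (Theorem \ref{thm:unique}) guarantees that the resulting $\omega$ is, up to a global change of sign, independent of the chosen subsequence, so the construction yields a well-defined orientation and $(Y, y, \meas)$ is orientable.
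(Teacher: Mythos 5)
Your overall architecture matches the paper's in its first half: extract an $L^p_{\mathrm{loc}}$-weak limit $\omega$ of the $\omega_i$ (noncollapsing keeping the degree at $k$), use $\nabla^{g_{Y_i}}\omega_i=0$ together with (\ref{eq:fundamental}) and Bochner's inequality to get uniform $H^{1,2}$-bounds on the pairings $\langle\omega_i,\dist f_{1,i}\wedge\cdots\wedge\dist f_{k,i}\rangle$, and conclude the regularity condition for $\omega$ by stability of Sobolev functions under mGH-convergence. That part is sound, although the paper obtains $\nabla^{g_{Y_i}}\omega_i=0$ more directly (Proposition \ref{prop:prop1}): differentiate $|\omega_i|^2\equiv 1$ and use that $\bigwedge^kT^*_zY_i$ is one-dimensional, with no need for an $H^{1,2}$ sign function.

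The genuine gap is in your normalization step. The claims that the regular parts converge ``by local smooth convergence by Cheeger--Colding'' and that therefore ``the local signs stabilize \dots forcing $\omega_i\to\omega$ a.e.'' are not available here: with only a lower Ricci bound there is no smooth (or even $C^{1,\alpha}$) convergence on regular sets, regular points of $Y_i$ can limit onto singular points of $Y$, and, more fundamentally, the forms $\omega_i$ live on different spaces whose cotangent fibers are defined only almost everywhere and carry no canonical identification, so ``a.e.\ convergence of $\omega_i$ to $\omega$'' is not a meaningful notion and cannot be fed into dominated convergence. What must actually be proved is that the $L^2_{\mathrm{loc}}$-weak convergence is \emph{strong}, i.e.\ $\limsup_i\|\omega_i\|_{L^2(B_R(y_i))}\le\|\omega\|_{L^2(B_R(y))}$; only then do volume convergence and the Lebesgue differentiation theorem yield $|\omega|(z)=1$ a.e. The paper carries this out quantitatively: at a regular point $z$ it takes splitting maps $\mathbf{b}^i$, sets $\eta_i=\dist\mathbf{b}^i_1\wedge\cdots\wedge\dist\mathbf{b}^i_k$, shows that the scalar functions $\langle\omega_i,\eta_i\rangle$ converge $L^2$-strongly by Rellich compactness (using exactly the uniform $H^{1,2}$-bound you derived), observes that $|\omega_i-\langle\omega_i,\eta_i\rangle\eta_i|<\Psi$ on the set where the splitting map is almost orthonormal, and then invokes a Vitali-type criterion to upgrade these local mean-value estimates to the global $\limsup$ bound. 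Without an argument of this kind your proof does not exclude that the signs of $\omega_i$ oscillate at scales tending to zero, in which case the weak limit would satisfy $|\omega|<1$ on a set of positive measure and condition (1) of the definition would fail.
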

This stability result and the compatibility with the smooth case as mentioned show the following;
\begin{corollary}\label{cor:nonex}
Let $Z$ be a compact metric space whose Hausdorff dimension $n-k-1$.
If there exists an open subset $O$ of $Z$ such that $O$ is isometric to a non-orientable smooth (possibly incomplete) Riemannian manifold,
 then the metric space $(\mathbf{R}^k \times C(Z), (0_k, p))$ never appears as a tangent cone at a point of a noncollapsed oriented Ricci limit space, where $C(Z)$ is the metric cone over $Z$ and $p$ denotes the pole. 
\end{corollary}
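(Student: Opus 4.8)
The plan is to argue by contradiction, combining the Stability Theorem~\ref{ah} with the compatibility between our orientability and the classical one on smooth pieces (Propositions~\ref{prop:comp1} and~\ref{prop:comp2}). So I would suppose that $(\mathbf{R}^k\times C(Z),(0_k,p))$ does arise as a tangent cone at some point $w$ of a noncollapsed oriented Ricci limit space $(X,x,\meas)$ of dimension $n$, and derive a contradiction with the non-orientability of $O$. After passing to a connected component I may assume $O$ itself is connected and non-orientable.

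First I would set up the blow-up. By definition the tangent cone is a pointed mGH-limit $(X,w,r_i^{-1}\dist, c_i\meas)\stackrel{GH}{\to}(\mathbf{R}^k\times C(Z),(0_k,p))$ for suitable $r_i\downarrow 0$ and normalizing constants $c_i$. Each term is a rescaled copy of $(X,\meas)$; since scaling distances by the factor $r_i^{-1}$ (with $r_i\le 1$) multiplies the lower Ricci bound by $r_i^2\le 1$, each rescaled space is again a Ricci limit space, and it is again orientable (the normalization and regularity conditions are preserved after renormalizing the pulled-back orientation of $X$). As rescaling does not change the dimension and the tangent cone of the noncollapsed space $X$ again has dimension $n$, the sequence is noncollapsed. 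Hence Theorem~\ref{ah} yields that $\mathbf{R}^k\times C(Z)$ is orientable; fix an orientation $\omega\in L^{\infty}(\bigwedge^n T^*(\mathbf{R}^k\times C(Z)))$.

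Next I would localize to the smooth part. The subset $U:=\mathbf{R}^k\times(C(O)\setminus\{p\})$ is open, and with the product-cone metric $g_{\mathbf{R}^k}+(\dist t^2+t^2 g_O)$ it is isometric to a smooth $n$-dimensional Riemannian manifold, diffeomorphic to $\mathbf{R}^k\times O\times(0,\infty)$. Since the first Stiefel--Whitney class of its tangent bundle equals $w_1(TO)\ne 0$, the manifold $U$ is non-orientable in the classical sense. On the other hand I would restrict $\omega$ to $U$ and run the argument behind Propositions~\ref{prop:comp1} and~\ref{prop:comp2} there: the point is that on $U$ the regularity condition is purely local, because one may insert test functions cut off to be supported in $U$, so that $\langle\omega,f_0\dist f_1\wedge\cdots\wedge\dist f_n\rangle\in H^{1,2}$ becomes a statement about the coefficients of $\omega$ on the smooth manifold $U$. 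In an oriented connected chart $V\subset U$ with Riemannian volume form $\mathrm{vol}_V$ one then writes $\omega=s_V\,\mathrm{vol}_V$ with $|s_V|=1$ a.e. and $s_V\in H^{1,2}_{\mathrm{loc}}(V)$; a Sobolev function valued a.e. in $\{\pm1\}$ has vanishing weak gradient, hence is constant on $V$. Thus $\omega$ is a nowhere-vanishing section of $\bigwedge^n T^*U$, i.e. a classical orientation of $U$, contradicting its non-orientability. This contradiction shows that $(\mathbf{R}^k\times C(Z),(0_k,p))$ cannot be such a tangent cone.

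I expect the main obstacle to be precisely this localization step: making rigorous that the global regularity condition of the Definition restricts faithfully to the smooth open set $U$, so that the compatibility Propositions~\ref{prop:comp1} and~\ref{prop:comp2} apply on $U$ rather than only to a globally smooth space. Concretely this requires checking that $\mathrm{Test}F(\mathbf{R}^k\times C(Z))$ contains enough functions supported in $U$ to detect the full smooth structure there (e.g.\ that products of test functions with smooth cutoffs supported away from the pole and from $Z\setminus O$ remain admissible inputs), together with the elementary fact that an $H^{1,2}$ function taking only the values $\pm1$ is locally constant.
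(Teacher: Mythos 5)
Your proposal is correct and follows essentially the route the paper intends for this corollary: blow up at the given point to obtain a noncollapsed sequence of oriented Ricci limit spaces, apply the stability theorem (Theorem \ref{thm:stability}, together with the observation recorded in Remark \ref{rem:loc ori} that tangent cones of oriented spaces are oriented) to orient $\mathbf{R}^k\times C(Z)$, and then use the compatibility with the smooth case to contradict the classical non-orientability of $\mathbf{R}^k\times O\times(0,\infty)$. The localization step you single out as the main obstacle is in fact already taken care of in the paper, since Proposition \ref{prop:comp1} is stated for an arbitrary open subset of the limit space and its proof rests on the local regularity statement of Proposition \ref{b}, so no additional argument is needed there.
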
 
The sectional curvature version of this corollary is known, more strongly, Kapovitch proved in \cite{Kapo} that for any noncollapsed GH-limit space of $n$-dimensional Riemannian manifolds with uniform sectional curvature bounds from below, the cross section (which is the space of directions) of the tangent cone at any point in the limit space is homeomorhic to a sphere of dimension $n-1$.

However in the case of noncollapsed Ricci limit spaces, the corollary makes sense.
For example we can find in \cite{DW} by Dancer-Wang an example of an Einstein metric on $\mathbf{R}P^6 \times \mathbf{R}^4$ such that the asymptotic cone is the metric cone over $\mathbf{R}P^6 \times \mathbf{S}^3$. This observation is due to Hattori.

On the other hand Colding-Naber gave in \cite{ColdingNaber2} necessary and sufficient conditions for the GH-closure of an open smooth family $\Omega$ of closed Riemannian manifolds to be the set $\overline{\Omega}_{Y, p}$ of all cross sections of all tangent cones at some point $p$ of some noncollapsed Ricci limit space $Y$, (i.e. $\overline{\Omega}=\overline{\Omega}_{Y, p}$).
Corollary \ref{cor:nonex} can be regarded as a new obstruction for their result.
In particular $\mathbf{R}P^6 \times \mathbf{S}^3$ with any metric \textit{never} appears as the cross section of a tangent cone of a noncollapsed Ricci limit space of orientable Riemannian manifolds.

It is well-known that orientability is related to the theory of currents.
In fact, even in our setting we will establish 
a relationship between our orientability and the theory of metric currents by Ambrosio-Kirchheim \cite{AmbrosioKirchheim} (more generally, local currents by Lang \cite{Lang} and Lang-Wenger \cite{LW}).
In order to give the precise statement, for an orientation $\omega$ of $(X, x, \meas)$, let $T_{\omega}$ be a functional defined by
$$
T_{\omega}(f_0, f_1, \ldots, f_k):=\int_X\langle \omega, f_0 \dist f_1 \wedge \cdots \wedge \dist f_k \rangle \dist \meas
$$
for any Lipschitz functions $f_i$ on $X$, where one of them has a compact support. 
Note that $T_{\omega}$ is a locally integral metric current with $\partial T_{\omega} =0$ in the sense of \cite{Lang, LW} if $X$ is isometric to a $k$-dimensional smooth Riemannian manifold with $\meas =\mathcal{H}^k$. 
However for general Ricci limit spaces, $T_{\omega}$ is not an integral current. 
For example, the space  $\left( [0, \pi], \frac{1}{2}\int \sin t \dist t \right)$ in example 2 above,  for any $c \in \mathbf{R}_{>0}$, $cT_{\dist t}$ is not integral current, but it is a metric current.

Recall that it was proven in \cite{CheegerColding1} that if a sequence of $n$-dimensional Riemannian manifolds $(Z_i, z_i)$ with $\mathrm{Ric}_{Z_i} \ge -(n-1)$ GH-converge to a metric space $(Z, z)$ and the sequence is \textit{noncollapsed} (i.e. $\liminf_{i \to \infty}\mathcal{H}^n(B_1(z_i))>0$ is satisfied), then it is also a mGH-convergent sequence with respect to the $n$-dimensional Hausdorff measure, that is, $(Z_i, z_i, \mathcal{H}^n) \stackrel{GH}{\to} (Z, z, \mathcal{H}^n)$ with $\mathcal{H}^n(B_1(z))>0$. Thus we always consider $n$-dimensional Hausdorff measures $\mathcal{H}^n$ instead of normalized one $\mathcal{H}^n/\mathcal{H}^n(B_1(z_i))$ as reference measures whenever the sequence of Riemannian manifolds is noncollpased.
\begin{theorem}[Relation to metric currents, Theorem \ref{thm:noncollapsed compactness}]\label{hhgg}
Let $(Z_i, z_i, \mathcal{H}^n)$ be a sequence of oriented $n$-dimensional Riemannian manifolds with $\mathrm{Ric}_{Z_i} \ge -(n-1)$ and their orientations $\omega_i \in C^{\infty}(\bigwedge^nT^*Z_i)$, let $(Z, z, \mathcal{H}^n)$ be the noncollapsed mGH-limit space with the orientation $\omega \in L^{\infty}(\bigwedge^nT^*Z)$ associated with $\omega_i$ (note that it makes sense by Theorem \ref{ah}).
Then we see that $T_{\omega}$ is a locally integral current, that the multiplicity of $T_{\omega}$ is one, that $\partial T_{\omega}=0$, and that $T_{\omega_i}$ converge to $T_{\omega}$ in the following sense;
\begin{equation}\label{hhhhooo}
\lim_{i \to \infty}T_{\omega_i}(f_{0, i}, f_{1, i}, \ldots, f_{n, i})=T_{\omega}(f_0, f_1, \ldots, f_n)
\end{equation}
whenever the following hold;
\begin{enumerate}
\item{(Uniform convergence)} $f_{j, i} \in \mathrm{LIP}_{\mathrm{loc}}(X_i)$ converge uniformly to $f_j \in \mathrm{LIP}_{\mathrm{loc}}(X)$ on each compact subset of $X$, where $\mathrm{LIP}_{\mathrm{loc}}$ denote the set of all locally Lipschitz functions,
\item{(Uniform Lipschitz bound)} Lipschitz constants $\mathbf{Lip}(f_{j ,i}|_{B_R(x_i)})$ of $f_{j, i}$ on $B_R(x_i)$ are uniformly bounded for any $R \in (0, \infty)$, i.e. $\sup_{i, j}\mathbf{Lip}(f_{j, i}|_{B_R(x_i)})<\infty$,
\item{(Uniform compact support)}  there exist $j$ and $R_0 \in (0, \infty)$ such that $\supp f_{j, i} \subset B_{R_0}(x_i)$ for any $i$.
\end{enumerate}
\end{theorem}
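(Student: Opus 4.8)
The plan is to take the convergence \eqref{hhhhooo} as the analytic core of the statement and to read off the three structural assertions---local integrality, multiplicity one, and $\partial T_\omega = 0$---from it by means of the closure theorem for integral currents. First I would record the smooth-side input: on each oriented Riemannian manifold $(Z_i, \omega_i)$ the functional $T_{\omega_i}$ is exactly the integration current determined by the volume form $\omega_i$, hence it is a locally integral current of multiplicity one with $\partial T_{\omega_i} = 0$ and mass $\norm{T_{\omega_i}} = \mathcal{H}^n$. Since $\mathrm{Ric}_{Z_i} \ge -(n-1)$, Bishop--Gromov volume comparison bounds $\mathcal{H}^n(B_R(z_i))$ by a constant depending only on $n$ and $R$; thus the masses of $T_{\omega_i}$ on each ball $B_R(z_i)$ are uniformly bounded and the boundary masses vanish. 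These are precisely the hypotheses of the closure/compactness theorem for local integral currents of Lang \cite{Lang} and Lang--Wenger \cite{LW}, in the spirit of Ambrosio--Kirchheim \cite{AmbrosioKirchheim}.

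The heart of the proof is \eqref{hhhhooo}. By the stability theorem (Theorem~\ref{ah}) the limit orientation $\omega$ is obtained as an $L^2$-limit of the $\omega_i$; since $\abs{\omega_i} \equiv 1 \equiv \abs{\omega}$ and the measures $\mathcal{H}^n$ converge, the $L^2$-norms converge as well, so that in fact $\omega_i \to \omega$ \emph{strongly} in $L^2$ on each compact set. By the uniform compact support hypothesis~(3) the integrals localize to a fixed ball $B_{R_0}$, so after multiplying by a fixed cutoff---which does not change the value of the current---I may assume every $f_{j,i}$ is compactly supported; on $B_{R_0}$ the uniform Lipschitz bound~(2) then makes the $f_{j,i}$ uniformly bounded and uniformly convergent, hence strongly $L^2$-convergent. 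To handle the differentials I would regularize by the heat flow. For fixed $t > 0$ the functions $h_t f_{j,i}$ are test functions; the Bakry--\'Emery estimate under $\mathrm{Ric} \ge -(n-1)$ keeps their Lipschitz constants bounded by $e^{(n-1)t}L$ uniformly in $i$, and Mosco convergence of the Cheeger energies under mGH-convergence yields $h_t f_{j,i} \to h_t f_j$ strongly in $H^{1,2}$, hence $\dist(h_t f_{j,i}) \to \dist(h_t f_j)$ strongly in $L^2$. The uniform $L^\infty$-bound on these differentials upgrades this to strong $L^2$-convergence of the wedge product $h_t f_{0,i}\,\dist(h_t f_{1,i}) \wedge \cdots \wedge \dist(h_t f_{n,i})$, and pairing it against the strongly converging, uniformly bounded $\omega_i$ gives \eqref{hhhhooo} with $h_t$ inserted. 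Letting $t \downarrow 0$, the uniform energy bound on $B_{R_0}$ furnishes a rate of $H^{1,2}$-approximation $h_t f_{j,i} \to f_{j,i}$ that is uniform in $i$, so a diagonal argument removes the regularization and establishes \eqref{hhhhooo} for all tuples satisfying (1)--(3).

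With \eqref{hhhhooo} in hand the structural assertions follow. The convergence says precisely that $T_{\omega_i} \to T_\omega$ weakly on the admissible tuples; combined with the uniform mass bounds and vanishing boundary masses from the first step, the closure theorem of \cite{Lang, LW} shows that $T_\omega$ is a locally integral current, and continuity of $\partial$ under weak convergence (pairing with the admissible tuple $(1, f_0, \ldots, f_{k-1})$) gives $\partial T_\omega = 0$. Finally, once $T_\omega$ is known to be integral it has the canonical form with an integer multiplicity $\theta \ge 1$ and orientation $\omega$, so that $\norm{T_\omega} = \theta\,\mathcal{H}^n$; on the other hand the normalization $\abs{\omega} \equiv 1$ together with the density-one, $n$-rectifiable structure of the noncollapsed limit (Cheeger--Colding \cite{CheegerColding1, CheegerColding3}) gives $\norm{T_\omega} = \mathcal{H}^n$ directly, whence $\theta \equiv 1$, i.e.\ the multiplicity is one.

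I expect the passage to the limit in the wedge product to be the main obstacle: the differentials of merely uniformly Lipschitz functions converge only weakly in $L^2$, so products of them need not pass to the limit, and it is crucial both that the uniform Lipschitz bound supplies an $L^\infty$-control and that noncollapsing yields strong $L^2$-convergence of gradients after heat-flow regularization. That noncollapsing is indispensable here is consistent with the collapsed example $([0,\pi], \tfrac{1}{2}\sin t\,\dist t)$, on which $T_{\dist t}$ fails to be integral.
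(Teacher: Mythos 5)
Your overall architecture is genuinely different from the paper's, and one step of it does not work as written. The paper never invokes a closure/compactness theorem to obtain integrality of $T_\omega$: it proves integrality \emph{explicitly}, by first establishing the push-forward formula $\phi_\sharp(T_\omega\lfloor_C)=[1_{\phi(C)}\,g\circ\phi^{-1}]$ (Theorem \ref{thm:push}) for bi-Lipschitz charts $(C,\phi)$ compatible with $\omega$, then using the Cheeger--Colding fact that in the noncollapsed case the density $g$ equals a constant a.e., so that $T_\omega=\sum_i(\phi_i^{-1})_\sharp(1_{\phi_i(C_i)})$ over a rectifiable atlas; local integrality and multiplicity one are then read off from this representation via \cite[Theorem 8.3]{Lang}. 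Your route through the closure theorem is viable in principle (it is essentially the Sormani--Wenger/Matveev--Portegies strategy), but it buys less: it does not produce the explicit formula (\ref{ppii}), and it still requires the full strength of (\ref{hhhhooo}) to identify the weak limit with $T_\omega$ after embedding everything into a common space.

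The genuine gap is in your removal of the heat-flow regularization in the proof of (\ref{hhhhooo}). You assert that the uniform energy bound on $B_{R_0}$ gives a rate, uniform in $i$, for the $H^{1,2}$-convergence $h_tf_{j,i}\to f_{j,i}$ as $t\downarrow 0$. This is false: writing $\|\nabla(h_tf-f)\|_{L^2}^2=\int_0^\infty\lambda(e^{-\lambda t}-1)^2\,\dist\|E_\lambda f\|_{L^2}^2$, a uniform bound on $\|\nabla f_{j,i}\|_{L^2}$ controls only $\int\lambda\,\dist\|E_\lambda f_{j,i}\|_{L^2}^2$ and gives no uniform decay in $t$ if the energy of $f_{j,i}$ concentrates at frequencies $\lambda_i\to\infty$ (which a uniform Lipschitz bound does not exclude). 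Consequently $\dist(h_tf_{j,i}-f_{j,i})$ is only uniformly bounded in $L^\infty$, not small, and the telescoped difference $T_{\omega_i}(f_{0,i},h_tf_{1,i},\ldots)-T_{\omega_i}(f_{0,i},f_{1,i},\ldots)$ does not tend to $0$ uniformly in $i$ by your argument. The paper closes exactly this step by exploiting normality: since $\delta_n\omega_i=0$ (equivalently $\partial T_{\omega_i}=0$), one integrates by parts in each telescoped term so that the exterior derivative is moved off $h_tf_{j,i}-f_{j,i}$, and then only the uniform smallness of $h_tf_{j,i}-f_{j,i}$ itself (in $L^1$, which \emph{does} decay at a rate $O(\sqrt{t})$ uniform in $i$) is needed; see the proof of (2) of Theorem \ref{thm:imply} and (\ref{eq:21}). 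Your proof needs this integration by parts; without it the diagonal argument collapses.
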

This theorem with a result established in \cite{GigliMondinoSavare} by Gigli-Mondino-Savar\'e gives a compatibility between the GH-convergence and the pointed flat compactness theorem given in \cite{LW}.
Moreover applying this to the compact case gives a new approach to prove the compatibility between the GH-convergence and 
the intrinsic flat convergence introduced in \cite{SormaniWenger} by Sormani-Wenger, which was known by Matveev-Portegies in \cite{MP}. Moreover our approach gives an explicit formula of the limit integral current by the limit orientation  as in the right hand side of (\ref{hhhhooo}).
See also Theorem \ref{comcom}.

Finally we will discuss dualities of (co) homology groups for singular spaces.
It is well-known that if a smooth compact $n$-dimensional manifold $M$ is orientable, then dualities between cohomology groups, $H^{n-k}(M) \cong H^k(M)$, hold for any $k$.
However in general we cannot expect such dualities for singular spaces.
In fact although $\mathbf{S}^0*\mathbb{C}P^2$ appears as the collapsed GH-limit of a sequence of Riemannian manifolds with uniform sectional curvature bounds from below (\cite[Example 1.2]{Y} by Yamaguchi) and it is oriented (in the sense of \cite{Mit}), $H^2(\mathbf{S}^0*\mathbb{C}P^2) \not \cong H^3(\mathbf{S}^0*\mathbb{C}P^2)$, where $\mathbf{S}^0*W$ is the spherical suspension of $W$. This observation is due to \cite{Mit} by Mitsuishi.

However we can prove dualities in a special case, which includes noncollapsed GH-limits of Einstein manifolds as typical examples:
\begin{theorem}[Duality, Theorems \ref{ww} and \ref{nnm}]\label{abc}
Let $X_i$ be a sequence of oriented $n$-dimensional compact Riemannian manifolds with $|\mathrm{Ric}_{X_i}| \le n-1$ and their orientations $\omega_i \in C^{\infty}(\bigwedge^nT^*X_i)$, and let $X$ be the noncollapsed compact GH-limit space with the orientation $\omega \in L^{\infty}(\bigwedge^nT^*X)$ associated with $\omega_i$ (recall that $(X_i, \mathcal{H}^n) \stackrel{GH}{\to} (X, \mathcal{H}^n)$).
Then we have the following dualities:
\begin{enumerate}
\item $H^n_{\mathrm{dR}}(X) \cong \mathbf{R}\omega (\cong \mathbf{R} \cong H^0_{\mathrm{dR}}(X))$, where $H^k_{\mathrm{dR}}$ is the $k$-dimensional de Rham cohomology group as $RCD$-spaces introduced in \cite{Gigli}.
\item $\mathrm{Harm}_1^{\infty}(\mathcal{R}(X)) \cong H^1_{\mathrm{dR}}(X) \cong H^{n-1}_{\mathrm{dR}}(X) \cong \mathrm{Harm}_{n-1}^{\infty}(\mathcal{R}(X))$, where $\mathrm{Harm}_k^{\infty}(\mathcal{R}(X))$ is the space of bounded weakly harmonic $k$-forms $\alpha$ on the regular set $\mathcal{R}(X)$ of $X$, i.e. $\|\alpha \|_{L^{\infty}}<\infty$, $\langle \alpha, \eta \rangle \in H^{1, 2}(X)$ and 
$$
\int_X\langle \dist \alpha, \dist \eta \rangle +\langle \delta \alpha, \delta \eta \rangle \dist \mathcal{H}^n =0
$$
are satisfied for any $\eta=f_0\dist f_1 \wedge \cdots \wedge \dist f_k$, where $f_0 \in \mathrm{LIP}_c(\mathcal{R}(X))$ and any $f_i \in \mathrm{Test}F(X) (i=1, 2, \ldots, k)$.
Moreover these are finite dimensional and an isomorphism $H^1_{\mathrm{dR}}(X) \cong H^{n-1}_{\mathrm{dR}}(X)$ is given by the Hodge star operator associated with $\omega$.
\end{enumerate}
\end{theorem}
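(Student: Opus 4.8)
The plan is to reduce both dualities to a finite-dimensional Hodge--Kodaira theory on $X$ in which the orientation-induced Hodge star intertwines the relevant harmonic spaces. First I would construct the Hodge star $*$ attached to $\omega$: for a.e.\ $z$ the unit top-form $\omega(z)$ determines a pointwise isometry $\bigwedge^kT_z^*X \cong \bigwedge^{n-k}T_z^*X$, and I would verify that $*$ maps forms of the type $f_0\dist f_1\wedge\cdots\wedge\dist f_k$ with $f_i\in\mathrm{Test}F(X)$ back into the same class, so that $**=(-1)^{k(n-k)}\mathrm{Id}$ and the weak Hodge identities $*\,\dist\alpha=\pm\,\delta(*\alpha)$, $*\,\delta\alpha=\pm\,\dist(*\alpha)$ hold after pairing against test forms. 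The regularity condition (2) in the definition of orientation is precisely what legitimizes these manipulations, since it guarantees $\langle\omega,f_0\dist f_1\wedge\cdots\wedge\dist f_k\rangle\in H^{1,2}(X)$ and hence that applying $*$ does not destroy Sobolev regularity. From these identities $*$ commutes with the form Laplacian $\dist\delta+\delta\dist$, so it carries weakly harmonic $k$-forms isomorphically onto weakly harmonic $(n-k)$-forms.

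Next I would invoke the Hodge--Kodaira theory for forms in the sense of Gigli, which under the present hypotheses yields $H^k_{\mathrm{dR}}(X)\cong\mathrm{Harm}^k(X)$, the space of $L^2$ harmonic $k$-forms, together with finite dimensionality. The two-sided bound $|\mathrm{Ric}_{X_i}|\le n-1$ with noncollapsing enters in two ways. On the one hand, by the codimension-four estimate of Cheeger--Naber the singular set $X\setminus\mathcal{R}(X)$ has Hausdorff dimension at most $n-4$, so its $2$-capacity vanishes; this lets me identify $L^2$ harmonic forms on $X$ with bounded weakly harmonic forms on $\mathcal{R}(X)$, giving $\mathrm{Harm}_1^\infty(\mathcal{R}(X))\cong H^1_{\mathrm{dR}}(X)$ and $\mathrm{Harm}_{n-1}^\infty(\mathcal{R}(X))\cong H^{n-1}_{\mathrm{dR}}(X)$. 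On the other hand, the two-sided bound provides a Bochner inequality with controlled curvature term, and a Moser iteration then promotes every $L^2$ harmonic form to an $L^\infty$ one, which is what places the harmonic representatives inside the bounded spaces used in the statement.

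For the first duality I would observe that $\omega$ is itself harmonic: being top-dimensional it is closed, and since $*\omega=\pm1$ is a harmonic constant, the weak relation $\delta\omega=\pm*\dist(*\omega)=0$ holds. The Hodge star in top degree then gives $\mathrm{Harm}^n(X)\cong\mathrm{Harm}^0(X)$, and harmonic $0$-forms are the locally constant functions, which reduce to $\mathbf{R}$ by connectedness of $X$; hence $H^n_{\mathrm{dR}}(X)\cong\mathrm{Harm}^n(X)=\mathbf{R}\,\omega\cong\mathbf{R}\cong H^0_{\mathrm{dR}}(X)$. For the second duality the Hodge star in degree one gives $\mathrm{Harm}^1(X)\cong\mathrm{Harm}^{n-1}(X)$, and combined with the Hodge--Kodaira identification and the capacity argument this assembles into $\mathrm{Harm}_1^\infty(\mathcal{R}(X))\cong H^1_{\mathrm{dR}}(X)\cong H^{n-1}_{\mathrm{dR}}(X)\cong\mathrm{Harm}_{n-1}^\infty(\mathcal{R}(X))$, with the middle isomorphism realized explicitly by $*$; finite dimensionality is inherited from the Hodge--Kodaira theory.

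I expect the main obstacle to be justifying the compatibility of the Hodge star with the weak exterior derivative and codifferential for the merely $L^\infty$ orientation $\omega$, i.e.\ establishing $*\dist=\pm\delta*$ and $*\delta=\pm\dist*$ across the singular set. In the smooth case these are pointwise algebraic identities, but here $\omega$ is defined only a.e.\ on $\mathcal{R}(X)$, so one must integrate against test forms $f_0\dist f_1\wedge\cdots\wedge\dist f_k$ and control the boundary contributions near $X\setminus\mathcal{R}(X)$ using the vanishing $2$-capacity. A closely related delicate point is upgrading $L^2$ harmonic forms to bounded forms on $\mathcal{R}(X)$ with $H^{1,2}$ components, and this is exactly where the two-sided Ricci bound and the codimension-four regularity of the singular set are genuinely required; a one-sided lower bound alone would not suffice.
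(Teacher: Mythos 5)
Your overall architecture (Hodge star from $\omega$, Hodge theorem, capacity of the singular set) points in the right direction, but there are two concrete gaps that your route does not close and that the paper's proof is specifically designed to avoid. First, finite dimensionality is \emph{not} ``inherited from the Hodge--Kodaira theory'': Gigli's Hodge theorem for $RCD$ spaces gives the isomorphism $H^l_{\mathrm{dR}}(X)\cong\mathrm{Harm}_l(X)$ but says nothing about dimension. In the paper, finite dimensionality of $H^1_{\mathrm{dR}}(X)$ comes from the discreteness of the spectrum of $\Delta_{H,1}$, which is a consequence of the spectral convergence theorem under two-sided Ricci bounds (Theorem \ref{thm:spectral conv}, from \cite{Honda6}); the cases $l=n-1,n$ then follow \emph{from} the dualities, not the other way around. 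Without this input your argument proves at best an isomorphism between possibly infinite-dimensional spaces.

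Second, and more seriously, your plan to establish the weak Hodge identities $\star\,\dist=\pm\,\delta\,\star$ intrinsically, ``integrating against test forms and controlling boundary contributions near $X\setminus\mathcal{R}(X)$ using the vanishing $2$-capacity,'' runs into the distinction between $H^{1,2}_H$ and the weak space $W^{1,2}_H$. The de Rham cohomology is defined via $H^{1,2}_H(\bigwedge^lT^*X)$, the \emph{completion of test forms}, so to transport a class through $\star$ you must show that $\star_{n-1}\eta$ is again approximable by test forms, not merely that it admits a distributional differential; a capacity cut-off argument only lands you in $W^{1,2}_H$, and the density question $H^{1,2}_H=W^{1,2}_H$ for forms is precisely what one cannot settle intrinsically here. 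The paper sidesteps this entirely: it realizes $\star$ as the $L^2$-strong limit of the smooth Hodge stars $\star_{j,i}$ on the $X_j$ (Remark \ref{kl}), writes every identity ($\delta\eta_j=(-1)^n\star_{j}^{-1}\dist\star_j\eta_j$, the equivalence $\eta\in H^{1,2}_H\Leftrightarrow\star\eta\in H^{1,2}_H$, the commutation with the Laplacians) on the manifolds, and passes to the limit using the Rellich compactness for forms (Theorem \ref{relli}) — which itself is proved by expanding in eigenforms and using the spectral convergence. The same mechanism supplies the $L^\infty$ bounds on harmonic forms that you propose to obtain by Moser iteration on the singular space; running Bochner plus Moser for forms across $X\setminus\mathcal{R}(X)$ is not justified there, whereas the quantitative $L^\infty$ bounds on eigenforms from \cite{Honda6} come for free with the eigenform convergence. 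Your capacity argument does correctly reproduce one half of Proposition \ref{prop:lin} (passing from bounded weakly harmonic forms on $\mathcal{R}(X)$ to global harmonic forms), but the opposite inclusion again needs the approximation by smooth eigenforms.
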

In this theorem note that it is known in \cite{CheegerColding1} that $\mathcal{R}(X)$ is an open subset of $X$ and that it is a $C^{1, \alpha}$-Riemannian manifold for any $\alpha \in (0, 1)$. Therefore the second statement makes sense.
Moreover we can check that $\omega$ is a $C^{1, \alpha}$-harmonic form on $\mathcal{R}(X)$. See Remark \ref{hoho} and Corollary \ref{coco}.
\subsection{Organization of the paper}
Let us introduce key ideas to prove Theorem \ref{thm:unique}.
Although it does not coincide with the original proof, it might be helpful to understand that for readers.

Let $\omega_1, \omega_2$ be orientations of a Ricci limit space $(X, x, \meas)$.
Then since $\omega_i$ are top-dimensional differential forms, there exists a Borel function $f:X \to \{-1, 1\}$ such that $\omega_1=f\omega_2$. Our goal is to prove that $f$ is constant.
For that, roughly speaking we will establish \textit{the continuity of $f$ along the interior of any limit geodesic $\gamma$}.
Then combining the continuity with the segment inequality on $X$ proven in \cite{CheegerColding3} shows that $f$ is constant.

In order to prove the continuity of $f$ along the interior of $\gamma$ we will first prove that for any regular point $z$ of $X$, any $\epsilon \in (0, 1)$ and any limit harmonic function $\mathbf{b}$ defined on a neighborhood of $z$,
\begin{equation}\label{quantt}
\frac{r^2}{\meas (B_r(z))}\int_{B_r(z)}|\mathrm{Hess}_{\mathbf{b}}|^2\dist \meas<\epsilon
\end{equation} 
holds if $r$ is sufficiently small, where the hessian above is taken in the sense of \cite{Honda10}.
The key point is to give a \textit{quantitative} estimate of (\ref{quantt}) (Theorem \ref{thm:quantitative bound hess}), which is justified by using a blow-up argument and the behavior of the Laplacian with respect to the mGH-convergence discussed in \cite{AmbrosioHonda} by Ambrosio with the author, and in \cite{Honda2}.

Next we will prove the compatibility between the second-order differential culculus established in \cite{Gigli, Honda10}, which allows us to prove that $\omega_i$ are differentiable for a.e. $y \in X$ in the sense of \cite{Honda10} and to give a pointwise estimate;  
\begin{equation}\label{quantt2}
|\nabla \langle \omega_i, \dist \mathbf{b}_1 \wedge \cdots \wedge \dist \mathbf{b}_k \rangle| \le \sum_{l=1}^k|\mathrm{Hess}_{\mathbf{b}_l}| \prod_{j \neq l}^k(\mathbf{Lip}\mathbf{b}_j),
\end{equation} 
where $\mathbf{b}_l$ are limit harmonic functions. 
Then combining  (\ref{quantt}) with (\ref{quantt2}), the existence of good splitting functions established in \cite{CheegerColding1} and the Poincar\'e inequality shows
\begin{equation}\label{quantt3}
\frac{1}{\meas (B_r(z))}\int_{B_r(z)}\left| f - \frac{1}{\meas (B_r(z))}\int_{B_r(z)}f \dist \meas \right| \dist \meas<\epsilon.
\end{equation} 
This quantitative estimate (\ref{quantt3}) with the uniform Reifenberg property along the interior of $\gamma$ established in \cite{ColdingNaber} yields the continuity of $f$ along the interior of $\gamma$.

Note that precise arguments above will be done by a contradiction.

The organization of the paper is as follows.
In Section 2 we recall several results on Ricci limit spaces.
In Section 3 we establish compatibilities between $L^p$-convergence of tensor fields established in \cite{Honda2} and $L^p$-convergence of derivations established in \cite{AmbrosioStraTrevisan} by Ambrosio-Stra-Trevisan.
Note that in general these are not compatible (Remark \ref{diffe}).
The compatibilities we will establish (Propositions \ref{hj} and \ref{qqssxx}) allow us to use both tools given in \cite{AmbrosioHonda, Honda2}, which will play key roles in many situations (roughly speaking, \cite{AmbrosioHonda} is about global $L^p$-objects, \cite{Honda2} is about $L^p_{\mathrm{loc}}$-objects).
In Section 4 we  prove the uniqueness of second-order differential structure of (non-compact) Ricci limit spaces by using the heat flow. In the case when the limit space is compact, this was proven in \cite{Honda3} by using Poisson's equation.
In Section 5 we prove a quantitative estimate of (\ref{quantt}).
In Section 6 we start to discuss our orientability.
Section 7 covers the proof of Theorem \ref{abc}.
Moreover for any $l \in \{n-1, n\}$ we will prove spectral convergence of the Hodge and the connection Laplacians acting on $l$-dimensional differential forms.
 
\textbf{Acknowledgement.}
The author would like to express his appreciation to Luigi Ambrosio for his warm encouragement during the stay at SNS.
He also thanks SNS for its warm hospitality and for giving him nice environment.
He is grateful to Ayato Mitsuishi for informing us of \cite{Mit, Y}. 
I would like to thank Kota Hattori for discussing on \cite{DW}. 
He is also grateful to the referee for the very thorough reading and valuable suggestions. 
Moreover he wants to thank TSIMF, BICMR and MFO for their wonderful hospitality during his stays.
Finally he acknowledges supports of the JSPS Program for Advancing Strategic
International Networks to Accelerate the Circulation of Talented Researchers, of the Grant-in-Aid for challenging Exploratory Research 26610016 and of the Grantin-Aid
for Young Scientists (B) 16K17585.
\section{Preliminaries}
We here introduce two useful notaions;
\begin{enumerate}
\item for $a, b \in \mathbf{R}$ and $\epsilon \in (0, \infty)$, we write $a=b \pm \epsilon$ if $|a-b| \le \epsilon$,
\item any function $f:(\mathbf{R}_{>0})^{k+m} \to \mathbf{R}_{\ge 0}$, satisfying that
$$
\lim_{\epsilon_1, \ldots, \epsilon_k \to 0}f(\epsilon_1, \ldots, \epsilon_k, c_1, \ldots, c_m)=0
$$
for all fixed $c_1, \ldots, c_m \in \mathbf{R}$, is denoted by $\Psi(\epsilon_1, \ldots, \epsilon_k; c_1, \ldots, c_m)$ for simplicity.
\end{enumerate} 
\subsection{Gromov-Hausdorff convergence}
Let us denote the open (closed, respectively) ball centered at a point $x$ of a metric space $X$ with the radius $r$ by $B_r(x) (\overline{B}_r(x)$, respectively). We usually denote by $\dist$ or $\dist_X$ the distance function for simplicity.
We denote by $\mathrm{LIP}(X), \mathrm{LIP}_{\mathrm{loc}}(X)$ the sets of all Lipschitz functions on $X$, all locally Lipschitz functions on $X$, respectively. Moreover let us denote by $\mathrm{LIP}_c(X)$ the set of $f \in \mathrm{LIP}(X)$ whose supports are compact.
For any $f \in \mathrm{LIP}(X)$ let $\mathbf{Lip}f$ be the global Lipschitz constant, i.e.  $\mathbf{Lip}f:=\sup_{x \neq y}|f(x)-f(y)|/\dist (x, y)$.

For two pointed geodesic spaces $(X_i, x_i) (i=1, 2)$, we say that \textit{a map $\phi$ from $B_R(x_1)$ to $X_2$ is a (pointed) $\epsilon$-Gromov-Hausdorff approximation to $B_R(x_2)$} if it holds that $|\dist_{X_1} (x, y)-\dist_{X_2} (\phi (x), \phi (y))|<\epsilon$ for any $x, y \in B_R(x_1)$, that $\dist_{X_2}(\phi(x_1), x_2)<\epsilon$ and that $B_R(x_2) \subset B_{\epsilon}(\phi (B_R(x_1)))$, where $B_{\epsilon}(A)$ denotes the open $\epsilon$-neighborhood of a subset $A$.

Throughout the paper we mainly discuss \textit{proper geodesic metric spaces}. Recall that a metric space $X$ is said to be \textit{proper} if all bounded closed subset of $X$ is compact and that $X$ is \textit{geodesic} if for all $x, y \in X$ there exists an isometric embedding $\gamma: [0, \dist (x, y)] \to X$, called a geodesic from $x$ to $y$, with $\gamma (0)=x$ and $\gamma (\dist (x, y))=y$. Moreover a pair $(X, \meas)$ of such a metric space $X$ with a Borel measure $\meas$ on $X$ satisfying $\mathrm{supp}\,\meas =X$ is called a \textit{metric measure space} in the paper, where $\mathrm{supp}\,\meas$ denotes the support of $\meas$.

We say that a sequence of pointed metric measure spaces $(Y_i, y_i, \meas_i)$ \textit{measured Gromov-Hausdorff converge to} a pointed metric measure space $(Y, y, \meas)$ if there exist sequences of positive numbers $\epsilon_i \searrow 0$, $R_i \nearrow \infty$, and of $\epsilon_i$-Gromov-Hausdorff approximations $\phi_i$ from $B_{R_i}(y_i)$ to $B_{R_i}(y)$ such that 
$$
\lim_{i \to \infty}\int_Yf\dist (\phi_i)_{\sharp}\meas_i=\int_Yf\dist \meas
$$
for any continuous function $f$ on $Y$ with compact support, where $(\phi_i)_{\sharp}\meas_i$ denotes the push-forward measure of $\meas_i$ by $\phi_i$.
Then we denote by $(Y_i, y_i, \meas_i) \stackrel{GH}{\to} (Y, y, \meas)$ the convergence for simplicity.

Moreover for a sequence $\alpha_i \in Y_i$ and a point $\alpha \in Y$ we denote $\alpha_i \stackrel{GH}{\to} \alpha$ if $\lim_{i \to \infty}\dist_Y (\phi_i(\alpha_i), \alpha) =0$.
Note that 
$$
\lim_{i \to \infty}\meas_i (B_r(\alpha_i))=\meas (B_r(\alpha))
$$
for any $r \in (0, \infty)$ and any $\alpha_i \stackrel{GH}{\to} \alpha$ if the sequence of measures $\meas_i$ have a uniform local doubling constant, where this condition is satisfied by the Bishop-Gromov inequality in the Ricci limit setting as discussed below.
Note that we do not need to consider base points if spaces we discuss are compact metric spaces.
See \cite{BuragoBuragoIvanov, CheegerColding1, Fukaya, Gromov} for details.

We say that a pointed metric measure space $(Z, z, \meas)$ is \textit{an ($n$-) Ricci limit space} if there exist a sequence of pointed $n$-dimensional complete Riemannian manifolds $(Z_i, z_i)$ with $\mathrm{Ric}_{Z_i} \ge -(n-1)$ such that 
$$
\left(Z_i, z_i, \mathcal{H}^n/\mathcal{H}^n(B_1(z_i))\right) \stackrel{GH}{\to} (Z, z, \meas).
$$
\subsection{Structure of Ricci limit spaces}
Let $(X, x, \meas)$ be a Ricci limit space.
We say that \textit{a pointed metric measure space $(Y, y, \nu)$ is a tangent cone at $z \in X$} if there exists a sequence $\epsilon_i \searrow 0$ such that 
$$
\left(X, z, \epsilon_i^{-1}\dist, \frac{\meas }{\meas (B_{\epsilon_i}(z))}\right) \stackrel{GH}{\to} (Y, y, \nu).
$$
A point $z \in X$ is said to be \textit{$k$-dimensional reular} if every tangent cone at $z$ is isometric to $(\mathbf{R}^k, 0_k, \mathcal{H}^k/\mathcal{H}^k(B_1(0_k)))$. 
Let us denote  by $\mathcal{R}^k(X)$ the set of $k$-dimensional regular points in $X$ and let $\mathcal{R}(X):=\bigcup_{1 \le k \le n}\mathcal{R}^k(X)$. As written below \textit{the dimension $\mathrm{dim}\,X$ of $(X, \dist, \meas)$} is defined by a unique $k$ such that $\meas (\mathcal{R}_k)>0$.
\begin{theorem}[Cheeger-Colding, Colding-Naber, \cite{CheegerColding1, CheegerColding3, ColdingNaber}]\label{thm:reg}
We have the following.
\begin{enumerate}
\item $\meas \left( X \setminus \mathcal{R}(X) \right)=0$.
\item Let us denote by $\mathcal{R}^k_{\tau, \delta}(X)$ the set of $z \in X$ such that 
$$
\dist_{GH}\left( (B_s(z), z), (B_s(0_k), 0_k) \right) \le \tau s
$$
for any $s \in (0, \delta ]$, where $\dist_{GH}$ is the Gromov-Hausdorff distance. Then $\mathcal{R}^k(X)=\bigcap_{\tau>0} \bigcup_{\delta >0}\mathcal{R}^k_{\tau, \delta}(X)$.
\item  There exists a unique $k$ such that $\meas \left(X \setminus \mathcal{R}^k(X)\right)=0$.
We call $k$ \textit{the dimension of $X$} and denote it $\mathrm{dim}\,X$. 
\item $X$ is (strong) $\meas$-rectifiable, i.e. there exist a countable family of Borel subsets $C_i$ of $\mathcal{R}^k(X)$ and a countable family of bi-Lipschitz embeddings $\phi_i:C_i \hookrightarrow \mathbf{R}^k$ such that $\meas (X \setminus \bigcup_iC_i)=0$ and that for any $z \in \bigcup_iC_i$ and any $\epsilon \in (0, 1)$ there exists $i$ such that $z \in C_i$ and that $\phi_i$ is a $(1 \pm \epsilon)$-bi-Lipschitz. We call $\{(C_i, \phi_i)\}_i$ a rectifiable atlas of $(X, x, \meas)$.
\end{enumerate}
\end{theorem}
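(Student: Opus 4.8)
The plan is to establish the four assertions in sequence, extracting progressively more structure from the tangent cones, and to treat the uniqueness of the dimension as the decisive step. Everything rests on the volume-convergence theorem of Colding and the Bishop-Gromov inequality, which pass to the limit and make $\meas$ a uniformly locally doubling measure; in particular the rescaled pointed spaces $(X, z, \epsilon^{-1}\dist, \meas/\meas(B_\epsilon(z)))$ are precompact in the pointed measured Gromov-Hausdorff topology, so that tangent cones exist at every point and any statement about all tangent cones can be attacked by a blow-up/subsequence argument.

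I would begin with (2), the quantitative characterization of $\mathcal{R}^k(X)$, since it is purely a compactness statement and will then be available for free. The inclusion $\bigcap_\tau\bigcup_\delta\mathcal{R}^k_{\tau,\delta}(X)\subseteq\mathcal{R}^k(X)$ is immediate: if the rescaled balls at all small scales lie within $\tau s$ of Euclidean balls for every $\tau$, then every blow-up limit is forced to be $(\mathbf{R}^k,0_k)$. For the reverse inclusion I would argue by contradiction through the precompactness above: were $z\in\mathcal{R}^k(X)$ to fail the condition for some fixed $\tau_0$, there would exist scales $s_i\downarrow 0$ along which $\dist_{GH}((B_{s_i}(z),z),(B_{s_i}(0_k),0_k))>\tau_0 s_i$, and a convergent subsequence of the rescalings would produce a tangent cone at distance $\ge\tau_0$ from $\mathbf{R}^k$, contradicting regularity.

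For (1) I would invoke the Cheeger-Colding cone structure theory. The volume-cone-to-metric-cone rigidity, combined with the monotonicity of the rescaled volume ratio inherited from Bishop-Gromov, yields that for $\meas$-a.e. $z$ every tangent cone is a metric cone; the splitting theorem then upgrades this so that each such cone factors maximally as $\mathbf{R}^j\times C(Y)$ with $C(Y)$ containing no further line. A Federer-type dimension-reduction argument on the singular cross-sections shows that the set where the cone is genuinely non-Euclidean is $\meas$-negligible, giving $\meas(X\setminus\mathcal{R}(X))=0$; the uniform-scale content, i.e.\ that \emph{all} tangent cones (not merely one) are Euclidean at a.e.\ point, is exactly where the characterization (2) re-enters. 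Assertion (4) then follows from the harmonic almost-splitting maps of Cheeger-Colding: near a point $z\in\mathcal{R}^k(X)$ one builds $\mathbf{b}=(\mathbf{b}_1,\dots,\mathbf{b}_k)$ from limits of harmonic functions whose Hessians are small in the integral sense, so that $\mathbf{b}$ is $(1\pm\epsilon)$-bi-Lipschitz on a subset of almost full measure. Covering $\mathcal{R}^k(X)$ by countably many such charts, and using (3) to fix a single $k=\mathrm{dim}\,X$, yields the rectifiable atlas.

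The main obstacle is (3), the uniqueness of the dimension, which is precisely the content of Colding-Naber and cannot be reached by the soft compactness arguments above. The difficulty is that (1) furnishes only a pointwise dimension $k(z)$ for a.e.\ $z$, with no a priori reason for $k(\cdot)$ to be a.e.\ constant; one must exclude the coexistence of sets $\mathcal{R}^{k_1}(X)$ and $\mathcal{R}^{k_2}(X)$ of positive measure with $k_1\neq k_2$. The route I would follow is Colding-Naber's sharp Hölder continuity of tangent cones along the interior of a limit geodesic: joining generic density points of $\mathcal{R}^{k_1}(X)$ and $\mathcal{R}^{k_2}(X)$ by a minimizing geodesic and transporting the near-Euclidean behaviour continuously along it would force the two dimensions to coincide. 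Establishing that Hölder continuity---which rests on delicate parabolic and second-order Hessian estimates for the heat flow rather than on blow-up compactness---is the genuinely hard input, and it is the one step I would expect to import wholesale from \cite{ColdingNaber} rather than reprove.
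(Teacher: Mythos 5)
The paper offers no proof of Theorem \ref{thm:reg}: it is stated purely as imported background, attributed to \cite{CheegerColding1, CheegerColding3, ColdingNaber}, so there is no in-paper argument to compare yours against line by line. That said, your outline is a faithful summary of how the cited sources actually proceed, and the one fragment the paper does re-derive --- the rectifiability assertion (4), revisited in Theorem \ref{thm:chc} --- follows exactly your route: harmonic $(\delta,C(n))$-splitting maps from Theorem \ref{arrr} restricted to the good set where the gradients are almost orthonormal, combined with the bi-Lipschitz embedding criterion of Theorem \ref{g}. You also correctly isolate (3) as the genuinely hard input that must be imported from \cite{ColdingNaber} rather than reproved by soft compactness; the paper does the same implicitly, and indeed later reuses the same H\"older-continuity-of-tangent-cones machinery (\cite[Theorem B.1]{ColdingNaber}) in the proof of Theorem \ref{thm:unique}.

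One caution on your treatment of (1): the sentence claiming that the upgrade from ``some tangent cone is Euclidean at a.e.\ point'' to ``\emph{every} tangent cone is Euclidean at a.e.\ point'' is ``exactly where the characterization (2) re-enters'' is not right as stated. Part (2) characterizes the set $\mathcal{R}^k(X)$ where all tangent cones are already Euclidean; it does not by itself convert a single Euclidean tangent cone into uniform control at all scales (membership in $\bigcup_{\delta}\mathcal{R}^k_{\tau,\delta}(X)$ requires the GH estimate for \emph{all} $s\le\delta$, which is strictly stronger than having one good blow-up sequence). In \cite{CheegerColding1} this upgrade is obtained from the iterated cone-splitting induction itself (a.e.\ point is a polar/cone point at every scale, and the number of split-off lines stabilizes a.e.), not from the quantitative characterization. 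This is a presentational slip in an otherwise sound sketch, but if you were writing the argument out in full it is the step you would need to fill in with the actual measure-theoretic induction of Cheeger--Colding rather than an appeal to (2).
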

By the rectifiability above, the Jacobi matrix $J(\phi_i \circ (\phi_j)^{-1})(y)$ is well-defined for a.e. $y \in \phi_j(C_i \cap C_j)$.
Using this property, the tangent bundle $TX$, more generally, the tensor bundles $T^r_sX:\bigotimes_{i=1}^rTX \otimes \bigotimes_{i=r+1}^{r+s}T^*X$ are constructed. Note that each fiber is well-defined for a.e. $z \in X$ only. 
Their important properties include; 
\begin{enumerate}
\item[$(\star)$] each fiber has the canonical inner product $\langle \cdot, \cdot \rangle$; 
\item[$(*)$] for any Sobolev function $f \in H^{1, p}(U)$ on an open subset $U$ of $X$ (see below for the definition) there exists the differential $\dist f(y) \in T_x^*X$ for a.e. $y \in U$ such that $\|f\|_{H^{1, p}}^p=\|f\|_{L^p}^p+\|\dist f\|_{L^p}^p$.
\end{enumerate}
Moreover if $f \in \mathrm{LIP}_{\mathrm{loc}}(U) \cap H^{1, p}(U)$, then 
\begin{equation}\label{new}
|\dist f|(z):=\sqrt{\langle \dist f(z), \dist f(z)\rangle}=\limsup_{y \to z}\frac{|f(y)-f(z)|}{\dist (y, z)} =:\mathrm{Lip} f(z)
\end{equation}
for a.e. $z \in U$.
Sometimes we denote by $g_X$ the metric of $TX$ and call it the \textit{Riemannian metric of} $(X, \dist, \meas)$. 
A Borel measurable function $f$ on a Borel subset $A$ (denoted by $f \in \Gamma_0(A)$ for short) of $X$ is said to be \textit{differentiable for a.e. $z \in A$} if there exists a countable family of Borel subsets $A_i$ of $A$ such that $\meas (A \setminus \bigcup_iA_i)=0$ and that each restriction $f|_{A_i}$ is Lipschitz.
Let us denote by $\Gamma_1(A)$ the set of such functions. Note that for any $f \in \Gamma_1(A)$ there exist canonical sections $\nabla f(z) \in T_zX, \dist f(z) \in T^*_zX$ for a.e. $z \in A$.

We are now in a position to introduce a second-order differential structure of $(X, x, \meas)$ given in \cite{Honda10}. 
A rectifiable atlas $\{(C_i, \phi_i)\}_i$ is said to be an \textit{(weakly) second-order differential structure of} $(X, x, \meas)$ if each coefficient of the Jacobi matrix $J(\phi_i \circ (\phi_j)^{-1})$ of each transition map $\phi_i \circ (\phi_j)^{-1}$ is in $\Gamma_1(\phi_j(C_i \cap C_j))$ whenever $\mathcal{H}^k(\phi_j(C_i \cap C_j))>0$.
\begin{theorem}\cite{Honda10}
We have the following:
\begin{enumerate}
\item Assume that a rectifiable atlas $\{(C_i, \phi_i)\}_i$ satisfies that there exist $p \in (1, \infty)$, balls $B_{r_i}(y_i)$ and Lipschitz functions $\hat{\phi}_{i, j}$ on $B_{r_i}(y_i)$ such that $C_i \subset B_{r_i}(y_i)$ and that the map $\hat{\phi}_i:=(\hat{\phi}_{i, 1}, \ldots, \hat{\phi}_{i, k})$ coincides with $\phi_i$ on $C_i$ and that for any $i, l$ with $\meas (C_i \cap C_l) >0$, $\langle \nabla \hat{\phi}_{i, j}, \nabla \hat{\phi}_{l, m} \rangle \in H^{1, p}(B_{r_i}(y_i) \cap B_{r_l}(y_m))$. Then the rectifiable atlas is a weakly second-order differential structure of $(X, \meas)$.
\item There exists a rectifiable atlas satisfying the assumption stated in (1). In particular there exists a weakly second order differential structure of $(X, \dist, \meas)$. More precisely we can take each $\hat{\phi}_i$ as a limit harmonic map. 
\end{enumerate}
\end{theorem}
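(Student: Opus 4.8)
For part (1) the heart of the matter is a pointwise linear-algebra identity, so I would fix indices $i,l$ with $\meas(C_i\cap C_l)>0$ and work at a point $z\in C_i\cap C_l$ at which all the relevant differentials exist (this is a.e.\ $z$, since the $\hat\phi$'s are Lipschitz). Let $J=J(\phi_i\circ(\phi_l)^{-1})$ denote the Jacobi matrix of the transition map expressed in the chart $\phi_l$. Since $\hat\phi_{i,a}=(\phi_i\circ(\phi_l)^{-1})_a\circ\phi_l$ on $C_i\cap C_l$, the chain rule gives $\dist\hat\phi_{i,a}=\sum_b J_{ab}\,\dist\hat\phi_{l,b}$, and pairing with $\dist\hat\phi_{l,c}$ via the canonical inner product $(\star)$ on $T^*X$ yields, for every $c$,
\begin{equation*}
\langle \dist\hat\phi_{i,a},\dist\hat\phi_{l,c}\rangle=\sum_b J_{ab}\,\langle \dist\hat\phi_{l,b},\dist\hat\phi_{l,c}\rangle=\sum_b J_{ab}\,G_{bc},
\end{equation*}
where $G_{bc}:=\langle \dist\hat\phi_{l,b},\dist\hat\phi_{l,c}\rangle$ is the coordinate metric. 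As $\phi_l$ is $(1\pm\epsilon)$-bi-Lipschitz, $G$ is uniformly positive definite on $C_i\cap C_l$, hence invertible with $\det G$ bounded away from zero, and I may solve
\begin{equation*}
J_{ab}=\sum_c \langle \dist\hat\phi_{i,a},\dist\hat\phi_{l,c}\rangle\,(G^{-1})_{cb}.
\end{equation*}

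Everything then reduces to showing the right-hand side lies in $\Gamma_1(C_i\cap C_l)$, for which I need two inputs. First, by hypothesis the inner products $\langle \dist\hat\phi_{i,a},\dist\hat\phi_{l,c}\rangle$ and the entries $G_{bc}$ (the case $i=l$) all belong to $H^{1,p}$. Second, a Ricci limit space is a doubling metric measure space supporting a Poincar\'e inequality (Bishop--Gromov and Cheeger--Colding), so every $H^{1,p}$ function is Lusin--Lipschitz, i.e.\ agrees with a Lipschitz function off a set of arbitrarily small measure; exhausting by such sets shows every $H^{1,p}$ function belongs to $\Gamma_1$ on the rectifiable set, and restriction to the subset $C_i\cap C_l$ preserves this. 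It remains to note that $\Gamma_1(C_i\cap C_l)$ is an algebra that is also closed under division by functions bounded away from zero: the cofactors of $G$ are polynomials in the $\Gamma_1$ entries $G_{bc}$ and $\det G\in\Gamma_1$ is bounded below, so each $(G^{-1})_{cb}\in\Gamma_1$, whence $J_{ab}\in\Gamma_1$. Transporting by the bi-Lipschitz map $\phi_l$ gives $J_{ab}\in\Gamma_1(\phi_l(C_i\cap C_l))$, which is the claimed weak second-order structure.

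For part (2) I would build the atlas from harmonic coordinates. At $\meas$-a.e.\ $z\in\mathcal{R}^k(X)$, the Cheeger--Colding harmonic $\epsilon$-splitting construction furnishes a ball $B_{r}(y)\ni z$ and limit harmonic functions $\hat\phi_{i,1},\dots,\hat\phi_{i,k}$ on $B_r(y)$ for which $\hat\phi_i=(\hat\phi_{i,1},\dots,\hat\phi_{i,k})$ is $(1\pm\epsilon)$-bi-Lipschitz onto its image on a set $C_i$ of definite measure; letting $\epsilon\downarrow 0$ and covering $\mathcal{R}^k(X)$ up to a null set produces a rectifiable atlas of the form required in part (1) with $p=2$. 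The point to verify is $\langle \nabla\hat\phi_{i,a},\nabla\hat\phi_{l,m}\rangle\in H^{1,2}$. Since the $\hat\phi_{i,a}$ are bounded Lipschitz limit harmonic functions they are (locally) test functions, and the second-order calculus gives them Hessians in $L^2_{\mathrm{loc}}$ together with the Leibniz-type bound
\begin{equation*}
|\nabla\langle \nabla\hat\phi_{i,a},\nabla\hat\phi_{l,m}\rangle|\le |\mathrm{Hess}_{\hat\phi_{i,a}}|\,|\nabla\hat\phi_{l,m}|+|\nabla\hat\phi_{i,a}|\,|\mathrm{Hess}_{\hat\phi_{l,m}}|,
\end{equation*}
whose right-hand side is in $L^2$ because the gradients are bounded and the Hessians are $L^2$. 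This yields the required regularity, and part (1) applies.

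I expect the main obstacle to be precisely the second-order regularity invoked in part (2): establishing the $L^2$ Hessian bounds for \emph{limit} harmonic functions and the attendant $H^{1,2}$ regularity of their gradient inner products. This is where Bochner's formula on the approximating manifolds, the behavior of the Laplacian under mGH-convergence, and the self-improvement of the Bakry--\'Emery condition must be combined. By comparison the linear algebra of part (1) and the passage $H^{1,p}\subset\Gamma_1$ are soft, the only genuine care being the uniform positivity of $G$ supplied by the bi-Lipschitz property of the charts.
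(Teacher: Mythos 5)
This theorem is quoted in the paper directly from [Honda10] with no proof supplied, so there is no in-paper argument to compare against line by line; your reconstruction is correct and follows the same route as the cited source and as the closely related arguments the paper does carry out elsewhere. In particular, the factorization $J_{ab}=\sum_c\langle \dist\hat\phi_{i,a},\dist\hat\phi_{l,c}\rangle (G^{-1})_{cb}$ combined with the fact that $H^{1,p}$ functions on a doubling space with a Poincar\'e inequality admit countable Lipschitz decompositions (hence lie in $\Gamma_1$) is exactly the mechanism behind the result the paper later invokes as \cite[Proposition 3.25]{Honda10}, and your construction of the atlas from harmonic almost-splitting maps, with $H^{1,2}$-regularity of $\langle\nabla\hat\phi_{i,a},\nabla\hat\phi_{l,m}\rangle$ obtained from Bochner's formula on the approximating manifolds and lower semicontinuity of Hessian norms under mGH-convergence, is the same strategy the paper uses to produce its canonical second-order structure.
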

It will be proven later that the second-order differential structure stated in (2) above is canonical. See Proposition \ref{hondahonda}.

We fix a second-order differential structure $\{(C_i, \phi_i)\}$. Then using this second order differential structure, we establish a second-order differential calculus on $(X, \meas)$. In particular the Levi-Civita connection $\nabla^{g_X}$ is well-defined.
In order to explain it more precisely, 
a Borel measurable vector field $V$ on $A$ (denoted by $V \in \Gamma_0(TA)$, for short) is said to be \textit{differentiable for a.e. $z \in A$} if each coefficient of $V$ expressed by each local patch $(C_j, \phi_j)$ is in $\Gamma_1(A \cap C_j)$.
Let us denote by $\Gamma_1(TA)$ the set of such vector fields.
Similarly the set $\Gamma_1(T^r_sA)$ of Borel measurable tensor fields of type $(r, s)$ on $A$, which are differentiable for a.e. $z \in A$, is well-defined. 

Then one of the main results in \cite{Honda10} is the following.
\begin{theorem}\cite{Honda10}
There exists a unique multi-linear map $\nabla^{g_X}: \Gamma_0(TA) \times \Gamma_1(TA) \to \Gamma_0(TA)$ such that the following hold (we use the standard notation; $\nabla^{g_X}_VW:=\nabla^{g_X}(V, W)$).
\begin{enumerate}
\item $\nabla^{g_X}_{f_1V_1+f_2V_2}W=f_1\nabla^{g_X}_{V_1}W+f_2\nabla^{g_X}_{V_2}W$ for any $V_i \in \Gamma_0(TA)$, any $f_i \in \Gamma_0(A)$ and any $W \in \Gamma_1(TA)$.
\item $\nabla^{g_X}_V(gW)=V(g)W+g\nabla^{g_X}_VW$ for any $V \in \Gamma_0(TA)$, any $g \in \Gamma_1(A)$, and any $W \in \Gamma_1(TA)$.
\item $\nabla^{g_X}_VW-\nabla^{g_X}_WV=[V, W]$ for any $V, W \in \Gamma_1(TA)$.
\item $V\langle W, Z \rangle =\langle \nabla^{g_X}_VW, Z\rangle +\langle W, \nabla^{g_X}_VZ\rangle$ for any $V \in \Gamma_0(TA)$ and any $W, Z \in \Gamma_1(TA)$.
\end{enumerate}
\end{theorem}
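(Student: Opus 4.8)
The plan is to follow the classical proof of the fundamental theorem of Riemannian geometry, but to carry it out entirely in the local charts supplied by the fixed second-order differential structure $\{(C_i,\phi_i)\}$ and to read every differentiation in the a.e.\ sense appropriate to $\Gamma_0$ and $\Gamma_1$. For \emph{uniqueness}, I would suppose $\nabla^{g_X}$ satisfies (1)--(4), take $V,W,Z\in\Gamma_1(TA)$, and combine the three cyclic permutations of the metric-compatibility identity (4) with the torsion-free identity (3) to obtain the Koszul formula
$$
2\langle \nabla^{g_X}_V W, Z\rangle = V\langle W,Z\rangle + W\langle Z,V\rangle - Z\langle V,W\rangle + \langle [V,W], Z\rangle - \langle [W,Z], V\rangle + \langle [Z,V], W\rangle
$$
valid for a.e.\ $z\in A$. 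The right-hand side involves only $g_X$, the a.e.\ derivatives of the coefficients of $V,W,Z$, and the Lie brackets, all intrinsic to $(X,\dist,\meas)$; since $\langle\cdot,\cdot\rangle$ is a genuine inner product on each fiber $T_zX$ by $(\star)$, letting $Z$ range over a coordinate frame determines $\nabla^{g_X}_V W$ a.e. Finally, for a general $V\in\Gamma_0(TA)$ the pointwise $\Gamma_0(A)$-linearity (1) reduces $\nabla^{g_X}_V W$ at a.e.\ $z$ to the values $\nabla^{g_X}_{\partial_a}W$ along the frame $\partial_a$ (which lie in $\Gamma_1(TA)$, the transition coefficients $J(\phi_j\circ\phi_i^{-1})$ being $\Gamma_1$ by the second-order structure), so uniqueness propagates to the full domain.

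For \emph{existence} I would work chart by chart. On $C_i$, with the coordinate frame $\partial_a$ of $\phi_i$, set $g_{ab}:=\langle\partial_a,\partial_b\rangle$; for the harmonic charts furnished by the cited theorem of \cite{Honda10} these coefficients lie in $H^{1,p}$, hence in $\Gamma_1$, so their a.e.\ derivatives $\partial_c g_{ab}$ exist. I would then define the Christoffel symbols $\Gamma^c_{ab}:=\tfrac12 g^{cd}(\partial_a g_{bd}+\partial_b g_{ad}-\partial_d g_{ab})\in\Gamma_0$ and, for $V=V^a\partial_a\in\Gamma_0(TA)$ and $W=W^b\partial_b\in\Gamma_1(TA)$, put
$$
\nabla^{g_X}_V W := V^a\bigl(\partial_a W^c + \Gamma^c_{ab}W^b\bigr)\partial_c,
$$
a well-defined element of $\Gamma_0(TA)$ because $W^c\in\Gamma_1$ admits the a.e.\ derivative $\partial_a W^c$. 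Properties (1)--(4) are then the usual pointwise identities of Riemannian geometry, each verified a.e.: (1) is immediate; (2) is the product rule for a.e.\ differentiation of $\Gamma_1$ functions; (3) follows from the symmetry $\Gamma^c_{ab}=\Gamma^c_{ba}$ together with the coordinate form of $[V,W]$; and (4) follows from the relation $\partial_a g_{bc}=g_{dc}\Gamma^d_{ab}+g_{bd}\Gamma^d_{ac}$ read off from the Christoffel formula.

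The main obstacle, and the only genuinely new point compared with the smooth case, will be the consistency of this local definition across overlapping charts. The transformation law for $\Gamma^c_{ab}$ contains the second derivatives of the transition map $\phi_i\circ\phi_j^{-1}$, which is precisely why the structure must be a \emph{second-order} one: those second derivatives exist a.e.\ exactly because the first-order coefficients of $J(\phi_i\circ\phi_j^{-1})$ lie in $\Gamma_1$. To make this rigorous I would combine the chain and product rules for a.e.-differentiable functions, using that the charts are bi-Lipschitz so $\meas$-null sets are preserved under $\phi_i\circ\phi_j^{-1}$ and a.e.\ statements transport correctly; granting this, the tensorial transformation of the inhomogeneous term cancels the second-derivative contribution arising when the components of $W$ are differentiated, showing that $\nabla^{g_X}_V W$ is chart-independent for a.e.\ $z\in A$ and completing the construction.
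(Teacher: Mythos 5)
The paper does not actually prove this theorem here — it is imported verbatim from \cite{Honda10} — and your proposal is the standard argument used there: the Koszul identity (from the cyclic permutations of (4) combined with (3)) pins down $\nabla^{g_X}_VW$ a.e.\ for $V,W\in\Gamma_1(TA)$ and hence, via the $\Gamma_0$-linearity (1) (which also yields locality in $V$, so one may work chart by chart), for all $V\in\Gamma_0(TA)$, while existence is the chart-wise Christoffel construction whose chart-independence rests exactly on the $\Gamma_1$-regularity of the transition Jacobians. The one hypothesis you should state explicitly is that the metric coefficients $g_{ab}$ themselves lie in $\Gamma_1$ — this is not part of the bare definition of a weakly second-order differential structure (which constrains only the transition maps) but is supplied by the harmonic/test-function atlases the paper actually fixes (it is recorded as $g_X\in\Gamma_1(T^0_2X)$ from \cite{Honda10}), so your appeal to those charts, together with the a.e.\ chain rule under bi-Lipschitz coordinate changes that you flag, closes the argument.
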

Moreover using the Levi-Civita connection with the standard way in Riemannian geometry allows us to define the covariant derivative $\nabla^{g_X}T \in \Gamma_0(T^r_{s+1}A)$ of $T \in \Gamma_1(T^r_sA)$ by satisfying that
\begin{align}\label{huuh}
&\left\langle \nabla^{g_X}T, \bigotimes_{i=1}^rV_i \otimes \bigotimes_{j=1}^{s+1}\omega_j\right\rangle \nonumber \\
&=\omega_{s+1}^*\left(\left\langle T, \bigotimes_{i=1}^rV_i \otimes \bigotimes_{j=1}^{s}\omega_j \right\rangle \right) \nonumber \\
&-\sum_{i=1}^r\left\langle T, V_1 \otimes \cdots \otimes V_{i-1} \otimes \nabla^{g_X}_{\omega_{s+1}^*}V_i \otimes V_{i+1} \otimes \cdots \otimes V_{r} \otimes \bigotimes_{j=1}^{s}\omega_j \right\rangle \nonumber \\
&-\sum_{j=1}^{s}\left\langle T, \bigotimes_{i=1}^rV_i \otimes \omega_1 \otimes \cdots \otimes \omega_{j-1} \otimes \left(\nabla^{g_X}_{\omega_{s+1}^*}\omega_j^*\right)^* \otimes \omega_{j+1} \otimes \cdots \otimes \omega_{s}\right\rangle \nonumber
\end{align}
for any $V_i \in \Gamma_1(TA)$ and $\omega_j \in \Gamma_1(T^*A)$.
Then it was proven in \cite{Honda10} that $g_X \in \Gamma_1(T^0_2X)$ with $\nabla^{g_X}g_X\equiv 0$.
A function $f \in \Gamma_1(A)$ is said to be \textit{weakly twice differentiable} (denoted by $f \in \Gamma_2(A)$ for short) if $\dist f \in \Gamma_1(T^*A)$.
Then we can define the \textit{geometric Hessian of} $f$ by $\mathrm{Hess}_f^{g_X}:=\nabla^{g_X} \dist f$ and the \textit{geometric Laplacian of} $f$ by $\Delta^{g_X}f:=-\mathrm{tr} (\mathrm{Hess}_f^{g_X})$.
By a direct calulation  for any $\omega \in \Gamma_1(\bigwedge^kT^*A)$ and any $f_i \in \Gamma_2(A)$ it is easy to check the inequality;
\begin{equation}\label{eq:fundamental}
|\nabla \langle \omega, \dist f_1 \wedge \cdots \wedge \dist f_k\rangle |(z) \le |\nabla^{g_X} \omega|(z) \prod_i |\dist f_i|(z) + \sum_i |\omega| |\mathrm{Hess}^{g_X}_{f_i}|(z)\prod_{j \neq i}|\dist f_j|(z)
\end{equation}
for a.e. $z \in A$, which will play a role in the paper.
Note that for any $\eta \in \Gamma_0(\bigwedge^lT^*Z)$, $\eta \in \Gamma_1(\bigwedge^lT^*A)$ if and only if $\langle \eta, \dist \phi_{j, i_1} \wedge \cdots \wedge \dist \phi_{j, i_l} \rangle \in \Gamma_1(A \cap C_j)$ for any $j$ and any $i_1, \ldots, i_l \in \{1, \ldots, k\}$ 
(similar statement for tensor fields also holds. In particular for any $g \in \Gamma_0(A)$, $g \in \Gamma_2(A)$ holds if and only if $g \in \Gamma_1(A)$ and $\langle \dist g, \dist \phi_{j, i} \rangle \in \Gamma_1(A \cap C_j)$ hold for all $j, i$). 

On the other hand Gigli established in \cite{Gigli} second-order differential calculus on $RCD$-spaces based on the regularity theory of the heat flow based on \cite{AmbrosioGigliSavare13, AmbrosioGigliSavare14, AmbrosioGigliMondinoRajala}.
It was proven in \cite{Honda3} that Gigli's second order differential structure and the above one are compatible on compact Ricci limit spaces.
We will generalize this compatibility to general Ricci limit spaces by using tools given in \cite{AmbrosioHonda} (Proposition \ref{prop:comp hess}).

Let us define the Sobolev spaces in the Ricci limit setting (see for instance \cite{Cheeger, Shanmugalingam, giglimemo} for more general setting).
For an open subset $U$ of $X$ and any $p \in (1, \infty)$ we define the Sobolev space $H^{1, p}(U)$ as the completion of the space of $f \in \mathrm{LIP}_{\mathrm{loc}}(U)$ satisfying $\|f\|_{L^p(U)} +\|\mathrm{Lip}f\|_{L^p(U)}<\infty$ with respect to the norm
$\|f\|_{H^{1, p}(U)}:=(\|f\|^p_{L^p(U)}+\|\mathrm{Lip}f\|_{L^p(U)}^p)^{1/p}$.
As stated in $(*)$ recall that if $f \in H^{1, p}(U)$, then $f \in \Gamma_1(U)$ with $\|f\|_{H^{1, p}(U)}=(\|f\|_{L^p(U)}^p+\|\dist f\|_{L^p(T^*U)}^p)^{1/p}$.
Let us denote by $\mathcal{D}(\Delta^{\meas}, U)$ the set of $f \in H^{1, 2}(U)$ such that there exists a (unique) $g \in L^2(U)$ such that 
$$
\int_U\langle \dist f, \dist h\rangle \dist \meas = \int_U gf\dist \meas
$$ 
for any $h \in \mathrm{LIP}_c(U)$. Then put $\Delta^{\meas} f:=g$ and call it the \textit{Dirichlet Laplacian of} $f$.
Sometimes we denote by $\Delta$ instead of $\Delta^{\meas}$ for simplicity.
See for instance section 2 of \cite{Honda2} for details in this subsection.

Finally we discuss noncollapsed Ricci limit spaces:
\begin{theorem}[Cheeger-Colding \cite{CheegerColding1}]\label{thm:noncoll}
Let $(X, x, \meas)$ be an ($n$-) Ricci limit space. Then the following five conditions are equivalent;
\begin{enumerate}
\item $\mathcal{R}^n(X) \neq \emptyset$.
\item $\mathcal{R}^k(X) = \emptyset$ for any $k < n$.
\item $\mathrm{dim}\,X=n$.
\item $\mathrm{dim}_{\mathcal{H}}\,X=n$, where $\mathrm{dim}_{\mathcal{H}}$ is the Hausdorff dimension.
\item $\meas=\mathcal{H}^n/\mathcal{H}^n(B_1(x))$
\end{enumerate}
We say that \textit{$(X, x, \meas)$ is a noncollapsed Ricci limit space} if these conditions are satisfied.
\end{theorem}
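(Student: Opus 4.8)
The plan is to funnel all five conditions through a single \emph{sequence-level} noncollapsing property and to lean on two deep inputs of Cheeger--Colding: the Bishop--Gromov comparison, which passes to the limit so that for every $z \in X$ the ratio $r \mapsto \meas(B_r(z))/V_{-1,n}(r)$ is non-increasing (here $V_{-1,n}(r)$ denotes the volume of a ball of radius $r$ in the $n$-dimensional simply connected space form of curvature $-1$), and the volume convergence theorem. Fix once and for all an approximating sequence $(Z_i,z_i)$ with $\mathrm{Ric}_{Z_i}\ge-(n-1)$ and $(Z_i,z_i,\mathcal{H}^n/\mathcal{H}^n(B_1(z_i)))\stackrel{GH}{\to}(X,x,\meas)$, and consider the auxiliary condition $(\star)$: $\liminf_{i\to\infty}\mathcal{H}^n(B_1(z_i))>0$. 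Because the uniform local doubling furnished by Bishop--Gromov is inherited by GH-limits, one always has $\mathrm{dim}_{\mathcal{H}}X\le n$; the whole point of the theorem is that the opposite inequality, and each of the remaining conditions, is equivalent to $(\star)$.

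First I would clear the purely structural implications with Theorem \ref{thm:reg}. The step (2)$\Rightarrow$(3) is formal: if $\mathcal{R}^k(X)=\emptyset$ for all $k<n$, then $\mathcal{R}(X)=\mathcal{R}^n(X)$, and since $\meas\big(X\setminus\mathcal{R}(X)\big)=0$ while $\meas$ has full support we get $\meas(\mathcal{R}^n(X))>0$, so $\mathrm{dim}X=n$ by the uniqueness in Theorem \ref{thm:reg}. The step (3)$\Rightarrow$(1) is immediate, since $\mathrm{dim}X=n$ gives $\meas(\mathcal{R}^n(X))>0$ and a fortiori $\mathcal{R}^n(X)\neq\emptyset$. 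For (1)$\Rightarrow(\star)$ I would argue by rigidity: an $n$-regular point $z$ carries, by definition, the tangent cone $(\mathbf{R}^n,0_n,\mathcal{H}^n/\mathcal{H}^n(B_1(0_n)))$, i.e. an $n$-dimensional, noncollapsed blow-up limit of the rescalings of $X$; a diagonal argument realizes this as a blow-up limit of rescalings of the $Z_i$, which is incompatible with $\mathcal{H}^n(B_1(z_i))\to0$, so $(\star)$ must hold.

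The measure-theoretic core is $(\star)\Leftrightarrow(5)$, which I would read off the volume convergence theorem: under $(\star)$, after passing to a subsequence with $\mathcal{H}^n(B_1(z_i))\to v>0$, continuity of the Hausdorff measure along noncollapsed convergence yields $\mathcal{H}^n(B_r(\alpha_i))\to\mathcal{H}^n(B_r(\alpha))$ whenever $\alpha_i\stackrel{GH}{\to}\alpha$, whence the renormalized measures converge to $\mathcal{H}^n/v$ and $v=\mathcal{H}^n(B_1(x))$; the converse is clear because (5) keeps the unit-ball volumes bounded away from $0$. The elementary implication (5)$\Rightarrow$(4) follows since $\meas=c\,\mathcal{H}^n$ with $0<\mathcal{H}^n(B_r(x))<\infty$ forces $\mathrm{dim}_{\mathcal{H}}X=n$. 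To close the equivalence I would add $(\star)\Rightarrow(2)$ and $(4)\Rightarrow(\star)$: for the first, the governing rigidity is that in a noncollapsed limit \emph{every} tangent cone at \emph{every} point is an $n$-dimensional metric cone, so no point can have $\mathbf{R}^k$ with $k<n$ as a tangent cone, i.e. $\mathcal{R}^k(X)=\emptyset$ for $k<n$; for the second I would use the contrapositive, that collapse $\mathcal{H}^n(B_1(z_i))\to0$ drives $\mathcal{H}^n(B_r(x))$ to $0$ for every $r$ and thereby lowers the Hausdorff dimension below $n$.

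Assembling these gives the chain $(5)\Leftrightarrow(\star)\Rightarrow(2)\Rightarrow(3)\Rightarrow(1)\Rightarrow(\star)$ together with $(5)\Rightarrow(4)\Rightarrow(\star)$, which renders all five conditions equivalent. I expect the genuine difficulty to lie not in this bookkeeping but in the two analytic inputs it rests on: the \emph{volume convergence theorem} (continuity of $\mathcal{H}^n$ along noncollapsed mGH-convergence), which powers $(\star)\Leftrightarrow(5)$, and the \emph{dimensional rigidity of tangent cones} in the noncollapsed regime---equivalently, the fact that vanishing volume forces a drop in Hausdorff dimension---which powers $(\star)\Rightarrow(2)$, $(1)\Rightarrow(\star)$, and $(4)\Rightarrow(\star)$. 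Both are established in \cite{CheegerColding1}, so once they are invoked, Theorem \ref{thm:reg} supplies every remaining passage.
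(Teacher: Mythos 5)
The paper does not prove this theorem: it is quoted verbatim from Cheeger--Colding \cite{CheegerColding1} as background in the preliminaries, so there is no in-paper argument to compare yours against. Judged on its own, your reconstruction is sound and is essentially the standard way the equivalences are assembled: all five conditions are funnelled through the sequence-level condition $(\star)$, the structural implications $(2)\Rightarrow(3)\Rightarrow(1)$ are correctly read off Theorem \ref{thm:reg}, and the genuine content is correctly isolated in the two deep inputs you name --- Colding's volume convergence theorem (for $(\star)\Leftrightarrow(5)$) and the ``collapse forces $\dim_{\mathcal H}\le n-1$'' theorem together with the dimensional rigidity of tangent cones in the noncollapsed regime (for $(1)\Rightarrow(\star)$, $(\star)\Rightarrow(2)$, $(4)\Rightarrow(\star)$). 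The implication chain you write down does close up, so the bookkeeping is complete.

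Two spots are looser than they should be. First, ``$(5)\Rightarrow(\star)$ is clear because (5) keeps the unit-ball volumes bounded away from $0$'' conflates the limit with the sequence: condition (5) says something about $\mathcal H^n$ \emph{on $X$}, and to get back to $\liminf_i\mathcal H^n(B_1(z_i))>0$ you must again invoke the contrapositive of the collapse theorem (if the sequence collapsed, $\mathcal H^n\equiv 0$ on $X$ and the normalization in (5) is vacuous), so this direction costs the same deep input as $(4)\Rightarrow(\star)$ rather than being elementary. Second, $\dim_{\mathcal H}X\le n$ does not follow from ``uniform local doubling'' in the abstract (a doubling measure constrains dimension only through its doubling exponent); the correct source is the Bishop--Gromov monotonicity you state in your first sentence, which gives $\meas(B_r(z))\ge c(n,R)\,\meas(B_R(z))\,r^n$ for $r\le R$ and hence the dimension bound by a covering argument. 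Neither point affects the overall correctness, since both repairs use only tools you have already placed on the table.
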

\section{$L^p$-convergence}
In this section we discuss $L^p$-convergence for functions, vector fields, and more generally, for tensor fields with respect to the mGH-convergence.
These were already discussed in \cite{AmbrosioHonda, AmbrosioStraTrevisan, Honda2}. 
More precisely, \cite{AmbrosioHonda, AmbrosioStraTrevisan} are on $RCD(K, \infty)$-spaces for global $L^p$-objects (i.e. $R=\infty$) by using the regularity theory of the heat flow and isometric embeddings to a common metric space, and \cite{Honda2} is on Ricci limit spaces  for local $L^p$-objects (i.e. $R<\infty$) with no use of such isometric embeddings.
By using a result in \cite{GigliMondinoSavare} and tools on each setting in  \cite{AmbrosioHonda, AmbrosioStraTrevisan, Honda2}, we will show several compatibilities, which play key roles in the paper.

In order to introduce precise statements, let us fix our setting as follows.
Let $(X_i, x_i, \meas_i) \stackrel{GH}{\to} (X, x, \meas)$ be a mGH-convergent sequence of Ricci limit spaces.
By the equivalence between mGH-convergence and pmG-convergence established in \cite[Theorem 3.15]{GigliMondinoSavare}, with no loss of generality we can assume that the mGH-convergence is given by isometric embeddings to a common complete separable metric space $\mathbb{X}$, i.e. there exist isometric embeddings $\psi_i: X_i \hookrightarrow \mathbb{X}$, $\psi:X \hookrightarrow \mathbb{X}$ such that $\psi_i(x_i) \to \psi (x)$ in $\mathbb{X}$ and that $(\psi_i)_{\sharp}\meas_i$ weakly converge to  $(\psi )_{\sharp}\meas$ in duality with $C_\bs(\mathbb{X})$ which is the set of all continuous functions on $\mathbb{X}$ with bounded supports, i.e.
\begin{equation}\label{i}
\lim_{i \to \infty}\int_{\mathbb{X}}\phi\dist (\psi_i)_{\sharp}\meas_i=\int_{\mathbb{X}}\phi\dist (\psi)_{\sharp}\meas
\end{equation} 
for any $\phi \in C_\bs(\mathbb{X})$.
For simplicity we identify $(X_i, x_i, \meas_i)$ with the image by $\psi_i$, i.e. $(X_i, x_i, \meas_i)=(\mathbb{X}, \psi_i(x_i), (\psi_i)_{\sharp}\meas_i)$.
Note that this identification allows us to write the convergence (\ref{i}) by
\begin{equation}
\lim_{i \to \infty}\int_{X_i}\phi \dist \meas_i=\int_X\phi \dist \meas.
\end{equation}
Let $p \in (1, \infty)$ and let us denote by $L^p(T^r_sA)$ be the set of $L^p$-tensor fields of type $(r, s)$ on a Borel subset $A$.
We first discuss the case of functions.
\subsection{Compatibility in the case of functions}
\begin{definition}[$L^p$-convergence of functions by \cite{AmbrosioStraTrevisan}]
We say that a sequence $f_i \in L^p(X_i)$ \textit{$L^p$-weakly converge to $f \in L^p(X)$ in the sense of \cite{AmbrosioStraTrevisan}} if $\sup_i\|f_i\|_{L^p}<\infty$ and $f_i\meas_i$ weakly converge to $f\meas$ in duality with $C_{bs}(\mathbb{X})$, i.e. 
\begin{equation}\label{rrt2}
\lim_{i \to \infty}\int_{X_i}\phi f_i\dist \meas_i=\int_X\phi f\dist \meas
\end{equation}
for any $\phi \in C_\bs(\mathbb{X})$.
Moreover we say that $f_i$ \textit{$L^p$-strongly converge to $f$ in the sense of \cite{AmbrosioStraTrevisan}} if it is an $L^p$-weak convergent sequence to $f$ with $\limsup_{i \to \infty}\|f_i\|_{L^p}\le \|f\|_{L^p}$. 
\end{definition}
\begin{definition}[$L^p$-convergence of functions by \cite{Honda2}]
Let $R \in (0, \infty)$.
We say that a sequence $f_i \in L^p(B_R(x_i))$ \textit{$L^p$-weakly converge to $f \in L^p(B_R(x))$ in the sense of \cite{Honda2}} if $\sup_i\|f_i\|_{L^p}<\infty$ and 
\begin{equation}\label{rrt}
\lim_{i \to \infty}\int_{B_r(y_i)}f_i\dist \meas_i=\int_{B_r(y)}f\dist \meas
\end{equation}
for any $y_i \in B_R(x_i) \stackrel{GH}{\to} y \in B_R(x)$ and any $r \in (0, \infty)$ with $\overline{B}_r(y) \subset B_R(x)$.
Moreover we say that $f_i$ \textit{$L^p$-strongly converge to $f$ in the sense of \cite{Honda2}} if it is an $L^p$-weak convergent sequence to $f$ with $\limsup_{i \to \infty}\|f_i\|_{L^p}\le \|f\|_{L^p}$. 
\end{definition}
Note that it was proven in \cite{Honda2} that this definition is equivalent to that in \cite{KS} by Kuwae-Shioya.
We check the compatibility between definitions above in the case when $p=2$ only, which is enough in the paper.
\begin{proposition}[Compatibility in the case of functions]\label{hj}
We have the following.
\begin{enumerate}
\item Let $f_i \in L^2(X_i)$ be an $L^2$-weak (or strong, respectively) convergent sequence to $f \in L^2(X)$ in the sense of \cite{AmbrosioStraTrevisan}.
Then $f_i$ $L^2$-weakly (or strongly, respectively) converge to $f$ on $B_R(x)$ for any $R \in (0, \infty)$ in the sense of \cite{Honda2}.
\item Let $R \in (0, \infty)$ and let  $f_i \in L^2(B_R(x_i))$ be an $L^2$-weak (or strong, respectively) convergent sequence to $f \in L^2(B_R(x))$ in the sense of \cite{Honda2}.
Then letting $f_i \equiv 0$ outside $B_R(x_i)$, $f_i$ $L^2$-weakly (or strongly, respectively) converge to $f$ in the sense of \cite{AmbrosioStraTrevisan}. 
\end{enumerate}
\end{proposition}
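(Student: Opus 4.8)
The plan is to prove both directions by passing between the ambient ``test against $C_{bs}(\mathbb{X})$'' formulation of \cite{AmbrosioStraTrevisan} and the ``integrate over balls'' formulation of \cite{Honda2}, the bridge being a cutoff approximation whose error is controlled by the measure of thin annuli. The one structural input that makes every such error vanish is Bishop--Gromov: the monotonicity of $\meas(B_r(\alpha))$ against the comparison volume forces $r\mapsto\meas(B_r(\alpha))$ to be continuous (so $\meas(\partial B_r(\alpha))=0$ for \emph{every} $r$ and $\alpha$), and together with the uniform local doubling it gives $\meas_i(B_r(\alpha_i))\to\meas(B_r(\alpha))$ for every $r$ and every $\alpha_i\stackrel{GH}{\to}\alpha$, exactly as recorded in the preliminaries. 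This is what will let the argument run for every radius rather than only almost every one.

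For direction (1), weak case, fix $y_i\stackrel{GH}{\to}y$ and $r$ with $\overline{B}_r(y)\subset B_R(x)$, and for small $\delta>0$ take a Lipschitz cutoff $\eta_\delta\in C_{bs}(\mathbb{X})$ with $1_{B_{r-\delta}(y)}\le\eta_\delta\le1_{B_r(y)}$, together with its analogue $\eta_\delta^{(i)}$ centered at the moving point, $1_{B_{r-\delta}(y_i)}\le\eta_\delta^{(i)}\le1_{B_r(y_i)}$. The hypothesis applied to the fixed test function $\eta_\delta$ gives $\int_{X_i}\eta_\delta f_i\di\meas_i\to\int_X\eta_\delta f\di\meas$; since $\norm{\eta_\delta^{(i)}-\eta_\delta}_{L^\infty}\le\delta^{-1}\dist(y_i,y)\to0$ while $\int|f_i|\di\meas_i$ is bounded on the common bounded support (Cauchy--Schwarz with $\sup_i\norm{f_i}_{L^2}<\infty$ and doubling), the same limit holds with $\eta_\delta$ replaced by $\eta_\delta^{(i)}$. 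The remaining difference $\int_{B_r(y_i)}f_i\di\meas_i-\int_{X_i}\eta_\delta^{(i)}f_i\di\meas_i$ is supported in the annulus $B_r(y_i)\setminus B_{r-\delta}(y_i)$ and is bounded by $\norm{f_i}_{L^2}\,\meas_i(B_r(y_i)\setminus B_{r-\delta}(y_i))^{1/2}$, whose $\limsup_i$ is $\le C\,\meas(B_r(y)\setminus B_{r-\delta}(y))^{1/2}$. Letting $i\to\infty$ and then $\delta\to0$ (using $\int_X\eta_\delta f\to\int_{B_r(y)}f$ by dominated convergence and $\meas(B_r(y)\setminus B_{r-\delta}(y))\to0$) yields Honda-weak convergence on every $B_R(x)$. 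The point is that centering the inner cutoff at $y_i$ confines the error to an annulus between \emph{concentric} balls, so no boundary-sphere term appears.

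For the strong case of (1), I apply the very same display to the nonnegative sequence $g_i:=f_i^2$. Since $f_i\to f$ strongly one has $f_i^2\meas_i\weakto f^2\meas$ in duality with $C_{bs}(\mathbb{X})$ (the standard ``strong $\times$ weak $\Rightarrow$ weak'' product property of this convergence in \cite{AmbrosioStraTrevisan}), so the cutoff argument gives $\norm{f_i}_{L^2(B_R(x_i))}^2=\int_{B_R(x_i)}f_i^2\di\meas_i\to\int_{B_R(x)}f^2\di\meas=\norm{f}_{L^2(B_R(x))}^2$; combined with the weak convergence just proven this is precisely Honda-strong convergence on $B_R(x)$. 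Again the inner cutoff centered at $x_i$ matches the open-ball normalization exactly, so any positive mass near $\partial B_R(x)$ causes no loss. Direction (2) is then handled by extending $f_i$ by $0$ off $B_R(x_i)$: the strong statement is immediate, as $\norm{f_i}_{L^2(X_i)}=\norm{f_i}_{L^2(B_R(x_i))}$ and $\norm{f}_{L^2(X)}=\norm{f}_{L^2(B_R(x))}$, so the $\limsup$ inequality transfers verbatim once weak convergence is known; and for the weak statement I must show $\int_{X_i}\phi f_i\di\meas_i\to\int_X\phi f\di\meas$ for every $\phi\in C_{bs}(\mathbb{X})$. By a partition of unity subordinate to a finite cover of $\supp\phi$ by small balls it suffices to treat $\phi$ supported in a small ball, and then, after a uniform approximation, to treat radial cutoffs $\chi(\dist(\plchldr,y))$ with $\chi$ monotone; their telescoping decomposition into indicators $1_{B_{r_k}(y)}$ reduces the claim to the ball-integral convergence of \cite{Honda2}, where $\int_{B_{r_k}(y)}f_i\di\meas_i$ and $\int_{B_{r_k}(y_i)}f_i\di\meas_i$ differ by an annular term that vanishes by the continuity of $s\mapsto\meas(B_s(y))$.

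I expect the main obstacle to be exactly this interchange of the two localizations, namely controlling the measure of thin annuli uniformly in $i$ while asking for convergence at \emph{all} radii. The resolution is entirely the Bishop--Gromov continuity of $r\mapsto\meas(B_r(\alpha))$ together with the exact convergence $\meas_i(B_r(\alpha_i))\to\meas(B_r(\alpha))$; once these are invoked, the cutoff estimates close with no exceptional radii, and the remaining verifications (the product property for the strong case, and the partition-of-unity reduction for direction (2)) are routine.
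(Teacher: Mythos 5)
Your route is genuinely different from the paper's. The paper disposes of both directions by citing the ``strong $\times$ weak'' product lemmas: for (1) it notes that $1_{B_r(y_i)}$ converge $L^q$-strongly to $1_{B_r(y)}$ and invokes \cite[(2.6)]{AmbrosioStraTrevisan}; for (2) it notes that $\phi_i=\phi|_{X_i}$ converge uniformly, hence $L^q$-strongly in the sense of \cite{Honda2}, and invokes \cite[Proposition 3.27]{Honda2}. You instead unpack these citations by hand with cutoffs and Bishop--Gromov, and for part (1) in the weak case your argument is complete and correct: the moving-center cutoff, the annulus bound $\|f_i\|_{L^2}\,\meas_i(B_r(y_i)\setminus B_{r-\delta}(y_i))^{1/2}$, and the convergence $\meas_i(B_s(y_i))\to\meas(B_s(y))$ close the estimate with no exceptional radii. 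What the paper's citations buy is brevity; what your version buys is that the reader sees exactly which structural facts (volume convergence, negligibility of spheres) are doing the work.

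Two steps do not close as written. First, in the strong case of (1) you apply ``the very same display'' to $g_i=f_i^2$, but the annulus error is now $\int_{B_R(x_i)\setminus B_{R-\delta}(x_i)}f_i^2\di\meas_i$, and Cauchy--Schwarz against $\meas_i(B_R(x_i)\setminus B_{R-\delta}(x_i))^{1/2}$ no longer helps: $g_i$ is only bounded in $L^1$, so a thin annulus could a priori carry a fixed amount of $\int f_i^2$. This is easily repaired---since $g_i\ge 0$ you can squeeze $\int\eta^-g_i\di\meas_i\le\int_{B_R(x_i)}g_i\di\meas_i\le\int\eta^+g_i\di\meas_i$ between an inner and an outer cutoff and use $\meas(\partial B_R(x))=0$---but that is a different estimate, not the same display. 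Second, and more seriously, in (2)-weak the reduction of an arbitrary $\phi\in C_\bs(\mathbb{X})$ to ``radial cutoffs $\chi(\dist(\cdot,y))$'' via partition of unity and uniform approximation does not go through: functions of a single distance are not uniformly dense in $C_\bs$ (the lattice and product operations needed for Stone--Weierstrass destroy radiality), and the Honda hypothesis only controls integrals over balls, not over the general open sets produced by a layer-cake decomposition of $\phi$. Passing from balls to general continuous test functions is precisely the content of \cite[Proposition 3.27]{Honda2}, whose proof uses a Vitali-covering/doubling argument; alternatively you could close (2)-weak by AST weak compactness of the sequence $f_i\meas_i$ together with uniqueness of the limit, identified through your part (1) and Lebesgue differentiation. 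As written, that step is a genuine gap rather than a routine verification.
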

\begin{proof}
Let us check (1). 
Assume that $f_i$ is an $L^2$-weak convergent sequence in the sense of \cite{AmbrosioStraTrevisan}.
Let $y_i \in X_i \stackrel{GH}{\to} y \in X$ and let $r \in (0, \infty)$.
Then since it is easy to check that $1_{B_r(y_i)}$ $L^q$-strongly converge to $1_{B_r(y)}$ for any $q \in (1, \infty)$ in the sense of \cite{AmbrosioStraTrevisan}, \cite[(2.6)]{AmbrosioStraTrevisan} shows 
$$
\lim_{i \to \infty}\int_{X_i}f_i1_{B_r(y_i)}\dist \meas_i=\int_Xf1_{B_r(y)}\dist \meas
$$
which proves (\ref{rrt}). Thus $f_i$ $L^2$-weakly converge to $f$ on $B_R(x)$ in the sense of \cite{Honda2}.
Moreover if $f_i$ is an $L^2$-strong convergent sequence in the sense of \cite{AmbrosioStraTrevisan}, then since it is easy to check that $f_i1_{B_R(x_i)}$ is an $L^2$-weak convergent sequence to $f1_{B_R(x)}$ in the sense of \cite{AmbrosioStraTrevisan}, applying \cite[(2.6)]{AmbrosioStraTrevisan} again shows
$$
\lim_{i \to \infty}\int_{X_i}f_i1_{B_R(x_i)} \cdot f_i \dist \meas_i=\int_Xf1_{B_R(x)} \cdot f\dist \meas,
$$ 
which proves the $L^2$-strong convergence of $f_i$ in the sense of \cite{Honda2}.

Next we prove (2).
Assume that $f_i$ is an $L^2$-weak convergent sequence on $B_R(x)$ in the sense of \cite{Honda2}.
Let $\phi \in C_\bs(\mathbb{X})$ and let $\phi_i=\phi|_{X_i} \in C_\bs(X_i)$.
Then since $\phi_i$ converge uniformly to $\phi$, in particular it is an $L^q$-strong convergent sequence in the sense of \cite{Honda2} (see for instance \cite[Remark 3.8]{Honda2}).
Thus \cite[Proposition 3.27]{Honda2} yields
$$
\lim_{i \to \infty}\int_{X_i}f_i\phi_i\dist \meas_i=\int_Xf\phi \dist \meas,
$$
which proves (\ref{rrt2}). Thus $f_i$ $L^2$-weakly converge to $f$ in the sense of \cite{AmbrosioStraTrevisan}.
Similarly we have the remaining implications.
\end{proof}

\subsection{Compatibility in the case of gradient vector fields}
Let $D$ be a countable dense subset of $\mathbb{X}$ and let $\Algebra_\bs$ be the smallest set that consists of bounded Lipschitz functions on $X$ containing
\begin{equation}\label{eq:setD}
\min\{\dist(\cdot,x),k\}\quad\text{with $k\in\setQ\cap [0,\infty]$ and $x\in D$}
\end{equation}
which is a vector space over $\setQ$ and is stable under products and lattice operations.
It is a countable set and it depends only on the choice of the set $D$ (but this dependence will not be emphasized 
in our notation, since the metric space will mostly be fixed). 
Let $\Algebra_\bs$ be the subalgebra of functions with bounded support, and let $h_{\mathbb{Q}_{>0}}^{\meas}\Algebra_\bs:=\{h_t^{\meas}f; t \in \mathbb{Q}_{>0}, f \in \Algebra_\bs\}$, where $h_t^{\meas}$ (or $h_t$ for short) denotes the heat flow, i.e. it is the $L^2(X, \meas)$-gradient flow of the Cheeger energy (see for instance \cite{AmbrosioGigliSavare13, AmbrosioGigliSavare14}).
Note that $h_t^{\meas}f$ is a bounded Lipschitz function on the support $\supp \meas$ of $\meas$ for any $h_t^{\meas}f \in h_{\mathbb{Q}_{>0}}^{\meas}\Algebra_\bs$ because the Bakry-\'Emery estimate on a $\RCD(K, \infty)$-space $(Y, \dist, \meas)$ yields that (recall that $\mathrm{Test}F(Y)=\{ f \in \mathcal{D}(\Delta, Y) \cap \mathrm{LIP}(Y); \Delta f \in H^{1, 2}(Y)\}$) 
\begin{equation}\label{eq:bakry}
g \in L^2(Y) \cap L^{\infty}(Y) \Rightarrow h_tg \in \mathrm{Test}F(Y) \subset \mathcal{D}(\Delta, Y) \cap \mathrm{LIP}(Y) \quad \forall t>0,
\end{equation}
See \cite{AmbrosioGigliSavare14} for the proof (see also \cite{Savare}).

For each $h_t^{\meas}f$ we fix an extension of the function to a function in $\mathrm{LIP}_b(\mathbb{X})$ and also denote it by the same notation $h_t^{\meas}f$.
See also page 16 of \cite{AmbrosioHonda}.
\begin{definition}[$L^p$-convergence of vector fields by \cite{AmbrosioStraTrevisan}]
We say that a sequence $V_i \in L^p(TX_i)$ \textit{$L^p$-weakly converge to $V \in L^p(TX)$ in the sense of \cite{AmbrosioStraTrevisan}} if $\sup_i\|V_i\|_{L^p}<\infty$ and $\langle V_i, h_t^{\meas}f \rangle \meas_i$ weakly converge to $\langle V, \nabla h_t^{\meas}f\rangle \meas$ in duality with $C_\bs(\mathbb{X})$ for any $h_t^{\meas}f \in h_{\mathbb{Q}_{>0}}^{\meas}\Algebra_\bs$, i.e.
\begin{equation}\label{vvbb}
\lim_{i \to \infty}\int_{X_i}\phi\langle V_i, \nabla h_t^{\meas}f\rangle \dist \meas_i=\int_X\phi\langle V, \nabla h_t^{\meas}f\rangle \dist \meas
\end{equation}
for any $\phi \in C_\bs(\mathbb{X})$. 
Moreover we say that $V_i$ \textit{$L^p$-strongly converge to $V$ in the sense of \cite{AmbrosioStraTrevisan}} if it is an $L^p$-weak convergent sequence to $V$ with $\limsup_{i \to \infty}\|\langle V_i, \nabla h_t^{\meas}f\rangle \|_{L^p}\le \|\langle V, \nabla h^{\meas}_tf\rangle \|_{L^p}$ for any $h_t^{\meas}f \in h_{\mathbb{Q}_{>0}}^{\meas}\Algebra_\bs$.
\end{definition}
Let us use the following notation: $\nabla ^r_sF=\nabla F_1 \otimes \cdots \otimes \nabla F_r \otimes \dist F_{r+1} \otimes \cdots \otimes \dist F_{r+s}$ for $F=(F_1, \ldots, F_{r+s}) \in (\Gamma_1(A))^{r+s}$.
\begin{definition}[$L^p$-convergence of vector/tensor fields by \cite{Honda2}]
Let $R \in (0, \infty)$.
We say that a sequence $V_i \in L^p(T^r_sB_R(x_i))$ \textit{$L^p$-weakly converge to $V \in L^p(T^r_sB_R(x))$ in the sense of \cite{Honda2}} if $\sup_i\|V_i\|_{L^p}<\infty$ and
\begin{equation}\label{vvbb2}
\lim_{i \to \infty}\int_{B_r(y_i)}\langle V_i, \nabla^r_s \mathbf{r_{z_i}}\rangle \dist \meas_i=\int_{B_r(y)}\langle V, \nabla^r_s \mathbf{r_z}\rangle \dist \meas
\end{equation}
for any $z_i^j, y_i \in B_R(x_i) \stackrel{GH}{\to} z^j, y \in B_R(x)$, respectively, and any $r \in (0, \infty)$ with $\overline{B}_r(y) \subset B_R(x)$, where $\mathbf{r_{z_i}}:=(r_{z_i^1}, \ldots, r_{z_i^{r+s}})$, $\mathbf{r_z}:=(r_{z^1}, \ldots, r_{z^{r+s}})$ and $r_z$ is the distance function from $z$.
Moreover we say that $V_i$ \textit{$L^p$-strongly converge to $V$ in the sense of \cite{Honda2}} if it is an $L^p$-weak convergent sequence to $V$ with  $\limsup_{i \to \infty}\|V_i\|_{L^p}\le \|V\|_{L^p}$.
\end{definition}
\begin{remark}\label{diffe}
We give an example which shows that in general, the definitions above for general vector fields are \textit{not} equivalent.
Let us consider the following setting.
\begin{enumerate}
\item Let $r_i \downarrow 0$ and let $\mathbf{S}^1(r_i):=\{x \in \mathbf{R}^2; |x|=r_i\}$.
\item Define the complete separable metric $\dist$ on $Z:=\bigsqcup_i \mathbf{S}^1(r_i) \sqcup \{x_{\infty}\}$ by 
\[\dist(x, y):=
\begin{cases} 2\pi r_i+2\pi r_j \,\,\,\,\,\mathrm{if}\,x \in \mathbf{S}^1(r_i), y \in \mathbf{S}^1(r_j),\, i \neq j,\\
\dist_{\mathbf{S}^1(r_i)}(x, y) \,\,\,\,\,\,\,\,\,\,\,\,\,\mathrm{if}\, x, y \in \mathbf{S}^1(r_i), \\
2\pi r_i \,\,\,\,\,\,\,\,\,\,\,\,\,\,\,\,\,\,\,\,\,\,\,\,\mathrm{if}\, x \in \mathbf{S}^1(r_i),  y=x_{\infty}, 
\end{cases}\]
where $\dist_{\mathbf{S}^1(r_i)}$ is the standard length distance on $\mathbf{S}^1(r_i)$. 
\item Let $(X_i, \meas_i):=(\mathbf{S}^1(1) \times \mathbf{S}^1(r_i), \dist_{\mathbf{S}^1(1) \times \mathbf{S}^1(r_i)}, \frac{1}{4\pi^2r_i}\mathcal{H}^2)$, where $\dist_{\mathbf{S}^1(1) \times \mathbf{S}^1(r_i)}$ on $\mathbf{S}^1(1) \times \mathbf{S}^1(r_i)$ is the product distance, and let $(X, \meas):=(\mathbf{S}^1(1), \frac{1}{2\pi}\mathcal{H}^1 )$.
\item Let $(\mathbb{X}, \dist_{\mathbb{X}} ):=(\mathbf{S}^1(1) \times Z, \dist_{\mathbf{S}^1(1) \times Z})$ and let $\phi_i, \phi$ be canonical isometric embeddings from $X_i, X$ to $\mathbb{X}$, respectively.
\item Let $\pi_i:X_i \to \mathbf{S}^1(r_i)$ be the canonical projection, let $\eta_i$ be an harmonic $1$-form on $\mathbf{S}^1(r_i)$ with $|\eta_i| \equiv 1$, and let $\omega_i=(\pi_i)^*\eta_i$ be the induced harmonic $1$-form on $X_i$.
\end{enumerate}
Then it is easy to check that the mGH-convergence $(X_i, \meas_i) \stackrel{GH}{\to} (X, \meas)$ is given by the isometric embeddings $\phi_i, \phi$ in the mannar of \cite{GigliMondinoSavare} and that $\omega_i$ $L^2$-weakly converge to $0$ in the sense of \cite{Honda2}, but it is not an $L^2$-strong convergence in the sense of \cite{Honda2}.

From now on we check that $\omega_i$ is an $L^2$-strong convergent sequence to $0$ in the sense of \cite{AmbrosioStraTrevisan} as follows. As mentioned previously we identify $(X_i, \meas_i)$ with the image by $\phi_i$.
Thus $\mathbb{X}=\bigsqcup_iX_i \sqcup X$.

For any $f \in \mathrm{LIP}(X)$ we take an extension of $f$ to a function $\phi_f \in \mathrm{LIP}(\mathbb{X})$ by $\phi_f(y, y_i):=f(y)$, where $(y, y_i) \in \mathbf{S}^1(1) \times \mathbf{S}^1(r_i)$.
Then by letting $f_i:=\phi_f|_{X_i} \in \mathrm{LIP}(X_i)$ it is easy to check $\lim_{i \to \infty}\|\nabla f_i\|_{L^2(X_i)}=\|\nabla f\|_{L^2}$, i.e. $f_i, \nabla f_i$ $L^2$-strongly converge to $f, \nabla f$ in the sense of \cite{Honda2}, respectively.
In particular by the Rellich compactness \cite[Theorem 4.9]{Honda2}, we see that $\langle \omega_i, \dist f_i\rangle$ $L^2$-strongly converge to $0$, which means that $\omega_i$ $L^2$-strongly converge to $0$ in the sense of \cite{AmbrosioStraTrevisan}.
\end{remark}
\begin{proposition}[Compatibility in the case of  $H^{1, 2}$-gradient vector fields]\label{qqssxx}
We have the following.
\begin{enumerate}
\item Let $f_i \in H^{1, 2}(X_i)$ be an $H^{1, 2}$-weakly convergent sequence to $f \in H^{1, 2}(X)$ in the sense of \cite{AmbrosioHonda, AmbrosioStraTrevisan}, i.e. $\sup_i\|f_i\|_{H^{1, 2}}<\infty$ and $f_i$ $L^2$-weakly converge to $f$ (note that it was proven that $f_i$ $L^2_{\mathrm{loc}}$-strongly converge to $f$. See \cite[Theorem 6.3]{GigliMondinoSavare} and \cite[Theorems 5.7 and 7.4]{AmbrosioHonda}).
Then $\nabla f_i$ $L^2$-weakly converge to $\nabla f$ on $B_R(x)$ for any $R \in (0, \infty)$ in the sense of \cite{Honda2}.
Moreover if $f_i$ $H^{1, 2}$-strongly converge to $f$ in the sense of \cite{AmbrosioHonda, AmbrosioStraTrevisan}, i.e. $\limsup_{i \to \infty}\|f_i\|_{H^{1, 2}}\le \|f\|_{H^{1, 2}}$, then we see that $\nabla f_i$ $L^2$-strongly converge to $\nabla f$ in the sense of \cite{AmbrosioStraTrevisan} and that $\nabla f_i$ $L^2$-strongly converge to $\nabla f$ on $B_R(x)$ for any $R \in (0, \infty)$ in the sense of \cite{Honda2}.
\item Let $R \in (0, \infty)$ and let $f_i \in H^{1, 2}(B_R(x_i))$ be an $H^{1, 2}$-strong convergent sequence to $f \in H^{1, 2}(B_R(x))$ in the sense of \cite{Honda2}, i.e. $f_i, \nabla f_i$ $L^2$-strongly converge to $f, \nabla f$ on $B_R(x)$, respectively.
Then letting $\nabla f_i \equiv 0$ outside $B_R(x_i)$, $\nabla f_i$ $L^2$-strongly converge to $\nabla f$ in the sense of \cite{AmbrosioStraTrevisan}. 
\end{enumerate}
\end{proposition}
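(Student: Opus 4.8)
The plan is to prove both assertions by extracting subsequential $L^2$-weak limits in the sense of \cite{Honda2} and identifying them through integration by parts; the decisive point is that each $\nabla f_i$ is a \emph{gradient}, and without this structure the statement would be false, as Remark \ref{diffe} shows. For the weak convergence in (1), note that $\sup_i\|\nabla f_i\|_{L^2}\le\sup_i\|f_i\|_{H^{1,2}}<\infty$, so the weak $L^2$-compactness of bounded vector fields from \cite{Honda2} shows that every subsequence of $\nabla f_i$ has a further subsequence that $L^2$-weakly converges, on each $B_R(x)$ and in the sense of \cite{Honda2}, to some $V\in L^2(TB_R(x))$; it suffices to prove $V=\nabla f$. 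I would fix $\phi\in\mathrm{Test}F(X)$ with support compactly contained in $B_R(x)$ and, using the heat-flow (Mosco) convergence of \cite{GigliMondinoSavare, AmbrosioHonda}, choose a recovery sequence $\phi_i\in\mathrm{Test}F(X_i)$ with $\phi_i\to\phi$, $\nabla\phi_i\to\nabla\phi$ and $\Delta\phi_i\to\Delta\phi$ all $L^2$-strongly on balls in the sense of \cite{Honda2}. Integration by parts on $X_i$ gives $\int_{X_i}\langle\nabla f_i,\nabla\phi_i\rangle\di\meas_i=\int_{X_i}f_i\,\Delta\phi_i\di\meas_i$; passing to the limit, the left-hand side tends to $\int_{B_R(x)}\langle V,\nabla\phi\rangle\di\meas$ by the weak--strong pairing of \cite{Honda2}, while the right-hand side tends to $\int_Xf\,\Delta\phi\di\meas=\int_{B_R(x)}\langle\nabla f,\nabla\phi\rangle\di\meas$ using the $L^2_{\loc}$-strong convergence $f_i\to f$ and $\Delta\phi_i\to\Delta\phi$.

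Hence $\int_{B_R(x)}\langle V-\nabla f,\nabla\phi\rangle\di\meas=0$ for all such $\phi$. Since the gradients of these test functions are $L^2$-dense in the space of gradient fields and $\nabla r_z$ (the gradient of a Lipschitz function) lies in that space, this gives $\int_{B_r(y)}\langle V-\nabla f,\nabla r_z\rangle\di\meas=0$ whenever $\overline{B}_r(y)\subset B_R(x)$. On the other hand the distance functions satisfy $\nabla r_{z_i}\to\nabla r_z$ $L^2$-strongly on balls in the sense of \cite{Honda2} (they are uniformly $1$-Lipschitz with $|\nabla r_{z_i}|\equiv1$, so their $L^2$-norms converge and weak plus norm convergence gives strong convergence), so the weak--strong pairing yields $\int_{B_r(y_i)}\langle\nabla f_i,\nabla r_{z_i}\rangle\di\meas_i\to\int_{B_r(y)}\langle V,\nabla r_z\rangle\di\meas=\int_{B_r(y)}\langle\nabla f,\nabla r_z\rangle\di\meas$ along the subsequence. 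As the limit is independent of the subsequence, the whole sequence satisfies $\nabla f_i\rightharpoonup\nabla f$ on every $B_R(x)$ in the sense of \cite{Honda2}. This integration-by-parts identification, which excludes a spurious divergence-free component of $V$, is the main obstacle and the point where the gradient structure is indispensable.

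For the ``moreover'' in (1), assume $\limsup_i\|f_i\|_{H^{1,2}}\le\|f\|_{H^{1,2}}$. The weak lower semicontinuity of the $L^2$-norms under $f_i\to f$ and under the just-proved $\nabla f_i\rightharpoonup\nabla f$, together with the $H^{1,2}$-norm bound, forces $\|f_i\|_{L^2}\to\|f\|_{L^2}$ and $\|\nabla f_i\|_{L^2}\to\|\nabla f\|_{L^2}$; localizing this global norm convergence to $B_R(x)$ by the same semicontinuity on the complementary annuli (for generic $R$ with $\meas(\partial B_R(x))=0$) yields $\|\nabla f_i\|_{L^2(B_R(x_i))}\to\|\nabla f\|_{L^2(B_R(x))}$, hence $L^2$-strong convergence on each $B_R(x)$ in the sense of \cite{Honda2}. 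The $L^2$-strong convergence $\nabla f_i\to\nabla f$ in the sense of \cite{AmbrosioStraTrevisan} is the stability of gradients under $H^{1,2}$-strong convergence established in \cite{AmbrosioHonda, AmbrosioStraTrevisan}.

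Finally, (2) reduces to (1) via the product rule of \cite{Honda2}. The zero-extension of $\nabla f_i$ converges $L^2$-strongly on every ball to that of $\nabla f$, with a uniform $L^2$-bound and uniform support in $B_R$, while for each $h_t^{\meas}g\in h^{\meas}_{\mathbb{Q}_{>0}}\Algebra_\bs$ part (1), applied to a heat-flow recovery sequence $h_t^{\meas_i}g_i$ whose existence and strong convergence come from \cite{GigliMondinoSavare, AmbrosioHonda}, gives $L^2$-strong convergence of the test gradients on balls. The strong$\times$strong product rule then makes $\langle\nabla f_i,\nabla h_t^{\meas}g\rangle$ converge $L^2$-strongly to $\langle\nabla f,\nabla h_t^{\meas}g\rangle$, and the uniform support in $B_R$ upgrades this to the weak convergence (\ref{vvbb}) against every $\phi\in\Cbs(\mathbb{X})$ together with the norm convergence $\|\langle\nabla f_i,\nabla h_t^{\meas}g\rangle\|_{L^2}\to\|\langle\nabla f,\nabla h_t^{\meas}g\rangle\|_{L^2}$, which is exactly $L^2$-strong convergence in the sense of \cite{AmbrosioStraTrevisan}.
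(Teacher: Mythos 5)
There is a genuine gap in your identification of the weak limit in part (1). After extracting a subsequential $L^2$-weak limit $V$ of $\nabla f_i$ in the sense of \cite{Honda2}, your integration by parts only yields $\int_{B_R(x)}\langle V-\nabla f,\nabla \phi\rangle \dist\meas =0$ for compactly supported test functions $\phi$. This says precisely that $W:=V-\nabla f$ is weakly divergence-free; it does \emph{not} force $W=0$, because gradients of compactly supported functions are not dense in $L^2(TB_R(x))$ (a nonzero harmonic, divergence-free field is orthogonal to all of them). Your next step compounds the problem: the \cite{Honda2}-pairings that must be matched are $\int_{B_r(y)}\langle V,\nabla r_z\rangle\dist\meas$, i.e.\ against $1_{B_r(y)}\nabla r_z$, and this field is \emph{not} in the $L^2$-closure of gradients — the sharp cutoff $1_{B_r(y)}$ cannot be absorbed into a Sobolev function — so the asserted density argument does not apply. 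You explicitly name the danger ("excludes a spurious divergence-free component") but the argument as written does not perform the exclusion. A repair would be to test against $\psi\nabla\phi$ with $\psi,\phi\in\mathrm{Test}F$ (using $\int\langle\nabla f_i,\psi_i\nabla\phi_i\rangle\dist\meas_i=\int f_i(\psi_i\Delta\phi_i-\langle\nabla\psi_i,\nabla\phi_i\rangle)\dist\meas_i$ and the $L^2$-density of such test vector fields), but the paper avoids the issue entirely: it deduces the weak convergence directly from Proposition \ref{hj} together with the Rellich compactness theorem \cite[Theorem 4.9]{Honda2}, which already contains the identification of the weak limit of $\nabla f_i$ for an $L^2$-strongly convergent, $H^{1,2}$-bounded sequence.

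Two smaller points. For the "moreover" in (1), your global-to-local norm argument via complementary annuli is workable, but the paper instead uses the continuity of gradient operators \cite[Theorem 5.7]{AmbrosioHonda} to get $|\nabla f_i|\to|\nabla f|$ $L^2$-strongly and then pairs with $1_{B_r(y_i)}|\nabla f_i|$; either route is fine once the weak convergence is in place. In part (2), note that the test object in the \cite{AmbrosioStraTrevisan} definition is $\nabla\bigl(h_t^{\meas}g|_{X_i}\bigr)$ — the gradient of the restriction of one fixed Lipschitz extension — not a heat-flow recovery sequence $h_t^{\meas_i}g_i$; the paper handles this by observing that the restrictions converge uniformly with uniform Lipschitz bounds, so Rellich gives only \emph{weak} convergence of their gradients, which suffices when paired with the \emph{strongly} convergent $\phi_i\nabla f_i$. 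Your strong-times-strong variant would need an additional argument to replace the restrictions by a recovery sequence.
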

\begin{proof}
Let us check (1).
The weak convergence of $\nabla f_i$ in the sense of \cite{Honda2} is a direct consequence of Proposition \ref{hj} and the Rellich compactness \cite[Theorem 4.9]{Honda2}.
Thus we assume that $f_i$ $H^{1, 2}$-strongly converge to $f$ in the sense of \cite{AmbrosioHonda, AmbrosioStraTrevisan}.
Then the $L^2$-strong convergence of $\nabla f_i$ in the sense of \cite{AmbrosioStraTrevisan} follows from \cite[Theorem 5.3]{AmbrosioStraTrevisan} (or \cite[Theorem 5.6]{AmbrosioHonda}).
Moreover the continuity of gradient operators \cite[Theorem 5.7]{AmbrosioHonda} yields that $|\nabla f_i|$ $L^2$-strongly converge to $|\nabla f|$.
In particular since $1_{B_r(y_i)}|\nabla f_i|$ $L^2$-strongly converge to $1_{B_r(y)}|\nabla f|$ whenever $y_i \stackrel{GH}{\to} y$ and $r \in (0, \infty)$, we have
$$
\lim_{i \to \infty}\int_{X_i}1_{B_r(y_i)}|\nabla f_i| \cdot |\nabla f_i|\dist \meas_i=\int_X1_{B_r(y)}|\nabla f| \cdot |\nabla f|\dist \meas,
$$
which proves that $\nabla f_i$ $L^2$-strongly converge to $\nabla f$ on $B_R(x)$ for any $R \in (0, \infty)$ in the sense of \cite{Honda2}.

Next we prove (2).
Let $\phi \in C_\bs(\mathbb{X})$ and let $\phi_i:=\phi|_{X_i} \in C_\bs(X_i)$.
Then since $\phi_i$ converge uniformly to $\phi$, in particular this is an $L^q$-strong convergent sequence for any $q \in (1, \infty)$ in the sense of \cite{Honda2}.
Thus \cite[Proposition 3.48]{Honda2} yields that $\phi_i\nabla f_i$ $L^p$-strongly converge to $\phi \nabla f$ on $B_R(x)$ in the sense of \cite{Honda2}.

On the other hand let $h_t^{\meas}g \in h_{\mathbb{Q}_{>0}}^{\meas}\Algebra_\bs$, let $h_i:=h_t^{\meas}g|_{X_i} \in \mathrm{LIP}(X_i)$ and let $h=h_t^{\meas}g|_X \in \mathrm{LIP}(X)$.
Then since $h_i$ converge uniformly to $h$ with $\sup_i\mathbf{Lip}h_i<\infty$, the Rellich compactness \cite[Theorem 4.9]{Honda2} shows that $\nabla h_i$ $L^q$-weakly converge to $\nabla h$ on $B_r(x)$ in the sense of \cite{Honda2} for any $r \in (0, \infty)$ and any $q \in (1, \infty)$.
In particular
$$
\lim_{i \to \infty}\int_{X_i}\langle \phi_i\nabla f_i, \nabla h_i \rangle \dist \meas_i=\int_X\langle \phi \nabla f, \nabla h \rangle \dist \meas,
$$
which proves that $\nabla f_i$ $L^2$-weakly converge to $\nabla f$ in the sense of \cite{AmbrosioStraTrevisan}.
Since it it trivial from the assumption that $\limsup_{i \to \infty}\|\nabla f_i\|_{L^2}\le \|\nabla f\|_{L^2}$,
this completes the proof.
\end{proof}
We often say that a sequence is \textit{$L^p_{\mathrm{loc}}$-strong convergent} if it is an $L^p$-strongly convergent sequence on $B_R(x)$ for any $R \in (0, \infty)$.
\section{Uniqueness of second-order differential structure}
\subsection{Rectifiability revisited}
In this subsection we recall several rectifiability results for Ricci limit spaces.
Note that these are not new, but we need precise statements later.

For a Ricci limit space $(X, x, \meas)$ whose dimension is $k$, a locally Lipschitz map $F=(f_1, \ldots, f_l):B_R(x) \to \mathbf{R}^l$ is said to be a {$(\delta, C)$-splitting map} if $|\nabla F| \le C$ and 
$$
\frac{1}{\meas (B_R(x))}\int_{B_R(x)}\left| \langle \nabla f_i, \nabla f_j\rangle -\delta_{ij}\right|\dist  \mathcal{H}^n<\delta
$$
are satisfied.
The following was a key result in Cheeger-Colding theory.
\begin{theorem}[Cheeger-Colding]\cite{CheegerColding}\label{arrr}
Let $(M, p)$ be a pointed $n$-dimensional complete Riemannian manifold with $\mathrm{Ric}_M \ge -\delta$.
If 
$$
\dist_{\mathrm{GH}}((B_L(p), p), (B_L(0_l), 0_l))<\epsilon,
$$
then there exists a harmonic $(\Psi(\epsilon, \delta, L^{-1};n, R), C(n))$-splitting map $\mathbf{b}:=(\mathbf{b}_1, \ldots, \mathbf{b}_l):B_R(p) \to \mathbf{R}^l$.
\end{theorem}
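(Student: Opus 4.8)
The plan is to produce $\mathbf{b}$ by harmonically replacing suitable distance functions and then running the Bochner formula to control the Hessian, from which the required gradient orthonormality follows. First I would use the Gromov-Hausdorff hypothesis to select, for each $i$, a point $p_i^+ \in M$ whose image under an $\epsilon$-approximation $B_L(p) \to B_L(0_l)$ is close to $L e_i$; writing $r_i^+ := \dist(\cdot, p_i^+)$, the approximation also provides a point $p_i^-$ near $-Le_i$, and the almost-rigidity of spaces GH-close to a line forces the excess $e_i := r_i^+ + r_i^- - \dist(p_i^+, p_i^-)$ to satisfy $\frac{1}{\meas(B_R(p))}\int_{B_R(p)} e_i \dist \meas < \Psi$. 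I would then harmonically replace $r_i^+$ on a fixed intermediate ball $B_{R'}(p)$, with $R < R' \ll L$: let $h_i$ solve $\Delta h_i = 0$ on $B_{R'}(p)$ with $h_i = r_i^+$ on $\partial B_{R'}(p)$, and set $\mathbf{b}_i := \dist(p_i^+,p) - h_i$, so that $\mathbf{b}_i$ approximates the coordinate $x^i$; finally $\mathbf{b} := (\mathbf{b}_1,\ldots,\mathbf{b}_l)$.

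Next I would record the two quantitative features of $\mathbf{b}$. The Lipschitz bound $|\nabla \mathbf{b}_i| \le C(n)$ comes from the Cheng-Yau gradient estimate for harmonic functions under $\mathrm{Ric}_M \ge -\delta$ applied to $h_i$ on $B_{R'}(p)$. For the proximity to $r_i^+$, the Laplacian comparison shows that $r_i^+ \approx L$ is almost harmonic on $B_{R'}(p)$ in the integral sense (its distributional Laplacian is controlled by $(n-1)/L$, $\sqrt\delta$ and, through the cut locus, by the excess $e_i$); combined with the energy-minimality of the harmonic replacement and the segment inequality, the maximum principle then yields both $\|h_i - r_i^+\|_{L^\infty(B_R(p))} < \Psi$ and the $H^{1,2}$-proximity $\frac{1}{\meas(B_R(p))}\int_{B_R(p)}|\nabla(h_i - r_i^+)|^2 \dist \meas < \Psi$. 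Since $|\nabla r_i^+| \equiv 1$, the latter immediately gives $\frac{1}{\meas(B_R(p))}\int_{B_R(p)} \bigl||\nabla \mathbf{b}_i|^2 - 1\bigr| \dist \meas < \Psi$.

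The core step is the Hessian estimate through the Bochner formula, which for the harmonic $\mathbf{b}_i$ reads
$$
\tfrac12 \Delta |\nabla \mathbf{b}_i|^2 = |\mathrm{Hess}_{\mathbf{b}_i}|^2 + \mathrm{Ric}(\nabla \mathbf{b}_i, \nabla \mathbf{b}_i) \ge |\mathrm{Hess}_{\mathbf{b}_i}|^2 - \delta\,|\nabla \mathbf{b}_i|^2.
$$
Testing against a cutoff $\phi$ with $\phi \equiv 1$ on $B_R(p)$, $\supp \phi \subset B_{R'}(p)$ and $|\Delta \phi| \le C(n,R)$, integrating by parts and using $|\nabla \mathbf{b}_i| \le C(n)$ together with the $L^1$-closeness of $|\nabla \mathbf{b}_i|^2$ to $1$ just established, gives
$$
\frac{1}{\meas(B_R(p))}\int_{B_R(p)} |\mathrm{Hess}_{\mathbf{b}_i}|^2 \dist \meas < \Psi.
$$
The off-diagonal conditions then follow by polarization: the direction $\tfrac{1}{\sqrt 2}(e_i \pm e_j)$ is again realized through the approximation by a distance function to which the same argument applies, so $|\nabla(\mathbf{b}_i \pm \mathbf{b}_j)|^2$ is $L^1$-close to $2$; subtracting the two identities yields $\frac{1}{\meas(B_R(p))}\int_{B_R(p)} |\langle \nabla \mathbf{b}_i, \nabla \mathbf{b}_j\rangle|\dist \meas < \Psi$ for $i \neq j$. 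Together with the diagonal estimate this is exactly the defining inequality of a $(\Psi(\epsilon,\delta,L^{-1};n,R), C(n))$-splitting map.

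The main obstacle is establishing the $H^{1,2}$-proximity of $h_i$ to $r_i^+$ with an effective $\Psi$-rate, since it is here that the three small parameters $\epsilon$, $\delta$ and $L^{-1}$ must be propagated simultaneously: one needs the Laplacian comparison, the excess estimate and the segment inequality to convert the merely pointwise (and singular) almost-harmonicity of the distance function into integral control of the gradient of the replacement. If only the qualitative $\Psi$-form is wanted, this propagation can be replaced by a Gromov compactness and contradiction argument, passing to a pointed limit in which $B_L(p)$ converges to a ball in a space splitting off $\mathbb{R}^l$ and in which the $\mathbf{b}_i$ converge to the genuine affine coordinates, whose Hessians vanish identically.
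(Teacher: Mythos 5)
The paper does not prove this statement: it is quoted verbatim from Cheeger--Colding, so there is no internal proof to compare against. Your reconstruction is the correct standard argument from the original source --- harmonic replacement of distance functions to points near $\pm Le_i$, the Abresch--Gromoll/excess and Laplacian-comparison estimates to get $L^\infty$ and $H^{1,2}$ proximity, the Bochner formula tested against a good cutoff (using $\int \Delta\phi\,\dist\meas=0$ to replace $|\nabla \mathbf{b}_i|^2$ by $|\nabla \mathbf{b}_i|^2-1$) for the Hessian bound, and polarization via the diagonal directions for the off-diagonal inner products --- and you correctly identify the conversion of the singular almost-harmonicity of $r_i^+$ into integral gradient control as the technically delicate step. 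It is worth noting that the compactness-and-contradiction alternative you mention at the end, which only yields the qualitative $\Psi$-form, is precisely the style this paper adopts for its own related quantitative statements (Lemma \ref{lem:conv hess} and Theorem \ref{thm:quantitative bound hess}), whereas the direct route you sketch is what produces the effective dependence on $\epsilon$, $\delta$, $L^{-1}$ claimed in the theorem.
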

Moreover a map $F$ is said to be a \textit{limit harmonic map} if there exist
a sequence of Riemannian manifolds $(X_i,  x_i, \meas_i)$ with $\mathrm{Ric}_{X_i}\ge -(n-1)$, and a sequence of harmonic maps $F_i:B_R(x_i) \to \mathbf{R}^k$ such that $(X_i, x_i, \meas_i) \stackrel{GH}{\to} (X, x, \meas)$ and that $F_i$ converge uniformly to $F$.   
Note that the continuity of the Laplacian with respect to the mGH-convergence \cite[Theorem 1.3]{Honda2} yields that each $f_i$ is harmonic, i.e. $F$ is also a harmonic map. 
\begin{corollary}\label{cor:delta}
Let $\tau, s \in (0, 1)$ and let $y \in \mathcal{R}_{\tau, \delta}^k(X)$.
Then for any $s \in (0, \min \{\tau^{1/2}, \delta \}]$ there exists a limit harmonic $(\Psi(s, \tau;n), C(n))$-splitting map $\mathbf{b} :B_s(y) \to \mathbf{R}^k$.
\end{corollary}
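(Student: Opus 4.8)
The plan is to deduce this from the Cheeger--Colding splitting theorem (Theorem \ref{arrr}) applied to an approximating sequence of manifolds, followed by a compactness argument that upgrades the resulting harmonic splitting maps to a \emph{limit} harmonic one. Since $(X,x,\meas)$ is a Ricci limit space, I first fix a sequence of pointed $n$-dimensional manifolds $(X_i,x_i)$ with $\mathrm{Ric}_{X_i}\ge-(n-1)$ and $(X_i,x_i,\meas_i)\stackrel{GH}{\to}(X,x,\meas)$, together with points $y_i\stackrel{GH}{\to}y$. Because $y\in\mathcal{R}^k_{\tau,\delta}(X)$ encodes the Reifenberg-type closeness $\dist_{GH}((B_{s'}(y),y),(B_{s'}(0_k),0_k))\le\tau s'$ for all $s'\le\delta$, the stability of the Gromov--Hausdorff distance under the convergence $X_i\to X$ transfers this closeness, at any fixed scale, to the $y_i$ for $i$ large (with $\tau$ replaced by, say, $2\tau$).

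Next I would rescale the metric of each $X_i$ by the factor $s^{-1}$, so that $B_s(y_i)$ becomes the unit ball and the lower Ricci bound becomes $-(n-1)s^2$; the hypothesis $s\le\tau^{1/2}$ makes this bound nearly nonnegative. Fix an intermediate reference scale $L:=\min\{\tau^{-1/2},\delta/s\}$: the constraint $s\le\delta$ guarantees $sL\le\delta$, so the rescaled ball $B_L(y_i)$ is $2\tau L$-close to the Euclidean $B_L(0_k)$, while $L\le\tau^{-1/2}$ gives $\tau L\le\tau^{1/2}$ and $s\le\tau^{1/2}$ gives $(n-1)s^2\le(n-1)\tau$. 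Applying Theorem \ref{arrr} on each rescaled $X_i$ with conclusion radius $R=1$ then produces harmonic $(\Psi(2\tau L,(n-1)s^2,L^{-1};n,1),C(n))$-splitting maps $\widetilde{\mathbf b}_i:B_1(y_i)\to\mathbf{R}^k$; the point of the choice of $L$ is that all three arguments $\tau L$, $(n-1)s^2$, $L^{-1}$ tend to $0$ as $(s,\tau)\to(0,0)$, so this error is bounded by a quantity of the form $\Psi(s,\tau;n)$.

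It then remains to pass to the limit in $i$. The uniform gradient bound $|\nabla\widetilde{\mathbf b}_i|\le C(n)$ makes the $\widetilde{\mathbf b}_i$ equi-Lipschitz, so by an Arzel\`a--Ascoli argument compatible with the Gromov--Hausdorff convergence a subsequence converges uniformly on the unit ball to a map $\widetilde{\mathbf b}:B_1(y)\to\mathbf{R}^k$, which is by construction a limit harmonic map and is harmonic by the continuity of the Laplacian under mGH-convergence \cite[Theorem 1.3]{Honda2}. To see that $\widetilde{\mathbf b}$ inherits the splitting estimate, I would invoke the $L^2_{\mathrm{loc}}$-strong convergence of the gradients of uniformly convergent, equi-Lipschitz harmonic functions (the Rellich-type compactness \cite[Theorem 4.9]{Honda2} together with the convergence results of Section 3): this gives $\langle\nabla\widetilde b_i^a,\nabla\widetilde b_i^b\rangle\to\langle\nabla\widetilde b^a,\nabla\widetilde b^b\rangle$ in $L^1_{\mathrm{loc}}$, and, since $\meas_i\to\meas$, the averaged defect $\frac{1}{\meas(B_1(y))}\int_{B_1(y)}\bigl|\langle\nabla\widetilde b^a,\nabla\widetilde b^b\rangle-\delta_{ab}\bigr|\di\meas$ is controlled by the limit of the corresponding quantities for $\widetilde{\mathbf b}_i$, hence by the same $\Psi$. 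Undoing the rescaling by the factor $s$ --- under which both the gradient bound and the averaged splitting defect are invariant --- yields the desired limit harmonic $(\Psi(s,\tau;n),C(n))$-splitting map $\mathbf b:B_s(y)\to\mathbf{R}^k$.

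The main obstacle I expect is the scale bookkeeping: one must choose the reference scale $L$ so that the single-scale closeness error $\tau L$ and the scale-ratio error $L^{-1}$ are simultaneously small while respecting $sL\le\delta$, and then check that the resulting bound depends only on $(s,\tau,n)$. This is precisely where the two hypotheses enter, the bound $s\le\tau^{1/2}$ controlling both $\tau L$ and the rescaled Ricci term, and the bound $s\le\delta$ guaranteeing enough Reifenberg room to take $L$ large. The second delicate point is ensuring that the averaged splitting inequality genuinely survives the passage to the limit, for which the compatibility between the various notions of $L^2$-convergence established in Section 3 is exactly what is needed.
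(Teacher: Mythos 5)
Your overall strategy --- transfer the Reifenberg closeness to the approximating manifolds, rescale, apply Theorem \ref{arrr} there, and pass to the limit via Arzel\`a--Ascoli together with the continuity of the Laplacian and the Rellich-type compactness for gradients --- is the same as the paper's, and your limiting step is sound. The gap is exactly in the scale bookkeeping that you flag as the delicate point. With your rescaling by $s^{-1}$ and reference scale $L=\min\{\tau^{-1/2},\delta/s\}$, the quantity $L^{-1}=\max\{\tau^{1/2},\,s/\delta\}$ is \emph{not} small under the stated hypotheses: the corollary only assumes $s\le\min\{\tau^{1/2},\delta\}$, so $s=\delta$ is allowed, in which case $L=1$, $L^{-1}=1$, and the error $\Psi(2\tau L,(n-1)s^2,L^{-1};n,1)$ delivered by Theorem \ref{arrr} does not tend to zero. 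Your claim that ``all three arguments $\tau L$, $(n-1)s^2$, $L^{-1}$ tend to $0$ as $(s,\tau)\to(0,0)$'' holds only for fixed $\delta$ with $s/\delta\to 0$; in general your bound depends on $s/\delta$ and hence cannot be put in the $\delta$-free form $\Psi(s,\tau;n)$ that the statement requires (and that its later applications use).

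The paper sidesteps this by rescaling by $(\tau^{1/2}s)^{-1}$ rather than $s^{-1}$, using the Reifenberg condition only at the single scale $s$ (legitimate since $s\le\delta$). In the rescaled metric the ball $B_s(y)$ has radius $\tau^{-1/2}$ and is $\tau^{1/2}$-GH-close to $B_{\tau^{-1/2}}(0_k)$, while the Ricci lower bound becomes $-(n-1)\tau s^2$. Thus the large scale $L=\tau^{-1/2}$ and the small closeness $\epsilon=\tau^{1/2}$ come for free from the choice of rescaling factor, no quantity involving $\delta$ ever enters, and the hypothesis $s\le\tau^{1/2}$ is needed only to make the rescaled Ricci term small. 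If you replace your choice of $L$ by this rescaling, the remainder of your argument (transfer of the closeness to $y_i$, application of Theorem \ref{arrr}, uniform convergence, $L^2$-strong convergence of the gradients, and scale invariance of the gradient bound and of the averaged splitting defect) goes through as you describe.
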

\begin{proof}
By rescaling; $\dist \mapsto \tilde{\dist}:=(\tau)^{1/2}s\dist$ we have $\dist_{\mathrm{GH}}(B_{\tau^{-1/2}}^{\tilde{\dist}}(y), y), (B_{\tau^{-1/2}}(0_k), 0_k))<\tau^{1/2}$.
By using this with Theorem \ref{arrr} and the continuity of the Laplacian with resepct to the mGH-convergence  \cite[Theorem 1.3]{Honda2} it is easy to check the assertion.
\end{proof}
Let us recall the following.
\begin{theorem}\cite[Theorem 3.4]{Honda4}\label{g}
Let $A$ be a Borel subset of $X$, let $l \in \{1, 2, \ldots, k\}$ and let $\{f_i\}_{i=1, \ldots, l}$ be a family of Lipschitz functions on $X$.
Assume that $\det (\langle \nabla f_i, \nabla f_j \rangle (z))_{ij}>0$ for a.e. $z \in A$.
Then there exist a countable family of Borel subsets $A_i$ of $A$, a family of points $x_i \in A$, and a family of points $y_{i, j} \in X$ such that the following hold;
\begin{enumerate}
\item $\meas \left( A \setminus \bigcup_iA_i\right)=0$,
\item for any $z \in \bigcup_iA_i$ and any $\epsilon \in (0, 1)$ there exists $i$ such that $z \in A_i$ and that the map $\phi_i:A_i \to \mathbf{R}^k$ defined by
$$
\phi_i:=\left((f_1, \ldots, f_l)(\langle \nabla f_j, \nabla f_k\rangle (x_i))_{jk}^{-1/2}, \dist(y_{i, 1}, \cdot), \ldots, \dist (y_{i, k-l}, \cdot) \right)
$$
is a $(1 \pm \epsilon)$-bi-Lipschitz embedding.
\end{enumerate}
\end{theorem}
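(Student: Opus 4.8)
The plan is to reduce everything to the structure of the regular set $\mathcal{R}^k(X)$, then build the chart by normalizing the given functions with the square root of their Gram matrix and completing the frame with distance functions whose gradients realize prescribed orthonormal directions. First I would restrict attention to $A \cap \mathcal{R}^k(X)$: by Theorem \ref{thm:reg} we have $\meas(X \setminus \mathcal{R}^k(X))=0$, so discarding a null set costs nothing, and on the regular set every Lipschitz $f_i$ is differentiable $\meas$-a.e.\ with $\nabla f_i(z) \in T_zX \cong \mathbf{R}^k$ and $\abs{\nabla f_i} = \mathrm{Lip}\,f_i$ a.e.\ by (\ref{new}). The hypothesis $\det(\langle \nabla f_i,\nabla f_j\rangle(z))_{ij}>0$ then guarantees that the $l$ covectors $\dist f_1(z),\ldots,\dist f_l(z)$ are linearly independent for a.e.\ $z \in A$.

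Next, using $\mathcal{R}^k(X)=\bigcap_{\tau>0}\bigcup_{\delta>0}\mathcal{R}^k_{\tau,\delta}(X)$ from Theorem \ref{thm:reg} together with a Lusin-type argument applied to the maps $z \mapsto G(z):=(\langle \nabla f_i,\nabla f_j\rangle(z))_{ij}$ and $z \mapsto \nabla f_i(z)$, I would partition $A$ up to a null set into countably many Borel pieces on each of which: (i) a base point $x_i$ can be fixed with $G(x_i)$ positive definite; (ii) the Reifenberg bound $\dist_{GH}((B_s(z),z),(B_s(0_k),0_k)) \le \tau s$ holds for all small $s$, with $\tau$ as small as we like; and (iii) the differentials of the $f_m$ are approximately constant. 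On such a piece I set $\tilde f := (f_1,\ldots,f_l)\,G(x_i)^{-1/2}$, exactly the first $l$ components of $\phi_i$ in the statement; since $G(x_i)^{-1/2}$ is symmetric positive definite, a direct computation gives $\langle \nabla \tilde f_m,\nabla \tilde f_n\rangle(x_i)=\delta_{mn}$, so $\{\nabla \tilde f_m(x_i)\}_{m=1}^l$ is an orthonormal $l$-frame in $T_{x_i}X \cong \mathbf{R}^k$.

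I would then complete the frame with distance functions. Because $x_i$ is $k$-regular, $B_s(x_i)$ is $\tau s$-close to $B_s(0_k)\subset \mathbf{R}^k$; transporting any prescribed direction $v \in \mathbf{R}^k$ through the GH-approximation, I can choose $y_{i,1},\ldots,y_{i,k-l} \in X$ so that the unit gradients $\nabla r_{y_{i,j}}(x_i)$ (unit by (\ref{new})) are, up to error $\Psi(\tau;n)$, orthonormal and orthogonal to $\mathrm{span}\{\nabla \tilde f_m(x_i)\}$. This is precisely the Cheeger–Colding mechanism for bi-Lipschitz distance coordinates, except that here only $k-l$ of the $k$ directions are supplied by distance functions. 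The assembled map $\phi_i=(\tilde f_1,\ldots,\tilde f_l,r_{y_{i,1}},\ldots,r_{y_{i,k-l}})$ then has differential at $x_i$ within $\Psi(\tau;n)$ of a linear isometry $T_{x_i}X \to \mathbf{R}^k$.

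Finally, the bi-Lipschitz estimate. The upper bound $\abs{\phi_i(z)-\phi_i(w)} \le (1+\epsilon)\dist(z,w)$ follows from the gradient control and the length-space identity $\mathbf{Lip}\,f = \norm{\abs{\nabla f}}_{L^\infty}$. The lower bound is the genuine difficulty: I must upgrade the infinitesimal near-isometry of $d\phi_i$ to a uniform two-sided estimate on a Borel set of nearly full measure. I would do this by a blow-up argument combined with Egorov's theorem: for a.e.\ $z$ the rescalings of $\phi_i$ at $z$ converge to the linear differential $d\phi_i(z)$, which on our pieces is $\Psi(\tau;n)$-close to an isometry, and by (iii) the $f_m$ themselves become GH-close to affine functions on $B_s(x_i)$, so $\phi_i$ looks like an affine isometry on the rescaled ball; Egorov makes this uniform off a small set, and the Reifenberg closeness (ii) yields $\abs{\phi_i(z)-\phi_i(w)} \ge (1-\epsilon)\dist(z,w)$ once $\tau$ is small relative to $\epsilon$. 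Shrinking and relabelling the pieces gives $\{(A_i,\phi_i)\}$ with $\meas(A \setminus \bigcup_i A_i)=0$ and the stated refinement property. The main obstacle is exactly this last passage from infinitesimal to uniform control, where the Reifenberg structure of $\mathcal{R}^k(X)$, rather than mere a.e.\ differentiability, is essential.
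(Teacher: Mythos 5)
First, a point of order: the paper does not prove Theorem \ref{g} at all --- it is quoted from \cite[Theorem 3.4]{Honda4} and used as a black box --- so there is no internal proof to compare against. Your architecture (restrict to $\mathcal{R}^k(X)$, normalize $(f_1,\ldots,f_l)$ by $G(x_i)^{-1/2}$, complete the frame with distance functions, then upgrade infinitesimal near-isometry to a two-sided estimate on a Lusin/Egorov piece) is the same architecture as the cited proof, and the normalization and frame-completion steps are fine modulo the usual caveat that $\nabla r_{y}(x_i)$ is only defined a.e., so the orthogonality conditions must be imposed through integral averages of $\langle \nabla r_y,\nabla r_{y'}\rangle$ over small balls, and the points $y_{i,j}$ must be kept at a definite distance from $A_i$ so that $r_{y_{i,j}}$ is close to affine on the whole piece.

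The gap is in the final step, and it is twofold. (i) The upper bound is not the easy half: componentwise $(1+\Psi)$-Lipschitz bounds only give $|\phi_i(z)-\phi_i(w)|\le \sqrt{k}\,(1+\Psi)\dist(z,w)$ after summing in quadrature, and improving $\sqrt{k}$ to $1+\epsilon$ uses the near-orthonormality of the gradients, i.e.\ exactly the same first-order expansion as the lower bound. Moreover the pointwise bound $|\nabla\langle a,\phi_i\rangle|\le 1+\Psi$ for unit vectors $a$ holds only a.e.\ on $A_i$, and a geodesic between two points of $A_i$ need not stay in $A_i$, so $\mathbf{Lip}\bigl(\langle a,\phi_i\rangle|_{A_i}\bigr)$ is not controlled by the essential sup of $|\nabla\langle a,\phi_i\rangle|$ on $A_i$; one needs the segment inequality of \cite{CheegerColding3} (or a maximal-function argument) to convert an $L^1$ gradient bound on a ball into a two-point estimate valid for most pairs. (ii) The lower bound rests on the assertion that the rescalings of an \emph{arbitrary} Lipschitz $f_m$ at a.e.\ $z$ converge, uniformly on balls, to a linear function on the tangent cone $\mathbf{R}^k$. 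This does not follow from Reifenberg closeness plus Egorov alone: it is precisely the a.e.\ first-order Taylor expansion of Lipschitz functions along distance coordinates, which is the main analytic input of \cite{Honda4} and is itself proved via the segment inequality and the splitting theorem. As written, your ``blow-up plus Egorov'' step assumes the conclusion of that result rather than supplying it; the route is viable, but this is the input that must be named and proved (or cited), and without it the argument does not close.
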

\begin{remark}
From the proof of Theorem \ref{g} we can take limit harmonic functions instead of distance functions $\dist (y_{i, j}, \cdot)$.
\end{remark}
Note that the following can be checked directly along the original proof of (4) of Theorem \ref{thm:reg} by Cheeger-Colding. However for reader's convenience, we give a sketch of the proof by using results above. This will play a key role in next subsection. 
See proofs of \cite[Theorems 5.5 and 5.7]{CheegerColding3}. 
\begin{theorem}[Cheeger-Colding \cite{CheegerColding3}]\label{thm:chc}
There exists a rectifiable structure $\{(C_i, \phi_i)\}_i$ of $(X, \meas)$ such that the following hold;
\begin{enumerate}
\item each $\phi_i$ is the restriction to $C_i$ of a limit harmonic map $\tilde{\phi}_i$ defined on a ball $B_{r_i}(y_i)$ which contains $C_i$ with $|\nabla \tilde{\phi}_i| \le C(n)$.
\item for any $z \in \bigcup_iC_i$ and any $\epsilon \in (0, 1)$ there exists $i$ such that $z \in C_i$, that $\phi_i$ is a $(1 \pm \epsilon)$-bi-Lipschitz embedding, that 
\begin{equation}\label{eq:rem}
\frac{1}{\meas (B_{r_i}(y_i))}\int_{B_{r_i}(y_i)}\left| \langle \nabla \tilde{\phi}_{i, j}, \nabla \tilde{\phi}_{i, k} \rangle -\delta_{jk}\right| \dist \meas<\epsilon
\end{equation}
and that
\begin{equation}\label{p}
\frac{\meas (C_i)}{\meas (B_{r_i}(y_i))} \ge 1- \epsilon.
\end{equation}
\end{enumerate}
\end{theorem}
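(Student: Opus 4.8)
The plan is to build the charts one regular point at a time, using Corollary \ref{cor:delta} to manufacture a limit harmonic splitting map and Theorem \ref{g} to extract a bi-Lipschitz piece. By parts (1)--(3) of Theorem \ref{thm:reg} we have $\meas(X \setminus \mathcal{R}^k(X)) = 0$ with $k = \mathrm{dim}\,X$, and $\mathcal{R}^k(X) = \bigcap_{\tau > 0}\bigcup_{\delta > 0}\mathcal{R}^k_{\tau, \delta}(X)$; hence it suffices to produce, for each $z \in \mathcal{R}^k(X)$ and each $\epsilon \in (0,1)$, a single chart $(C_i, \phi_i)$ with $z \in C_i$ satisfying (1) together with the three requirements in (2). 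The countable family $\{(C_i, \phi_i)\}_i$ is then assembled by letting $z$ range over the countably many strata $\mathcal{R}^k_{\tau, \delta}$ with rational $\tau, \delta$, letting $\epsilon$ run through a sequence tending to $0$, and applying a standard exhaustion of each stratum.

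So fix $z \in \mathcal{R}^k(X)$ and $\epsilon \in (0,1)$, and choose $\tau$ small enough that every error function $\Psi(\cdot, \tau; n)$ occurring below is $< \epsilon$. Since $z \in \bigcup_\delta \mathcal{R}^k_{\tau, \delta}(X)$, there is $\delta > 0$ with $z \in \mathcal{R}^k_{\tau, \delta}(X)$, and Corollary \ref{cor:delta} supplies, for every $s \le \min\{\tau^{1/2}, \delta\}$, a limit harmonic $(\Psi(s, \tau; n), C(n))$-splitting map $\mathbf{b} = (\mathbf{b}_1, \ldots, \mathbf{b}_k) : B_s(z) \to \mathbf{R}^k$. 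Set $y_i := z$, $r_i := s$ and $\tilde\phi_i := \mathbf{b}$. Then (1) holds because $\mathbf{b}$ is limit harmonic with $|\nabla \mathbf{b}| \le C(n)$, while the splitting estimate is precisely (\ref{eq:rem}) once $\Psi(s, \tau; n) < \epsilon$.

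It remains to produce $C_i \subset B_s(z)$ on which $\mathbf{b}$ is $(1 \pm \epsilon)$-bi-Lipschitz and to verify the volume bound (\ref{p}). That bi-Lipschitz pieces exist and cover $B_s(z)$ up to a $\meas$-null set is the content of Theorem \ref{g}, applied with $A$ the regular part of $B_s(z)$, $l = k$ and $f_j = \mathbf{b}_j$: the splitting estimate forces $\det(\langle \nabla \mathbf{b}_j, \nabla \mathbf{b}_l\rangle) > 0$ a.e., and for $l = k$ the resulting charts carry no auxiliary distance functions and differ from $\mathbf{b}$ only by a constant linear normalization, which the splitting estimate keeps close to the identity on the relevant pieces, so we may take $\mathbf{b}$ itself as the chart and let $C_i$ be the piece through $z$. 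The main obstacle is then the volume bound (\ref{p}): Theorem \ref{g} yields only an a.e.\ covering, not a single piece occupying a fraction $\ge 1 - \epsilon$ of the ball. To close this gap I would invoke the quantitative Cheeger--Colding estimate underlying Theorem \ref{g}, namely that the subset of $B_s(z)$ where $\mathbf{b}$ fails to be $(1 \pm \Psi(s, \tau; n))$-bi-Lipschitz has $\meas$-measure at most $\Psi(s, \tau; n)\,\meas(B_s(z))$; this rests on a maximal-function argument applied to $\int_{B_s(z)}|\langle \nabla \mathbf{b}_j, \nabla \mathbf{b}_l\rangle - \delta_{jl}|\dist \meas$ together with the segment inequality of \cite{CheegerColding3}, which bounds, for most pairs of points, the path integral of the gradient deficit and hence the distortion of distances. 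Shrinking $\tau$ (and with it $s$) until this error is $< \epsilon$ then yields a single $C_i$ with $\meas(C_i)/\meas(B_s(z)) \ge 1 - \epsilon$, completing the construction.
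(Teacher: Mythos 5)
Your overall strategy coincides with the paper's: obtain a limit harmonic splitting map from Corollary \ref{cor:delta} and extract $(1\pm\epsilon)$-bi-Lipschitz pieces from Theorem \ref{g}. However, there is a concrete gap at the point where you invoke Theorem \ref{g}. You apply it with $A$ equal to (the regular part of) all of $B_s(z)$, asserting that ``the splitting estimate forces $\det(\langle \nabla \mathbf{b}_j, \nabla \mathbf{b}_l\rangle)>0$ a.e.'' This does not follow: the splitting estimate is only an $L^1$-average bound on $\langle \nabla \mathbf{b}_j, \nabla \mathbf{b}_l\rangle-\delta_{jl}$, so on a small-measure subset of the ball the gradient matrix may degenerate, and the hypothesis of Theorem \ref{g} can fail there. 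The paper's proof repairs exactly this by a Chebyshev argument: it restricts to $A:=\bigcap_{i,j}\{w:\ |\langle \nabla \mathbf{b}_i,\nabla \mathbf{b}_j\rangle(w)-\delta_{ij}|<\Psi^{1/2}\}$, shows via (\ref{eq:const lem2}) that $\meas(B_s(y)\setminus A)\le n^2\Psi^{1/2}\meas(B_s(y))$, and then applies Theorem \ref{g} only on $A\cap B_r(z)$ around a Lebesgue point $z$ of $1_A$, where the pointwise nondegeneracy holds and all the normalizing matrices are uniformly close to the identity. Your argument needs this restriction (or something equivalent) before Theorem \ref{g} can be used.

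For the volume bound (\ref{p}) you correctly diagnose that Theorem \ref{g} alone yields only an a.e.\ covering with no size control on the individual pieces, and you propose to close the gap by the quantitative Cheeger--Colding distortion estimate (segment inequality plus a maximal-function argument). That is a legitimate route --- it is essentially how Cheeger--Colding prove the result, to whom the theorem is attributed --- but it is not the paper's route: the paper obtains the large good set from the Chebyshev bound on $A$ combined with the Lebesgue-density localization, without re-running the segment-inequality argument. Be aware, though, that the estimate you cite (``the subset of $B_s(z)$ where $\mathbf{b}$ fails to be $(1\pm\Psi)$-bi-Lipschitz has measure at most $\Psi\,\meas(B_s(z))$'') is not a pointwise-defined condition; what is actually needed is the existence of a single Borel set $C\subset B_s(z)$ of nearly full measure on which $\mathbf{b}$ is $(1\pm\Psi)$-bi-Lipschitz, and that statement is essentially (\ref{p}) together with the bi-Lipschitz claim of (2). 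Taking it as a black box therefore assumes most of what is to be proven; if you go this way you should cite the relevant step of the proof of \cite[Theorems 5.5 and 5.7]{CheegerColding3} explicitly rather than present it as a consequence of Theorem \ref{g}.
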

\begin{proof}
Let $y \in \mathcal{R}^k_{\tau, \delta}$ and let $s \in (0, \min \{\tau^{1/2}, \delta\})$.
Then by Corollary \ref{cor:delta} there exists a limit harmonic $(\Psi (\delta, \tau;n), C(n))$-splitting map $\mathbf{b}=(\mathbf{b}_1, \ldots, \mathbf{b}_k)$ on $B_s(y)$.
For this $\Psi=\Psi (\delta, \tau;n)$ let $A_{i, j}:=\{w \in B_s(y); |\langle \nabla \mathbf{b}_i, \nabla \mathbf{b}_j\rangle (y)-\delta_{ij}|<(\Psi )^{1/2}\}$ and let $A:=\bigcap_{i, j}A_{i, j}$.
Then 
\begin{equation}\label{eq:const lem2}
\frac{\meas (B_s(y) \setminus A)}{\meas (B_s(y))} \le \sum_{i, j}\frac{1}{(\Psi)^{1/2}\meas (B_s(y))}\int_{A_{i, j}} \left|\langle \nabla \mathbf{b}_i, \nabla \mathbf{b}_j\rangle -\delta_{ij}\right|\dist \meas \le n^2 (\Psi )^{1/2}.
\end{equation}
Let $z \in \mathrm{Leb}\,(1_A)\cap \bigcap_{i, j} \mathrm{Leb}\,(\langle \nabla \mathbf{b}_i, \nabla \mathbf{b}_j \rangle)$, 
where
$\mathrm{Leb}\,(g):=\{w; \lim_{r \to \infty}\frac{1}{\meas (B_r(w))}\int_{B_r(w)}|g-g(w)|\dist \meas =0\}$ for a Borel measurable function $g$.
Then applying Theorem \ref{g} for $A \cap B_r(w)$ and $\mathbf{b}|_{A \cap B_r(w)}$ for any sufficiently small $r \in (0, 1)$ yields
that there exist a countable family of Borel subsets $A_i \subset A \cap B_r(w)$ and a family of points $x_i \in A \cap B_r(w)$ such that the following hold;
\begin{enumerate}
\item $\meas ((A \cap B_r(w)) \setminus \bigcup_iA_i)=0$; 
\item for any $\epsilon \in (0, 1)$ and any $z \in \bigcup_iA_i$ there exists $i$ such that $z \in A_i$ and that the map $\phi_i:A_i \to \mathbf{R}^k$ defined by 
$$
\phi_i = \left( (\mathbf{b}_1, \ldots, \mathbf{b}_k) (\langle \nabla \mathbf{b}_l, \nabla \mathbf{b}_m\rangle (x_i) )^{-1}_{lm}\right)
$$
is a $(1 \pm \epsilon)$-bi-Lipschitz embedding.
\end{enumerate}
Since $|(\langle \nabla \mathbf{b}_i, \nabla \mathbf{b}_j\rangle )_{ij}-(\delta_{ij})_{ij}|<n^2\Psi$ on $A$ and $\epsilon, \tau, \delta, r$ are arbitrary, we conclude.
\end{proof}

\subsection{The canonical second-order differential structure}
The main technical tool we will use in this subsection is the heat flow $h_t$ associated with the Laplacian $\Delta$.
See \cite{AmbrosioGigliSavare13, AmbrosioGigliSavare14, AmbrosioGigliMondinoRajala} for details of the regularity theory.

Let us recall the definition of the Hessian of a test function defined in \cite{Gigli} by Gigli only in the Ricci limit setting (note that the Hessian in the sense of \cite{Gigli} is well-defined on $RCD(K, \infty)$-spaces).

Let $(X, x, \meas)$ be a Ricci limit space and
let $W^{2, 2}(X)$ be the set of $f \in H^{1, 2}(X)$ satisfying that there exists a unique $T \in L^2(T^0_2X)$, denoted by $\mathrm{Hess}_f^{\meas}$, such that  
\begin{align*}\label{78}
2\int_{X}g_0\left\langle T, \dist g_1 \otimes \dist g_2 \right\rangle \dist \meas &=\int_X-\langle \nabla f, \nabla g_1 \rangle (\langle \nabla g_0, \nabla g_2\rangle -g_0\Delta g_2)\dist \meas \\
&-\int_X\langle \nabla f, \nabla g_2\rangle (\langle \nabla g_0, \nabla g_1\rangle -g_0\Delta g_1) \dist \meas \\
&- \int_Xg_0 \left\langle \nabla f, \nabla \left\langle \nabla g_1, \nabla g_2 \right\rangle\right\rangle \dist \meas
\end{align*}
for any $g_i \in \mathrm{Test}F(X)$ (recall $\mathrm{Test}F(X):=\{f \in \mathcal{D}^2(\Delta, X) \cap \mathrm{LIP}(X) \cap L^{\infty}(X); \Delta f \in H^{1, 2}(X)\}$).
Then it was proven in \cite{Gigli} that $\mathrm{Test}F(X) \subset \mathcal{D}^2(\Delta, X) \subset W^{2, 2}(X)$, that $W^{2, 2}(X)$ is a Hilbert space equipped with the norm $||f||_{W^{2, 2}}:= (||f||_{H^{1, 2}}^2+||\mathrm{Hess}^{\meas}_f||_{L^2}^2)^{1/2}$, and that 
$$
\mathrm{Hess}^{\meas}_f(\nabla g_1, \nabla g_2)=\frac{1}{2}\left(\langle \nabla g_1, \nabla \langle \nabla f, \nabla g_2 \rangle \rangle +\langle \nabla g_2, \nabla \langle \nabla f, \nabla g_2 \rangle \rangle -\langle \nabla f, \nabla \langle \nabla g_1, \nabla g_2 \rangle \rangle \right)
$$
for any $f, g_i \in \mathrm{Test}F(X)$ (\cite[Proposition 3.3.22]{Gigli}).
\begin{lemma}\label{aa}
For all $f, g \in \mathcal{D}(\Delta, B_R(x)) \cap \mathrm{LIP}_{\mathrm{loc}}(B_R(x))$ we have $\langle \nabla f, \nabla g \rangle \in H^{1, 2}(B_r(x))$ for any $r \in (0, R)$.
\end{lemma}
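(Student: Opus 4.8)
The plan is to reduce to Gigli's global second-order calculus by a cutoff argument. Fix $r\in(0,R)$ and choose radii $r<R''<R'<R$. Recall that a Ricci limit space is an $\mathrm{RCD}(-(n-1),n)$-space, so the global calculus of \cite{Gigli} applies, and that there exist good cutoff functions: one can find $\chi\in\mathcal{D}(\Delta,X)\cap\mathrm{LIP}(X)$ with $0\le\chi\le1$, $\chi\equiv1$ on $B_{R''}(x)$, $\supp\chi\subset B_{R'}(x)$, and $\Delta\chi\in L^\infty(X)$ (see \cite{Gigli}). Since $X$ is proper, $\overline{B_{R'}(x)}$ is compact, and as $f,g\in\mathrm{LIP}_{\mathrm{loc}}(B_R(x))$ their restrictions to $\supp\chi$ are bounded and Lipschitz; hence $\hat f:=\chi f$ and $\hat g:=\chi g$ are bounded, globally Lipschitz, and supported in $B_{R'}(x)$.

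Next I would show $\hat f,\hat g\in\mathcal{D}(\Delta,X)$. By the Leibniz rule for the Dirichlet Laplacian, $\Delta(\chi f)$ is a linear combination of $\chi\,\Delta f$, $f\,\Delta\chi$ and $\langle\nabla\chi,\nabla f\rangle$ on $B_{R'}(x)$, and vanishes off $\supp\chi$. Here $\chi\,\Delta f\in L^2(X)$ because $f\in\mathcal{D}(\Delta,B_R(x))$ forces $\Delta f\in L^2(B_{R'}(x))$ and $\chi$ is bounded, while $f\,\Delta\chi$ and $\langle\nabla\chi,\nabla f\rangle$ are bounded with support in $\overline{B_{R'}(x)}$, hence lie in $L^2(X)$. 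Thus $\hat f\in\mathcal{D}(\Delta,X)$, and likewise $\hat g$. Gigli's inclusion $\mathcal{D}(\Delta,X)\subset W^{2,2}(X)$, a consequence of the Bochner inequality on RCD spaces, then yields $\mathrm{Hess}^{\meas}_{\hat f},\mathrm{Hess}^{\meas}_{\hat g}\in L^2(T^0_2X)$.

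Now I would invoke the product rule of Gigli's calculus: for $\hat f,\hat g\in W^{2,2}(X)$ with bounded gradients one has $\langle\nabla\hat f,\nabla\hat g\rangle\in H^{1,2}(X)$ together with $\nabla\langle\nabla\hat f,\nabla\hat g\rangle=\mathrm{Hess}^{\meas}_{\hat f}(\nabla\hat g,\cdot)+\mathrm{Hess}^{\meas}_{\hat g}(\nabla\hat f,\cdot)$, so that $|\nabla\langle\nabla\hat f,\nabla\hat g\rangle|\le|\mathrm{Hess}^{\meas}_{\hat f}|\,|\nabla\hat g|+|\mathrm{Hess}^{\meas}_{\hat g}|\,|\nabla\hat f|\in L^2(X)$, the membership following from the $L^2$ Hessian bounds and the $L^\infty$-gradient (Lipschitz) property of $\hat f,\hat g$. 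Finally, on $B_{R''}(x)\supset\overline{B_r(x)}$ we have $\chi\equiv1$ and $\nabla\chi=0$ a.e., so $\nabla\hat f=\nabla f$ and $\nabla\hat g=\nabla g$ there; consequently $\langle\nabla f,\nabla g\rangle=\langle\nabla\hat f,\nabla\hat g\rangle$ a.e. on $B_r(x)$, and restricting the global $H^{1,2}(X)$-function $\langle\nabla\hat f,\nabla\hat g\rangle$ to the open ball gives $\langle\nabla f,\nabla g\rangle\in H^{1,2}(B_r(x))$.

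The main obstacle is justifying the two calculus inputs in the nonsmooth setting rather than the routine bookkeeping: the Leibniz rule placing $\chi f$ in the global domain $\mathcal{D}(\Delta,X)$ with $\Delta(\chi f)\in L^2$, and—more essentially—the identity $\nabla\langle\nabla\hat f,\nabla\hat g\rangle=\mathrm{Hess}^{\meas}_{\hat f}(\nabla\hat g,\cdot)+\mathrm{Hess}^{\meas}_{\hat g}(\nabla\hat f,\cdot)$ with its $L^2$ bound, both of which rest on Gigli's second-order calculus and the Bochner-type inequality for RCD spaces. If one prefers to avoid citing the product rule in full, an alternative is to approximate $\hat f,\hat g$ by the test functions $h_t\hat f,h_t\hat g$, for which the identity is known, and to pass to the limit using $W^{2,2}$-bounds that are uniform in $t$; this is where the heat-flow regularity highlighted at the start of the subsection would enter.
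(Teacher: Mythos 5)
Your reduction step (cutoff to get $\hat f,\hat g\in\mathcal{D}(\Delta,X)\cap\mathrm{LIP}_c(X)$, then restrict back to $B_r(x)$ where $\chi\equiv 1$) is exactly what the paper does, and it is fine. The problem is the step you flag as a ``calculus input'': the claim that for general $\hat f,\hat g\in W^{2,2}(X)$ with bounded gradients one has $\langle\nabla\hat f,\nabla\hat g\rangle\in H^{1,2}(X)$ with $\nabla\langle\nabla\hat f,\nabla\hat g\rangle=\mathrm{Hess}^{\meas}_{\hat f}(\nabla\hat g,\cdot)+\mathrm{Hess}^{\meas}_{\hat g}(\nabla\hat f,\cdot)$ is not a citable black box at that level of generality. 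In Gigli's calculus the Sobolev regularity of $\langle\nabla f,\nabla g\rangle$ together with that identity is established for \emph{test} functions (where it comes out of the measure-valued Bochner inequality); for a general $W^{2,2}$ function the Hessian is defined by duality against test objects, and the membership $\langle\nabla f,\nabla g\rangle\in H^{1,2}$ is precisely the conclusion one is trying to reach, not a hypothesis one can import. Knowing $\mathrm{Hess}^{\meas}_{\hat f}\in L^2$ does not by itself upgrade $\langle\nabla\hat f,\nabla\hat g\rangle$ from $L^2$ to $H^{1,2}$ without an approximation by test functions whose gradients are uniformly bounded and whose Hessians are uniformly $L^2$-bounded. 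So, as written, your main route begs the question.

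Your closing remark, however, is the correct repair and is exactly the paper's proof: replace $\hat f,\hat g$ by $h_t\hat f,h_t\hat g\in\mathrm{Test}F(X)$, for which $\langle\nabla h_t\hat f,\nabla h_t\hat g\rangle\in H^{1,2}(X)$ holds with the pointwise bound $|\nabla\langle\nabla h_t\hat f,\nabla h_t\hat g\rangle|\le|\nabla h_t\hat f|\,|\mathrm{Hess}^{\meas}_{h_t\hat g}|+|\nabla h_t\hat g|\,|\mathrm{Hess}^{\meas}_{h_t\hat f}|$; the Bakry--\'Emery estimate gives $\sup_{t<1}\|\nabla h_t\hat f\|_{L^\infty}<\infty$ and Bochner's inequality gives $\sup_{t<1}\|\mathrm{Hess}^{\meas}_{h_t\hat f}\|_{L^2}<\infty$ in terms of $\|\Delta h_t\hat f\|_{L^2}$ and $\|\nabla h_t\hat f\|_{L^2}$, which are controlled uniformly in $t$. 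Since $\langle\nabla h_t\hat f,\nabla h_t\hat g\rangle\to\langle\nabla\hat f,\nabla\hat g\rangle$ in $L^2$ as $t\downarrow 0$ with uniformly bounded $H^{1,2}$-norms, the closedness of $H^{1,2}$ under such limits concludes. If you promote that last paragraph from ``alternative'' to the actual argument, your proof coincides with the paper's.
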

\begin{proof}
This is a direct consequence of the regularity theory of the heat flow as follows.
By using a good cutoff on a limit space (c.f. \cite[Corollary 4.29]{Honda2}) with no loss of generality we can assume that $f, g \in \mathcal{D}(\Delta, X) \cap \mathrm{LIP}_c(X)$.
Let us consider a function $\langle \nabla h_tf, \nabla h_tg \rangle$ for $t \in (0, 1)$.
Then since $h_tf, h_tg \in \mathrm{Test}F(X)$, by Bakry-\'Emery estimates and Bochner's inequality \cite[Corollary 3.3.9]{Gigli}, we have $\langle \nabla h_tf, \nabla h_tg \rangle \in H^{1, 2}(X)$ with
\begin{align*}
\|\nabla \langle \nabla h_tf, \nabla h_tg \rangle \|_{L^2(X)} &\le \|\nabla h_tf\|_{L^{\infty}(X)}\|\mathrm{Hess}_{h_tg}^{\meas}||_{L^2(X)}+\|\nabla h_tg\|_{L^{\infty}(X)}\|\mathrm{Hess}^{\meas}_{h_tf}\|_{L^2(X)}\\
& \le e^{t(n-1)}\|\nabla f\|_{L^{\infty}(X)}\left( \int_X\left( \left(\Delta h_tf\right)^2+(n-1)|\nabla h_tf|^2\right)\dist \meas\right)^{1/2} \\
&+ e^{t(n-1)}\|\nabla g\|_{L^{\infty}(X)}\left( \int_X\left( \left(\Delta h_tg\right)^2+(n-1)|\nabla h_tg|^2\right)\dist \meas\right)^{1/2}.
\end{align*}
Then since the right hand side above is bounded with respect $t \in (0, 1)$, letting $t \downarrow 0$ gives $\langle \nabla f, \nabla g \rangle \in H^{1, 2}(X)$, which completes the proof. 
\end{proof}
\begin{proposition}[Uniqueness of second-order differential structure]\label{hondahonda}
Let $\{(C_i, \phi_i)\}_i$ be a rectifiable structure of $(X, x, \meas)$. Assume that
\begin{itemize}
\item for any $\phi_{i}$ there exist $r_i \in (0, \infty)$, $x_i \in X$ and $\tilde{\phi}_{i, j} \in \mathcal{D}^2(\Delta, B_{r_i}(x_i)) \cap \mathrm{LIP}_{\mathrm{loc}}(B_{r_i}(x_i))$ such that $C_i \subset B_{r_i}(x_i)$ and that $\tilde{\phi}_{i, j}|_{C_i} \equiv \phi_{i, j}$, where $\phi_i=(\phi_{i, 1}, \ldots, \phi_{i, k})$.
\end{itemize}
Then  $\{(C_i, \phi_i)\}_i$ is a second-order differential structure of $(X, x, \meas)$. 
\end{proposition}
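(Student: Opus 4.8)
The plan is to verify directly the defining condition of a second-order differential structure — that every transition Jacobian $J(\phi_i\circ\phi_j^{-1})$ has its coefficients in $\Gamma_1(\phi_j(C_i\cap C_j))$, together with the fact that the coordinate functions are weakly twice differentiable — rather than route through the abstract criterion of \cite{Honda10}, which yields only the weak form of the conclusion. The engine is Lemma \ref{aa}. Each extension $\tilde{\phi}_{i,m}$ lies in $\mathcal{D}^2(\Delta, B_{r_i}(x_i))\cap\mathrm{LIP}_{\mathrm{loc}}(B_{r_i}(x_i))\subset\mathcal{D}(\Delta,\cdot)\cap\mathrm{LIP}_{\mathrm{loc}}$, and membership in the domain of the Dirichlet Laplacian restricts to any smaller open set (the defining integration by parts against $\mathrm{LIP}_c$ only shrinks). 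Hence for any two coordinates, say $\tilde{\phi}_{i,a}$ and $\tilde{\phi}_{j,b}$, Lemma \ref{aa} gives $\langle\nabla\tilde{\phi}_{i,a},\nabla\tilde{\phi}_{j,b}\rangle\in H^{1,2}(B)$ on every ball $B\Subset B_{r_i}(x_i)\cap B_{r_j}(x_j)$. Covering $C_i\cap C_j$ by countably many such balls and invoking the Lusin-type approximation of Sobolev functions on PI spaces (so that $H^{1,2}\subset\Gamma_1$; recall \cite{Cheeger}), I obtain that all the gradient inner products $\langle\nabla\tilde{\phi}_{i,a},\nabla\tilde{\phi}_{j,b}\rangle$ and $\langle\nabla\tilde{\phi}_{j,c},\nabla\tilde{\phi}_{j,d}\rangle$ belong to $\Gamma_1(C_i\cap C_j)$. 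This is the single place where the hypothesis truly uses that the $\tilde{\phi}_{i,m}$ are second-order objects (in the domain of $\Delta$), not merely Lipschitz; it is exactly what the criterion of \cite{Honda10} must assume by hand.

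Next I would convert this into $\Gamma_1$-regularity of the transition Jacobians. On the chart $\phi_j$ the Gram matrix $G_j:=(\langle\nabla\tilde{\phi}_{j,c},\nabla\tilde{\phi}_{j,d}\rangle)_{cd}$ is symmetric, and since $\phi_j$ is $(1\pm\epsilon)$-bi-Lipschitz on $C_j$ by Theorem \ref{thm:chc}, it satisfies $\det G_j\ge c>0$ a.e. on $C_i\cap C_j$. A short linear-algebra computation in the a.e.-defined tangent bundle gives the pointwise identity
$$
J(\phi_i\circ\phi_j^{-1})_{ba}=\sum_{d}\langle\nabla\tilde{\phi}_{i,b},\nabla\tilde{\phi}_{j,d}\rangle\,(G_j^{-1})_{da}\qquad\text{a.e. on }\phi_j(C_i\cap C_j).
$$
Because $\Gamma_1$ is a local algebra stable under reciprocals of functions bounded away from zero, Cramer's rule places each entry of $G_j^{-1}$ in $\Gamma_1$, and hence so is each $J(\phi_i\circ\phi_j^{-1})_{ba}$; composing with the bi-Lipschitz $\phi_j^{-1}$ preserves $\Gamma_1$, so the coefficients lie in $\Gamma_1(\phi_j(C_i\cap C_j))$ as required.

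To secure the full second-order conclusion — that this structure carries the same Levi-Civita calculus as the canonical one of \cite{Honda10}, rather than merely the weak transition condition — I would additionally check that each coordinate function is weakly twice differentiable and compatible with the canonical structure. Fixing the canonical structure of \cite{Honda10}, whose coordinates $\psi_{l,p}$ are limit harmonic maps (hence again in $\mathcal{D}(\Delta,\cdot)\cap\mathrm{LIP}_{\mathrm{loc}}$), Lemma \ref{aa} applies equally to the cross inner products $\langle\nabla\tilde{\phi}_{i,m},\nabla\psi_{l,p}\rangle\in H^{1,2}\subset\Gamma_1$. By the characterization that $g\in\Gamma_2(A)$ iff $g\in\Gamma_1(A)$ and $\langle\dist g,\dist\psi_{l,p}\rangle\in\Gamma_1$ for all $l,p$, this yields $\tilde{\phi}_{i,m}\in\Gamma_2$, and the computation of the previous paragraph (with $G$ replaced by the Gram matrix of $\psi_l$) shows the cross transition Jacobians $J(\phi_i\circ\psi_l^{-1})$ and $J(\psi_l\circ\phi_i^{-1})$ are in $\Gamma_1$. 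Thus the two atlases define one and the same second-order differential structure; in particular $g_X\in\Gamma_1(T^0_2X)$ expressed in the charts $\phi_i$, which is the substance of the full structure and of the asserted uniqueness.

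The main obstacle is concentrated in the first paragraph: producing Sobolev regularity of the gradient inner products of the coordinate functions from the hypothesis alone. Lemma \ref{aa} resolves it, but its use forces two care points — that $\mathcal{D}^2(\Delta,\cdot)$-membership localizes to the overlap balls, and that the Lusin-Lipschitz passage $H^{1,2}\subset\Gamma_1$ is legitimate on Ricci limit spaces (valid, as they are PI spaces). Everything downstream — the Jacobian identity, Cramer's rule, and the $\Gamma_2$-characterization — is routine $\Gamma_1$-algebra once the inner products are known to lie in $\Gamma_1$, and it requires no appeal to the as-yet-unproven Hessian comparison of Proposition \ref{prop:comp hess}, so no circularity arises.
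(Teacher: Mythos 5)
Your proof is correct and takes essentially the same route as the paper: the paper's own proof is precisely Lemma \ref{aa} combined with the transition-Jacobian criterion of \cite[Proposition 3.25]{Honda10} and the fact that Sobolev functions are differentiable a.e.\ (i.e.\ $H^{1,p}\subset \Gamma_1$), which is exactly the engine of your first paragraph. The only difference is presentational: where the paper cites \cite{Honda10}, you re-derive that criterion inline (the Gram-matrix/Cramer's-rule identity and the compatibility check against the canonical limit-harmonic atlas), and note that by the paper's definition the ``(weakly) second-order differential structure'' you label the weak form is already the full conclusion being asserted, so your third paragraph is sound but not needed for the statement itself.
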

\begin{proof}
It is a direct consequence of Lemma \ref{aa},  \cite[Proposition 3.25]{Honda10} and a fact that all Sobolev functions are differentiable for a.e. as mentioned in subsection 2.2. See also \cite[Theorem 4.11]{Honda3}.
\end{proof}
We call $\{(C_i, \phi_i)\}_i$ as above a \textit{canonical second-order differential structure} and always consider it whenever we discuss second-order differential calculus.
\begin{theorem}[Second-order differential structure by test functions]\label{thm:can sec}
There exists a canonical second order differential structure $\{(C_i, \phi_i)\}_i$ of $(X, x, \meas)$ such that each $\phi_{i, j}$ is the restriction of a function $\tilde{\phi}_{i, j} \in \mathrm{Test}F(X)$ to $C_i$.
\end{theorem}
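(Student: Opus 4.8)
The plan is to upgrade the Cheeger--Colding rectifiable structure of Theorem \ref{thm:chc}, whose coordinate functions are limit harmonic maps, to one whose coordinate functions are restrictions of \emph{global} test functions, by regularizing via the heat flow $h_t$ and then invoking Proposition \ref{hondahonda}.

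First I would fix $y \in \mathcal{R}^k_{\tau, \delta}(X)$ and a small scale $s \in (0, \min\{\tau^{1/2}, \delta\})$, and apply Corollary \ref{cor:delta} to obtain a limit harmonic $(\Psi(s, \tau; n), C(n))$-splitting map $\mathbf{b} = (\mathbf{b}_1, \ldots, \mathbf{b}_k)$ on $B_s(y)$. Choosing a good cutoff $\chi \in \mathrm{Test}F(X)$ with $\chi \equiv 1$ on $B_{s/2}(y)$, $\supp \chi \subset B_s(y)$, and $|\nabla \chi| + |\Delta \chi|$ bounded (as in \cite[Corollary 4.29]{Honda2}), I set $g_j := \chi \mathbf{b}_j$. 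Since $\mathbf{b}_j$ is $C(n)$-Lipschitz on $B_s(y)$ and $\chi$ is a bounded compactly supported test function, $g_j \in H^{1, 2}(X) \cap L^{\infty}(X)$, and the Bakry--\'Emery regularization (\ref{eq:bakry}) gives $h_t g_j \in \mathrm{Test}F(X)$ for every $t > 0$. These will be the candidate coordinate functions.

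The heart of the argument is to show that, for $t$ sufficiently small, the map $(h_t g_1, \ldots, h_t g_k)$ still behaves like an almost-orthonormal splitting on $B_{s/2}(y)$. Since $g_j \in H^{1, 2}(X)$ and $h_t$ is the gradient flow of the Cheeger energy, $h_t g_j \to g_j$ in $H^{1, 2}(X)$ as $t \downarrow 0$; in particular $\nabla h_t g_j \to \nabla g_j = \nabla \mathbf{b}_j$ in $L^2(B_{s/2}(y))$. Writing $\langle \nabla h_t g_a, \nabla h_t g_b\rangle - \delta_{ab} = (\langle \nabla h_t g_a, \nabla h_t g_b\rangle - \langle \nabla \mathbf{b}_a, \nabla \mathbf{b}_b\rangle) + (\langle \nabla \mathbf{b}_a, \nabla \mathbf{b}_b\rangle - \delta_{ab})$ on $B_{s/2}(y)$, the first bracket tends to $0$ in $L^1$ by Cauchy--Schwarz together with the $L^2$ gradient convergence and the uniform $L^\infty$ gradient bound from the Bakry--\'Emery estimate, while the second integrates to the splitting defect $\Psi(s, \tau; n)$. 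Hence for $t = t(s, \tau)$ small the averaged defect of $(h_t g_1, \ldots, h_t g_k)$ on $B_{s/2}(y)$ is controlled by $\Psi$, so $\det(\langle \nabla h_t g_a, \nabla h_t g_b\rangle) > 0$ on a subset of nearly full measure. I would then apply Theorem \ref{g} with $l = k$ to the Lipschitz functions $h_t g_j$ to extract, for each prescribed $\epsilon$, Borel subsets $C_i$ on which $(h_t g_1, \ldots, h_t g_k)(\langle \nabla h_t g_a, \nabla h_t g_b\rangle(x_i))^{-1/2}$ is a $(1 \pm \epsilon)$-bi-Lipschitz embedding. A Vitali-type covering over all $y$, $s$, $\epsilon$, exactly as in the proof of Theorem \ref{thm:chc}, assembles these into a countable rectifiable atlas $\{(C_i, \phi_i)\}_i$ whose entries $\phi_{i,j}$ are restrictions of $\tilde\phi_{i,j} := h_{t_i} g_{i,j} \in \mathrm{Test}F(X)$.

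Finally, since $\mathrm{Test}F(X) \subset \mathcal{D}^2(\Delta, X) \cap \mathrm{LIP}(X)$, each $\tilde\phi_{i,j}$ satisfies the hypothesis of Proposition \ref{hondahonda} (with $x_i$ arbitrary and $r_i$ any radius with $C_i \subset B_{r_i}(x_i)$), so $\{(C_i, \phi_i)\}_i$ is a canonical second-order differential structure of the desired form. The main obstacle is the control in the previous paragraph: the heat-flow approximation only yields $L^2$-closeness of gradients, so the near-orthonormality must be transferred to the Lebesgue-point level before Theorem \ref{g} applies, and one must choose $t$ \emph{after} $s$ and $\tau$ (yet uniformly over the covering) so that the loss in the splitting parameters, and hence in the bi-Lipschitz constants, stays within the prescribed $\epsilon$.
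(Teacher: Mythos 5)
Your proposal is correct and follows essentially the same route as the paper: start from the Cheeger--Colding harmonic splitting/rectifiable patches, cut off to a compactly supported Lipschitz function, mollify by the heat flow so that (\ref{eq:bakry}) places the coordinates in $\mathrm{Test}F(X)$, observe that the $L^1$-averaged near-orthonormality of the gradients survives for small $t$, pass to pointwise control on a set of nearly full measure by Chebyshev, and re-apply Theorem \ref{g}. The only cosmetic differences are that the paper begins from the already-assembled atlas of Theorem \ref{thm:chc} rather than unfolding Corollary \ref{cor:delta}, and leaves the appeal to Proposition \ref{hondahonda} implicit.
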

\begin{proof}
Since the proof is essentially same to that of Theorem \ref{thm:chc} (or \cite[Theorem 3.4]{Honda4}) we only give a sketch of that as follows.

Fix $\epsilon \in (0, 1)$ and choose a rectifiable patch $(C_i, \phi_i)$ such that $\phi_i$ is a $(1 \pm \epsilon)$-bi-Lipschitz embedding and that  each $\phi_{i}$ is the restriction to $C_i$ of a limit harmonic map $\tilde{\phi}_{i, j}$ defined on a ball $B_{r_i}(y_i)$ satisfying (\ref{eq:rem}) and (\ref{p}).
With no loss of generality we can assume that each $\tilde{\phi}_{i, j}$ is a restriction to $B_{r_i}(y_i)$ of a function $\psi_{i, j} \in \mathrm{LIP}_c(X)$.

Then for any sufficiently small $t \in (0, 1)$ since
$$
\frac{1}{\meas (B_{r_i}(y_i))}\int_{B_{r_i}(y_i)}\left| \langle \nabla h_t\psi_{i, j}, \nabla h_t\psi_{i, k} \rangle -\delta_{jk}\right|\dist \meas<\epsilon,
$$
if let $A_t:=\bigcap_{j, k}\{z \in B_{r_i}(y_i);|\langle \nabla h_t\psi_{i, j}, \nabla h_t\psi_{i, k}\rangle (z)-\delta_{ij}|<\epsilon^{1/2}\}$, then by an argument similar to (\ref{eq:const lem2}) for some $j, k$
$$
\frac{\meas (B_{r_i}(y_i) \setminus A_t)}{\meas (B_{r_i}(y_i))} \le n^2\epsilon^{1/2}.
$$
Then applying Theorem \ref{g} as $A=A_t$ and $f_j=\psi_{i, j}$ for sufficiently small $\epsilon, t$ completes the proof.
\end{proof}
The following is a direct consequence of Theorem \ref{g}.
\begin{corollary}\label{cor:com}
If a tensor $T$ of type $(r, s)$ on a Borel subset $A$ of $X$ satisfies that $\langle T, \nabla^r_sF \rangle \in \Gamma_1(A)$ for any $F:=(F_1, \ldots, F_{r+s}) \in (\mathrm{Test}F(X))^{r+s}$, then $T \in \Gamma_1(T^r_sA)$.
In particular if $T$ is defined on a ball $B_R(y)$ satisfying that for any $F$ as above, $\langle T, \nabla^r_sF \rangle \in H^{1, p}(B_R(y))$ holds for some $p \in (1, \infty)$, then $T \in \Gamma_1(T^r_sB_R(y))$.
\end{corollary}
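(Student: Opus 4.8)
The plan is to reduce the statement to the component characterization of $\Gamma_1(T^r_sA)$ recorded right after (\ref{eq:fundamental}), exploiting that the canonical second-order differential structure may be built entirely out of test functions.

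First I would invoke Theorem \ref{thm:can sec} to fix a canonical second-order differential structure $\{(C_j,\phi_j)\}_j$ of $(X,x,\meas)$ in which every coordinate $\phi_{j,\ell}$ is the restriction to $C_j$ of some $\tilde{\phi}_{j,\ell}\in\mathrm{Test}F(X)$. By definition, $T\in\Gamma_1(T^r_sA)$ means precisely that each component of $T$ relative to each patch lies in $\Gamma_1(A\cap C_j)$; by the tensorial version of the remark following (\ref{eq:fundamental}), this is equivalent to
\[
\langle T,\nabla^r_s F_j\rangle\in\Gamma_1(A\cap C_j),\qquad F_j:=(\phi_{j,i_1},\ldots,\phi_{j,i_{r+s}}),
\]
for every $j$ and every choice of indices $i_1,\ldots,i_{r+s}\in\{1,\ldots,k\}$.

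Next I would feed the hypothesis into this criterion. For a fixed $j$ and a fixed index tuple, put $\tilde{F}_j:=(\tilde{\phi}_{j,i_1},\ldots,\tilde{\phi}_{j,i_{r+s}})\in(\mathrm{Test}F(X))^{r+s}$. By assumption $\langle T,\nabla^r_s\tilde{F}_j\rangle\in\Gamma_1(A)$, and since $\tilde{\phi}_{j,\ell}\equiv\phi_{j,\ell}$ on $C_j$ this contraction agrees a.e. on $A\cap C_j$ with the component $\langle T,\nabla^r_sF_j\rangle$. As the defining countable Borel decomposition of a $\Gamma_1(A)$-function restricts to one for the Borel subset $A\cap C_j$, we obtain $\langle T,\nabla^r_sF_j\rangle\in\Gamma_1(A\cap C_j)$ for all $j$ and all index tuples, whence $T\in\Gamma_1(T^r_sA)$. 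For the ``in particular'' clause I would simply recall that every Sobolev function is differentiable almost everywhere (property $(*)$ in subsection 2.2), so $H^{1,p}(B_R(y))\subset\Gamma_1(B_R(y))$; thus the $H^{1,p}$ assumption implies the $\Gamma_1$ assumption with $A=B_R(y)$, and the first part applies verbatim.

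The only genuine input, and hence the main potential obstacle, is the existence of a canonical second-order differential structure whose coordinates are bona fide test functions; this is exactly the content of Theorem \ref{thm:can sec}, which rests on Theorem \ref{g} together with the Bakry--\'Emery regularization $g\mapsto h_tg\in\mathrm{Test}F(X)$ from (\ref{eq:bakry}). Granting that, the remaining points are routine: the translation from the abstract definition of $\Gamma_1(T^r_sA)$ to the component criterion, the stability of $\Gamma_1$-membership under restriction to Borel subsets, and the fact that the class $\Gamma_1(T^r_sA)$ does not depend on the chosen canonical structure (Proposition \ref{hondahonda}), so that no generality is lost by working with the test-function atlas of Theorem \ref{thm:can sec}.
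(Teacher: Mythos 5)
Your proposal is correct and follows essentially the route the paper intends: the paper dismisses the corollary as a direct consequence of Theorem \ref{g} via the test-function atlas of Theorem \ref{thm:can sec}, and you have supplied exactly the natural details — the component criterion for $\Gamma_1(T^r_sA)$ from the remark after (\ref{eq:fundamental}), the agreement of $\dist\phi_{j,\ell}$ with $\dist\tilde{\phi}_{j,\ell}$ a.e.\ on $C_j$, stability of $\Gamma_1$ under restriction to Borel subsets, and $H^{1,p}\subset\Gamma_1$ for the second claim.
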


\begin{proposition}[Compatibility between Hessians]\label{prop:comp hess}
Let $f \in \mathcal{D}(\Delta, X)$.
Then we have the following;
\begin{enumerate}
\item $f \in \Gamma_2(X)$, i.e. $f$ is twice diferentiable for a.e. $y \in X$ with $\mathrm{Hess}_f^{g_X}(y)=\mathrm{Hess}_f^{\meas}(y)$ for a.e. $y \in X$,
\item if $(X, x, \meas)$ is a noncollapsed Ricci limit space, then $-\mathrm{tr} (\mathrm{Hess}_f^{g_X})=\Delta f$.
In particular $\mathcal{D}(\Delta, X)=H^{2, 2}(X)$.
\end{enumerate}
\end{proposition}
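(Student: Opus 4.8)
The plan is to prove part (1) first for test functions, where both the existence of the geometric Hessian and its coincidence with Gigli's Hessian are transparent, and then to reach a general $f\in\mathcal{D}(\Delta,X)$ by a heat-flow approximation combined with the completeness of the second-order calculus of \cite{Honda10}. Part (2) will then follow by taking traces and exploiting the special structure of the noncollapsed case.

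For $f\in\mathrm{Test}F(X)$ I would fix the canonical second-order structure $\{(C_i,\phi_i)\}_i$ from Theorem \ref{thm:can sec}, whose chart functions $\phi_{i,j}$ are restrictions of $\tilde\phi_{i,j}\in\mathrm{Test}F(X)$. Lemma \ref{aa} applied to $f$ and each $\tilde\phi_{j,m}$ gives $\langle\nabla f,\nabla\tilde\phi_{j,m}\rangle\in H^{1,2}$ on small balls, hence in $\Gamma_1$ by $(*)$; by the characterization of $\Gamma_2$ recalled in Section 2 this yields $f\in\Gamma_2(X)$, so that $\mathrm{Hess}^{g_X}_f=\nabla^{g_X}\dist f$ is defined. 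To identify it with $\mathrm{Hess}^{\meas}_f$ it is enough, since gradients of test functions are dense in $L^2(TX)$, to match the two tensors on pairs $\nabla g_1\otimes\nabla g_2$ with $g_i\in\mathrm{Test}F(X)$. Because $\nabla^{g_X}$ is metric and torsion-free (properties (4) and (3) of the Levi-Civita connection), the geometric Hessian is symmetric and obeys $\nabla g_1\langle\nabla f,\nabla g_2\rangle=\mathrm{Hess}^{g_X}_f(\nabla g_1,\nabla g_2)+\mathrm{Hess}^{g_X}_{g_2}(\nabla g_1,\nabla f)$; summing this identity cyclically as in the smooth Koszul computation and cancelling the lower-order terms by symmetry reproduces exactly Gigli's formula for $\mathrm{Hess}^{\meas}_f(\nabla g_1,\nabla g_2)$. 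Thus the two Hessians agree a.e.\ for $f\in\mathrm{Test}F(X)$.

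For general $f\in\mathcal{D}(\Delta,X)$ I would set $f_t:=h_tf$. Using $\mathcal{D}(\Delta,X)\subset W^{2,2}(X)$ (Gigli) and the regularizing properties of the heat flow, $f_t\to f$ in $W^{2,2}(X)$, so $\mathrm{Hess}^{\meas}_{f_t}\to\mathrm{Hess}^{\meas}_f$ in $L^2(T^0_2X)$, and after a localization the $f_t$ may be taken inside $\mathrm{Test}F(X)$. By the previous paragraph $\mathrm{Hess}^{g_X}_{f_t}=\mathrm{Hess}^{\meas}_{f_t}$, so $\mathrm{Hess}^{g_X}_{f_t}$ converges in $L^2$ to $\mathrm{Hess}^{\meas}_f$ while $\dist f_t\to\dist f$ in $L^2(T^*X)$; invoking the closedness of the covariant derivative of \cite{Honda10} identifies $\mathrm{Hess}^{\meas}_f$ with the covariant derivative of $\dist f$, and Corollary \ref{cor:com} promotes this to the pointwise statements $f\in\Gamma_2(X)$ and $\mathrm{Hess}^{g_X}_f=\mathrm{Hess}^{\meas}_f$ a.e. I expect this paragraph to be the main obstacle: the pairings $\langle\mathrm{Hess}^{\meas}_f,\dist\tilde\phi_{j,m}\otimes\dist\tilde\phi_{j,l}\rangle$ are products of $L^2$-gradients and need not a priori lie in $H^{1,2}$, so the identification cannot be read off from Lemma \ref{aa} directly; it must instead be routed through the $L^2$-closedness of the geometric calculus together with the compatibility between the two notions of $L^2$-convergence of tensor fields established in Section 3.

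For part (2), (1) gives $\tr\mathrm{Hess}^{g_X}_f=\tr\mathrm{Hess}^{\meas}_f$, so it remains to see that the trace equals $-\Delta f$ when there is no weight. In the noncollapsed case $\meas=\mathcal{H}^n$ by Theorem \ref{thm:noncoll}, and by \cite{CheegerColding1} the regular set $\mathcal{R}(X)$ is open and a $C^{1,\alpha}$-Riemannian $n$-manifold on which $\mathcal{H}^n$ is the Riemannian volume, with singular complement of Hausdorff codimension at least two. Integrating the defining identity $\int_X\langle\nabla f,\nabla h\rangle\dist\meas$ of the Dirichlet form by parts over $\mathcal{R}(X)$ — the negligibility and codimension bound of the singular set ruling out boundary contributions — produces no weight term, whence $\Delta f=-\tr\mathrm{Hess}^{g_X}_f$. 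Finally $\mathcal{D}(\Delta,X)=H^{2,2}(X)$ follows by combining Gigli's inclusion $\mathcal{D}(\Delta,X)\subset W^{2,2}(X)$ with its converse: if $f\in W^{2,2}(X)$ then $\tr\mathrm{Hess}^{\meas}_f\in L^2(X)$, and the trace identity forces $\Delta f=-\tr\mathrm{Hess}^{\meas}_f\in L^2(X)$, so $f\in\mathcal{D}(\Delta,X)$.
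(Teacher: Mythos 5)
Your first paragraph (test functions) is sound and is genuinely different from the paper: the paper never treats test functions separately by an intrinsic Koszul computation, but instead runs everything through the approximating sequence of smooth manifolds $(X_i,x_i,\meas_i)$, where $\mathrm{Hess}^{g_{X_i}}_{h_tf_i}=\mathrm{Hess}^{\meas_i}_{h_tf_i}$ trivially, and then passes both Hessians to the limit using the $L^2$-weak continuity results of \cite{Honda2} and \cite{AmbrosioHonda} together with the Section~3 compatibilities. The real problem is your second paragraph. You correctly flag it as ``the main obstacle'' but then do not close it: the ``$L^2$-closedness of the geometric calculus'' you invoke is not an available tool. The class $\Gamma_2(X)$ is defined by a.e.\ differentiability via countable Lipschitz decompositions; it carries no norm, and there is no closed-graph statement saying that if $\dist f_t\to\dist f$ in $L^2$ and $\nabla^{g_X}\dist f_t$ converges in $L^2$ then $f\in\Gamma_2(X)$ with the expected covariant derivative. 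The only route to $\Gamma_1$-membership of $\dist f$ is Corollary \ref{cor:com}, which needs $\langle\nabla f,\nabla g\rangle\in H^{1,p}$ for some $p>1$. Working purely on $X$ with $f_t=h_tf$, the best you can say is $\nabla\langle\nabla f_t,\nabla g\rangle=\mathrm{Hess}_{f_t}(\nabla g,\cdot)+\mathrm{Hess}_g(\nabla f_t,\cdot)$; the first term is bounded in $L^2$, but the second is a product of two objects that are merely $L^2$-bounded (for general $f\in\mathcal{D}(\Delta,X)$ one has no $L^\infty$ or higher-$L^p$ control on $\nabla f_t$), so you only get a uniform $L^1$ bound on the gradients, which gives no weak compactness and no $H^{1,p}$ limit. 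This is exactly the point where the paper returns to the manifolds: \cite[Theorem 1.3]{Honda2} applied to $h_tg_i$ on $X_i$ yields that $h_tf$ is twice differentiable \emph{and} the quantitative bound (\ref{r}), i.e.\ $\||\nabla h_tf|^2\|_{H^{1,p_n}(B_R(x))}\le C$ with $p_n=2n/(2n-1)>1$ uniformly in $t$, from which $\langle\nabla f,\nabla g\rangle\in H^{1,p_n}_{\mathrm{loc}}$ follows by weak compactness and Corollary \ref{cor:com} applies. Your proposal has no substitute for this input.

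There is a second, independent problem in your part (2). You integrate by parts over $\mathcal{R}(X)$ using that it is open and a $C^{1,\alpha}$-Riemannian manifold with singular complement of codimension two. For a noncollapsed Ricci limit space with only a lower Ricci bound, $\mathcal{R}(X)=\mathcal{R}^n(X)$ has full measure and the singular set has Hausdorff codimension two, but $\mathcal{R}(X)$ is \emph{not} known to be open (the singular set may be dense) and carries no $C^{1,\alpha}$ structure; those properties are used in the paper only under the two-sided bound $|\mathrm{Ric}_{X_i}|\le n-1$ of Theorem \ref{abc}. The identity $-\mathrm{tr}(\mathrm{Hess}^{g_X}_f)=\Delta f$ should instead be obtained from the manifold approximation: on $X_i$ one has $-\mathrm{tr}(\mathrm{Hess}_{h_tg_i})=\Delta h_tg_i$, and in the noncollapsed case $g_{X_i}$ converges $L^2$-strongly to $g_X$, so the trace passes to the limit against the $L^2$-weakly convergent Hessians. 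Your closing argument for $\mathcal{D}(\Delta,X)=H^{2,2}(X)$ also quietly assumes the trace identity for arbitrary $W^{2,2}$ functions, which has only been established for $f\in\mathcal{D}(\Delta,X)$ at that stage.
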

\begin{proof}
Since the proofs are essentially same to that of \cite[Theorem 1.9]{Honda3}, we only give a skech of the proof of (1).

Let $(X_i, x_i, \meas_i)$ be an approximate sequence of $(X, x, \meas)$, i.e. it is a sequence of $n$-dimensional Riemannian manifolds with $\mathrm{Ric}_{X_i}\ge -(n-1)$ such that 
 $(X_i, x_i, \meas_i) \stackrel{GH}{\to} (X, x, \meas)$.
We take a sequence $g_i \in L^2(X_i)$ $L^2$-strongly converging to $f$ on $X$.
Then by the $H^{1, 2}$-strong convergence for the heat flow \cite[(4.6),  Corollary 5.5]{AmbrosioHonda}, for any $t \in (0, 1)$ we see that $h_tg_i$ strongly converge to $h_tf$ in $H^{1, 2}$ and that $\Delta h_tg_i$ $L^2$-weakly converge to $\Delta h_tf$ on $X$.
In particular by the continuity of the Laplacian with respect to the mGH-convergence \cite[Theorem 1.3]{Honda2} (c.f. \cite[Theorem 4.11]{Honda3}), for any $R \in (0, \infty)$ we see that $h_tf$ is twice differentiable for a.e. $z \in X$, that $\mathrm{Hess}_{h_tf_i}^{g_{X_i}}$ $L^2$-weakly converge to $\mathrm{Hess}_{h_tf}^{g_X}$ on $B_R(x)$ and that $|\nabla h_tf|^2 \in H^{1, p_n}(B_R(x))$ with 
\begin{equation}\label{r}
\|\mathrm{Hess}_{h_tf}^{g_X}\|_{L^2(B_R(x))}+\||\nabla h_tf|^2\|_{H^{1, p_n}(B_R(x))} \le C\left(n, R, \|f\|_{L^2(B_{2R}(x))}, \|\Delta f\|_{L^2(B_{2R}(x))}\right),
\end{equation}
where $p_n=2n/(2n-1)$.

On the other hand by the $L^2$-weak continuity of Hessians \cite[Theorem 10.3]{AmbrosioHonda}, whenever $g_i \in H^{1, 2}(X_i, \dist_i, \meas_i)$ are uniformly Lipschitz and strongly converge in $H^{1, 2}$ to $g \in H^{1, 2}(X, \dist, \meas)$, $\mathrm{Hess}_{f_i}^{\meas_i}(\nabla g_i, \nabla g_i)$ $L^2$-weakly converge to $\mathrm{Hess}_f^{\meas}(\nabla g, \nabla g)$.
Note that $\mathrm{Hess}_{h_tf_i}^{g_{X_i}}=\mathrm{Hess}_{h_tf_i}^{\meas_i}$ on $X_i$ because $f_i$ is smooth, and that for any $g \in \mathrm{LIP}(X) \cap H^{1, 2}(X)$ there exists an approximation $g_i$ of $g$ as above (c.f. \cite[(10.5), Theorem 10.2]{AmbrosioHonda}).
In particular $\mathrm{Hess}_{h_tf}^{g_X}(\nabla g, \nabla g)(z)=\mathrm{Hess}_{h_tf}^{\meas}(\nabla g, \nabla g)(z)$ for a.e. $z \in X$ for any $g \in \mathrm{LIP}(X) \cap H^{1, 2}(X)$.
From the density of $\mathrm{Test}T^0_2(X)$ in $L^2(T^0_2X)$, this shows $\mathrm{Hess}^{g_X}_{h_tf}(z)=\mathrm{Hess}^{\meas}_{h_tf}(z)$ for a.e. $z \in X$ for any $t \in (0, 1)$.

For any $g \in \mathrm{Test}F(X)$ by (\ref{r}) since $\langle \nabla h_tf, \nabla g \rangle$ is unifomly bounded in $H^{1, p_n}(B_R(x))$ with respect to $t \in (0, 1)$, letting $t \downarrow 0$ shows $\langle \nabla f, \nabla g \rangle \in H^{1, p_n}(B_R(x))$. In particular Corollary \ref{cor:com} yields that $f$ is twice differentiable for a.e. $z \in X$ and that $\mathrm{Hess}_f^{g_X}(\nabla g, \nabla g)=\langle \nabla g, \nabla \langle \nabla f, \nabla g \rangle \rangle -(1/2)\langle \nabla f, \nabla |\nabla g|^2\rangle =\mathrm{Hess}_f^m(\nabla g, \nabla g)$, which completes the proof of (1).  
\end{proof}
\section{Quantitative behavior of Hessians on regular sets}
\begin{lemma}\label{lem:conv hess}
Let us consider the following setting;
\begin{enumerate}
\item let $\delta_i \searrow 0$ be a convergent sequence of positive numbers, let $L_i \nearrow \infty$ be a divergent sequence of positive numbers,
\item let $(X_i, x_i)$ be a sequence of $n$-dimensional Riemannian manifolds with $\mathrm{Ric}_{X_i} \ge -\delta_i(n-1)$,  let $(X_i, x_i, \meas_i) \stackrel{GH}{\to} (\mathbf{R}^k, 0_n, \mathcal{H}^k/\mathcal{H}^k(B_1(0_k)))$, where $\meas_i=\mathcal{H}^n/\mathcal{H}^n(B_1(x_i))$, and 
\item  let $f_i$ be smooth functions on $B_{L_i}(x_i)$ with $\sup_i\|\nabla f_i\|_{L^{\infty}(B_{L_i}(x_i))}<\infty$ and 
$$
\lim_{i \to \infty}\|\Delta f_i\|_{L^2(B_r(x_i))}=0
$$
for any $r>0$.
\end{enumerate}
Then
$$
\lim_{i \to \infty}\|\mathrm{Hess}_{f_i}\|_{L^2(B_r(x_i))}=0
$$
for any $r>0$.
\end{lemma}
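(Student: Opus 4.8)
The plan is to pass to a blow-up limit on $\mathbf{R}^k$ and then control the $L^2$-norm of the Hessian through the integrated Bochner formula on the smooth approximating manifolds. Fix $r>0$. Since $\int_{B_r(x_i)}|\mathrm{Hess}_{f_i}|^2\di\meas_i\ge 0$, it suffices to prove that every subsequence admits a further subsequence along which this integral tends to $0$. After replacing $f_i$ by $f_i-f_i(x_i)$, the hypothesis $\sup_i\|\nabla f_i\|_{L^\infty}<\infty$ makes the $f_i$ uniformly Lipschitz, so by Arzel\`a--Ascoli (in the GH sense) a subsequence converges locally uniformly to a Lipschitz function $f$ on $\mathbf{R}^k$ with $|\nabla f|\le C$. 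Using the Rellich compactness \cite[Theorem 4.9]{Honda2} together with a standard energy comparison (testing $\int\phi_i|\nabla f_i|^2$ against the limit and invoking $\|\Delta f_i\|_{L^2(B_r(x_i))}\to 0$), I would upgrade this to strong $L^2_\loc$-convergence $\nabla f_i\to\nabla f$, whence $|\nabla f_i|^2\to|\nabla f|^2$ strongly in $L^2_\loc$.

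The structural input is the identification of $f$. By the continuity of the Laplacian under mGH-convergence \cite[Theorem 1.3]{Honda2}, the hypothesis $\|\Delta f_i\|_{L^2(B_r(x_i))}\to 0$ forces $f$ to be harmonic on $\mathbf{R}^k$; being harmonic with bounded gradient on all of $\mathbf{R}^k$ (recall $L_i\nearrow\infty$), each partial derivative of $f$ is a bounded harmonic function, hence constant by Liouville, so $f$ is affine and $|\nabla f|^2\equiv c$ is a constant. Consequently $\||\nabla f_i|^2-c\|_{L^2(B_{2r}(x_i))}\to 0$.

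Now I would integrate the Bochner formula against a good cutoff $\phi_i$ with $0\le\phi_i\le 1$, $\phi_i\equiv 1$ on $B_r(x_i)$, $\supp\phi_i\subset B_{2r}(x_i)$, and $|\nabla\phi_i|+\|\Delta\phi_i\|_{L^\infty}\le C(n,r)$, as provided by \cite[Corollary 4.29]{Honda2}. Writing $L=-\Delta$ for the geometer's Laplacian, Bochner's identity on the smooth manifold $X_i$ integrates (after using $\int_{X_i}L\phi_i\di\meas_i=0$ to insert the constant $c$) to
\[
\int_{X_i}\phi_i|\mathrm{Hess}_{f_i}|^2\di\meas_i
=\tfrac12\int_{X_i}(L\phi_i)(|\nabla f_i|^2-c)\di\meas_i
-\int_{X_i}(\Delta f_i)\langle\nabla\phi_i,\nabla f_i\rangle\di\meas_i
+\int_{X_i}\phi_i(\Delta f_i)^2\di\meas_i
-\int_{X_i}\phi_i\,\mathrm{Ric}(\nabla f_i,\nabla f_i)\di\meas_i.
\]
The first term is bounded by $\tfrac12\|L\phi_i\|_{L^2}\,\||\nabla f_i|^2-c\|_{L^2(B_{2r})}\to 0$; the second and third tend to $0$ because $\|\Delta f_i\|_{L^2(B_{2r})}\to 0$ while $|\nabla\phi_i|,|\nabla f_i|,\phi_i$ are uniformly bounded; and the Ricci term is controlled one-sidedly by $\mathrm{Ric}_{X_i}\ge -\delta_i(n-1)$, giving $-\int\phi_i\mathrm{Ric}(\nabla f_i,\nabla f_i)\le \delta_i(n-1)C^2\meas_i(B_{2r}(x_i))\to 0$ since $\delta_i\to 0$ and the masses stay bounded. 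Hence $\int_{B_r(x_i)}|\mathrm{Hess}_{f_i}|^2\le\int\phi_i|\mathrm{Hess}_{f_i}|^2\to 0$ along the subsequence, which is the claim.

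The main obstacle I expect is the first Bochner term: a priori the product $(L\phi_i)|\nabla f_i|^2$ need not vanish, and it is precisely the blow-up identification of $f$ as affine---so that $|\nabla f_i|^2$ concentrates near the constant $c$ in $L^2_\loc$---that lets me subtract $c$ (legitimate because $\int L\phi_i\di\meas_i=0$) and close the estimate by Cauchy--Schwarz. The supporting technical points are the strong $L^2_\loc$-convergence of the gradients and the availability of good cutoffs with $L^2$-controlled Laplacian on the (possibly collapsed) approximating sequence, both supplied by the convergence theory of \cite{Honda2, AmbrosioHonda}.
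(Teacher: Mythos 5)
Your proposal is correct and follows essentially the same route as the paper's proof: identify the blow-up limit $f$ as a linear function on $\mathbf{R}^k$ (harmonicity from the continuity of the Laplacian under mGH-convergence plus the bounded-gradient Liouville property), deduce $\||\nabla f_i|^2-|\nabla f|^2\|_{L^2}\to 0$ from the strong $L^2_{\mathrm{loc}}$-convergence of the gradients, and then integrate Bochner's formula against a good cutoff, absorbing the $\Delta\phi_i$ term via the constancy of $|\nabla f|^2$ and the Ricci term via $\delta_i\to 0$. The only differences are cosmetic (writing Bochner as an identity with the Ricci term rather than as the one-sided inequality, and expanding $\mathrm{div}(\phi_i\nabla f_i)$).
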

\begin{proof}
With no loss of generality we can assume that there exists $f \in \mathrm{LIP}_{\mathrm{loc}}(\mathbf{R}^k)$ such that $f_i$ converges uniformly to $f$ on each compact subset. 

Then the continuity of the Laplacian with respect to the mGH-convergence \cite[Theorem 1.3]{Honda2} yields that $f$ is harmonic on $\mathbf{R}^n$ and that $\nabla f_i$ $L^2$-converge strongly to $\nabla f$ on $B_r(0_n)$ for any $r>0$.
Since $\|\nabla f\|_{L^{\infty}(\mathbf{R}^k)}\le \sup_i\|\nabla f_i\|_{L^{\infty}(B_{L_i}(x_i))}<\infty$, $f$ is a linear function on $\mathbf{R}^k$.
In particular $|\nabla f|$ is constant on $\mathbf{R}^k$.
Thus the $L^2$-strong convergence of $\nabla f_i$ implies
$$
\lim_{i \to \infty}\| |\nabla f_i|^2- |\nabla f|^2 \|_{L^2(B_r(x_i))}=0
$$
for any $r>0$. 
We now take a sequence of good cut-off functions $\phi_i$ (see \cite[Theorem 6.33]{CheegerColding}), i.e.
for any $r>0$ there exists $\phi_i \in C^{\infty}(X_i)$ such that $0 \le \phi_i \le 1$, that $\phi_i|_{B_r(x_i)}\equiv 1$, that $\phi_i|_{X_i \setminus B_{2r}(x_i)} \equiv 0$, and that $|\nabla \phi_i| + |\Delta \phi_i| \le C(n, r)$.
Then since Bochner's formula yields
$$
-\frac{1}{2}\phi_i\Delta \left(|\nabla f_i|^2-|\nabla f|^2\right) \ge \phi_i|\mathrm{Hess}_{f_i}|^2 - \phi_i\langle \nabla \Delta f_i, \nabla f_i \rangle -\delta_i\phi_i|\nabla f_i|^2,
$$
integrating this on $B_{2r}(x_i)$ gives 
$$
\|\mathrm{Hess}_{f_i}\|_{L^2(B_r(x_i))}^2 \le \int_{B_{2r}(x_i)}\left( \frac{1}{2}|\Delta \phi_i|\left| |\nabla f_i|^2 - |\nabla f|^2 \right| + |\Delta f_i| \left| \mathrm{div} (\phi_i \nabla f_i )\right|  + \delta_i |\nabla f_i|^2 \right) \dist \meas_i.
$$
Letting $i \to \infty$ with the Cauthy-Schwarz inequality completes the proof.
\end{proof}
Let $\mathcal{R}_{\tau, \delta}(X):=\bigcup_{1 \le k \le n}\mathcal{R}^k_{\tau, \delta}(X)$.
The following is the main result in this section, which will be used for harmonic functions later.
\begin{theorem}[Quantitative behavior of Hessians on regular sets]\label{thm:quantitative bound hess}
For any $L \in [1, \infty)$, any $p \in (n, \infty ]$ and any $\epsilon \in (0, 1)$, there exists $\delta:=\delta (n, p, L, \epsilon)>0$ such that the following hold;
\begin{enumerate}
\item  let $(X_i, x_i)$ be a sequence of $n$-dimensional Riemannian manifolds with $\mathrm{Ric}_{X_i} \ge -(n-1)$,  let $(X, x, \meas)$ be the mGH-limit of $(X_i, x_i, \meas_i)$,
\item let $r \in (0, 1]$, let $f_i$ be smooth functions on $B_r(x_i)$ with 
$$
\sup_i  (\| \nabla f_i\|_{L^{\infty}(B_r(x_i))} +\|\Delta f_i\|_{L^p(B_r(x_i))}) \le L,
$$
and let $f$ be the $L^2$-strong limit function on $B_r(x)$.
\end{enumerate}
Then for any $\tau, s \in (0, \delta)$, any $y \in B_{r/2}(x) \cap \mathcal{R}_{\tau, s}(X)$ and any $t \in (0, s^2)$, we have
\begin{equation}\label{eq:equi bound hess}
\frac{t^2}{\meas (B_t(y))} \int_{B_t(y)}|\mathrm{Hess}_f^{g_X}|^2\dist \meas < \epsilon.
\end{equation}
\end{theorem}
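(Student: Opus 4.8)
The plan is to argue by contradiction, blowing up at $y$ at the scale $t$ so as to reduce to the qualitative collapse statement of Lemma \ref{lem:conv hess}. First I would normalize $r=1$: the conclusion is invariant under the simultaneous metric--function rescaling used below, and under this normalization the bounds on $\nabla f$ and $\Delta f$ transform into bounds depending only on $n,p,L$ (already using $p>n$ to control the change of the normalized reference measure). Suppose then that the conclusion fails for some fixed $L,p,\epsilon$; taking $\delta=1/m$ yields, for each $m$, a Ricci limit space $(X^{(m)},x^{(m)},\meas^{(m)})$, a limit function $f^{(m)}$ arising from smooth $f_i^{(m)}$ with $\sup_i(\|\nabla f_i^{(m)}\|_{L^\infty}+\|\Delta f_i^{(m)}\|_{L^p})\le L$, scales $\tau_m,s_m\in(0,1/m)$, a point $y^{(m)}\in B_{1/2}(x^{(m)})\cap \mathcal{R}^{k_m}_{\tau_m,s_m}(X^{(m)})$, and $t_m\in(0,s_m^2)$ for which the left-hand side of (\ref{eq:equi bound hess}) is $\ge\epsilon$; after a subsequence $k_m\equiv k$ and $t_m\le s_m^2\to 0$. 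Using the $L^2$-weak convergence of Hessians along $X_i^{(m)}\stackrel{GH}{\to}X^{(m)}$ (\cite[Theorem 10.3]{AmbrosioHonda}, as in the proof of Proposition \ref{prop:comp hess}) and lower semicontinuity of the $L^2$-norm on balls, I would pass to the approximating manifolds: for a suitable index there is a smooth $n$-manifold $M_m$ with $\mathrm{Ric}_{M_m}\ge-(n-1)$, a point $q_m\to y^{(m)}$ inheriting $\dist_{GH}((B_\sigma(q_m),q_m),(B_\sigma(0_k),0_k))\le 2\tau_m\sigma$ for $\sigma\le s_m$, and a smooth $g_m$ with $\|\nabla g_m\|_{L^\infty}+\|\Delta g_m\|_{L^p}\le L$ and $\frac{t_m^2}{\meas(B_{t_m}(q_m))}\int_{B_{t_m}(q_m)}|\mathrm{Hess}_{g_m}|^2\,d\meas\ge\epsilon/2$.

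Next I would blow up $M_m$ at $q_m$ by the factor $t_m^{-1}$ and rescale the function to $\hat g_m:=t_m^{-1}g_m$. The quantity $\frac{t^2}{\meas(B_t)}\int_{B_t}|\mathrm{Hess}|^2$ is invariant under this simultaneous scaling, so over the rescaled unit ball the integral of $|\mathrm{Hess}_{\hat g_m}|^2$ is still $\ge\epsilon/2$. The hypothesis $t_m<s_m^2$ enters precisely here: after rescaling, the Reifenberg bound holds on balls of radius up to $s_m/t_m\ge 1/s_m\to\infty$ with error $\tau_m\to 0$, whence $(M_m,q_m,t_m^{-1}\dist,\hat\meas_m)\stackrel{GH}{\to}(\mathbf{R}^k,0_k,\mathcal{H}^k/\mathcal{H}^k(B_1(0_k)))$ --- with only $t_m<s_m$ the blow-up would be controlled on a bounded ball. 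The rescaled Ricci bound is $-(n-1)t_m^2=:-\delta_m(n-1)$ with $\delta_m\to 0$, the gradient bound $\|\nabla\hat g_m\|_{L^\infty}\le L$ persists, and since $q_m\in B_{1/2}(x^{(m)})$ and $t_m\to 0$ the rescaled domains exhaust $\mathbf{R}^k$; thus all hypotheses of Lemma \ref{lem:conv hess} hold except the decay of the Laplacian.

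The one genuinely quantitative point, which I expect to be the crux, is this decay of the rescaled Laplacian. The Laplacian scales by $t_m^2$, so on the rescaled ball of radius $R$ (original radius $Rt_m$),
\[
\|\hat\Delta\hat g_m\|_{L^2(B_R,\hat\meas_m)}^2=\frac{t_m^2}{\meas(B_{t_m}(q_m))}\int_{B_{Rt_m}(q_m)}|\Delta g_m|^2\,d\meas\le C(n,R,L)\,t_m^{2(1-n/p)},
\]
where the last inequality comes from Hölder with exponents $p/2$ and $(1-2/p)^{-1}$ applied to $\|\Delta g_m\|_{L^p}\le L$, together with the Bishop--Gromov bounds $\meas(B_{Rt_m}(q_m))\le C(n,R)\meas(B_{t_m}(q_m))$ and $\meas(B_{t_m}(q_m))\ge c(n)t_m^n$. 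Since $p>n$ the exponent $2(1-n/p)$ is positive, so the right-hand side tends to $0$ for every fixed $R$. This is exactly where the hypothesis $p>n$ and the dependence $\delta=\delta(n,p,L,\epsilon)$ are forced. Now Lemma \ref{lem:conv hess} applies and gives $\|\mathrm{Hess}_{\hat g_m}\|_{L^2(B_1(q_m))}\to 0$, contradicting the bound $\ge\epsilon/2$ from the second paragraph; the collapse $k<n$ is already absorbed into Lemma \ref{lem:conv hess}.

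Two routine points remain, neither affecting the main line. The left-hand side of (\ref{eq:equi bound hess}) is meaningful because the limit $f$ lies in $\mathcal{D}(\Delta,B_r(x))$ with $\mathrm{Hess}_f^{g_X}=\mathrm{Hess}_f^{\meas}$, by lower semicontinuity of the Laplacian under $L^2$-strong convergence together with Proposition \ref{prop:comp hess}(1); and the comparability $\meas(B_1(q_m))\ge c(n)\meas(B_1(x^{(m)}))$ used in the Bishop--Gromov step holds since $q_m$ stays within bounded distance of the base point. The essential mechanism is the blow-up contradiction, driven by the $p>n$ Hölder estimate feeding Lemma \ref{lem:conv hess}.
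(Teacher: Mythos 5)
Your proposal is correct and follows essentially the same route as the paper's proof: a contradiction argument, a diagonal selection via lower semicontinuity of the $L^2$-norms of Hessians to transfer the lower bound $\ge\epsilon/2$ to the approximating manifolds, a blow-up at scale $t_m^{-1}$ whose range of validity is guaranteed precisely by $t_m<s_m^2$, the Hölder/Bishop--Gromov estimate giving decay $t_m^{2(1-n/p)}\to 0$ of the rescaled Laplacian (the point where $p>n$ enters), and finally Lemma \ref{lem:conv hess}. The only cosmetic difference is that you spell out some routine points (well-definedness of the left-hand side, the weak convergence of Hessians) that the paper leaves implicit.
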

\begin{proof}
By the rescaling; $\dist \mapsto r^{-1}\dist$, $f_i \mapsto r^{-1}f_i$, with no loss of generality we can assume that $r=1$.
The proof of (\ref{eq:equi bound hess}) is done by a contradiction.
If the assertion is false, then there exist $L \in [1, \infty)$, $p \in (n, \infty]$ and $\epsilon \in (0, 1)$ such that for any $j$ there exist;
\begin{itemize}
\item a sequence of $n$-dimensional complete Riemannian manifolds $(X_{i, j}, x_{i, j}, \meas_{i, j})$ with $\mathrm{Ric}_{X_{i, j}} \ge -(n-1)$,
\item the mGH-limit space $(X_j, x_j, \meas_j)$ of $(X_{i, j}, x_{i, j}, \meas_{i, j})$,
\item a sequence of smooth functions $f_{i, j}$ on $B_1(x_{i, j})$ with $\sup_i (\|\nabla f_{i, j}\|_{L^{\infty}(B_1(x_{i, j}))} +\| \Delta f_i\|_{L^p(B_1(x_{i, j}))} )\le L$,
\item the $L^2$-strong limit function $f_j$ of $f_{i, j}$ on $B_1(x_j)$, 
\item real numbers $\tau_j, s_j \in (0, j^{-1})$ and a point $y_j \in B_{1/2}(x_j) \cap \mathcal{R}_{\tau_j, s_j}(X_j)$ such that
$$
\frac{t_j^2}{\meas_j(B_{t_j}(y_j))}\int_{B_{t_j}(y_j)}|\mathrm{Hess}_{f_j}^{g_{X_j}}|^2\dist \meas_j \ge \epsilon
$$
for some $t_j \in (0, s_j^2)$.
\end{itemize} 
Let us consider the rescaling by $t_j^{-1}$; $\dist \mapsto t_j^{-1}\dist$, $f_{i, j} \mapsto t_j^{-1}f_{i, j}$, $\meas_{i, j} \mapsto \meas_{i, j}/\meas_{i, j}(B_{t_j}(x_j))$, etc. 
Moreover we shall use the ``hat''-notation after the rescaling, i.e. $\hat{\dist}:=t_j^{-1}\dist, \hat{f}_{i, j}:=t_j^{-1}f_{i, j}$, etc for short.
Note that for a fixed $R>0$ since $Rt_j<s_j$ for any sufficiently large $j$, we have 
$\dist_{GH}\left((B_{Rt_j}(y_j), y_j), (B_{Rt_j}(0_k), 0_k)\right)<R\tau_jt_j$,
i.e. 
$$
(\hat{X}_j, \hat{\dist}, \hat{y}_j, \hat{\meas}_j) \stackrel{GH}{\to} \left(\mathbf{R}^k, 0_k, \frac{1}{\mathcal{H}^k(B_1(0_k))}\mathcal{H}^k \right)
$$ for some $k \le n$.

Then by the lower semicontinuity of $L^2$-norms of Hessians \cite[Theorem 1.3]{Honda2}, we have
$$
\liminf_{i \to \infty}\|\mathrm{Hess}_{\hat{f}_{i, j}}\|_{L^2(\hat{B}_1(y_{i, j}))} \ge \|\mathrm{Hess}_{\hat{f}_{j}}^{g_{X_j}}\|_{L^2(\hat{B}_1(y_{j}))} \ge \epsilon
$$
for $y_{i, j} \stackrel{GH}{\to} y_j$.
Thus there exists a subsequence $i(j)$ such that $(\hat{X}_{i(j), j}, \hat{\dist}, y_{i(j), j}, \hat{\meas}_{i(j), j}) \stackrel{GH}{\to} (\mathbf{R}^k, 0_k, \frac{1}{\mathcal{H}^k(B_1(0_k))}\mathcal{H}^k)$ and that 
\begin{equation}\label{eq:contradiction}
\| \mathrm{Hess}_{\hat{f}_{i(j), j}}\|_{L^2(\hat{B}_1(y_{i(j), j}))} \ge \epsilon /2.
\end{equation}
On the other hand since $\sup_j \|\nabla \hat{f}_{i(j), j}\|_{L^{\infty}(\hat{B}_{1/(2t_j)}(y_{i(j), j}))}<\infty$, $\mathrm{Ric}_{\hat{X}_{i(j), j}} \ge -\tau_jt_j^2(n-1)$ and
\begin{align*}
\int_{\hat{B}_R(y_{i(j), j})}|\Delta \hat{f}_{i(j), j}|^2\dist \meas_{i(j), j}&=\frac{(t_j)^2}{\mathcal{H}^n(B_{Rt_j}(y_{i(j), j}))}\int_{B_{Rt_j}(y_{i(j), j})}|\Delta f_{i(j), j}|^2\dist \mathcal{H}^n\\
& \le (t_j)^2 \left(\mathcal{H}^n(B_{Rt_j}(y_{i(j), j}))\right)^{-2/p}\|\Delta f_{i(j), j}\|_{L^p(B_1(x_{i(j), j}))}\\
&\le C(n) (t_j)^2 (Rt_j)^{-(2n)/p}\|\Delta f_{i(j), j}\|_{L^p(B_1(x_{i(j), j}))} \to 0
\end{align*}
as $j \to \infty$
for any $R \in (0, \infty)$,
Lemma \ref{lem:conv hess} yields
$$
\lim_{j \to \infty}\|\mathrm{Hess}_{\hat{f}_{i(j), j}}\|_{L^2(\hat{B}_1(y_{i(j), j}))}=0,
$$
which contradicts (\ref{eq:contradiction}).
Thus we have (\ref{eq:equi bound hess}).
\end{proof}
\section{Orientability of Ricci limit spaces}
\subsection{Oriented atlas}
Let $(X, x, \meas)$ be a Ricci limit space whose dimension is $k$.
\begin{definition}[Orientations as rectifiable metric measure space]
We say that a rectifiable atlas $\{(C_i, \phi_i)\}_i$ of $(X, x, \meas)$ is \textit{oriented} if 
$$
\det J\left(\phi_i \circ (\phi_j)^{-1}\right)(z)>0
$$
for a.e. $z \in \phi_j(C_i \cap C_j)$ for all $i, j$.
We say that  two oriented rectifiable atlases $\{(C_i^j, \phi_i^j)\}_i (j=1, 2)$ are \textit{equivalent} if $\{(C_i^1, \phi_i^1)\}_i \cup \{(C_i^2, \phi_i^2)\}_i$ is also oriented. We denote by $O(X, \meas)$ the set of all equivalence classes $[\{(C_i, \phi_i)\}_i]$.
\end{definition}
It is not hard to check the following (see the proof of \cite[Lemma 3.5]{Honda10}). 
\begin{lemma}\label{ee}
We have the following.
\begin{enumerate}
\item Let $\omega \in L^{\infty}(\bigwedge^kT^*X)$ with $|\omega|(z)=1$ for a.e. $z \in X$.
Then for any Borel subset $C$ of $X$ and any $\epsilon \in (0, 1)$ there exists a countable family of pairwise disjoint rectifiable patches $(C_i, \phi_i)$ such that $C_i \subset C$, that $\meas (C  \setminus \bigcup_iC_i)=0$, that the orientation of each $(C_i, \phi_i)$ is compatible with $\omega$, i.e. $\langle \omega, \dist \phi_{i, 1} \wedge \cdots \wedge \dist \phi_{i, k} \rangle (z) >0$ for a.e. $z \in C_i$, and that $\phi_i$ is an $(1 \pm \epsilon)$-bi-Lipschitz embedding. Moreover we can take each $C_i$ as a compact subset.
\item Let $\{(C_i, \phi_i)\}_i$ be a rectifiable atlas of $(X, x, \meas)$ and let $\{C_{i, j}\}_j$ be countable families of Borel subsets $C_{i, j}$ of $C_i$ with $\meas \left( C_i \setminus \bigcup_jC_{i, j}\right)=0$.
Then there exist families $\{D_{i, j}\}_j$ of Borel subsets $D_{i, j}$ of $C_{i, j}$ such that $\meas \left( C_{i, j} \setminus D_{i, j}\right)=0$ and that $\{(D_{i, j}, \phi_i|_{D_{i, j}})\}_{i, j}$ is a rectifiable atlas of $(X, x, \meas)$.
\end{enumerate}
\end{lemma}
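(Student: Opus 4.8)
The plan is to reduce both statements to the existence of a sufficiently rich rectifiable atlas (Theorem \ref{thm:reg}(4), refined as in Theorem \ref{thm:chc}) together with elementary sign- and measure-theoretic surgery; no new analytic input beyond inner regularity of $\meas$ is needed.

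For (1) I would first fix a rectifiable atlas of $(X,x,\meas)$ and discard every chart whose bi-Lipschitz constant is worse than $(1\pm\epsilon)$. By the defining refinement property of a rectifiable atlas the retained charts still cover $\meas$-almost all of $X$, so intersecting their domains with $C$ produces a countable Borel cover $\{(E_m,\psi_m)\}_m$ of $C$, up to a $\meas$-null set, by $(1\pm\epsilon)$-bi-Lipschitz charts with $E_m\subset C$. On each $E_m$ the function $h_m:=\langle \omega,\dist\psi_{m,1}\wedge\cdots\wedge\dist\psi_{m,k}\rangle$ is $\meas$-a.e.\ nonzero: $\omega(z)\neq 0$ because $|\omega|=1$ a.e., while $\dist\psi_{m,1}\wedge\cdots\wedge\dist\psi_{m,k}(z)\neq 0$ a.e.\ since $\psi_m$ is bi-Lipschitz and its differentials therefore span $T_z^*X$ for a.e.\ $z$; as both are top-forms in the one-dimensional fibre $\bigwedge^kT_z^*X$, their pairing can vanish only on a null set. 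I would then split $E_m=E_m^{+}\sqcup E_m^{-}$ by the sign of $h_m$ and, on $E_m^{-}$, replace $\psi_m$ by its composition with the reflection $(t_1,\dots,t_k)\mapsto(-t_1,t_2,\dots,t_k)$ of $\mathbf{R}^k$. This flips the sign of the wedge, hence aligns the induced orientation with $\omega$ on $E_m^{-}$, while preserving the bi-Lipschitz constant because a reflection is an isometry.

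It then remains to make the patches pairwise disjoint and compact. Disjointness follows from the usual telescoping $F'_l:=F_l\setminus\bigcup_{l'<l}F_{l'}$ after enumerating the patches as $\{F_l\}_l$; restriction preserves both the bi-Lipschitz bound and the pointwise sign of $\langle\omega,\cdot\rangle$. For compactness I would invoke inner regularity of the Radon measure $\meas$ on the proper space $X$: inside each Borel $F'_l$ one extracts pairwise disjoint compact sets $K_{l,1},K_{l,2},\dots$ with $\meas\big(F'_l\setminus\bigcup_m K_{l,m}\big)=0$, by repeatedly choosing in the remaining Borel set a compact subset carrying at least half of its measure. The family $\{(K_{l,m},\psi|_{K_{l,m}})\}_{l,m}$ is then the desired oriented cover.

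For (2) the only nontrivial issue is that the refinement axiom of a rectifiable atlas — arbitrarily good charts through every point of the union — must survive the passage to subsets, and I would secure this by working on a single full-measure good set. Let $G$ be the set of $z\in\bigcup_iC_i$ such that for every rational $\epsilon\in(0,1)$ there is a pair $(i,j)$ with $z\in C_{i,j}$ and $\phi_i$ being $(1\pm\epsilon)$-bi-Lipschitz on $C_i$; this is Borel, being a countable intersection over $\epsilon$ of countable unions of the $C_{i,j}$. The key point is that $(\bigcup_iC_i)\setminus G$ is $\meas$-null: if $z\notin G$, witnessed by some $\epsilon_0$, the original atlas still provides an $i_0$ with $z\in C_{i_0}$ and $\phi_{i_0}$ $(1\pm\epsilon_0)$-bi-Lipschitz, so $z$ must avoid every $C_{i_0,j}$, i.e.\ $z\in C_{i_0}\setminus\bigcup_jC_{i_0,j}$, and these sets are null by hypothesis. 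Setting $D_{i,j}:=C_{i,j}\cap G$ then yields $\meas(C_{i,j}\setminus D_{i,j})=0$, a cover of $X$ up to a null set, and, for every $z\in\bigcup_{i,j}D_{i,j}\subset G$ and every $\epsilon$, a pair $(i,j)$ with $z\in D_{i,j}$ and $\phi_i|_{D_{i,j}}$ $(1\pm\epsilon)$-bi-Lipschitz, which are exactly the axioms of a rectifiable atlas. I expect this verification of the refinement axiom on $G$ to be the main (though still routine) obstacle, the remainder being the standard sign-flipping and inner-regularity bookkeeping described above.
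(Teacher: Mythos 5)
Your argument is correct and is essentially the one the paper intends: the paper offers no proof of this lemma beyond a pointer to \cite[Lemma 3.5]{Honda10}, and the sign-splitting into $C_i^{\pm}$ followed by composition with an orientation-reversing isometry of $\mathbf{R}^k$ (the paper transposes two coordinates where you reflect one; both preserve the bi-Lipschitz constant) is exactly the device deployed in the paragraph immediately after the lemma, while your good-set $G$ cleanly settles the refinement axiom in (2). The only detail worth a sentence in a write-up is that the inner-regularity exhaustion by compact sets should first be run on bounded pieces of each patch, since $\meas$ is only locally finite and $C$ may have infinite measure.
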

Let us take $\omega \in L^{\infty}(\bigwedge^kT^*X)$ with $|\omega (z)| =1$ for a.e. $z \in X$.
Then we define an oriented rectifiable atlas (associated with $\omega$) as follows.
We first fix a rectifiable atlas $\{(C_i, \phi_i)\}_i$ of $(X, x, \meas)$.
Let $C_i^+:=\{z \in C_i; \langle \omega, \dist \phi_{i, 1} \wedge \cdots \dist \phi_{i, k}\rangle (z)>0\}$, let $C_i^-:=\{z \in C_i; \langle \omega, \dist \phi_{i, 1} \wedge \cdots \dist \phi_{i, k}\rangle (z)<0\}$, let $\phi_i^+:=\phi_i$ and let $\phi_i^-:=(\phi_{i, 2}, \phi_{i, 1}, \phi_{i, 3}, \phi_{i, 4}, \ldots, \phi_{i, k})$.
Then applying Lemma \ref{ee} for $\{(C_i^+, \phi_i^+)\}_i \cup \{(C_i^-, \phi_i^-)\}_i$ gives an oriented rectifiable atlas.
We denote by $\mathcal{A}_{\omega}$ the atlas. Then it is easy to check the map: $\omega \mapsto [\mathcal{A}_{\omega}]$
is well-defined from the space $\{\omega \in L^{\infty}(X); |\omega (z)|=1\,\mathrm{for}\,\mathrm{a.e.} z \in X\}$ to $O(X, \meas)$ and that it is bijective.

From this observation we see that there are uncountable many equivalence classes of oriented, rectifiable atlases.
In the next subsection we will discuss main orientability in the sense of Ricci limit space.
\subsection{Definition and Properties}
We first recall \textit{test differential forms} introduced in \cite{Gigli}: $\mathrm{TestForm}_k(X):=\{\sum_{i=1}^Nf_{0, i}\dist f_{1, i} \wedge \cdots \wedge \dist f_{k, i}; N \in \mathbb{N}, f_{j, i} \in \mathrm{Test}F(X)\}$, which is dense in $L^2(\bigwedge^kT^*X)$.
Let us reformulate the definition of orientability by using test differential forms.
Let $(X, x, \meas)$ be a Ricci limit space whose dimension is $k$
\begin{definition}[Orientability]\label{def:ori}
We say that \textit{$(X, x, \meas)$ is orientable} if there exists $\omega \in L^{\infty}\left( \bigwedge^kT^*X \right)$ such that $|\omega|(z) =1$ for a.e. $z \in X$ and that 
\begin{equation}\label{eq:reg cond}
\left\langle \omega, \eta \right\rangle \in H^{1, 2}(X)
\end{equation}
for any $\eta \in \mathrm{TestForm}_k(X)$.
Then we call $\omega$ an \textit{orientation of} $(X, x, \meas)$. 
\end{definition}
\begin{proposition}\label{rem:equiv}
Let $\omega \in L^{\infty}(\bigwedge^kT^*X)$ with $|\omega |(z)=1$ for a.e. $z \in X$.
Then $\omega$ is an orientaion of $(X, x, \meas)$ if and only if 
\begin{equation}\label{eq:char}
\langle \omega, \dist f_1 \wedge \cdots \wedge \dist f_k \rangle \in H^{1, 2}(X)
\end{equation}
for any $f_i \in \mathrm{Test}F(X)$.
\end{proposition}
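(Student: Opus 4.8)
The plan is to prove the two implications separately, writing $g_{\mathbf{f}} := \langle\omega,\dist f_1\wedge\cdots\wedge\dist f_k\rangle$ for $\mathbf{f} = (f_1,\ldots,f_k)\in(\mathrm{Test}F(X))^k$. First I would record an a priori bound used throughout: since $|\omega|\equiv 1$ and each $f_i$ is bounded Lipschitz,
$$|g_{\mathbf{f}}| \le \prod_{i=1}^k|\dist f_i| \le |\dist f_1|\prod_{i=2}^k\mathbf{Lip}f_i,$$
so $g_{\mathbf{f}}\in L^{\infty}(X)$, and since $f_1\in H^{1,2}(X)$ forces $|\dist f_1|\in L^2(X)$, also $g_{\mathbf{f}}\in L^2(X)$. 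Thus the only real content of $g_{\mathbf{f}}\in H^{1,2}(X)$ is the $L^2$-integrability of its differential.

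For the implication "(\ref{eq:char})$\Rightarrow$ orientability" I would use the Leibniz rule. Any $\eta\in\mathrm{TestForm}_k(X)$ is a finite sum $\sum_i f_{0,i}\dist f_{1,i}\wedge\cdots\wedge\dist f_{k,i}$, so $\langle\omega,\eta\rangle = \sum_i f_{0,i}\,g_{\mathbf{f}^{(i)}}$ with $f_{0,i}\in\mathrm{Test}F(X)\subset\mathrm{LIP}(X)\cap L^{\infty}(X)$ and $g_{\mathbf{f}^{(i)}}\in H^{1,2}(X)\cap L^{\infty}(X)$ by hypothesis. The product of a bounded Lipschitz function with a bounded $H^{1,2}$-function again lies in $H^{1,2}(X)$, with $\dist(f_{0,i}g_{\mathbf{f}^{(i)}}) = f_{0,i}\dist g_{\mathbf{f}^{(i)}} + g_{\mathbf{f}^{(i)}}\dist f_{0,i}$ and both summands in $L^2(X)$; summing over $i$ gives $\langle\omega,\eta\rangle\in H^{1,2}(X)$.

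The converse "orientability$\Rightarrow$(\ref{eq:char})" is the substantial direction; the difficulty is purely the passage from local to global integrability when $\meas(X)=\infty$, since the constant $1$ need not be a test function. Testing orientability against $f_0\dist f_1\wedge\cdots\wedge\dist f_k$ shows $f_0\,g_{\mathbf{f}}\in H^{1,2}(X)\subset\Gamma_1(X)$ for every $f_0\in\mathrm{Test}F(X)$; choosing $f_0\equiv 1$ on an arbitrary ball (a good cutoff, which may be taken in $\mathrm{Test}F(X)$) localizes this to $g_{\mathbf{f}}\in\Gamma_1(X)$. Since $\langle\omega,\nabla^0_k\mathbf{f}\rangle$ equals $g_{\mathbf{f}}$ up to a combinatorial factor, Corollary \ref{cor:com} then upgrades this to $\omega\in\Gamma_1(\bigwedge^kT^*X)$, so $\nabla^{g_X}\omega$ is defined for a.e.\ $z$. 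The key observation is that $\bigwedge^kT^*X$ is a line bundle (as $k=\dim X$), so $\nabla^{g_X}\omega=\alpha\otimes\omega$ for an a.e.-defined $1$-form $\alpha$; pairing with $\omega$ and using metric compatibility of $\nabla^{g_X}$ together with $|\omega|^2\equiv 1$ gives $\alpha=\langle\nabla^{g_X}\omega,\omega\rangle=\tfrac12\nabla|\omega|^2=0$, hence $\nabla^{g_X}\omega=0$ a.e. In short, a normalized top-form is automatically parallel.

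With $\nabla^{g_X}\omega=0$ the dangerous first term of (\ref{eq:fundamental}) disappears, leaving
$$|\nabla g_{\mathbf{f}}| \le \sum_{i=1}^k|\mathrm{Hess}^{g_X}_{f_i}|\prod_{j\ne i}|\dist f_j|.$$
Each $f_i\in\mathrm{Test}F(X)\subset\mathcal{D}(\Delta,X)$ lies in $\Gamma_2(X)$ with $\mathrm{Hess}^{g_X}_{f_i}=\mathrm{Hess}^{\meas}_{f_i}\in L^2(X)$ by Proposition \ref{prop:comp hess}, while the factors $|\dist f_j|\le\mathbf{Lip}f_j$ are bounded; hence the right-hand side is in $L^2(X)$, giving $|\nabla g_{\mathbf{f}}|\in L^2(X)$. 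Combined with $g_{\mathbf{f}}\in L^2(X)$ from the first paragraph, this yields $g_{\mathbf{f}}\in H^{1,2}(X)$, which is (\ref{eq:char}). The main obstacle is precisely this global $L^2$-control of the gradient; the mechanism that overcomes it is the parallelism $\nabla^{g_X}\omega=0$ of a unit top-form, which converts the a priori only-local regularity of $\omega$ into a genuine global $L^2$-gradient bound arising solely from the Hessians of the test functions.
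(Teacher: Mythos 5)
Your proposal is correct in substance, but it follows a genuinely different route from the paper's for the nontrivial implication, and the comparison is instructive. You correctly isolate the easy direction ((\ref{eq:char}) $\Rightarrow$ orientability: pure Leibniz rule, since each $f_{0,i}$ is a bounded Lipschitz $H^{1,2}$-function and each $g_{\mathbf{f}^{(i)}}$ is bounded and $H^{1,2}$) and the substantial one (orientability $\Rightarrow$ (\ref{eq:char}), where the whole issue is that $1\notin \mathrm{Test}F(X)$ when $\meas (X)=\infty$). For the substantial direction the paper argues by approximation: it takes cut-offs $\phi_j$, observes via (\ref{eq:bakry}) that $h_t\phi_j\in\mathrm{Test}F(X)$, so that $h_t\phi_j\langle\omega,\dist f_1\wedge\cdots\wedge\dist f_k\rangle=\langle\omega,h_t\phi_j\dist f_1\wedge\cdots\wedge\dist f_k\rangle\in H^{1,2}(X)$ directly from the definition of orientation, and then lets $t\downarrow 0$ and $j\uparrow\infty$. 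You instead establish $\omega\in\Gamma_1$ via Corollary \ref{cor:com}, prove $\nabla^{g_X}\omega=0$ from $|\omega|\equiv 1$ on the line bundle $\bigwedge^kT^*X$, and invoke (\ref{eq:fundamental}) to obtain the global bound $|\nabla g_{\mathbf{f}}|\le\sum_i|\mathrm{Hess}^{g_X}_{f_i}|\prod_{j\ne i}|\dist f_j|\in L^2(X)$; this is exactly the content of the paper's subsequent Propositions \ref{prop:prop1} and \ref{b}, front-loaded into the present proof, and there is no circularity since those arguments use only the orientation hypothesis. What your route buys is that it makes explicit the uniform estimate which the paper's limiting step tacitly requires: the purely functional-analytic statement ``$f_0g\in H^{1,2}(X)$ for all $f_0\in\mathrm{Test}F(X)$ implies $g\in H^{1,2}(X)$'' is false in general (on $\mathbf{R}$ the function $g(x)=\sin(x^3)/(1+x^2)$ satisfies the hypothesis but not the conclusion), so the parallelism of $\omega$ really is the mechanism that closes the argument.

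The one step you should still write out is the very last one: knowing $g_{\mathbf{f}}\in L^2(X)$, $g_{\mathbf{f}}\in\Gamma_1(X)$ with locally $H^{1,2}$ regularity, and $|\nabla g_{\mathbf{f}}|\in L^2(X)$ does not by itself place $g_{\mathbf{f}}$ in $H^{1,2}(X)$, which is defined as a completion of locally Lipschitz functions. One more approximation is needed: $h_t\phi_jg_{\mathbf{f}}\in H^{1,2}(X)$ by the orientation hypothesis, $h_t\phi_jg_{\mathbf{f}}\to g_{\mathbf{f}}$ in $L^2(X)$, and $\|\dist (h_t\phi_jg_{\mathbf{f}})\|_{L^2}\le\|\nabla g_{\mathbf{f}}\|_{L^2}+e^{Ct}\|g_{\mathbf{f}}\|_{L^2}$ uniformly, whence weak compactness in the Hilbert space $H^{1,2}(X)$ yields $g_{\mathbf{f}}\in H^{1,2}(X)$. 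All the ingredients are already on your table; this is precisely how the paper closes the analogous Proposition \ref{b}.
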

\begin{proof}
It is easy to check the proof of the `only if' part.
Assume that $|\omega|(z)=1$ for a.e. $z \in X$ and (\ref{eq:char}) are satisfied.
Take a sequence $\phi_j \in \mathrm{LIP}_c(X)$ such that $0 \le \phi_j \le 1$, that $\phi_j \equiv 1$ on $B_j(x)$, that $\supp \phi_j \subset B_{j+1}(x)$, and that $|\nabla \phi_j| \le 1$.
Then since  (\ref{eq:bakry}) yields $h_t\phi_j \in \mathrm{Test}F(X)$, we see that by definition 
$$
h_t\phi_j \langle \omega, \dist f_1 \wedge \cdots \wedge \dist f_k \rangle = \langle \omega, h_t\phi_j \dist f_1 \wedge \cdots \wedge \dist f_k \rangle\in H^{1, 2}(X)  \quad \forall t>0.
$$
Thus letting $t \downarrow0$ and then letting $j \uparrow \infty$ show (\ref{eq:reg cond}).
\end{proof}
\begin{proposition}\label{prop:prop1}
Let $(X, x, \meas)$ be a Ricci limit space and let $\omega \in L^{\infty}\left(\bigwedge^kT^*X\right)$ be an oriention of $(X, x, \meas)$.
Then $\omega$ is differentiable for a.e. $z \in X$ with $\nabla^{g_X} \omega (z)= 0$ for a.e. $z \in X$.
\end{proposition}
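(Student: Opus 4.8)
The plan is to separate the two claims of the statement: differentiability of $\omega$ for a.e.\ $z$ (which is what makes $\nabla^{g_X}\omega$ defined pointwise) and the vanishing $\nabla^{g_X}\omega=0$. Once differentiability is in hand, the vanishing is a purely algebraic consequence of the fact that $\bigwedge^kT^*X$ is a \emph{line} bundle on which $\omega$ is a unit section, combined with the metric compatibility of the Levi--Civita connection $\nabla^{g_X}$.

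\textbf{Differentiability.} By Proposition~\ref{rem:equiv} the orientation condition is equivalent to $\langle\omega,\dist f_1\wedge\cdots\wedge\dist f_k\rangle\in H^{1,2}(X)$ for all $f_i\in\mathrm{Test}F(X)$, and every $H^{1,2}$-function lies in $\Gamma_1(X)$, as recalled in $(*)$ of Subsection 2.2. Since $\omega$ is antisymmetric, contracting it against $\dist f_1\otimes\cdots\otimes\dist f_k=\nabla^0_k(f_1,\ldots,f_k)$ produces, up to a combinatorial constant, the scalar $\langle\omega,\dist f_1\wedge\cdots\wedge\dist f_k\rangle\in\Gamma_1(X)$. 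Hence $\langle\omega,\nabla^0_kF\rangle\in\Gamma_1(X)$ for every $F\in(\mathrm{Test}F(X))^k$, and Corollary~\ref{cor:com} applied with $T=\omega$ yields $\omega\in\Gamma_1(\bigwedge^kT^*X)$. In particular $\nabla^{g_X}\omega\in\Gamma_0\!\big(\bigwedge^kT^*X\otimes T^*X\big)$ is well defined through the formula~(\ref{huuh}).

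\textbf{Vanishing.} At a.e.\ $z\in X$ we have $z\in\mathcal{R}^k(X)$ with $k=\mathrm{dim}\,X$ (Theorem~\ref{thm:reg}), so the fibre $\bigwedge^kT^*_zX$ is one-dimensional and is spanned by $\omega(z)$, because $|\omega(z)|=1\neq0$. Consequently, for every vector field $V$ there is a scalar $\lambda(z,V)$ with $\nabla^{g_X}_V\omega(z)=\lambda(z,V)\,\omega(z)$ for a.e.\ $z$. On the other hand, since $\nabla^{g_X}g_X\equiv0$, the connection $\nabla^{g_X}$ is compatible with the inner product induced on every tensor bundle, in particular on $\bigwedge^kT^*X$; thus $V\langle\omega,\omega\rangle=2\langle\nabla^{g_X}_V\omega,\omega\rangle$ a.e. The left-hand side vanishes because $\langle\omega,\omega\rangle=|\omega|^2\equiv1$, so substituting $\nabla^{g_X}_V\omega=\lambda\,\omega$ gives $0=\lambda\,|\omega|^2=\lambda$. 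Hence $\nabla^{g_X}_V\omega=0$ a.e.\ for every $V$, that is, $\nabla^{g_X}\omega=0$ a.e., as claimed.

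The only point that needs genuine care is the metric compatibility invoked above: one must verify that the inner product $g_X$ induces on the line bundle $\bigwedge^kT^*X$ is parallel, which reduces to $\nabla^{g_X}g_X\equiv0$ and the defining identity~(\ref{huuh}) for covariant derivatives of tensor fields. I expect this bookkeeping, rather than any analytic difficulty, to be the main (and rather modest) obstacle. It is worth stressing that neither the quantitative Hessian bound of Theorem~\ref{thm:quantitative bound hess} nor the pointwise inequality~(\ref{eq:fundamental}) is needed here; those enter instead into the continuity of the sign function along limit geodesics in the proof of the uniqueness Theorem~\ref{thm:unique}.
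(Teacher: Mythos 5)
Your proposal is correct and follows essentially the same route as the paper: differentiability via Corollary \ref{cor:com} (the paper likewise reduces the hypothesis of that corollary to the regularity condition $\langle \omega, \dist f_1 \wedge \cdots \wedge \dist f_k\rangle \in H^{1,2}(X) \subset \Gamma_1(X)$), and vanishing via $0 = \langle \nabla |\omega|^2, v\rangle = 2\langle \nabla^{g_X}\omega, \omega \otimes v\rangle$ combined with the fact that $\omega(z)$ spans the one-dimensional fibre $\bigwedge^k T^*_z X$ for a.e.\ $z$. Your intermediate step $\nabla^{g_X}_V\omega = \lambda\,\omega$ is just a rephrasing of the paper's appeal to $\omega(z)$ being a basis, so there is nothing substantively different to compare.
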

\begin{proof}
Corollary \ref{cor:com} yields that $\omega$ is differentiable for a.e. $z \in X$.
By the definition of Levi-Civita connection for a.e. $z \in X$ we have
$$
0=\langle \nabla |\omega|^2, v \rangle (z)= 2\langle \nabla^{g_X} \omega, \omega \otimes v \rangle (z)
$$
for any $v \in T^*_zX$.
Since $\omega (z)$ is a basis of $\bigwedge^kT^*_zX$ for a.e. $z \in X$, we have $\nabla^{g_X} \omega (z)=0$ for a.e. $z \in X$.  
\end{proof}
\begin{proposition}\label{b}
Let $(X, x, \meas)$ be a Ricci limit space and let $\omega \in L^{\infty}\left(\bigwedge^kT^*X\right)$ be an orientation of $(X, x,  \meas)$.
Then
\begin{equation}\label{eq:ineq}
\left\langle \omega, \dist f_1 \wedge \cdots \wedge \dist f_k\right\rangle \in H^{1, 2}(B_r(z))
\end{equation}
for any $r<R$ and any $f_i \in \mathcal{D}(\Delta, B_R(z))$ with $|\nabla f_i| \in L^{\infty}(B_R(x))$.
\end{proposition}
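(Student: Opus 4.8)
The plan is to deduce (\ref{eq:ineq}), which involves the local functions $f_i$, from the defining regularity of an orientation, which only concerns global test functions, by combining a cut-off, a heat-flow regularization, and a weak compactness argument in $H^{1,2}(X)$. First I would fix a good cut-off $\phi \in \mathrm{LIP}_c(X)$ with $0 \le \phi \le 1$, $\phi \equiv 1$ on $B_r(z)$, $\supp\phi \subset B_{R'}(z)$ for some $r < R' < R$, and $|\nabla\phi| + |\Delta\phi| \le C$, which exists by \cite[Corollary 4.29]{Honda2}. Since $f_i \in \mathcal{D}(\Delta, B_R(z))$ and $|\nabla f_i| \in L^\infty(B_R(z))$, the product $\phi f_i$ (extended by $0$) lies in $\mathcal{D}(\Delta, X) \cap \mathrm{LIP}(X) \cap L^\infty(X)$, because $\Delta(\phi f_i) = \phi\,\Delta f_i + 2\langle\nabla\phi, \nabla f_i\rangle + f_i\,\Delta\phi \in L^2(X)$. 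Regularizing by the heat flow, I set $f_{i,t} := h_t(\phi f_i)$; then (\ref{eq:bakry}) gives $f_{i,t} \in \mathrm{Test}F(X)$, so Proposition \ref{rem:equiv} and the hypothesis that $\omega$ is an orientation yield $g_t := \langle\omega, \dist f_{1,t}\wedge\cdots\wedge\dist f_{k,t}\rangle \in H^{1,2}(X)$ for every $t > 0$.

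The core of the argument is a bound on $\|g_t\|_{H^{1,2}(X)}$ uniform in $t \in (0,1)$, and here the parallelism of $\omega$ is decisive. By Proposition \ref{prop:prop1} we have $\nabla^{g_X}\omega = 0$ and $|\omega| = 1$ a.e., so in the fundamental inequality (\ref{eq:fundamental}) the term $|\nabla^{g_X}\omega|$ drops out and
\[
|\nabla g_t| \le \sum_{i=1}^k |\mathrm{Hess}^{g_X}_{f_{i,t}}| \prod_{j\neq i}|\nabla f_{j,t}| \quad \text{a.e.}
\]
Bakry-\'Emery bounds $|\nabla f_{j,t}| = |\nabla h_t(\phi f_j)|$ uniformly in $t \in (0,1)$, while $\|\mathrm{Hess}^{g_X}_{f_{i,t}}\|_{L^2(X)}$ is bounded uniformly in $t \in (0,1)$ by Bochner's inequality exactly as in the proof of Lemma \ref{aa}, using $f_{i,t} \in \mathrm{Test}F(X)$, the identification $\mathrm{Hess}^{g_X} = \mathrm{Hess}^{\meas}$ from Proposition \ref{prop:comp hess}, and $\|\Delta h_t(\phi f_i)\|_{L^2} \le \|\Delta(\phi f_i)\|_{L^2}$. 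Together with $\|g_t\|_{L^\infty} \le \prod_i \|\nabla f_{i,t}\|_{L^\infty}$ this gives $\sup_{t\in(0,1)}\|g_t\|_{H^{1,2}(X)} < \infty$.

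Finally I would let $t \downarrow 0$. Since $\nabla f_{i,t} \to \nabla(\phi f_i)$ in $L^2(X)$ with uniform $L^\infty$ bounds, multilinearity in the differentials yields $g_t \to G := \langle\omega, \dist(\phi f_1)\wedge\cdots\wedge\dist(\phi f_k)\rangle$ in $L^2(X)$, the contribution outside $\supp\phi$ being controlled by $\|\nabla h_t(\phi f_1)\|_{L^2(X\setminus\supp\phi)}\to 0$. As $H^{1,2}(X)$ is a Hilbert space and $\{g_t\}$ is bounded in it, a weakly convergent subsequence identifies its strong $L^2$-limit $G$ as an element of $H^{1,2}(X)$. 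On $B_r(z)$ one has $\phi\equiv1$, hence $\dist(\phi f_i) = \dist f_i$ and $G = \langle\omega, \dist f_1\wedge\cdots\wedge\dist f_k\rangle$ a.e., which proves (\ref{eq:ineq}). The main obstacle is precisely the uniform-in-$t$ control of the displayed right-hand side: it is the parallelism $\nabla^{g_X}\omega = 0$ that removes the otherwise uncontrolled $|\nabla^{g_X}\omega|$ term, and it is the Ricci lower bound, through Bochner and Bakry-\'Emery, that furnishes the uniform Hessian and gradient estimates needed to pass to the limit.
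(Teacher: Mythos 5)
Your proposal is correct and follows essentially the same route as the paper: reduce to $\phi f_i\in\mathcal{D}(\Delta,X)\cap\mathrm{LIP}_c(X)$ via a good cut-off, regularize by the heat flow so that Proposition \ref{rem:equiv} applies, use $\nabla^{g_X}\omega=0$ from Proposition \ref{prop:prop1} together with (\ref{eq:fundamental}), Bakry-\'Emery and Bochner to get a $t$-uniform $H^{1,2}$ bound, and pass to the limit $t\downarrow0$ by $L^2$-convergence plus weak compactness in $H^{1,2}(X)$. No gaps.
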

\begin{proof}
By the existence of good cut-off functions on Ricci limit spaces \cite[Corollary 4.29]{Honda2} there exists a cutoff $\phi \in \mathcal{D}(\Delta, X)$ such that $0 \le \phi \le 1$, that $\phi \equiv 1$ on $B_r(z)$, that $\supp \phi \subset B_R(z)$, and that $|\nabla \phi | + |\Delta \phi| \le C(n, r, R)$.
Considering $\phi f_i$ yields that with no loss of generaliry we can assume that $f_i \in \mathcal{D}(\Delta, X) \cap \mathrm{LIP}_c(X)$.

Then the regularity of the heat flow (c.f. \cite{AmbrosioGigliSavare13, AmbrosioGigliSavare14, AmbrosioGigliMondinoRajala}) yields that $h_tf_i \in \mathrm{Test}F(X)$ for any $t>0$, that $h_t f_i \to f_i$ in $H^{1, 2}(X)$ as $t \downarrow 0$, that $\sup_{t<1}\|\nabla h_tf_i\|_{L^{\infty}(X)}<\infty$, and that $\Delta h_tf_i \to \Delta f_i$ in $L^2(X)$ as $t \downarrow 0$.
In particular Bochner's inequality on $RCD$-spaces \cite[Corollary 3.3.9]{Gigli} with Proposition \ref{prop:comp hess}
yields $\sup_{t<1}\|\mathrm{Hess}_{h_tf_i}^{g_X}\|_{L^2(X)}<\infty$.

Then by Propositions \ref{rem:equiv} and  \ref{prop:prop1} with (\ref{eq:fundamental}) since
\begin{align*}
|\nabla\langle \omega, \dist (h_tf_1) \wedge \cdots \wedge \dist (h_tf_k)\rangle | &\le |\nabla^{g_X} \omega| \prod_i |\nabla h_tf_i| + \sum_i |\mathrm{Hess}^{g_X}_{f_i}|\prod_{j \neq i}|\nabla h_tf_j| \\
&= \sum_i |\mathrm{Hess}^{g_X}_{f_i}|\prod_{j \neq i}|\nabla h_tf_j|,
\end{align*}
in particular $\sup_{t<1}\|\langle \omega, \dist (h_tf_1) \wedge \cdots \wedge \dist (h_tf_k)\rangle\|_{H^{1, 2}(X)}<\infty$. 
Since $\langle \omega, \dist (h_tf_1) \wedge \cdots \wedge \dist (h_tf_k)\rangle \to \langle \omega, \dist f_1 \wedge \cdots \wedge \dist f_k\rangle$ in $L^2(X)$ as $t \downarrow 0$, this completes the proof. 
\end{proof}
\subsection{Proof of Theorem \ref{thm:unique}}
Let us prove Theorem \ref{thm:unique}.
There exists a Borel function $f:X \to \{-1, 1\}$ such that $\omega_1(z)=f(z)\omega_2(z)$ for a.e. $z \in X$.
It suffices to prove that $f$ is constant as follows.
  
\textbf{Step 1.} Let $p \in \mathcal{R}_{\tau, s}$ with $s \le \frac{1}{2}\tau^{1/2}$. Then for any $t \in (0, s^2)$ there exists $c(t) \in \{-1, 1\}$ such that (recall the notation $\Psi$ given in the preliminaries)
\begin{equation}\label{eq:const}
\frac{1}{\meas (B_t(p))}\int_{B_t(p)}\left| f-c(t) \right| \dist \meas \le \Psi(\tau, s; n).
\end{equation}
The proof is as follows.

Let $r:=\tau^{1/2}s  \ge 2s^2$.
Corollary \ref{cor:delta} yields that there exists a limit harmonic $(\Psi (\tau, s; n), C(n))$-splitting map $\mathbf{b}:=(\mathbf{b}_1, \ldots, \mathbf{b}_k): B_r(p) \to \mathbf{R}^k$.
Then Theorem \ref{thm:quantitative bound hess} shows for any $t \in (0, s^2)$
\begin{equation}\label{rrrfff}
\frac{t^2}{\meas (B_t(p))}\int_{B_t(p)}|\mathrm{Hess}_{\mathbf{b}_i}^{g_X}|^2\dist \meas \le \Psi(\tau, s;n).
\end{equation}
Let $g_i:= \langle \omega_i, \dist \mathbf{b}_1 \wedge \cdots \wedge \dist \mathbf{b}_k\rangle$. Then applying the Poincar\'e inequality of type $(1, 2)$ for $g_i$ with (\ref{eq:fundamental}), (\ref{rrrfff}) and Proposition \ref{b} yields
\begin{equation}\label{eq:1}
\frac{1}{\meas (B_t(p))} \int_{B_t(p)}\left| g_i-\frac{1}{\meas (B_t(p))}\int_{B_t(p)}g_i\dist \meas\right| \dist \meas \le \Psi(\tau, s;n).
\end{equation}
We now fix $\Psi(\tau, s;n)$ as above and write it $\psi$ for short.
Let $A:=\bigcap_{i, j}\{z \in B_t(p); |\langle \nabla \mathbf{b}_i, \nabla \mathbf{b}_j\rangle (z)-\delta_{ij}|<(\psi)^{1/2}\}$.
Then by the same argument to (\ref{eq:const lem2}) we have 
\begin{equation}\label{eq:const lem}
\frac{\meas (B_t(p) \setminus A)}{\meas (B_t(p))}\le n^2(\psi)^{1/2}.
\end{equation}
In particular since $\dist \mathbf{b}_1 \wedge \cdots \wedge \dist \mathbf{b}_k(z)$ is a basis of $\bigwedge^kT^*_zX$ with $|\dist \mathbf{b}_1 \wedge \cdots \wedge \dist \mathbf{b}_k(z)|=1 \pm \Psi(\psi; n)$ for a.e. $z \in A$, we have $ |g_i|(z)=1 \pm \Psi(\psi; n)$ for a.e. $z \in A$.
Thus combining this with (\ref{eq:const lem}) and (\ref{eq:1}) gives
\begin{align*}
\left| \frac{1}{\meas (B_t(p))}\int_{B_t(p)}g_i\dist \meas\right| &= \frac{1}{\meas (B_t(p))}\int_{B_t(p)}|g_i|\dist \meas \pm \psi \\
&=\frac{1}{\meas (B_t(p))}\int_{A}|g_i|\dist \meas \pm \Psi(\psi ; n) = 1 \pm \Psi(\psi ; n),
\end{align*}
where we used $|g_i| \le C(n)$ on $B_r(p)$.
Thus (\ref{eq:1}) shows
\begin{equation}\label{eq:2}
\frac{1}{\meas (B_t(p))} \int_{B_t(p)}\left| g_i-c_i\right| \dist \meas \le \Psi(\psi ;n),
\end{equation} 
where $c_i \in \{-1, 1\}$ is a constant.
Therefore letting $c:=c_1c_2$ yields
\begin{align*}
\frac{1}{\meas (B_t(p))}\int_{B_t(p)}\left| f-c \right| \dist \meas &=\frac{1}{\meas (B_t(p))}\int_{B_t(p)}\left| f-c_1c_2 \right| \dist \meas \\
&=\frac{1}{\meas (B_t(p))}\int_{B_t(p)}\left| fc_2-c_1 \right| \dist \meas \\
&=\frac{1}{\meas (B_t(p))}\int_{B_t(p)}\left| g_1-c_1\right| \dist \meas \pm \Psi(\psi ; n) \le \Psi(\psi ; n).
\end{align*}

\textbf{Step 2.} If $g \in L^1(B_R(z))$ and $c_1, c_2 \in \{-1, 1\}$ satisfy
$$
\frac{1}{\meas (B_R(z))}\int_{B_R(z)}|g-c_i|\dist \meas<1,
$$
then $c_1=c_2$.

This is a direct consequence of the inequality:
$$
|c_1-c_2| \le \frac{1}{\meas (B_R(z))}\int_{B_R(z)}|g-c_1|\dist \meas + \frac{1}{\meas (B_R(z))}\int_{B_R(z)}|g-c_2|\dist \meas <2.
$$

\textbf{Step 3.} There exists $\epsilon (n, R) \in (0, 1)$ such that the following hold. Let $r \in (0, R]$, let $\gamma: [0, r] \to X$ be a minimal geodesic and let $g \in L^1(B_{2r}(\gamma (0)))$.
Assume that there exists $s \in (0, r)$ such that for any $t \in [0, r]$ there exists $c_t \in \{-1, 1\}$ such that 
$$
\frac{1}{\meas (B_s(\gamma (t)))}\int_{B_s(\gamma (t))}\left| g-c_t \right| \dist \meas <\epsilon (n, R).
$$
Then $c_r = c_0$.

The proof is as follows.

Let us take a partition $0=t_0<t_1<t_2< \cdots <t_{N-1}<t_N=r$ of $[0, r]$ with $|t_i-t_{i+1}|<s/10$.
Then Bishop-Gromov inequality yields
\begin{align*}
\frac{1}{\meas (B_{s/2}(\gamma (t_i)))}\int_{B_{s/2}(\gamma (t_i))}\left|g-c_{t_{i}}\right|\dist \meas &\le \frac{C(n, R)}{\meas (B_{s}(\gamma (t_{i})))}\int_{B_{s}(\gamma (t_{i}))}\left|g-c_{t_{i}}\right|\dist \meas \\
&<C(n, R)\epsilon (n, R)<1.
\end{align*}
On the other hand since $B_{s/2}(\gamma (t_i)) \subset B_s(\gamma (t_{i+1}))$, applying Bishop-Gromov inequality again shows
\begin{align*}
\frac{1}{\meas (B_{s/2}(\gamma (t_i)))}\int_{B_{s/2}(\gamma (t_i))}\left|g-c_{t_{i+1}}\right|\dist \meas &\le \frac{C(n, R)}{\meas (B_{s}(\gamma (t_{i+1})))}\int_{B_{s}(\gamma (t_{i+1}))}\left|g-c_{t_{i+1}}\right|\dist \meas \\
&<C(n, R)\epsilon (n, R)<1.
\end{align*}
Thus step 2 shows $c_{t_i}=c_{t_{i+1}}$, which completes the proof.

\textbf{Step 4.} To finish the proof, we assume that $f$ is not a constant.
Let $A_+:=\{f=1\}$ and let $A_-:=\{f=-1\}$. Then they have positive measures.
By \cite[Lemma 1.17]{ColdingNaber} with the Lebesgue differentiation theorem there exist $p, q \in X$, $\delta \in (0, 1)$  and a limit minimal geodesic $(\delta, \dist (p, q)+\delta) \to X$ such that $\gamma (0)=p$, that $\gamma (\dist (p, q))=q$, that the image of $\gamma$ is in $\mathcal{R}^k$, and that
$$
\lim_{r \to 0}\frac{1}{\meas (B_r(p))}\int_{B_r(p)}\left|f-1\right|\dist \meas =\lim_{r \to 0}\frac{1}{\meas (B_r(q))}\int_{B_r(q)}\left|f+1\right|\dist \meas =0.
$$
Then applying the uniform Reifenberg property along the interior of a limit minimal geodesic \cite[Theorem B.1]{ColdingNaber} to $\gamma$ yields that for any $\epsilon \in (0, 1)$ there exists $r>0$ such that $\gamma ([0, \dist (p, q)]) \subset \mathcal{R}^k_{\epsilon, r}$.
In particular step 1 shows that for any $\epsilon \in (0, 1)$ there exists $r \in (0, 1)$ such that for any $t \in [0, \dist (p, q)]$ and any $s \in (0, r)$ there exists $c(\epsilon, r, t, s) \in \{-1, 1\}$ such that
$$
\frac{1}{\meas (B_s(\gamma (t)))}\int_{B_s(\gamma (t))}\left| f- c(\epsilon, r, t, s)\right| \dist \meas <\epsilon.
$$
Thus if $\epsilon$ is sufficiently small, then step 3 implies $1=c(\epsilon, r, 0, s)=c(\epsilon, r, \dist (p, q), s)=-1$, which is a contradiction. $\,\,\,\,\,\,\,\Box$

\begin{remark}
By the proof above, it is noticed easily that we can prove Theorem \ref{thm:unique} with no use of Theorem \ref{thm:quantitative bound hess} because we only use limit harmonic $(\epsilon, C(n))$-splitting maps and know the lower semicontinuity of $L^2$-norms of Hessian \cite[Theorem 1.3]{Honda2}. However since Theorem \ref{thm:quantitative bound hess} gives local behavior of more general functions, the author believes that this has independent importance.
\end{remark}
\subsection{Stability}
In this subsection we will prove Theorem \ref{ah}. More precisely;
\begin{theorem}[Stability]\label{thm:stability}
Let $(X_i, x_i, \meas_i)$ be a mGH-convergent sequence of Ricci limit spaces to $(X, x, \meas)$.
Assume that the sequence is noncollapsed (i.e. $\mathrm{dim}\,X_i=\mathrm{dim}\,X :=k\le n$ for any sufficiently large $i$)  and that $(X_i, x_i, \meas_i)$ is orientable with an orientation $\omega_i \in L^{\infty}(\bigwedge^kT^*X_i)$. 
Then $(X, x, \meas)$ is orientable.
More precisely, there exist a subsequence $\{i(j)\}_j$ and an orientation $\omega \in L^{\infty}(\bigwedge^kT^*X)$ of $(X, x, \meas)$ such that $\omega_{i(j)}$ $L^p_{\mathrm{loc}}$-strongly converge to $\omega$ for any $p \in (1, \infty)$. Then we say that $\omega$ is associated with $\omega_{i(j)}$.
\end{theorem}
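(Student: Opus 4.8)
The plan is to extract an $L^p_{\mathrm{loc}}$-strong limit of the $\omega_{i(j)}$ and to verify directly the two conditions of Definition \ref{def:ori}. Since $|\omega_i|=1$ a.e., the sequence is uniformly bounded in $L^\infty$, hence in $L^p$ on every ball; by the weak $L^p$-compactness for tensor fields in the sense of \cite{Honda2} together with a diagonal argument over an exhaustion $B_R(x)$, $R\uparrow\infty$, I would first pass to a subsequence so that $\omega_{i(j)}$ $L^p_{\mathrm{loc}}$-weakly converges to some $\omega\in L^\infty(\bigwedge^kT^*X)$ with $|\omega|\le 1$ a.e. The entire difficulty is then to upgrade this to strong convergence and to prove $|\omega|=1$, because weak limits of unit forms may have strictly smaller norm; this is precisely the phenomenon exhibited in Remark \ref{diffe}, and the hypotheses (orientability, giving parallelism, and noncollapsedness) are exactly what rule it out here.

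The key local input is that each $\omega_i$ is \emph{parallel}: applying Proposition \ref{prop:prop1} on each orientable space $X_i$ gives $\nabla^{g_{X_i}}\omega_i\equiv 0$. Fix a regular point $p\in\mathcal{R}^k(X)$ and $\epsilon>0$. By Theorem \ref{thm:reg}(2) and Corollary \ref{cor:delta}, on a small ball $B_s(p)$ I obtain a limit harmonic $(\Psi,C(n))$-splitting map $\mathbf{b}=(\mathbf{b}_1,\dots,\mathbf{b}_k)$ with $\Psi=\Psi(\tau,s;n)<\epsilon$, realized as a uniform limit of harmonic splitting maps $\mathbf{b}_{i}=(\mathbf{b}_{1,i},\dots,\mathbf{b}_{k,i})$ on $B_s(x_i)$ whose gradients are uniformly bounded and $L^2_{\mathrm{loc}}$-strongly convergent. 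Setting $g_i:=\langle\omega_i,\dist\mathbf{b}_{1,i}\wedge\cdots\wedge\dist\mathbf{b}_{k,i}\rangle$, the inequality (\ref{eq:fundamental}) with $\nabla^{g_{X_i}}\omega_i=0$ yields
\[
|\nabla g_i|\le\sum_{l=1}^k|\mathrm{Hess}_{\mathbf{b}_{l,i}}^{g_{X_i}}|\prod_{j\neq l}|\nabla\mathbf{b}_{j,i}|\le C(n)\sum_{l=1}^k|\mathrm{Hess}_{\mathbf{b}_{l,i}}^{g_{X_i}}|,
\]
so the standard Hessian estimate for harmonic splitting maps \cite{CheegerColding} (cf.\ Theorem \ref{thm:quantitative bound hess}) gives a uniform $H^{1,2}(B_{s/2}(x_i))$-bound on $g_i$. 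By the Rellich compactness \cite[Theorem 4.9]{Honda2} a further subsequence makes $g_i$ $L^2_{\mathrm{loc}}$-strongly converge to $g\in H^{1,2}(B_{s/2}(x))$, and matching the weak factor $\omega_i$ with the strong factors $\dist\mathbf{b}_{l,i}$ identifies $g=\langle\omega,\dist\mathbf{b}_1\wedge\cdots\wedge\dist\mathbf{b}_k\rangle$.

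I would then read off the normalization from the strong convergence of $g_i$. Let $A\subset B_{s/2}$ be the good set where $\dist\mathbf{b}_1\wedge\cdots\wedge\dist\mathbf{b}_k$ has norm $1\pm\Psi$; as in (\ref{eq:const lem2}) it satisfies $\meas(B_{s/2}\setminus A)\le n^2\Psi^{1/2}\meas(B_{s/2})$. Since a top-form is rank one, $|g_i|=|\omega_i|\,|\dist\mathbf{b}_{1,i}\wedge\cdots\wedge\dist\mathbf{b}_{k,i}|=1\pm\Psi$ on the corresponding good sets, and $L^2_{\mathrm{loc}}$-strong convergence passes this to $|g|=1\pm\Psi$ a.e.\ on $A$, whence $|\omega|=1\pm\Psi$ a.e.\ on $A$. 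Integrating and using $|\omega|\le 1$ on the small remainder gives $\frac{1}{\meas(B_s(p))}\int_{B_s(p)}\big||\omega|-1\big|\dist\meas\le\Psi'(\tau,s;n)$ for every $p\in\mathcal{R}^k_{\tau,s}(X)$; letting $\tau,s\to0$ at every Lebesgue point of $|\omega|$ forces $|\omega|=1$ a.e. Once $|\omega|=1$, we have $\|\omega_{i(j)}\|_{L^2(B_R)}^2=\meas_{i(j)}(B_R)\to\meas(B_R)=\|\omega\|_{L^2(B_R)}^2$, so the weak convergence is in fact $L^2_{\mathrm{loc}}$-strong, and being uniformly bounded in $L^\infty$ it improves to $L^p_{\mathrm{loc}}$-strong for every $p\in(1,\infty)$.

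For the regularity condition, by Proposition \ref{rem:equiv} it suffices to check $\langle\omega,\dist f_1\wedge\cdots\wedge\dist f_k\rangle\in H^{1,2}(X)$ for $f_l\in\mathrm{Test}F(X)$, and I would again argue by a limiting argument rather than by differentiating $\omega$ on $X$ directly (which is not yet licensed, as Proposition \ref{prop:prop1} presupposes orientability). Approximating each $f_l$ by functions $f_{l,i}$ on $X_i$ that $H^{1,2}$-strongly converge, with uniformly bounded Lipschitz constants and uniformly $L^2$-bounded Hessians (through the heat-flow constructions of \cite{AmbrosioHonda} together with Proposition \ref{qqssxx}), the inequality (\ref{eq:fundamental}) with $\nabla^{g_{X_i}}\omega_i=0$ makes $\|\langle\omega_i,\dist f_{1,i}\wedge\cdots\wedge\dist f_{k,i}\rangle\|_{H^{1,2}(X_i)}$ uniformly bounded; the $L^p_{\mathrm{loc}}$-strong convergence of $\omega_i$ then forces $L^2_{\mathrm{loc}}$-strong convergence to $\langle\omega,\dist f_1\wedge\cdots\wedge\dist f_k\rangle$, which therefore inherits the bound and lies in $H^{1,2}(X)$. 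Thus $\omega$ satisfies both conditions and is an orientation associated with $\omega_{i(j)}$. The main obstacle, as stressed above, is the normalization: weak compactness is cheap, but transferring the constraint $|\omega_i|=1$ to the limit genuinely requires strong convergence, which here is manufactured from the parallelism $\nabla^{g_{X_i}}\omega_i=0$ and the splitting-map Hessian estimates — exactly the ingredients that fail in the collapsing counterexample of Remark \ref{diffe}.
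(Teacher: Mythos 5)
Your proposal is correct and uses the same core ingredients as the paper's proof: $L^p$-weak compactness, the parallelism $\nabla^{g_{X_i}}\omega_i\equiv 0$ from Proposition \ref{prop:prop1}, splitting maps with Hessian bounds feeding (\ref{eq:fundamental}) to get uniform $H^{1,2}$-control on the scalar pairings $g_i=\langle\omega_i,\dist\mathbf{b}_{1,i}\wedge\cdots\wedge\dist\mathbf{b}_{k,i}\rangle$, Rellich compactness, the good-set estimate as in (\ref{eq:const lem2}), and the limiting argument for the regularity condition (\ref{eq:char}). The one genuine difference is the order of the final two steps. The paper first proves $L^2_{\mathrm{loc}}$-strong convergence of $\omega_i$ via the Vitali-covering criterion of \cite[Proposition 3.8]{Honda3} (condition (\ref{eq:22}) on local averages of $|\omega_i|^2$), and only then deduces $|\omega|=1$ a.e.\ by Lebesgue differentiation. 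You instead extract $|\omega|=1$ a.e.\ directly from the good-set analysis at regular points, and then obtain strong convergence for free from $\|\omega_i\|_{L^2(B_R(x_i))}^2=\meas_i(B_R(x_i))\to\meas(B_R(x))=\|\omega\|_{L^2(B_R(x))}^2$. This reordering is legitimate and avoids the covering argument entirely; the only place to be slightly more careful than you are is the sentence claiming that strong convergence of $g_i$ ``passes'' the pointwise bound $|g_i|=1\pm\Psi$ to $|g|$ on $A$ --- strictly you only get convergence of integrals, but combined with the pointwise upper bound $|g|\le|\omega|(1+\Psi)\le 1+\Psi$ on $A$ (using $|\omega|\le 1$ from weak lower semicontinuity) the integral identity does force the $L^1$-average estimate $\frac{1}{\meas(B_s(p))}\int_{B_s(p)}\bigl||\omega|-1\bigr|\dist\meas\le\Psi'$ that you actually use, so the argument closes.
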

\begin{proof}
By the $L^p$-weak compactness \cite[Proposition 3.50]{Honda2} with no loss of generality we can assume that the $L^p_{\mathrm{loc}}$-weak limit $\omega \in L^{\infty}(\bigwedge^kT^*X)$ of $\omega_i$ exists for any $p \in (1, \infty)$.
Note that since the sequence $\{(X_i, x_i, \meas_i)\}_i$ is noncollapsed, $\omega$ is also a top dimensional differential form on $X$.

First let us check (\ref{eq:char}).
Let $f_i \in \mathrm{Test}F(X)$.
From the existence of approximate sequences \cite[Proposition 1.10.2]{AmbrosioHonda}, there exist sequences of $f_{j, i} \in \mathrm{Test}F(X_i)$ such that $\sup_{j, i}\|\nabla f_{j, i}\|_{L^{\infty}} <\infty$ and that $f_{j, i}, \dist f_{j, i}, \Delta f_{j, i}$ $L^2$-strongly converge to $f_j, \dist f_j, \Delta f_j$ on $X$, respectively.
Then since $\langle \omega_i, \dist f_{1, i} \wedge \cdots \wedge \dist f_{k, i} \rangle \in H^{1, 2}(X_i)$ and
\begin{align*}\label{a}
|\nabla\langle \omega_i, \dist f_{1, i} \wedge \cdots \wedge \dist f_{k, i}\rangle | &\le |\nabla^{g_{X_i}} \omega_i| \prod_j |\nabla f_{j, i}| + \sum_l |\mathrm{Hess}^{g_{X_i}}_{f_{l, i}}|\prod_{j \neq l}|\nabla f_{j, i}| \nonumber \\
&=\sum_l |\mathrm{Hess}^{g_{X_i}}_{f_{l, i}}|\prod_{j \neq l}|\nabla f_{j, i}|,
\end{align*}
we have $M:=\sup_{i, j}\| \langle \omega_i, \dist f_{1, i} \wedge \cdots \wedge \dist f_{k, i} \rangle \|_{H^{1, 2}(X_i)}<\infty$, where we used (\ref{eq:fundamental}), Proposition \ref{prop:comp hess} and Bochner's inequality (\cite[Corollary 3.3.9]{Gigli}).
Therefore the stability of Sobolev functions \cite[Theorem 1.7.4]{AmbrosioHonda} (or \cite[Theorem 7.4]{GigliMondinoSavare} or \cite[Theorem 4.9]{Honda2}) yields $\langle \omega, \dist f_1 \wedge \cdots \wedge \dist f_k \rangle \in H^{1, 2}(X)$ with $\| \langle \omega, \dist f_1 \wedge \cdots \wedge \dist f_k\rangle \|_{H^{1, 2}(X)}\le M$, which proves (\ref{eq:char}).

In order to finish the proof, it suffices to check that $\omega_i$ $L^2_{\mathrm{loc}}$-strong converge to $\omega$.  Because this with the uniform $L^{\infty}$-bound of $\omega_i$ implies the $L^p_{\mathrm{loc}}$-strong convergence of $\omega_i$ (c.f. \cite[Proposition 3.69]{Honda2}), and 
since 
$$
1=\lim_{i \to \infty}\frac{1}{\meas_i(B_r(y_i))}\int_{B_r(y_i)}|\omega_i|^2\dist \meas_i=\frac{1}{\meas (B_r(y))}\int_{B_r(y)}|\omega |^2\dist \meas
$$
for any $y_i \stackrel{GH}{\to} y$,
letting $r \downarrow 0$ with the Lebesgue differentiation theorem yields $|\omega (z)|=1$ for a.e. $z \in X$. 
It is worth pointing out that we need the noncollapsed assumption to prove the $L^2_{\mathrm{loc}}$-strong convergence of $\omega_i$ because in general it is not satisfied in the collapsed setting. For example, as in Remark \ref{diffe}, the sequence of standard orientations $\omega_i$ of $\mathbf{S}^1(1) \times \mathbf{S}^1(1/i)$ $L^2$-weak, but not strong, converge to $0 \in L^2(\mathbf{S}^1(1))$ as $i \to \infty$, which is a counter example in the collpased setting.

The proof of the $L^2_{\mathrm{loc}}$-strong convergence of $\omega_i$ is as follows.

We first recall a result given in \cite{Honda3}; let $p \in (1, \infty)$, let $f_i \in L^p(B_R(x_i))$ with $\sup_i\|f_i\|_{L^p(B_R(x_i))}<\infty$ and let $f \in L^1(B_R(x))$.
If for a.e. $z \in B_R(x)$ and any $\epsilon>0$ there exist $r \in (0, 1)$ and a convergent sequencce $z_i \stackrel{GH}{\to} z$ such that for any $t \in (0, r)$
\begin{equation}\label{eq:22}
\limsup_{i \to \infty}\frac{1}{\meas_i (B_t(z_i))}\int_{B_t(z_i)}f_i\dist \meas_i -\frac{1}{\meas (B_t(z))}\int_{B_t(z)}f\dist \meas \le \epsilon
\end{equation}
holds, then $\limsup_{i \to \infty}\|f_i\|_{L^1(B_R(x_i))}\le \|f\|_{L^1(B_R(x))}$.
It is not difficult to check this by using Vitali's covering theorem and the doubling condition.
See \cite[Proposition 3.8]{Honda3} for the detail.

Let $z \in \mathcal{R}^k(X)$ and let $z_i \stackrel{GH}{\to} z$.
Then for any $\epsilon \in (0, 1)$ and any $L \in (1, \infty)$ there exists $r \in (0, 1)$ such that for any $t \in (0, r)$
$$
\dist_{\mathrm{GH}}\left((B_{Lt}(z), z), (B_{Lt}(0_k), 0_k)\right)<\epsilon t.
$$
Fix $t \in (0, r)$.
Then by Theorem \ref{arrr} (or the proof of Corollary \ref{cor:delta}),
there exists a limit harmonic $(\Psi(\epsilon, L^{-1};n), C(n))$-splitting map $\mathbf{b}^i=(\mathbf{b}^i_1, \ldots, \mathbf{b}^i_k):B_{4t}(z_i) \to \mathbf{R}^k$.
With no loss of generality we can assume that there exists a limit $(\Psi(\epsilon, L^{-1};n), C(n))$-splitting map $\mathbf{b}:=(\mathbf{b}_1, \ldots, \mathbf{b}_k):B_{2t}(z) \to \mathbf{R}^k$ such that $\mathbf{b}^i$ converge uniformly to $\mathbf{b}$ on $B_{2t}(z)$.
We now fix $\Psi(\epsilon, L^{-1};n)$ as above and denote it by $\psi$ for short. 

Let us consider the following;
\begin{itemize}
\item let $\eta =\dist \mathbf{b}_1 \wedge \cdots \wedge \dist \mathbf{b}_k$,
\item let $\eta_i:=\dist \mathbf{b}_{1}^i \wedge \cdots \wedge \dist \mathbf{b}_{k}^i$, 
\item let $A_i:=\bigcap_{j, l}\{w \in B_t(z_i);|\langle \nabla \mathbf{b}_{j}^i, \nabla \mathbf{b}_{l}^i \rangle (w)-\delta_{j l}|<(\psi)^{1/2} \}$ and
\item let $A:=\bigcap_{j, l}\{w \in B_t(z);|\langle \nabla \mathbf{b}_j, \nabla \mathbf{b}_l \rangle (w)-\delta_{j l}|<(\psi)^{1/2} \}$.
\end{itemize}
Note that $\eta_i$ $L^2$-strongly converge to $\eta$ on $B_t(z)$, in particular $\langle \omega_i, \eta_i \rangle$ $L^2$-weakly converge to $\langle \omega, \eta \rangle$ on $B_t(z)$.
On the other hand from Proposition \ref{b} and (\ref{eq:fundamental}) we have $\sup_i\|\langle \omega_i, \eta_i\rangle \|_{H^{1, 2}(B_t(z_i))}<\infty$.
Thus combining these with the Rellich compactness \cite[Theorem 4.9]{Honda2} shows that  $\langle \omega_i, \eta_i \rangle$ $L^2$-strongly converge to $\langle \omega, \eta \rangle$ on $B_t(z)$.

Then by the same argument to (\ref{eq:const lem2}) we have 
\begin{equation}\label{eq:25}
\frac{\meas (B_t(z) \setminus A)}{\meas (B_t(z))}\le n^2(\psi)^{1/2}.
\end{equation}
Note that since $|\eta|=1 \pm \Psi(\psi ;n)$ on $A$ and $\bigwedge^kT^*_zX$ is $1$-dimensional for a.e. $z \in X$, we have $|\omega -\langle \omega, \eta \rangle \eta|<\Psi(\psi; n)$ on $A$ (of course similar statements for $\omega_i, \eta_i, A_i$ also hold for any sufficiently large $i$).
Thus for any sufficiently large $i$
\begin{align*}
\frac{1}{\meas (B_t(z))}\int_{B_t(z)}|\omega|^2\dist \meas &= \frac{1}{\meas (B_t(z))}\int_{A}|\omega|^2\dist \meas \pm \Psi(\psi; n) \\
&= \frac{1}{\meas (B_t(z))}\int_{A}\langle \omega, \eta \rangle ^2\dist \meas \pm \Psi(\psi; n) \\
&= \frac{1}{\meas (B_t(z))}\int_{B_t(z)}\langle \omega, \eta \rangle ^2\dist \meas \pm \Psi(\psi; n) \\
&= \frac{1}{\meas (B_t(z_i))}\int_{B_t(z_i)}\langle \omega_i, \eta_i \rangle ^2\dist \meas \pm \Psi(\psi; n) \\
&= \frac{1}{\meas (B_t(z_i))}\int_{A_i}\langle \omega_i, \eta_i \rangle ^2\dist \meas \pm \Psi(\psi; n) \\
&= \frac{1}{\meas (B_t(z_i))}\int_{A_i}|\omega_i|^2\dist \meas \pm \Psi(\psi; n) =1 \pm \Psi(\psi; n),
\end{align*}
which proves (\ref{eq:22}) as $f=|\omega|^2, f_i=|\omega_i|^2$.
Therefore 
$$
1=\limsup_{i \to \infty}\||\omega|^2\|_{L^1(B_R(x_i))} \le \| |\omega|^2\|_{L^1(B_R(x))}=\|\omega\|_{L^2(B_R(x))},
$$
which shows that $\omega_i$ $L^2$-strongly converge to $\omega$ on $B_R(x)$.
\end{proof}
\begin{remark}\label{rem:loc ori}
The local version of orientability can be discussed as follows.
Let $(X, x, \meas)$ be a Ricci limit space whose dimension is $k$.
We say that $(X, x, \meas)$ is \textit{locally orientable at a point $p \in X$} if there exist $r \in (0, \infty)$ and $\omega \in L^{\infty}(\bigwedge^kT^*B_r(x))$ such that $\langle \omega, \eta \rangle|_{B_r(x)} \in H^{1, 2}(B_r(x))$ for any $\eta \in \mathrm{TestForm}_k(X, \dist, \meas)$.
Then we call $\omega$ a \textit{local orientation of $(X, \dist, \meas)$ at} $p$.

From the proof of Theorem \ref{thm:unique} we can prove similar uniequeness; for two local orientations $\omega_1, \omega_2$ of $(X, \meas)$ at $p$ there exists $s>0$ such that we have either $\omega_1(z)=\omega_2(z)$ for a.e. $z \in B_s(p)$ or $\omega_1(z)=-\omega_2(z)$ for a.e. $z \in B_s(p)$.
Moreover by the proof of Theorem \ref{thm:stability} we can also prove if $(X, \meas)$ is locally orientable at $p \in X$, then all tangent cones of $(X, x, \meas)$ at $p$, whose dimension are $k$, are orientable. 
\end{remark}
We end this subsection by giving a sufficient condition for the collapsed limit space to be orientable. See subsection 6.6 for the detail of metric currents.
\begin{theorem}[Orientability to collapsed spaces]
Let $(X_i, x_i, \meas_i)$ be a sequence of $n$-dimensional Riemannian manifolds with the normalized measures satisfying $\mathrm{Ric}_{X_i} \ge -(n-1)$, let $(X, x, \meas)$ be the mGH-limit space whose dimension is $k$, and let $\omega \in L^{\infty}(\bigwedge^kT^*X)$ with $|\omega|(z)=1$ for a.e. $z \in X$.
If there exists a sequence $\omega_i \in C^{\infty}(\bigwedge^kT^*X_i)$ such that $\sup_i\|\nabla \omega_i\|_{B_R(x_i)}<\infty$ for any $R \in (0, \infty)$ and that $\omega_i$ $L^2_{\mathrm{loc}}$-strongly converge to $\omega$, then
$\omega$ is an orientation of $(X, x, \meas)$, $\omega \in \mathcal{D}^2_{\mathrm{Loc}}(\delta_k, X)$ and $T_{\omega}$ is a metric current with $\partial T_{\omega}=T_{\delta_k\omega}$.
\end{theorem}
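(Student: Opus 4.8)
The plan is to establish the three assertions in turn: the first by repeating the argument of Theorem \ref{thm:stability} while keeping track of the covariant derivatives $\nabla\omega_i$ (which, unlike in the noncollapsed case, need not vanish), and the second and third together as a single integration-by-parts statement transported to the limit. For orientability, by Proposition \ref{rem:equiv} it suffices to prove $\langle\omega,\dist f_1\wedge\cdots\wedge\dist f_k\rangle\in H^{1,2}(X)$ for $f_j\in\mathrm{Test}F(X)$. First I would fix approximating sequences $f_{j,i}\in\mathrm{Test}F(X_i)$ with $\sup_{j,i}\mathbf{Lip}f_{j,i}<\infty$ such that $f_{j,i},\dist f_{j,i},\Delta f_{j,i}$ converge $L^2$-strongly to $f_j,\dist f_j,\Delta f_j$ (as in \cite[Proposition 1.10.2]{AmbrosioHonda}). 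Writing $g_i:=\langle\omega_i,\dist f_{1,i}\wedge\cdots\wedge\dist f_{k,i}\rangle\in H^{1,2}(X_i)$ and applying the pointwise inequality (\ref{eq:fundamental}) on $X_i$, the hypothesis $\sup_i\|\nabla\omega_i\|_{B_R(x_i)}<\infty$ controls the first term $|\nabla^{g_{X_i}}\omega_i|\prod_j|\nabla f_{j,i}|$, while Proposition \ref{prop:comp hess} together with Bochner's inequality controls the Hessian terms; combined with the uniform local $L^\infty$-bound on $\omega_i$ that follows from the $L^2_{\mathrm{loc}}$-strong convergence and the gradient bound (a Lipschitz function whose $L^2$-averages on $B_R$ stay bounded is bounded on $B_R$), this yields $\sup_i\|g_i\|_{H^{1,2}(B_R(x_i))}<\infty$.

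Passing to the limit via the stability of Sobolev functions (\cite[Theorem 4.9]{Honda2}) then gives $g:=\langle\omega,\dist f_1\wedge\cdots\wedge\dist f_k\rangle\in H^{1,2}_{\mathrm{loc}}(X)$, so Corollary \ref{cor:com} shows that $\omega$ is differentiable for a.e.\ $z\in X$. Since $|\omega|^2\equiv1$ and $\bigwedge^kT_z^*X$ is one dimensional for a.e.\ $z$, the computation in the proof of Proposition \ref{prop:prop1} now gives $\nabla^{g_X}\omega=0$ a.e.; inserting this into (\ref{eq:fundamental}) on $X$ leaves only the Hessian terms, which are globally in $L^2(X)$ because $f_j\in\mathrm{Test}F(X)$, so in fact $g\in H^{1,2}(X)$ and $\omega$ is an orientation.

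For $\omega\in\mathcal{D}^2_{\mathrm{Loc}}(\delta_k,X)$ and the boundary formula I would treat both at once, since $\partial T_\omega=T_{\delta_k\omega}$ is exactly the defining relation of the codifferential: for $\beta=f_0\dist f_1\wedge\cdots\wedge\dist f_{k-1}$ one has, by the definition of the boundary of a metric current,
\[
\partial T_\omega(f_0,\ldots,f_{k-1})=T_\omega(1,f_0,\ldots,f_{k-1})=\int_X\langle\omega,\dist\beta\rangle\dist\meas ,
\]
so it is enough to produce $\sigma\in L^2_{\mathrm{loc}}(\bigwedge^{k-1}T^*X)$ with $\int_X\langle\omega,\dist\beta\rangle\dist\meas=\int_X\langle\sigma,\beta\rangle\dist\meas$ for all compactly supported test $(k-1)$-forms $\beta$, and then to set $\delta_k\omega:=\sigma$. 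On each $X_i$ the reference measure $\meas_i$ is a constant multiple of the Riemannian volume, so the $\meas_i$-codifferential $\delta_{k,i}\omega_i$ coincides with the Riemannian one and satisfies the pointwise bound $|\delta_{k,i}\omega_i|\le C(n,k)|\nabla\omega_i|$; hence $\sup_i\|\delta_{k,i}\omega_i\|_{L^2(B_R(x_i))}<\infty$, and by $L^2_{\mathrm{loc}}$-weak compactness \cite[Proposition 3.50]{Honda2} a subsequence converges $L^2_{\mathrm{loc}}$-weakly to some $\sigma$. Passing to the limit in the smooth integration-by-parts identity on $X_i$---using the $L^2_{\mathrm{loc}}$-strong convergence of $\omega_i$ and of the differentials of the approximating test forms $\beta_i$, together with the $L^2_{\mathrm{loc}}$-weak convergence $\delta_{k,i}\omega_i\to\sigma$---identifies $\sigma$ as the $\meas$-codifferential on $X$, proving $\omega\in\mathcal{D}^2_{\mathrm{Loc}}(\delta_k,X)$ and $\partial T_\omega=T_{\delta_k\omega}$.

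Finally, that $T_\omega$ is a (local) metric current in the sense of \cite{Lang,LW} is checked directly from its integral representation: multilinearity and locality are immediate because $\dist f_i$ vanishes where $f_i$ is locally constant, the mass is locally finite since $|T_\omega(f_0,\ldots,f_k)|\le\prod_{i\ge1}\mathbf{Lip}f_i\int_X|f_0|\dist\meas$ with $|\omega|\le1$ and $\meas$ locally finite, and the continuity axiom follows from the weak $L^2_{\mathrm{loc}}$-convergence of gradients (Rellich, \cite[Theorem 4.9]{Honda2}); together with $\delta_k\omega\in L^2_{\mathrm{loc}}$ this also makes $\partial T_\omega=T_{\delta_k\omega}$ a current of locally finite mass. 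The main obstacle is the step in the previous paragraph: identifying the weak limit $\sigma$ of the \emph{unweighted} codifferentials $\delta_{k,i}\omega_i$ with the \emph{weighted} codifferential on the singular collapsed limit, where the density of $\meas$ enters only through the limit of the integration-by-parts identity. This is exactly where the hypothesis $\sup_i\|\nabla\omega_i\|_{B_R(x_i)}<\infty$ is indispensable, and it requires carefully matching compactly supported test forms across the sequence within the $L^2_{\mathrm{loc}}$-convergence framework of \cite{Honda2,AmbrosioHonda}.
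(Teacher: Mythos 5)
Your proposal is correct and follows essentially the same route as the paper: uniform $H^{1,2}$-bounds on $\langle\omega_i,\dist f_{1,i}\wedge\cdots\wedge\dist f_{k,i}\rangle$ via (\ref{eq:fundamental}) and approximate sequences, followed by Rellich/stability of Sobolev functions for orientability, and then the pointwise bound $|\delta_k\omega_i|\le C(n)|\nabla\omega_i|$ together with weak compactness and the limit of the integration-by-parts identity (the content of Lemma \ref{lem:conv}) and the representation result (Theorem \ref{thm:imply}) for the current statements. Your explicit derivation of $\nabla^{g_X}\omega=0$ to pass from local to global $H^{1,2}$-regularity, and your inline verification of the current axioms, merely spell out steps the paper compresses into citations of Proposition \ref{rem:equiv}, Lemma \ref{lem:conv} and Theorem \ref{thm:imply}; the final ``obstacle'' you flag is already resolved by your own argument, since $\delta_k$ on the limit is defined purely through the integration-by-parts identity with respect to $\meas$.
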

\begin{proof}
By the proof of Proposition \ref{rem:equiv} it suffices to check that $g:=\phi \langle \omega, \dist f_1 \wedge \cdots \wedge \dist f_k \rangle$ is in $H^{1, 2}(X)$ for any $f_i \in \mathrm{Test}F(X)$ and any $\phi \in \mathrm{LIP}_c(X)$.
Then by existences of approximate sequences \cite[Theorem 4.2]{Honda4}, \cite[Proposition 10.2]{AmbrosioHonda}, there exist $R \in (0, \infty)$, a sequence $\phi_j \in \mathrm{LIP}_c(X_j)$ and a sequence $f_{i, j} \in \mathrm{Test}F(X_j)$ such that $\supp \phi_j \subset B_R(x_j)$ for any $j$, and that $\phi_j, \nabla \phi_j, f_{i, j}, \nabla f_{i, j}, \Delta f_{i, j}$ $L^2$-strongly converge to $\phi, \nabla \phi, f_i, \nabla f_i, \Delta f_i$, respectively.
Then since $\sup_j\|\mathrm{Hess}_{f_{i, j}}\|_{L^2}<\infty$, (\ref{eq:fundamental}) yields $g_j:=\phi_j\langle \omega_j, \dist f_{1, j} \wedge \cdots \wedge \dist f_{k, j} \rangle \in H^{1, 2}(X_j)$ with $\sup_j\|g_j\|_{H^{1, 2}(X_j)}<\infty$.
Thus since $g_j$ $L^2$-strongly converge to $g$, the Rellich compactness \cite[Theorem 4.9]{Honda2} (or \cite[Theorem 6.8]{GigliMondinoSavare}, \cite[Theorem 5.4]{AmbrosioHonda}) shows $g \in H^{1, 2}(X)$.
The remaining statements follows directly from Theorem \ref{thm:imply}, Lemma \ref{lem:conv} and a fact that $|\delta_k\omega_j|\le C(n)|\nabla \omega_j|$.
\end{proof}
\subsection{Compatibility with the smooth case}
Let us denote again by $(X, x, \meas)$ a Ricci limit space whose dimension is $k$.
\begin{proposition}[Compatibility; singular to smooth]\label{prop:comp1}
Let $\omega \in L^{\infty}(\bigwedge^kT^*X)$ be an orientation of $(X, x, \meas)$, and let $O$ be an open subset of $X$.
Assume that $O$ is locally isometric to a $k$-dimensional $C^1$-Riemannian manifold with $\meas \lfloor_O =e^F \dist \mathcal{H}^k$ for some $F \in \mathrm{LIP}_{\mathrm{loc}}(O)$.
Then $O$ has the orientation in the ordinary sense with respect to $\omega$.
In particular $\omega$ is continuous on $O$.
\end{proposition}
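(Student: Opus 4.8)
The plan is to work in local harmonic coordinates on $O$ and to show that, expressed in such coordinates, $\omega$ is a locally constant multiple of the (continuous) normalized top-form attached to the coordinates. First I would fix $p\in O$. Since $O\subset\mathcal{R}^k(X)$ and $\mathcal{R}^k(X)=\bigcap_{\tau}\bigcup_{\delta}\mathcal{R}^k_{\tau,\delta}(X)$ by Theorem \ref{thm:reg}, the point $p$ lies in $\mathcal{R}^k_{\tau,\delta}(X)$ for arbitrarily small $\tau$, so Corollary \ref{cor:delta} furnishes a limit harmonic $(\Psi(\tau,s;n),C(n))$-splitting map $\mathbf{b}=(\mathbf{b}_1,\ldots,\mathbf{b}_k):B_s(p)\to\mathbf{R}^k$. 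Each $\mathbf{b}_i$ is harmonic, hence $\mathbf{b}_i\in\mathcal{D}(\Delta,B_s(p))$ with $|\nabla\mathbf{b}_i|\le C(n)$, and the splitting estimate forces $\langle\nabla\mathbf{b}_i,\nabla\mathbf{b}_j\rangle$ to be close to $\delta_{ij}$; after shrinking $s$ this makes $\mathbf{b}$ a bi-Lipschitz chart with $J:=|\dist\mathbf{b}_1\wedge\cdots\wedge\dist\mathbf{b}_k|$ bounded away from $0$. Because $O$ carries a $C^1$ metric and a density $e^F$ with $F\in\mathrm{LIP}_{\mathrm{loc}}$, the regularity theory for the weighted Laplacian on $O$ upgrades each $\mathbf{b}_i$ to class $C^1$, so that $\dist\mathbf{b}_1\wedge\cdots\wedge\dist\mathbf{b}_k$, and hence the normalized form $\theta:=(\dist\mathbf{b}_1\wedge\cdots\wedge\dist\mathbf{b}_k)/J$, is continuous and nonvanishing on $B_s(p)$.

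Next I would write $\omega=u\,\theta$ on $B_s(p)$, where $u:B_s(p)\to\{-1,1\}$ is Borel; this is legitimate since $|\omega|=|\theta|=1$ a.e. and each fiber $\bigwedge^kT^*_zX$ is one–dimensional. The goal becomes showing that $u$ is locally constant. Pairing gives $h:=\langle\omega,\dist\mathbf{b}_1\wedge\cdots\wedge\dist\mathbf{b}_k\rangle=uJ$, and Proposition \ref{b} (applied to the harmonic, gradient-bounded functions $\mathbf{b}_i$) yields $h\in H^{1,2}(B_r(p))$ for $r<s$. On the other hand, Lemma \ref{aa} gives $\langle\nabla\mathbf{b}_i,\nabla\mathbf{b}_j\rangle\in H^{1,2}\cap L^\infty$ locally, so that $J^2=\det(\langle\nabla\mathbf{b}_i,\nabla\mathbf{b}_j\rangle)_{ij}\in H^{1,2}\cap L^\infty$, being a polynomial in those entries; since $J^2$ is bounded away from $0$, composing with the smooth map $t\mapsto t^{-1/2}$ shows $1/J\in H^{1,2}\cap L^\infty$. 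Hence $u=h/J$ is a product of two functions in $H^{1,2}\cap L^\infty$, and therefore $u\in H^{1,2}(B_r(p))$.

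Then I would invoke the standard fact that a Sobolev function valued in the discrete set $\{-1,1\}$ has vanishing minimal relaxed gradient: the level sets $\{u=1\}$ and $\{u=-1\}$ exhaust $B_r(p)$ up to a null set, so $\nabla u=0$ a.e. As Ricci limit spaces support a local Poincaré inequality and balls are connected, $\nabla u=0$ forces $u$ to be a.e. constant on $B_r(p)$. Consequently $\omega=\pm\theta$ on $B_r(p)$, which is continuous, proving the last assertion that $\omega$ is continuous on $O$. Finally, for each $p$ I would reorder the coordinates (swapping $\mathbf{b}_1$ and $\mathbf{b}_2$ when $u=-1$) so that $\langle\omega,\dist\mathbf{b}_1\wedge\cdots\wedge\dist\mathbf{b}_k\rangle>0$; on any overlap both such $C^1$-charts are positively oriented with respect to the single form $\omega$, so the transition Jacobians are positive. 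This is exactly an ordinary orientation of $O$ compatible with $\omega$.

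The main obstacle is the Sobolev bookkeeping that converts the abstract regularity $h\in H^{1,2}$ into the membership $u\in H^{1,2}$: it is essential that $J$ — equivalently the Gram determinant of the harmonic coordinate gradients — be Sobolev and bounded away from zero, which is precisely what Lemma \ref{aa} together with the splitting estimate of Corollary \ref{cor:delta} delivers. A secondary subtlety is using the $C^1$ hypothesis, via elliptic regularity for the weighted harmonic functions $\mathbf{b}_i$, to upgrade the almost-everywhere identity $\omega=\pm\theta$ to genuine continuity and to a genuine $C^1$ oriented atlas.
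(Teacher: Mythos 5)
Your overall skeleton is the same as the paper's: pick a reference top-form attached to a coordinate system, write $\omega$ as a $\{-1,1\}$-valued function times that form, use Proposition \ref{b} to put the pairing in $H^{1,2}$, observe that a $\pm 1$-valued Sobolev function has vanishing gradient a.e., and conclude by the Poincar\'e inequality. Where you diverge is in the choice of coordinates, and that choice creates a genuine gap. You build the chart from a limit harmonic splitting map $\mathbf{b}$ via Corollary \ref{cor:delta} and then assert that ``the splitting estimate forces $\langle\nabla\mathbf{b}_i,\nabla\mathbf{b}_j\rangle$ to be close to $\delta_{ij}$; after shrinking $s$ this makes $\mathbf{b}$ a bi-Lipschitz chart with $J$ bounded away from $0$.'' The splitting estimate is only an $L^1$-\emph{average} bound at the single scale $s$ over the single ball $B_s(p)$; it gives closeness of the Gram matrix to the identity on a subset of large measure (as in the argument around (\ref{eq:const lem2})), not pointwise, and shrinking $s$ does not propagate the estimate to smaller balls around other points. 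To turn this into the everywhere lower bound on $J$ that you need both to define $\theta=(\dist\mathbf{b}_1\wedge\cdots\wedge\dist\mathbf{b}_k)/J$ and to invert $J$ in $H^{1,2}\cap L^{\infty}$, you would have to combine the quantitative Hessian estimate (Theorem \ref{thm:quantitative bound hess}) with the Poincar\'e inequality to get smallness at every point and scale, and then use the $C^{1,\alpha}$ interior elliptic estimates (with their uniform modulus of continuity for $\nabla\mathbf{b}_i$) to pass from averages to pointwise values. None of this is in your write-up, and without it the steps ``$1/J\in H^{1,2}\cap L^{\infty}$'' and ``$\theta$ is continuous and nonvanishing'' are unsupported.

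The paper sidesteps all of this by not using harmonic coordinates at all: since $O$ is a $k$-dimensional $C^1$-Riemannian manifold, one can take an honest $C^2$-coordinate chart $\phi=(\phi_1,\ldots,\phi_k)$ on $B_R(p)$. A direct computation gives $\phi_i\in\mathcal{D}(\Delta^{\meas},B_R(p))$ with $\Delta^{\meas}\phi_i=\Delta\phi_i-\langle\dist F,\dist\phi_i\rangle$ and $|\nabla\phi_i|\in L^{\infty}$, so Proposition \ref{b} applies directly to give $\langle\omega,\dist\phi_1\wedge\cdots\wedge\dist\phi_k\rangle\in H^{1,2}(B_r(p))$; and the normalizing factor $|\dist\phi_1\wedge\cdots\wedge\dist\phi_k|$ is automatically continuous and nowhere vanishing because the chart is $C^2$ and the metric is $C^1$. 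From there the sign function $f=\langle\omega,\dist\phi_1\wedge\cdots\wedge\dist\phi_k\rangle/|\dist\phi_1\wedge\cdots\wedge\dist\phi_k|$ lies in $H^{1,2}(B_r(p))$, satisfies $|f|\equiv1$, hence $\nabla f=0$ a.e., and the Poincar\'e inequality makes it constant. If you replace your harmonic coordinates by the manifold's own chart, the rest of your argument (including the final assembly of an oriented atlas by swapping two coordinates where the sign is $-1$) goes through essentially as you wrote it.
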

\begin{proof}
Let $p \in O$ and let $\phi:=(\phi_1, \ldots, \phi_k): B_R(p) \to \mathbf{R}^k$ be a $C^2$-coordinate chart with $\overline{B}_R(p) \subset O$. 
Note that by a direct calculation we see $\phi_i \in \mathcal{D}(\Delta^{\meas}, B_R(p))$ with $\Delta^{\meas}\phi_i=\Delta \phi_i -\langle \dist F, \dist \phi_i \rangle$, where $\Delta$ is the standard Laplacian with respect to the $C^1$-Riemannian metric on $O$. 
Then $\omega$ can be expressed by
$$
\omega=\frac{f}{|\dist \phi_1 \wedge \cdots \wedge \dist \phi_k|}\dist \phi_1 \wedge \cdots \wedge \dist \phi_k
$$
on $B_r(p)$ for any $r \in (0, R)$, where $f$ is a function on $B_R(p)$ with $|f| \equiv 1$.
Thus we have $f=\langle \omega, \dist \phi_1 \wedge \cdots \wedge \dist \phi_k\rangle /|\dist \phi_1 \wedge \cdots \wedge \dist \phi_k|$.
In particular Proposition \ref{b} yields $f \in H^{1, 2}(B_r(p))$.
Since $|f| \equiv 1$ implies $|\nabla f|(z) =0$ for a.e. $z \in B_r(p)$, the Poincare inequality shows that $f$ is constant, which completes the proof.
\end{proof}
Let us discuss the opposite implication.
The key point is to consider the Sobolev (2)-capacity of a subset $A$ of $X$, denoted by $\mathrm{Cap}_2^{\meas}(A)$.
See for instance \cite{KM, Shanmugalingam} for the definition.
We only need the following properties;
\begin{itemize}
\item for a closed subset $A$ of $X$, $\mathrm{Cap}_2^{\meas}(A)=0$ if and only if there exists a sequence $f_i \in H^{1, 2}(X) \cap \mathrm{LIP}(X)$ such that $0 \le f_i \le 1$, that for any $i$, $f_i \equiv1$ on a neighborhood of $A$, and that $f_i \to 0$ in $H^{1, 2}(X)$.
\end{itemize}
\begin{proposition}[Compatibility; smooth to singular]\label{prop:comp2}
Let $O$ be an open subset of $X$.
Assume that $\mathrm{Cap}_2^{\meas}(X \setminus O)=0$ and that $O$ is locally isometric to a $k$-dimensional $C^1$-Riemannian manifold with $\meas \lfloor_O =e^F \dist \mathcal{H}^k$ for some function $F$ on $O$ satisfying that $f|_U \in H^{1, 2}(U)$ for any relatively compact open subset $U$ of $O$.
Then $O$ is orientable in the ordinary sense if and only if $(X, x, \meas)$ is orientable.
\end{proposition}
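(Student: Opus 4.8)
The plan is to treat the two implications of the equivalence separately. The forward implication, that orientability of $(X,x,\meas)$ forces ordinary orientability of $O$, is already contained in Proposition \ref{prop:comp1}: given an orientation $\omega$ of $(X,x,\meas)$, its restriction to $O$ is continuous and is an orientation of the $C^1$-manifold $O$ in the classical sense. The only point to check is that the proof of Proposition \ref{prop:comp1} goes through under the weaker hypothesis $F|_U\in H^{1,2}(U)$ in place of $F\in\mathrm{LIP}_{\mathrm{loc}}(O)$; this is the case, since that proof uses $F$ only through the identity $\Delta^\meas\phi_a=\Delta\phi_a-\langle\dist F,\dist\phi_a\rangle$, and $\langle\dist F,\dist\phi_a\rangle\in L^2_{\mathrm{loc}}(O)$ already when $\dist F\in L^2_{\mathrm{loc}}(O)$.

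For the converse, suppose $O$ is orientable in the ordinary sense and let $\omega_O$ be a continuous unit-norm top-form orienting it. Since $\mathrm{Cap}_2^\meas(X\setminus O)=0$ forces $\meas(X\setminus O)=0$ (because $\mathrm{Cap}_2^\meas(A)\ge\meas(A)$), I extend $\omega_O$ to a form $\omega\in L^\infty(\bigwedge^kT^*X)$ by setting $\omega:=\omega_O$ on $O$ and $\omega:=0$ on the null set $X\setminus O$; then $|\omega|(z)=1$ for a.e. $z\in X$. By Proposition \ref{rem:equiv} it suffices to prove that $g:=\langle\omega,\dist f_1\wedge\cdots\wedge\dist f_k\rangle\in H^{1,2}(X)$ for all $f_i\in\mathrm{Test}F(X)$. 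First, $g$ is bounded by $\prod_i\mathbf{Lip}f_i$ and lies in $L^2(X)$, since $|g|\le\big(\prod_{i\ge2}\mathbf{Lip}f_i\big)\,|\dist f_1|$ with $|\dist f_1|\in L^2(X)$.

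The local regularity on $O$ is where the $C^1$-structure enters. On $O$ the form $\omega_O$ is $C^1$ and parallel, so $\nabla^{g_X}\omega_O=0$, while each $f_i\in\mathcal{D}(\Delta,X)$ is twice differentiable with $\mathrm{Hess}^{g_X}_{f_i}=\mathrm{Hess}^\meas_{f_i}\in L^2(X)$ by Proposition \ref{prop:comp hess}. In a $C^1$-chart one has the explicit expression $g=\sigma\,\det(\partial_b f_a)/\sqrt{\det(g_{bc})}$ with $\sigma\in\{-1,1\}$ locally constant; since the first derivatives $\partial_b f_a$ are bounded and lie in $W^{1,2}_{\mathrm{loc}}(O)$ (this is exactly the content of $\mathrm{Hess}_{f_i}\in L^2$) while $\sqrt{\det(g_{bc})}$ is $C^1$ and locally bounded away from $0$, the product formula shows $g\in H^{1,2}_{\mathrm{loc}}(O)$. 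Moreover, applying the inequality (\ref{eq:fundamental}) with $\nabla^{g_X}\omega_O=0$ bounds the differential globally by $G:=\sum_i|\mathrm{Hess}^{g_X}_{f_i}|\prod_{j\ne i}\mathbf{Lip}f_j$, and $G\in L^2(X)$.

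Finally I remove the capacity-zero set and globalize. Let $f_\ell\in H^{1,2}(X)\cap\mathrm{LIP}(X)$ be the cutoffs furnished by $\mathrm{Cap}_2^\meas(X\setminus O)=0$, namely $0\le f_\ell\le1$, each $f_\ell\equiv1$ on a neighborhood of $A:=X\setminus O$, and $f_\ell\to0$ in $H^{1,2}(X)$. Put $g_\ell:=(1-f_\ell)g$. Because $1-f_\ell$ vanishes near $A$, the function $g_\ell$ is locally $H^{1,2}$ on the open cover of $X$ given by $O$ and by the neighborhood of $A$ on which $g_\ell\equiv0$, so $g_\ell\in H^{1,2}_{\mathrm{loc}}(X)$; combined with $g_\ell,\nabla g_\ell\in L^2(X)$ and multiplication by compactly supported good cutoff functions (letting the radius tend to $\infty$), this yields $g_\ell\in H^{1,2}(X)$. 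Since $\nabla g_\ell=(1-f_\ell)\nabla g-g\,\nabla f_\ell$, dominated convergence gives $g_\ell\to g$ and $\nabla g_\ell\to G$ in $L^2(X)$ (using $|g|\le C$, $f_\ell\to0$ a.e. along a subsequence, and $\|g\,\nabla f_\ell\|_{L^2}\le C\|\nabla f_\ell\|_{L^2}\to0$), whence $g\in H^{1,2}(X)$ with $\nabla g=G$ by closedness of the differential. I expect this last step to be the main obstacle: $X\setminus O$ may be topologically large (indeed dense, as in the Otsu--Shioya example), so no metric thickening of it is available, and only the analytic smallness $\mathrm{Cap}_2^\meas(X\setminus O)=0$ rules out a singular contribution to the global gradient concentrated on $X\setminus O$. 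The delicate companion point is that mere a.e.\ differentiability of $g$ on $O$ (membership in $\Gamma_1$) is \emph{not} enough — the locally constant sign functions appearing in the uniqueness argument show this — so the genuine Sobolev regularity $g\in H^{1,2}_{\mathrm{loc}}(O)$ from the coordinate formula must be secured before the capacity cutoffs and the globalization over the noncompact $X$ can be combined.
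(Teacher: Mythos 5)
Your proposal is correct and follows essentially the same route as the paper: one direction is delegated to Proposition \ref{prop:comp1}, and for the converse you form $(1-\phi_\ell)\langle\omega,\dist f_1\wedge\cdots\wedge\dist f_k\rangle$ with the capacity cutoffs, verify local $H^{1,2}$-regularity on $O$ from the $C^1$-structure, obtain a uniform $H^{1,2}$-bound from (\ref{eq:fundamental}) with $\nabla^{g_X}\omega=0$, and conclude by $L^2$-closedness — exactly the paper's argument, with the only cosmetic difference that the paper first localizes to a ball $B_R(x)$ via $\phi\in\mathrm{LIP}_c(X)$ and applies the capacity cutoff to $\overline{B}_R(x)\setminus O$, whereas you apply the cutoffs globally and localize afterwards. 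Your extra detail on the coordinate expression for $g$ on $O$ fills in what the paper dismisses as "easy to check".
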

\begin{proof}
From Proposition \ref{prop:comp1} it suffices to check `only if' part.
Let $\omega \in C^1(\bigwedge^kT^*O)$ be the canonical form defind by an orientation of $O$ with $|\omega| \equiv 1$ on $O$.
Then we first check:
\begin{equation}\label{eq:9}
\langle \omega, \phi \dist f_1 \wedge \cdots \wedge \dist f_k \rangle \in H^{1, 2}(X)
\end{equation}
for any $\phi \in \mathrm{LIP}_c(X)$ and any $f_i \in \mathrm{Test}F(X, \dist, \meas)$.
Let $R \in (0, \infty)$ with $\supp \phi \subset B_R(x)$. 
Since $\mathrm{Cap}_2^{\meas}(\overline{B}_R(x) \setminus O)=0$ there exists a sequence $\phi_i \in H^{1, 2}(X)$ such that $\phi_i \equiv 1$ on a neighborhood of $\overline{B}_R(x) \setminus O$ and that $\phi_i \to 0$ in $H^{1, 2}(X)$.
Let us consider a sequence $g_i=(1-\phi_i) \langle \omega, \phi \dist f_1 \wedge \cdots \wedge \dist f_k \rangle$.
Since $\supp g_i \subset B_R(x) \cap O$ and $\omega$ is $C^1$ on $O$, it is easy to check $g_i \in H^{1, 2}_c( B_R(x) \cap O)$ with $\sup_i\|g_i\|_{H^{1, 2}}<\infty$.
Since $g_i \to \langle \omega, \phi \dist f_1 \wedge \cdots \wedge \dist f_k \rangle$ in $L^2(X)$, this proves (\ref{eq:9}).

Then in particular $\langle \omega, \dist f_1 \wedge \cdots \wedge \dist f_k \rangle|_{B_R(x)} \in H^{1, 2}(B_R(x))$ for any $R \in (0, \infty)$.
Moreover by (\ref{eq:fundamental}) with $\nabla^{g_X}\omega\equiv 0$ on $O$ we have $\sup_{R \in [1, \infty)}\|\langle \omega, \dist f_1 \wedge \cdots \wedge \dist f_k \rangle|_{B_R(x)} \|_{H^{1, 2}(B_R(x))}<\infty$, which implies easily $\langle \omega, \dist f_1 \wedge \cdots \wedge \dist f_k \rangle \in H^{1 ,2}(X)$.
\end{proof}
\begin{corollary}
Let $(X_i, x_i, \mathcal{H}^n)$ be a sequence of noncollapsed $n$-dimensional Riemannian manifolds with $|\mathrm{Ric}_{X_i}|\le n-1$, and let $(X, x, \mathcal{H}^n)$ be the noncollapsed mGH-limit space. Then $(X, \dist, x, \mathcal{H}^n)$ is orientable if and only if $\mathcal{R}^n(X)$ is orientable in the ordinary sense.
\end{corollary}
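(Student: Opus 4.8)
The plan is to deduce the statement directly from Proposition \ref{prop:comp2} by taking the open set to be $O := \mathcal{R}^n(X)$. Since the limit is noncollapsed, Theorem \ref{thm:noncoll} gives $\mathrm{dim}\,X = n$, so $O$ is precisely the top-dimensional regular set and the dimension $k$ occurring in Proposition \ref{prop:comp2} equals $n$; moreover, in accordance with the convention adopted for noncollapsed sequences, the reference measure is $\meas = \mathcal{H}^n$. Thus the whole task reduces to checking the hypotheses of Proposition \ref{prop:comp2} for this choice of $O$.

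First I would record the smooth structure of $O$. By Cheeger--Colding \cite{CheegerColding1} (as recalled in the discussion following Theorem \ref{abc}), under the two-sided bound $|\mathrm{Ric}_{X_i}| \le n-1$ the set $\mathcal{R}^n(X)$ is open and carries a $C^{1,\alpha}$-Riemannian metric $g$ for every $\alpha \in (0,1)$, whose induced distance agrees locally with $\dist_X$. On a $C^{1,\alpha}$-Riemannian manifold the $n$-dimensional Hausdorff measure coincides with the Riemannian volume of $g$, so $\meas \lfloor_O = \mathcal{H}^n \lfloor_O = e^F \dist \mathcal{H}^n$ with $F \equiv 0$, which trivially lies in $H^{1,2}(U)$ for every relatively compact $U \subset O$. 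Hence all the structural hypotheses of Proposition \ref{prop:comp2} hold, and $O$ is a genuine manifold for which orientability in the ordinary sense is meaningful.

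The one remaining, and genuinely substantial, point is the capacity condition $\mathrm{Cap}_2^{\meas}(X \setminus O) = 0$. The complement $\mathcal{S}(X) := X \setminus \mathcal{R}^n(X)$ is closed, since $O$ is open. Here I would invoke the codimension-four estimate for noncollapsed limits with two-sided Ricci bounds (Cheeger--Colding--Tian, in sharp form Cheeger--Naber), which gives $\mathrm{dim}_{\mathcal{H}}\,\mathcal{S}(X) \le n-4$; in particular the singular set has Hausdorff dimension strictly less than $n-2$. Since $(X, \dist, \meas)$ is a doubling space supporting a local $(1,2)$-Poincar\'e inequality, the standard capacity estimate then shows that a closed set of Hausdorff dimension $< n-2$ has vanishing $2$-capacity: concretely one exhausts $\mathcal{S}(X)$ by families of balls whose total $(n-2)$-content tends to $0$ and assembles the associated logarithmic-type cutoffs into functions $f_i \in H^{1,2}(X)\cap \mathrm{LIP}(X)$ that equal $1$ near $\mathcal{S}(X)$ and converge to $0$ in $H^{1,2}(X)$, which is exactly the characterization of $\mathrm{Cap}_2^{\meas}(\mathcal{S}(X)) = 0$ used in Proposition \ref{prop:comp2}.

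With the capacity condition verified, Proposition \ref{prop:comp2} applies verbatim and yields that $\mathcal{R}^n(X)$ is orientable in the ordinary sense if and only if $(X, \dist, x, \mathcal{H}^n)$ is orientable, which is the assertion. The main obstacle is not the reduction, which is essentially bookkeeping about the smooth structure of the regular set, but the capacity computation: it requires the codimension of $\mathcal{S}(X)$ to exceed $2$ strictly, and this is where the deep two-sided structure theory is indispensable, since the mere lower Ricci bound only yields codimension $\ge 2$, which would be insufficient for $\mathrm{Cap}_2$ to vanish.
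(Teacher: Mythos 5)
Your proposal is correct and follows essentially the same route as the paper: reduce to Proposition \ref{prop:comp2} with $O=\mathcal{R}^n(X)$, use the Cheeger--Naber codimension-four estimate for the singular set, and conclude that $\mathrm{Cap}_2^{\meas}(X\setminus\mathcal{R}^n(X))=0$. The only cosmetic difference is that the paper cites Kinnunen--Martio for the implication from codimension $>2$ to vanishing $2$-capacity, whereas you sketch the standard cutoff construction directly.
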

\begin{proof}
Recall that it was proven in \cite{CheegerNaber} by Cheeger-Naber that $\mathcal{R}^n(X)$ has codimension 4 (with respect to $\mathcal{H}^n$).
In particular $\mathrm{Cap}^{\meas}_2(X \setminus \mathcal{R}^n(X))=0$ (c.f. \cite[Theorem 4.13]{KM}).
Thus the assertion follows from Proposition \ref{prop:comp2}.
\end{proof}
\subsection{Metric currents}
In this section we will establish Theorem \ref{hhgg}.
\subsubsection{Quick introduction of currents in metric spaces}
The pioneer work on currents in metric spaces was founded by Ambrosio-Kirchheim in \cite{AmbrosioKirchheim} under finite mass condition.
After that Lang and Lang-Wenger generalized in \cite{Lang, LW} this to general case, called local currents. 
Then Lang-Wenger established in \cite{LW} the pointed flat compactness for integral current spaces \cite[Theorem 1.1]{LW}.
As mentioned in the introduction since Theorem \ref{hhgg} is closely related to the pointed flat compactness, we adopt their formulation here. 
See \cite{LW} for the details of the following.

Let $Y$ be a metric space, let us denote by $\mathrm{LIP}_B(Y)$ the set of all Lipschitz functions on $Y$ with bounded supports and let $\mathrm{LIP}_{\mathrm{Loc}}(Y)$ be the set of all functions on $Y$ which are Lipschitz on each bounded subset (thus $\mathrm{LIP}_{\mathrm{loc}}(Y) = \mathrm{LIP}_{\mathrm{Loc}}(Y)$ if $Y$ is proper). For $m \in \mathbf{Z}_{\ge 0}$, put $\mathcal{D}^m(Y):=\mathrm{LIP}_B(Y) \times (\mathrm{LIP}_{\mathrm{Loc}}(Y))^m$.
For an open subset $U$ of $Y$ and a multi-linear function $T:\mathcal{D}^m(Y) \to \mathbf{R}$, let
$$
\mathbf{M}_U(T):=\sup \sum_{\lambda \in \Lambda}T(f_{\lambda}, \pi_{\lambda}),
$$
where the supremum runs over all countable sets $\Lambda$, and all $(f_{\lambda}, \pi_{\lambda}) \in \mathcal{D}^m(Y) (\pi_{\lambda}=(\pi_{\lambda, 1}, \ldots, \pi_{\lambda, m}) \in (\mathrm{LIP}_{\mathrm{Loc}}(Y))^m)$ such that $\supp f_{\lambda} \subset U$, that $\mathbf{Lip} \pi_{\lambda, l} \le 1$ and that $\sum_{\lambda}|f_{\lambda}|\le 1$.
Then for any subset $A$ of $Y$ let 
$$
\|T\|(A):=\inf_U \mathbf{M}_U(T),
$$
where the infimum runs over all open subset $U$ of $X$ with $A \subset U$.
Then 
\begin{itemize}
\item{(Push-foward)} for any map $\phi:Y \to Z$ such that $\phi$ is Lipschitz on each bounded subset of $Y$ and that $\phi^{-1}(A)$ is bounded for any bounded subset $A$ of $Z$, let us define the multi-linear functional $\phi_{\sharp}(T):\mathcal{D}^m(Z) \to \mathbf{R}$ by
$$
\phi_{\sharp}(T)(f, \pi_1, \ldots, \pi_m):=T(f\circ \phi, \pi_1 \circ \phi, \ldots, \pi_m \circ \phi ).
$$
\end{itemize}
Moreover $T$ is said to be an \textit{$m$-dimensional metric functional on $Y$} if the following two conditions hold;
\begin{enumerate}
\item{(Continuity)} we have
$$
\lim_{j \to \infty}T(f, \pi_1^j, \ldots, \pi_m^j)=T(f, \pi_1, \ldots, \pi_m),
$$
whenever $\pi_i^j$ pointwise converge to $\pi_i^j$ with $\sup_j\mathbf{Lip} (\pi_i^j|_A)<\infty$ (in particular it is uniformly convergent on each bounded subset $A$ of $Y$).
\item{(Locality)} in case $m \ge 1$, $T(f, \pi_1, \ldots, \pi_m)=0$ whenever some $\pi_i$ is constant on a neighborhood of $\supp f$.
\end{enumerate}
Assume that $T$ is a metric functional.
Then 
\begin{itemize} 
\item{(Boundary)} let us define the $(m-1)$-dimensional metric functional $\partial T$ on $Y$ by
$$
\partial T(f, \pi_1, \ldots, \pi_{m-1}):=T(\sigma, f, \pi_1, \ldots, \pi_{m-1}),
$$
where $\sigma \in \mathrm{LIP}_B(Y)$ is any function satisfying $\sigma|_{\supp f} \equiv 1$. 
\end{itemize}
In addition, a metric functional $T$ is said to be a \textit{local current} if the following holds:
\begin{enumerate}
\setcounter{enumi}{1}
\setcounter{enumi}{2}
\item{(Borel regularity)} for any $\epsilon \in (0, 1)$ and any bounded open subset $U$ of $Y$ there exists a compact subset $C$ of $U$ such that $\mathbf{M}_{U \setminus C}(T)<\epsilon$.
\end{enumerate}
Then $\|T\|$ determines a Borel regular measure on the set of all Borel subsets of $Y$ and can be characterized as the minimum Borel measure $\nu$ on $Y$ satisfying
$$
|T(f, \pi_1, \ldots, \pi_m)| \le \prod_i \mathbf{Lip} (\pi_i|_{\supp f}) \int_Y|f|\dist \nu
$$
for any $(f, \pi_1, \ldots, \pi_m) \in \mathcal{D}^m(Y)$.
See \cite[Propositions 2.2 and 2.3]{LW}.

Assume that $T$ is a local current.
Let us denote by $\mathcal{B}^{\infty}_B(Y)$ the set of all bounded Borel functions with bounded supports on $Y$.
Then it is easy to check that there is a canonical extension of $T$ as a multi-linear map: $\mathcal{B}^{\infty}_B(Y) \times (\mathrm{LIP}_{\mathrm{Loc}}(Y))^m \to \mathbf{R}$, which is also denoted by $T$ for short.
Then  
\begin{itemize}
\item{(Restriction)} for a Borel subset $A$ of $Y$ let us define a local current $T \lfloor_A$ on $A$ by 
$$
T\lfloor_A (f, \pi_1, \ldots, \pi_m):=T(1_Af, \pi_1, \ldots, \pi_m).
$$  
\end{itemize}
We say that local current $T$ is \textit{normal} if $\partial T$ is also a local current.

Finally we recall definitions of locally integer rectifiable currents and of locally integral currents.
For that let us denote by $[g]$ the canonical $k$-dimensional local current on an open subset $U$ of $\mathbf{R}^k$ defined by given $g \in L^1_{\mathrm{Loc}}(U)$, i.e.
$$
[g](f, \pi_1, \ldots, \pi_k):=\int_{U}gf \langle \dist x_1 \wedge \cdots \wedge \dist x_k, \dist \pi_1\wedge \cdots \wedge \dist \pi_k\rangle \dist \mathcal{H}^k. 
$$
\begin{definition}[Locally integer rectifiable currents and locally integral currents]\cite[Definition 2.4]{LW}
Let $S$ be an $m$-dimensional metric functional on $Y$.
\begin{enumerate}
\item{(Locally integer rectifiable current)} $S$ is said to be a \textit{locally integer rectifiable current} if the following two conditions hold;
\begin{enumerate}
\item for any bounded open subset $U$ of $Y$ and any $\epsilon \in (0, 1)$ there exist a finite family of compact subsets $\{K_i\}_{i=1}^N$ of $\mathbf{R}^m$ and a family of Lipschitz maps $\phi_i:K_i \to Y$ such that $\mathbf{M}_{U \setminus \bigcup_{i=1}^N\phi_i(K_i)}(S)<\epsilon$. In particular $S$ is a local current on $Y$,
\item for any bounded Borel subset $B$ of $Y$ and any Lipschitz map $f:B \to \mathbf{R}^m$, there exists a $\mathbf{Z}$-valued $L^1_{\mathrm{loc}}$-function $\theta$ on $\mathbf{R}^m$ such that $f_{\sharp}(S \lfloor_B)=[\theta ]$.
\end{enumerate}
\item{(Locally integral current)} $S$ is said to be a \textit{locally integral current} if $S, \partial S$ are locally integer rectifiable currents. In particular $S$ is normal.
\end{enumerate}
\end{definition}
\subsubsection{Proof of Theorem \ref{hhgg}}
Recall that $(X, x, \meas)$ is a Ricci limit space.
Note that in this subsection we may not assume that $k$ denotes the dimension.

Let $U$ be an open subset of $X$.
We say that $\eta \in L^p_{\mathrm{Loc}}(\bigwedge^kT^*U)$ (which means that $\eta$ is $L^p$-bounded on each bounded subset of $U$) \textit{is in $\mathcal{D}^p(\delta_k, U)$ (or in $\mathcal{D}^p_{\mathrm{Loc}}(\delta_k, U)$, respectively) for some $p \in [1, \infty]$} if there exists a unique $\alpha \in L^p(\bigwedge^{k-1}T^*U)$ (or  $\in L^p_{\mathrm{Loc}}(\bigwedge^{k-1}T^*U)$, respectively), denoted by $\delta_k\eta$, such that 
$$
\int_X\langle \eta, \dist f_0 \wedge \cdots \wedge \dist f_{k-1} \rangle \dist \meas = \int_X\langle \alpha, f_0 \dist f_1 \wedge \cdots \wedge \dist f_{k-1} \rangle \dist \meas
$$ 
for any $f_i \in \mathrm{LIP}_c(U)$.
Note that $\mathcal{D}^p(\delta_k, U) = \mathcal{D}^p_{\mathrm{Loc}}(\delta_k, U)$ if $U$ is bounded.
It is easy to check that if $\eta \in \mathcal{D}^p_{\mathrm{Loc}}(\delta_k, X)$, then for any $f \in \mathrm{LIP}_{\mathrm{Loc}}(X)$, $f\eta \in \mathcal{D}^p_{\mathrm{Loc}}(\delta_k, X)$ with 
\begin{equation}\label{eq:12}
\delta_k (f\eta)=f\delta_k\eta-\eta(\nabla f, \cdot).
\end{equation}
\begin{lemma}\label{o}
If $\eta \in L^1_{\mathrm{Loc}}(\bigwedge^kT^*U) \cap \mathcal{D}^p_{\mathrm{Loc}}(\delta_k, U)$, then $\delta_k \eta \in \mathcal{D}^{p}_{\mathrm{Loc}}(\delta_{k-1}, U)$ with $\delta_{k-1}(\delta_k\eta)=0$.
\end{lemma}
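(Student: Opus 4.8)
The plan is to establish the relation $\delta_{k-1}(\delta_k\eta)=0$ as a metric-measure transcription of the classical identity $\delta^2=0$, by exhibiting $0$ as the unique $(k-2)$-form satisfying the integration-by-parts identity that defines $\delta_{k-1}(\delta_k\eta)$. Since $\delta_k\eta\in L^p_{\mathrm{Loc}}(\bigwedge^{k-1}T^*U)$ is already in hand, it suffices to verify
\[
\int_X\left\langle \delta_k\eta,\, \dist f_0\wedge \dist f_1\wedge\cdots\wedge \dist f_{k-2}\right\rangle\dist\meas=0
\]
for all $f_0,\ldots,f_{k-2}\in\mathrm{LIP}_c(U)$; the definition of $\mathcal{D}^p_{\mathrm{Loc}}(\delta_{k-1},U)$ then yields $\delta_k\eta\in\mathcal{D}^p_{\mathrm{Loc}}(\delta_{k-1},U)$ with $\delta_{k-1}(\delta_k\eta)=0$, the element being unique because the test $(k-2)$-forms $f_0\,\dist f_1\wedge\cdots\wedge \dist f_{k-2}$ are total (density of $\mathrm{TestForm}_{k-2}(X)$ together with Corollary \ref{cor:com}). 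The case $k=1$ is trivial since $\bigwedge^{-1}T^*U=0$, so I would assume $k\ge 2$.

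First I would record the key antisymmetry. By the defining identity of $\delta_k\eta$,
\[
\int_X\left\langle \delta_k\eta,\, h_0\,\dist h_1\wedge\cdots\wedge \dist h_{k-1}\right\rangle\dist\meas
=\int_X\left\langle \eta,\, \dist h_0\wedge \dist h_1\wedge\cdots\wedge \dist h_{k-1}\right\rangle\dist\meas
\]
for all $h_i\in\mathrm{LIP}_c(U)$, and the right-hand side is totally antisymmetric in $(h_0,\ldots,h_{k-1})$ because the wedge $\dist h_0\wedge\cdots\wedge \dist h_{k-1}$ is. Hence the left-hand side is totally antisymmetric in the $h_i$ as well; in particular exchanging $h_0$ and $h_1$ reverses the sign.

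Next, given arbitrary $f_0,\ldots,f_{k-2}$, I would choose a cutoff $\chi\in\mathrm{LIP}_c(U)$ with $0\le\chi\le1$ and $\chi\equiv1$ on a neighborhood of $\supp f_0$, so that $f_0\chi=f_0$ and therefore $\dist(f_0\chi)=\dist f_0$ almost everywhere. Applying the displayed identity to the tuple $(h_0,h_1,h_2,\ldots,h_{k-1})=(f_0,\chi,f_1,\ldots,f_{k-2})$ and to the swapped tuple $(\chi,f_0,f_1,\ldots,f_{k-2})$ and adding, the two right-hand sides cancel by the antisymmetry above, which gives
\[
\int_X\left\langle \delta_k\eta,\, \bigl(f_0\,\dist\chi+\chi\,\dist f_0\bigr)\wedge \dist f_1\wedge\cdots\wedge \dist f_{k-2}\right\rangle\dist\meas=0 .
\]
By the Leibniz rule for the differential in the present calculus, $f_0\,\dist\chi+\chi\,\dist f_0=\dist(f_0\chi)=\dist f_0$, and this is exactly the desired vanishing.

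The only genuinely delicate points are the validity of the Leibniz rule $\dist(f_0\chi)=f_0\,\dist\chi+\chi\,\dist f_0$ for Lipschitz functions in this a.e.-differential framework (standard from the Sobolev calculus of subsection 2.2) and the passage from ``$0$ satisfies the identity'' to ``$\delta_{k-1}(\delta_k\eta)=0$'', which relies on the totality of the test $(k-2)$-forms, i.e. on the uniqueness built into the definition of $\mathcal{D}^p_{\mathrm{Loc}}$. I do not expect a real obstacle here, since both ingredients are already used freely earlier in the excerpt; the whole argument reduces $\delta^2=0$ to the antisymmetry of the top pairing inherited from the wedge product, combined with one Leibniz rule and a cutoff to handle local supports.
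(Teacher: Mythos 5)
Your argument is correct and is essentially the paper's proof in a different wrapping: both insert a compactly supported cutoff equal to $1$ on the supports into the defining identity of $\delta_k\eta$, the paper concluding directly from $\dist \phi=0$ a.e.\ on $\bigcup_i\supp f_i$ that $\int_U\langle \eta, \dist \phi\wedge \dist f_0\wedge\cdots\wedge \dist f_{k-2}\rangle \dist \meas=0$, while you extract the same cancellation by swapping the first two test slots, invoking the antisymmetry of the wedge, and applying the Leibniz rule $\dist (f_0\chi)=f_0\,\dist \chi+\chi\,\dist f_0$ with $f_0\chi=f_0$. Both routes are sound, and your remarks on uniqueness/totality of the test forms and on the degenerate case $k=1$ are at the same level of rigor the paper itself assumes.
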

\begin{proof}
For any $f_i \in \mathrm{LIP}_c(U)$ taking $\phi \in \mathrm{LIP}_c(U)$ with $\phi |_{\bigcup_i \supp f_i} \equiv 1$ yields
\begin{align*}
\int_U\langle \delta_k \eta, \dist f_0 \wedge \cdots \wedge \dist f_{k-1} \rangle \dist \meas &=
\int_U\langle \delta_k \eta, \phi \dist f_0 \wedge \cdots \wedge \dist f_{k-1} \rangle \dist \meas \\
&=\int_U\langle \eta, \dist \phi \wedge \dist f_0 \wedge \cdots \wedge \dist f_{k-1} \rangle \dist \meas =0,
\end{align*} 
which completes the proof,
where we used $\dist \phi(z) =0$ for a.e. $z \in \bigcup_i\supp f_i$ in the final equality. 
\end{proof}
\begin{theorem}[From $L^1_{\mathrm{Loc}}$-forms to metric currents]\label{thm:imply}
Let $\omega \in L^1_{\mathrm{Loc}}\left( \bigwedge^kT^*X \right)$ (note that $ L^1_{\mathrm{Loc}}\left( \bigwedge^kT^*X \right)= L^1_{\mathrm{loc}}\left( \bigwedge^kT^*X \right)$ in this case).
Then the multi-linear functional $T_{\omega}: \mathcal{D}^k(X) \to \mathbf{R}$ defined by 
$$
T_{\omega}(f, \pi_1, \ldots, \pi_k):=\int_X\langle \omega, f\dist \pi_1 \wedge \cdots \wedge \dist \pi_k \rangle \dist \meas 
$$
satisfies $\|T_{\omega}\| \le |\omega |\dist \meas$ (i.e. $\|T_{\omega}\|(A) \le \int_A|\omega| \dist \meas$ for any Borel subset $A$ of $X$) and the locality condition.
Moreover we have the following;
\begin{enumerate}
\item If $k=\mathrm{dim}\,X$, then $\|T_{\omega}\| =|\omega |\dist \meas$.
\item If $\omega \in \mathcal{D}_{\mathrm{Loc}}^1(\delta_k, X)$, then $T_{\omega}$ is a locally normal current with $\partial T_{\omega} =T_{\delta_k\omega}$.
\end{enumerate}
\end{theorem}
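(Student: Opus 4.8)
The plan is to prove the three assertions in turn, the last (normality) being the delicate one. For the mass bound I would start from the pointwise inequality
\[
\left|\langle\omega,f\dist\pi_1\wedge\cdots\wedge\dist\pi_k\rangle\right|\le|\omega|\,|f|\prod_l|\dist\pi_l|
\]
(note first that $\langle\omega,f\dist\pi_1\wedge\cdots\wedge\dist\pi_k\rangle$ is a well-defined $L^1$ function, since $\omega\in L^1_{\mathrm{Loc}}$, $\supp f$ is bounded and each $\pi_l\in\mathrm{LIP}_{\mathrm{Loc}}(X)$ is differentiable $\meas$-a.e.) and combine it with (\ref{new}), which gives $|\dist\pi_l|=\mathrm{Lip}\,\pi_l\le\mathbf{Lip}\,\pi_l$ a.e. For an admissible family in the definition of $\mathbf{M}_U$, with $\mathbf{Lip}\,\pi_{\lambda,l}\le1$, $\supp f_\lambda\subset U$ and $\sum_\lambda|f_\lambda|\le1$, this yields $\sum_\lambda T_\omega(f_\lambda,\pi_\lambda)\le\int_U\big(\sum_\lambda|f_\lambda|\big)|\omega|\dist\meas\le\int_U|\omega|\dist\meas$, so $\mathbf{M}_U(T_\omega)\le\int_U|\omega|\dist\meas$ and $\|T_\omega\|\le|\omega|\dist\meas$. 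Locality is immediate: if some $\pi_i$ is constant on a neighbourhood of $\supp f$, then $\dist\pi_i=0$ a.e.\ there by (\ref{new}), so the integrand vanishes a.e.

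For (1) the reverse inequality is a local computation at $\meas$-a.e.\ regular point. Since $k=\dim X$ the fibre $\bigwedge^kT_z^*X$ is one-dimensional for a.e.\ $z$, so $\omega$ is determined up to sign by $|\omega|$. At a.e.\ $z\in\mathcal{R}^k(X)$, Theorem \ref{thm:chc} (with Lemma \ref{ee} to take compact chart domains) gives, on small balls, a $(1\pm\epsilon)$-bi-Lipschitz harmonic-splitting chart $\phi=(\phi_1,\dots,\phi_k)$ with $|\langle\nabla\phi_i,\nabla\phi_j\rangle-\delta_{ij}|$ small, so that $\langle\omega,\dist\phi_1\wedge\cdots\wedge\dist\phi_k\rangle=(1\pm\Psi)|\omega|$ after ordering the components so this pairing is nonnegative. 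Normalising and McShane-extending the $\phi_l$ to globally $(1+\epsilon)$-Lipschitz functions, and testing $\mathbf{M}_U(T_\omega)$ against these together with $f\ge0$, $f\le1$ supported near $z$, gives $\mathbf{M}_U(T_\omega)\ge(1+\epsilon)^{-k}(1-\Psi)\int f|\omega|\dist\meas$. Letting $f\uparrow1$ and then $\epsilon,\Psi\downarrow0$ shows the density of $\|T_\omega\|$ at $z$ is at least $|\omega|(z)$, and covering $X$ up to a null set yields $\|T_\omega\|\ge|\omega|\dist\meas$.

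For (2) I would first compute the boundary. Taking $\sigma\equiv1$ on $\supp f$ and using $\sigma\,\dist f=\dist f$ a.e.,
\[
\partial T_\omega(f,\pi_1,\dots,\pi_{k-1})=\int_X\langle\omega,\dist f\wedge\dist\pi_1\wedge\cdots\wedge\dist\pi_{k-1}\rangle\dist\meas;
\]
inserting a cutoff $\chi\in\mathrm{LIP}_c(X)$ equal to $1$ near $\supp f$ so that the defining identity of $\delta_k$ applies (the integrand being supported in $\supp f$), this equals $\int_X\langle\delta_k\omega,f\,\dist\pi_1\wedge\cdots\wedge\dist\pi_{k-1}\rangle\dist\meas=T_{\delta_k\omega}(f,\pi_1,\dots,\pi_{k-1})$, so $\partial T_\omega=T_{\delta_k\omega}$. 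Borel regularity of $T_\omega$ and of $\partial T_\omega=T_{\delta_k\omega}$ then follows from the mass bounds $\|T_\omega\|\le|\omega|\dist\meas$ and $\|T_{\delta_k\omega}\|\le|\delta_k\omega|\dist\meas$ together with inner regularity of these Radon measures: for $\epsilon>0$ and bounded open $U$ choose compact $C\subset U$ with $\int_{U\setminus C}|\omega|\dist\meas<\epsilon$.

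The main obstacle is the continuity axiom, which I would establish for $T_\alpha$ of any $\alpha\in L^1_{\mathrm{Loc}}\cap\mathcal{D}^1_{\mathrm{Loc}}(\delta_p,X)$ by induction on the degree $p$, so that applying it to $\omega$ and to $\delta_k\omega$ finishes (2). The inductive step peels one differential via $f\,\dist\pi_1=\dist(f\pi_1)-\pi_1\,\dist f$, splitting $T_\omega(f,\pi_1,\dots,\pi_k)$ as
\[
T_{\delta_k\omega}(f\pi_1,\pi_2,\dots,\pi_k)-\int_X\langle\omega,\pi_1\,\dist f\wedge\dist\pi_2\wedge\cdots\wedge\dist\pi_k\rangle\dist\meas .
\]
By Lemma \ref{o} the $(k-1)$-form $\delta_k\omega$ lies in $L^1_{\mathrm{Loc}}\cap\mathcal{D}^1_{\mathrm{Loc}}(\delta_{k-1},X)$, so the first term is covered by the inductive hypothesis (here $f\pi_1\in\mathrm{LIP}_B(X)$). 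The crux is that the second term equals $-T_{\omega(\nabla f,\cdot)}(\pi_1,\dots,\pi_k)$ (after cutting off $\pi_1$, the integrand being supported in $\supp f$), and that the contraction $\omega(\nabla f,\cdot)$ \emph{again} lies in $\mathcal{D}^1_{\mathrm{Loc}}(\delta_{k-1},X)$: reordering the wedge and applying the defining identity of $\delta_k$ gives $\delta_{k-1}\big(\omega(\nabla f,\cdot)\big)=-(\delta_k\omega)(\nabla f,\cdot)\in L^1_{\mathrm{Loc}}$, with no second-order term in $f$ appearing because the computation is purely distributional. Both terms are then continuous by the inductive hypothesis, the base case $p=0$ being dominated convergence; this gives continuity and hence that $T_\omega$ and $\partial T_\omega$ are local currents, i.e.\ $T_\omega$ is locally normal with $\partial T_\omega=T_{\delta_k\omega}$.
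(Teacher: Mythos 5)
Your proposal is correct, and for the mass bound, locality, part (1) and the identity $\partial T_{\omega}=T_{\delta_k\omega}$ it runs essentially parallel to the paper's argument (the paper proves (1) by exhausting a bounded Borel set by pairwise disjoint $(1\pm\delta)$-bi-Lipschitz, orientation-compatible patches and testing $\mathbf{M}_U$ against the rescaled extended charts, which is your density computation in global form). The genuine divergence is in the continuity axiom. The paper telescopes the difference $\int_X\langle \omega, \phi\dist f_{1,i}\wedge\cdots\wedge\dist f_{k,i}\rangle\dist\meas-\int_X\langle \omega, \phi\dist f_{1}\wedge\cdots\wedge\dist f_{k}\rangle\dist\meas$ into $k$ terms, each containing exactly one factor $\dist(f_{j,i}-f_j)$, permutes that factor to the front, and integrates by parts \emph{once} using $\phi\omega\in\mathcal{D}^1(\delta_k,X)$ (via (\ref{eq:12})); each term is then bounded in absolute value by $\prod_l\mathbf{Lip}(\cdot)\int_X|\delta_k(\phi\omega)|\,|f_{j,i}-f_j|\dist\meas\to0$, so no recursion is needed — after one integration by parts the result is estimated directly rather than recognized as another current. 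Your induction on the degree, built on $f\dist\pi_1=\dist(f\pi_1)-\pi_1\dist f$ together with the contraction identity $\delta_{k-1}\bigl(\omega(\nabla f,\cdot)\bigr)=-(\delta_k\omega)(\nabla f,\cdot)$, is also valid and yields the marginally stronger conclusion that $T_\alpha$ is continuous for \emph{every} $\alpha\in L^1_{\mathrm{Loc}}\cap\mathcal{D}^1_{\mathrm{Loc}}(\delta_p,X)$ in one pass; the price is the extra lemma on contractions (which does hold, by antisymmetry and the defining identity of $\delta_k$, with no second derivative of $f$ appearing) and the small point — worth making explicit — that in your recursion the $\mathrm{LIP}_B$-argument $f\pi_1^j$ also varies with $j$, so the inductive hypothesis must include continuity in that slot too; this is harmless because $T_\beta(g^j-g,\pi^j)$ is controlled by $\|T_\beta\|\le|\beta|\dist\meas$ and the uniform convergence of $g^j$ on the bounded support. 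Either route closes the proof of (2) once Lemma \ref{o} is invoked for $\partial T_\omega=T_{\delta_k\omega}$.
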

\begin{proof}
We first check $\|T_{\omega}\| \le |\omega |\dist \meas$.
For any Borel subset $A$ of $X$ and any open subset $U$ of $X$ containing $A$, since
\begin{align*}
\sum_{\lambda \in \Lambda}T_{\omega}(f_{\lambda}, \pi_{\lambda}) &=\sum_{\lambda \in \Lambda}\int_X\langle \omega, f_{\lambda} \dist \pi_{\lambda, 1} \wedge \cdots \wedge \dist \pi_{\lambda, k}\rangle \dist \meas \\
&\le \int_U|\omega| \left(\sum_{\lambda \in \Lambda}|f_{\lambda}| \right) \dist \meas \\
&\le \int_U |\omega| \dist \meas 
\end{align*}
for any $\Lambda$ and any $(f_{\lambda}, \pi_{\lambda})$ as in the definition of $\mathbf{M}_U(T_{\omega})$, we have $\mathbf{M}_U(T_{\omega}) \le \int_U|\omega| \dist \meas$.
Since $U$ is arbitrary, the Borel regularity of $\meas$ yields $\|T_{\omega}\| \le |\omega|\dist \meas$, which implies the locality condition.

Let us prove (1).
Let $A$ be a bounded Borel subset of $X$ and let $\delta \in (0, 1)$.
Then by the rectifiablity of $(X, x, \meas)$ (c.f. Lemma \ref{ee}) it is easy to check that there exist a countable family of bounded Borel subsets $A_i$ of $A$ and a family of $(1 \pm \delta)$-bi-Lipschitz embeddings $\phi_i: A_i \hookrightarrow \mathbf{R}^k$ such that $A_i$ are pairwise disjoint with $\meas \left(A \setminus \bigsqcup A_i \right)=0$.
In particular $\int_{A \setminus \bigsqcup_iA_i}|\omega |\dist \meas=0$.
Moreover by considering the decomposition 
\begin{align*}
A_i&=\left\{ z \in A_i; \langle \omega, \dist \phi_{i, 1} \wedge \cdots \wedge \dist \phi_{i, k} \rangle (z) >0\right\} \sqcup \left\{  z \in A_i; \langle \omega, \dist \phi_{i, 1} \wedge \cdots \wedge \dist \phi_{i, k} \rangle (z)<0\right\} \\
&\sqcup \left\{ z \in A_i; \omega (z)=0\right\}
\end{align*}
with no loss of generality we can assume that $\phi_i$ is defined on $X$ as a $(1+\delta)$-Lipschitz map and that
$$
\langle \omega, \dist \phi_{i, 1} \wedge \cdots \wedge \dist \phi_{i, k} \rangle >0
$$
on $A_i$.

Let $\tilde{\phi}_i:=(1+\delta)^{-1}\phi_i$,
let $U$ be a bounded open subset of $X$ with $A \subset U$ and $\int_{U \setminus A}|\omega| \dist \meas <\delta$, let $N \in \mathbf{N}$ with $\int_{A \setminus \bigsqcup_{i=1}^NA_i}|\omega| \dist \meas<\delta$ and let $B_i$ be a compact subset of $A_i$ with $\int_{A_i \setminus B_i}|\omega| \dist \meas \le \frac{1}{2^i}\delta$.
Note that $\tilde{\phi}_i$ is $1$-Lipschitz.
Moreover we can take  $\{f_i\}_{i=1}^N \subset \mathrm{LIP}_c(U)$ such that $0 \le f_i \le 1$, that $f_i|_{B_i}\equiv 1$, that $\{\supp f_i\}_{i=1}^N$ are pairwise disjoint and that $\int_X|1_{\bigsqcup_{i=1}^NB_i}-f||\omega| \dist \meas \le \delta$, where $f:=\sum_{i=1}^Nf_i$.

Then (recall the notation $\Psi$ in the preliminaries)
\begin{align*}
\mathbf{M}_U(T_{\omega}) &\ge \sum_{i=1}^N \int_X \langle \omega, f_i\dist \tilde{\phi}_{i, 1} \wedge \cdots \wedge \dist \tilde{\phi}_{i, k} \rangle \dist \meas \\
& \ge \sum_{i=1}^N \int_{B_i} \langle \omega, \dist \tilde{\phi}_{i, 1} \wedge \cdots \wedge \dist \tilde{\phi}_{i, k} \rangle \dist \meas -\delta \\
&\ge \sum_{i=1}^N (1- \Psi(\delta; k))\int_{B_i}|\omega |\dist \meas -\delta \\
&=(1-\Psi (\delta ;k))\int_{\bigsqcup_{i=1}^N B_i }|\omega | \dist \meas -\delta \ge (1-\Psi (\delta ;k))\int_{A}|\omega | \dist \meas -\delta.
\end{align*}
Since $\|T_{\omega}\|(U \setminus A) \le \int_{U \setminus A}|\omega|\dist \meas <\delta$, 
letting $\delta \downarrow 0$ proves $\|T_{\omega}\|(A) \ge \int_A|\omega | \dist \meas$, which proves (1) (see also \cite[Lemma 4.7]{Lang} and \cite[Proposition 2.7]{AmbrosioKirchheim}).

Next we prove (2).
In order to prove that $T_{\omega}$ is a local current,
it suffices to check the continuity condition.

Let $\phi \in \mathrm{LIP}_B(X)$ and let $f_{j, i} \in \mathrm{LIP}_{\mathrm{Loc}}(X)$ be uniformly convergent sequences to $f_j \in \mathrm{LIP}_{\mathrm{Loc}}(X)$ on $B_R(x)$ with $\sup_{i, j} \mathbf{Lip}(f_{j, i}|_{B_R(x_i)})<\infty$ for any $R \in (0, \infty)$.
Note
\begin{align}\label{eq:star}
\int_X\langle \omega, \phi \dist f_{1, i} \wedge \cdots \wedge \dist f_{k, i} \rangle \dist \meas &= \int_X\langle \omega, \phi \dist f_1 \wedge \dist f_2 \wedge \cdots \wedge \dist f_k \rangle \dist \meas \nonumber \\
&+ \int_X\langle \omega, \phi \dist f_1 \wedge \dist f_2 \wedge \cdots \wedge \dist f_{k-1} \wedge \dist (f_{k, i}-f_k) \rangle \dist \meas  \nonumber \\
& \ldots  \nonumber \\
&+\int_X\langle \omega, \phi \dist f_1 \wedge \dist (f_{2, i}-f_2) \wedge \cdots \wedge \dist f_{k, i} \rangle \dist \meas \nonumber  \\
& +\int_X\langle \omega, \phi \dist (f_{1, i}-f_1) \wedge \dist f_{2, i} \wedge \cdots \wedge \dist f_{k, i} \rangle \dist \meas. \nonumber 
\end{align}
Then 
\begin{align*}
&\lim_{i \to \infty} \left|\int_X\langle \omega, \phi \dist f_1 \wedge \cdots \wedge \dist f_{j-1} \wedge \dist (f_{j, i}-f_j) \wedge \dist f_{j+1, i} \wedge \cdots \wedge \dist f_{k, i} \rangle \dist \meas\right| \\
&=\lim_{i \to \infty} \left|\int_X\langle \phi \omega, \dist f_1 \wedge \cdots \wedge \dist f_{j-1} \wedge \dist (f_{j, i}-f_j) \wedge \dist f_{j+1, i} \wedge \cdots \wedge \dist f_{k, i} \rangle \dist \meas\right| \\
&\le \lim_{i \to \infty} \left(\prod_{l < j}\mathbf{Lip}(f_l|_{\supp \phi})\right) \cdot \left(\prod_{l>j} \mathbf{Lip}(f_{l, i}|_{\supp \phi}) \right) \int_X |\delta_k(\phi \omega)||f_{j, i}-f_j|\dist \meas=0,
\end{align*}
where we used (\ref{eq:12}).
In particular
\begin{equation}\label{eq:20}
\lim_{i \to \infty}\int_X\langle \omega, \phi_i\dist f_{1, i} \wedge \cdots \wedge \dist f_{k, i} \rangle \dist \meas = \int_X\langle \omega, \phi\dist f_{1} \wedge \cdots \wedge \dist f_{k} \rangle \dist \meas,
\end{equation}
which is the desired continuity property. Therefore $T_{\omega}$ is a local current. 
Then it is easy to check $\partial T_{\omega}=T_{\delta_k\omega}$.
Moreover applying the above for $\delta_k\omega$ with Lemma \ref{o} shows that $\partial T_{\omega}$ is also a local current, which completes the proof. 
\end{proof}
\begin{remark}\label{100}
It is easy to see that for any $n \ge 2$ the space $(X, \meas):=([0, \pi], \frac{1}{\int_0^{\pi}\sin ^{n-1}t \dist t}\int \sin^{n-1} t\dist t)$ is the collapsed mGH-limit space of a sequence $(\mathbf{S}^n, g_i, \frac{1}{\mathcal{H}^n(\mathbf{S}^n)}\mathcal{H}^n)$, where $g_i$ is a sequence of Riemannian metrics on the $n$-dimensional unit sphere $\mathbf{S}^n$ whose sectional curvature is bounded below by $1$ (see for instance \cite[Remark 1.10.6]{AmbrosioHonda} in the case when $n=2$).
Then $\mathrm{Cap}^{\meas}_2(\{0, 1\})=0$ because if let $f_k$ be Lipschitz functions on $[0, \pi]$ defined by
$$
f_k(t):=
\begin{cases} 1 \,\,\,\,\,\,\,\,\,\,\,\,\,\,\,\,\,\,\,\,\,\,\,\,\mathrm{if}\,t\in  [0, 1/k], \\
1-\frac{\log (tk)}{\log 2} \,\,\,\,\,\,\mathrm{if}\,t \in (1/k, 2/k], \\
0 \,\,\,\,\,\,\,\,\,\,\,\,\,\,\,\,\,\,\,\,\,\,\,\,\mathrm{if}\,t \in (2/k, \pi],
\end{cases}
$$
then it is easy to check that $f_k \to 0$ in $H^{1, 2}(X)$, which implies $\mathrm{Cap}^{\meas}_2(\{0, 1\})=0$.
Thus by Proposition \ref{prop:comp2}, $(X, \meas)$ is orientable.
Let $\omega$ be the canonical orientation, i.e. $\omega:=\dist t$.
Then integration by parts shows $\omega \in \mathcal{D}^{\infty}(\delta_1, [0, \pi])$ with $\delta_1 \omega=-(n-1)(\tan t)^{-1}$.
In particular by Theorem \ref{thm:imply}, $T_{\omega}$ is a normal current with $\partial T_{\omega}=[-(n-1)(\tan t)^{-1}]$ on $[0, \pi]$.
\end{remark}
\begin{remark}\label{1}
It is easy to see that $(X, \meas):=([0, \pi], \frac{1}{\pi}\mathcal{H}^1)$ is the collapsed mGH-limit space of a sequence  $(\mathbf{S}^2, g_i, \frac{1}{\mathcal{H}^2(\mathbf{S})}\mathcal{H}^2)$ whose sectional curvature is nonnegative.
We can check that $(X, \meas)$ is orientable as follows.
Note that $\mathrm{Cap}^{\meas}_2(\{0, 1\}) \neq 0$ and that the eigenvalue of $\Delta$ is of the Neumann problem, i.e. $\{f_i(t):=\sqrt{2}\cos(it)\}_i$ are all eigenfunctions of $\Delta$, in particular this gives an orthonormal basis in $L^2(X)$ and a basis in $H^{1, 2}(X)$.

Let $\omega:=\dist t$.
For any $f \in \mathrm{Test}F(X)$ let $f=\sum_i a_if_i$ in $H^{1, 2}(X)$, i.e. $a_i=\int_Xff_i\dist \meas$.
Note that since $f \in \mathcal{D}(\Delta, X)$, we have $\Delta f=\sum_ii^2a_if_i$ in $L^2(X)$.
In particular $L:=\sum_i(i^2 a_i)^2<\infty$. Let $g_n=\sum_{i=1}^na_i \langle \omega, \dist f_i \rangle \in C^{\infty}([0, \pi])$.
Then
\begin{align*}
\|\dist g_n\|_{L^2}^2&=\left\|\sum_{i=1}^na_i\Delta f_i\right\|_{L^2}^2 =\sum_{i=1}^n(i^2a_i)^2 \le L.
\end{align*}
Thus since $g_n \to \langle \omega, \dist f \rangle $ in $L^2(X)$, we have $\langle \omega, \dist f \rangle \in H^{1, 2}(X)$, which implies that $\omega$ is an orientation of $(X, \dist, \meas)$.
Moreover since 
\begin{equation}\label{mmmm}
\int_0^{\pi}\langle \omega, \dist h \rangle \dist \meas =\frac{1}{\pi}\left(h(\pi)-h(0)\right) 
\end{equation}
for any $h \in \mathrm{LIP}(X, \dist)$,
$\omega$ is \textit{not} in $\mathcal{D}^1(\delta_1, X)$.
In particular $\omega$ is not in $H^{1, 2}_H(T^*X)$ (see \cite{Gigli} or Section 7 for the definition of Sobolev spaces $H^{1, 2}_H$ for differential forms).
However by (\ref{mmmm}) we can check directly that $T_{\omega}$ is a metric current with $\partial T_{\omega}=\frac{1}{\pi}(\delta_{\pi}-\delta_0)$, where $\delta_t$ is the Dirac measure centered on $t$.
\end{remark}
\begin{lemma}\label{lem:conv}
Let $(X_i, x_i, \meas_i) \stackrel{GH}{\to} (X, x, \meas)$ be a convergent sequence of ($n$-)Ricci limit spaces, let $p \in (1, \infty]$, and let $\omega_i \in  L^p(\bigwedge^kT^*B_R(x_i)) \cap \mathcal{D}^p(\delta_k, B_R(x_i)) (i=1, 2, \ldots )$ with $\sup_{i<\infty}(\|\omega_i \|_{L^p}+\|\delta_k\omega_i \|_{L^p})<\infty$.
Then there exist a subsequence $i(j)$ and $\omega \in  L^p(\bigwedge^kT^*B_R(x)) \cap \mathcal{D}^p(\delta_k, B_R(x))$ such that $\omega_{i(j)}, \delta_k\omega_{i(j)}$ $L^p$-weakly converge to $\omega, \delta_k\omega$ on $B_R(x)$, respectively.
\end{lemma}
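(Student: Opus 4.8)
The plan is to extract convergent subsequences by $L^p$-weak compactness and then to pass the defining identity of $\delta_k$ to the limit by a weak--strong pairing argument.

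First I would invoke the $L^p$-weak compactness \cite[Proposition 3.50]{Honda2} twice, applied to the uniformly $L^p$-bounded sequence of $k$-forms $\omega_i$ and to the uniformly $L^p$-bounded sequence of $(k-1)$-forms $\delta_k\omega_i$. Passing to a common subsequence $\{i(j)\}_j$ (which from now on I do not relabel), this produces $\omega \in L^p(\bigwedge^kT^*B_R(x))$ and $\alpha \in L^p(\bigwedge^{k-1}T^*B_R(x))$ such that $\omega_i$ $L^p$-weakly converges to $\omega$ and $\delta_k\omega_i$ $L^p$-weakly converges to $\alpha$ on $B_R(x)$. When $p=\infty$ I would first extract $L^q$-weak limits for a fixed large finite $q$, the $L^\infty$-bounds on $\omega$ and $\alpha$ then following from the lower semicontinuity of the norms. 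It remains to show that $\omega \in \mathcal{D}^p(\delta_k, B_R(x))$ with $\delta_k\omega=\alpha$; by the uniqueness built into the definition of $\delta_k$, this identification is unambiguous.

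To this end, fix $f_0,\ldots,f_{k-1}\in \mathrm{LIP}_c(B_R(x))$. I would select approximating sequences $f_{j,i}\in \mathrm{LIP}_c(B_R(x_i))$ converging uniformly to $f_j$ with $\sup_{i,j}\mathbf{Lip}f_{j,i}<\infty$ and with $\nabla f_{j,i}$ $L^q$-strongly convergent to $\nabla f_j$ for every $q\in(1,\infty)$; the existence of such sequences, together with the cutoffs needed to keep supports compact in the noncompact setting, is supplied by the approximation results of \cite{AmbrosioHonda, Honda4}. Using the multilinearity and continuity of $L^q$-strong convergence under products and wedges with uniform $L^\infty$-bounds \cite[Propositions 3.27 and 3.48]{Honda2}, the test forms $\dist f_{0,i}\wedge\cdots\wedge\dist f_{k-1,i}$ and $f_{0,i}\dist f_{1,i}\wedge\cdots\wedge\dist f_{k-1,i}$ then $L^q$-strongly converge to their limit analogues for every finite $q$, in particular for the conjugate exponent $p'$ of $p$.

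Since the definition of $\delta_k$ on each $X_i$ gives
\[
\int_{X_i}\langle \omega_i, \dist f_{0,i}\wedge\cdots\wedge\dist f_{k-1,i}\rangle\dist\meas_i=\int_{X_i}\langle \delta_k\omega_i, f_{0,i}\dist f_{1,i}\wedge\cdots\wedge\dist f_{k-1,i}\rangle\dist\meas_i,
\]
and both sides pair an $L^p$-weakly convergent field against an $L^{p'}$-strongly convergent one (with supports in a fixed compact subset of $B_R(x)$, so that the local convergences suffice), the weak--strong pairing passes to the limit and yields
\[
\int_X\langle \omega, \dist f_0\wedge\cdots\wedge\dist f_{k-1}\rangle\dist\meas=\int_X\langle \alpha, f_0\dist f_1\wedge\cdots\wedge\dist f_{k-1}\rangle\dist\meas.
\]
As $f_0,\ldots,f_{k-1}$ were arbitrary, this is precisely the defining identity, so $\omega\in\mathcal{D}^p(\delta_k,B_R(x))$ with $\delta_k\omega=\alpha$. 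The main obstacle is the content of the third paragraph: producing compactly supported Lipschitz approximations with $L^q$-strongly convergent gradients and then controlling wedge and scalar products under $L^q$-strong convergence so that the weak--strong pairing in the final step is justified; the degenerate exponent $p=\infty$ requires the minor separate treatment indicated above.
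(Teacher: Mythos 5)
Your proposal is correct and follows essentially the same route as the paper: extract $L^p$-weak limits of $\omega_i$ and $\delta_k\omega_i$ by the weak compactness of \cite[Proposition 3.50]{Honda2}, approximate the test functions by compactly supported Lipschitz functions with uniformly bounded Lipschitz constants and $L^q$-strongly convergent differentials (\cite[Theorem 4.2]{Honda4}), and pass the defining identity of $\delta_k$ to the limit by pairing weak against strong convergence. Your extra remarks on the $p=\infty$ case and on the product/wedge stability are sensible elaborations of steps the paper leaves implicit.
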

\begin{proof}
By the $L^p$-weak compactness, there exist a subsequence $i(j)$, $\omega \in  L^p(\bigwedge^kT^*B_R(x))$ and $\eta \in  L^p(\bigwedge^{k-1}T^*B_R(x))$ such that $\omega_{i(j)}, \delta_k\omega_{i(j)}$ $L^p$-weakly converge to $\omega, \eta$ on $B_R(x)$, respectively.
Let $f_i \in \mathrm{LIP}_c(B_R(x)) (i=0, 1, \ldots, k)$. Then from the existence of an approximate sequence \cite[Theorem 4.2]{Honda4}, there exist sequences of $f_{l, i(j)} \in \mathrm{LIP}_c(B_R(x_i))$ such that $f_{l, i(j)}, \dist f_{l, i(j)}$ $L^q$-strongly converge to $f_l, \dist f_l$ on $B_R(x)$ for any $q \in (1, \infty)$ with $\sup_{j}\mathbf{Lip} f_{l, i(j)}<\infty$. 
Then since 
$$
\int_{X_{i(j)}}\langle \omega_{i(j)},\dist f_{1, i(j)} \wedge \cdots \wedge \dist f_{k, i(j)} \rangle \dist \meas_{i(j)} = \int_{X_{i(j)}}\langle \delta_k\omega_{i(j)}, f_{1, i(j)} \dist f_{2, i(j)} \wedge \cdots \wedge \dist f_{k, i(j)} \rangle \dist \meas_{i(j)},
$$
letting $j \to \infty$ yields $\omega \in \mathcal{D}^p(\delta_k, B_R(x))$ with $\delta_k\omega=\eta$.
\end{proof}
\begin{corollary}\label{co:stab}
Let $(X_i, x_i, \meas_i) \stackrel{GH}{\to} (X, x, \meas)$ be a convergent sequence of ($n$-)Ricci limit spaces, let $p \in (1, \infty]$ and let $\omega_i \in L^p_{\mathrm{Loc}}\left(\bigwedge^kT^*X_i\right) (=L^p_{\mathrm{loc}}\left(\bigwedge^kT^*X_i\right))$ be an $L^p_{\mathrm{loc}}$-weakly convergent sequence to $\omega \in L^p_{\mathrm{Loc}}\left(\bigwedge^kT^*X\right)(=L^p_{\mathrm{loc}}\left(\bigwedge^kT^*X\right))$.
Then we have the following.
\begin{enumerate}
\item If for any $i$, $T_{\omega_i}$ is a local current on $X_i$ with $\sup_i\|\partial T_{\omega_i}\|(B_R(x_i))<\infty$ for any $R \in (0, \infty)$, then
$T_{\omega_i}$ converge to $T_{\omega}$ in the following sense;
\begin{equation}\label{eq:15}
\lim_{i \to \infty}T_{\omega_i}(f_{0, i}, f_{1, i}, \ldots, f_{k, i})=T_{\omega}(f_0, f_1, \ldots, f_k).
\end{equation}
whenever $f_{j, i}$ converge uniformly to $f_j$ on $B_R(x)$ with $\sup_i\mathbf{Lip}(f_{j, i}|_{B_R(x_i)})<\infty$ for any $R \in (0, \infty)$ and there exist $j$ and $R_0 \in (0, \infty)$ such that $\supp f_{j, i} \subset B_{R_0}(x_i)$ for any $i$.
\item If for any $i$, $\omega_i \in \mathcal{D}^p_{\mathrm{Loc}}(\delta_k, X_i)$ with $\sup_i\|\delta_k\omega_i\|_{L^p(\bigwedge^kT^*B_R(x_i))}<\infty$ for any $R \in (0, \infty)$, then we see that $T_{\omega_i}, T_{\omega}$ are locally  normal currents, that $\omega \in \mathcal{D}^p_{\mathrm{Loc}}(\delta_k, X)$ and that $T_{\omega_i}, \partial T_{\omega_i}$ converge to $T_{\omega}, \partial T_{\omega}$ in the sense of (\ref{eq:15}), respectively.
\end{enumerate}
\end{corollary}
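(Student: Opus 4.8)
The plan is to establish (1) directly and then deduce (2) from it by applying (1) twice. For (1), I would first localize the problem to a fixed ball. The uniform compact support hypothesis provides an index $j$ and a radius $R_0$ with $\supp f_{j,i}\subset B_{R_0}(x_i)$; since $\dist f_{j,i}$ then vanishes almost everywhere outside $B_{R_0}(x_i)$, the integrand $\langle\omega_i,f_{0,i}\dist f_{1,i}\wedge\cdots\wedge\dist f_{k,i}\rangle$ is supported in $B_{R_0}(x_i)$, and multiplying the function slot by a fixed cutoff $\chi\equiv 1$ on $B_{R_0}$ of bounded support leaves $T_{\omega_i}(f_{0,i},\ldots,f_{k,i})$ unchanged. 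Thus I may assume every datum lives on a fixed ball. There the weak convergence $\omega_i\weakto\omega$ forces $\sup_i\|\omega_i\|_{L^p(B_{R_0})}<\infty$, so $\|T_{\omega_i}\|\le|\omega_i|\meas_i$ has uniformly bounded mass, while $\sup_i\|\partial T_{\omega_i}\|(B_{R_0})<\infty$ by hypothesis.

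Next I would replace the given test functions by approximations with strongly convergent gradients. Using the existence of approximate sequences (\cite[Theorem 4.2]{Honda4}, \cite[Proposition 10.2]{AmbrosioHonda}) I fix, for each limit function $f_l$, functions $\hat f_{l,i}$ with uniform Lipschitz bound such that $\hat f_{l,i}$ and $\nabla\hat f_{l,i}$ converge $L^2_{\mathrm{loc}}$-strongly to $f_l$ and $\nabla f_l$; in particular $f_{l,i}-\hat f_{l,i}\to 0$ uniformly on $B_{R_0}$. The heart of the argument is to show $T_{\omega_i}(f_{0,i},\ldots,f_{k,i})-T_{\omega_i}(\hat f_{0,i},\ldots,\hat f_{k,i})\to 0$ by telescoping one index at a time, so that each error term carries the uniformly small function $g:=f_{l,i}-\hat f_{l,i}$ in a single slot. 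When $g$ sits in the function slot the term is $\int\langle\omega_i,g\,\dist\hat f_{1,i}\wedge\cdots\rangle\dist\meas_i$, bounded directly by a constant times $\|g\|_{L^\infty(B_{R_0})}\|\omega_i\|_{L^1(B_{R_0})}$. When $g$ sits in a differential slot, the Leibniz rule $\dist(\psi_0 g)=\psi_0\dist g+g\dist\psi_0$, valid for $T_{\omega_i}$ by direct computation, together with the boundary identity $\partial T(h,\vec\rho)=T(\sigma,h,\vec\rho)$ yields
\[T_{\omega_i}(\psi_0,g,\vec\rho)=\partial T_{\omega_i}(\psi_0 g,\vec\rho)-T_{\omega_i}(\sigma g,\psi_0,\vec\rho),\qquad \sigma\equiv 1\ \text{near}\ \supp\psi_0,\]
and bounding the two summands by $\|\partial T_{\omega_i}\|$ and $\|T_{\omega_i}\|$ respectively, using their uniform finiteness on $B_{R_0}$ and $\|g\|_{L^\infty(B_{R_0})}\to 0$, drives the term to zero. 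This normality-based estimate, where the boundary-mass bound is spent to compensate for the merely weak convergence of the gradients, is the step I expect to be the main obstacle.

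Once the replacement is justified, I would close (1) by a weak--strong pairing. The form $\hat\eta_i:=\hat f_{0,i}\dist\hat f_{1,i}\wedge\cdots\wedge\dist\hat f_{k,i}$ converges $L^q_{\mathrm{loc}}$-strongly, for every $q\in(1,\infty)$, to $\hat\eta:=f_0\dist f_1\wedge\cdots\wedge\dist f_k$, because wedge products of uniformly bounded, $L^q$-strongly convergent vector fields converge strongly by the $L^p$-calculus of \cite{Honda2} and the compatibilities of Propositions \ref{hj} and \ref{qqssxx}; since $\omega_i\weakto\omega$ weakly, $T_{\omega_i}(\hat f_{0,i},\ldots,\hat f_{k,i})=\int\langle\omega_i,\hat\eta_i\rangle\dist\meas_i\to\int\langle\omega,\hat\eta\rangle\dist\meas=T_\omega(f_0,\ldots,f_k)$, which with the previous step proves (\ref{eq:15}).

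For (2) I would bootstrap from (1). By Theorem \ref{thm:imply} each $T_{\omega_i}$ is locally normal with $\partial T_{\omega_i}=T_{\delta_k\omega_i}$, and applying Lemma \ref{lem:conv} on each ball identifies every weak limit of $\delta_k\omega_i$ as $\delta_k\omega$; hence $\omega\in\mathcal{D}^p_{\mathrm{Loc}}(\delta_k,X)$, the full sequence satisfies $\delta_k\omega_i\weakto\delta_k\omega$ by uniqueness of the limit, and $T_\omega$ is locally normal with $\partial T_\omega=T_{\delta_k\omega}$. The bound $\sup_i\|\delta_k\omega_i\|_{L^p(B_R)}<\infty$ together with Bishop--Gromov gives $\|\partial T_{\omega_i}\|(B_R)\le\int_{B_R}|\delta_k\omega_i|\dist\meas_i<\infty$ uniformly, so (1) applies to $\omega_i$ and yields $T_{\omega_i}\to T_\omega$. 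Applying (1) to the $(k-1)$-forms $\delta_k\omega_i$, whose currents trivially satisfy $\partial\partial T_{\omega_i}=T_{\delta_{k-1}\delta_k\omega_i}=0$ by Lemma \ref{o} so that the normality hypothesis is automatic, yields $\partial T_{\omega_i}\to\partial T_\omega$ in the sense of (\ref{eq:15}), completing the proof.
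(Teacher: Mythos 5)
Your argument is correct and matches the paper's proof in all essentials: localize with a cutoff, replace the test functions by ones whose differentials converge strongly, pass to the limit by weak--strong pairing of $\omega_i$ against the strongly convergent wedge products, and absorb the replacement error into the uniformly bounded boundary mass via the identity $T(\psi_0,g,\vec{\rho})=\partial T(\psi_0 g,\vec{\rho})-T(\sigma g,\psi_0,\vec{\rho})$ --- which is exactly the ``argument similar to the proof of (2) of Theorem \ref{thm:imply}, using $\partial T_{\omega}$ instead of $\delta_k\omega$'' that the paper invokes --- and your derivation of (2) from (1), Theorem \ref{thm:imply} and Lemma \ref{lem:conv} is the paper's. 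The only cosmetic difference is the regularization device: the paper applies the heat flow to the given test functions and works with the iterated limit $\lim_{t\downarrow 0}\limsup_{i\to\infty}$, whereas you substitute approximate sequences of the limit functions $f_j$ and take a single limit, both choices being legitimate since either way the error is controlled by the boundary mass times a uniformly vanishing sup-norm.
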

\begin{proof}
Let us prove (\ref{eq:15}).
By using cut-off functions, with no loss of generality we can assume that $\supp f_{j, i} \subset B_{R_0}(x_i)$ for any $i, j$.
Then since $h_tf_{j, i}$ are uniformly Lipschitz functions and $L^q_{\mathrm{loc}}$-strongly converge to $h_tf_j$ for any $q \in (1, \infty)$ and any $t>0$ (c.f. \cite[Corollary 5.5]{AmbrosioHonda}), we have 
\begin{equation}\label{eq:20}
\lim_{i \to \infty}T_{\omega_i}(f_{0, i}, h_tf_{1, i}, \ldots, h_tf_{k, i})=T_{\omega}(f_0, h_tf_1, \ldots, h_tf_k).
\end{equation}
Then by an argument similar to the proof of (2) of Theorem \ref{thm:imply} (by using $\partial T_{\omega}$ instead of $\delta_k\omega$) we have
\begin{equation}\label{eq:21}
\lim_{t \downarrow 0}\left( \limsup_{i \to \infty}\left|  T_{\omega_i}(f_{0, i}, h_tf_{1, i}, \ldots, h_tf_{k, i})-T_{\omega_i}(f_{0, i}, f_{1, i}, \ldots, f_{k, i}) \right|\right)=0.
\end{equation}
Thus combining (\ref{eq:20}) with (\ref{eq:21}) shows (\ref{eq:15}).

Moreover (2) follows from (1), Theorem \ref{thm:imply} and Lemma \ref{lem:conv}.
\end{proof}
Let us denote by $D:=D(X, x, \meas )$ the set of points $z \in X$ such that the limit
$$
\lim_{r \downarrow 0}\frac{\meas (B_r(z))}{r^k}
$$
exists, and is positive and finite, where $k$ is the dimension of $(X, x, \meas)$.
Then for any $z \in D$ we put 
$$
g(z):=g_{(X, x, \meas)}(z)=\lim_{r \downarrow 0}\frac{\meas (B_r(z))}{\mathcal{H}^k(B_r(0_k))}.
$$
Recall that it is proven by Cheeger-Colding that $\meas (X \setminus D)=0$ and that if $(X, x, \meas )$ is a noncollapsed ($n$-)Ricci limit space, then we see that $X=D$, that $g \le \frac{1}{\mathcal{H}^n(B_1(x))}$ and that $g(z)=\frac{1}{\mathcal{H}^n(B_1(x))}$ if and only if $z \in \mathcal{R}^n(X)$, in particular $g(z)=\frac{1}{\mathcal{H}^n(B_1(x))}$ for a.e. $z \in X$ (recall that $\meas =\mathcal{H}^n/\mathcal{H}^n(B_1(x))$). See \cite[Theorems 3.1 and 5.9]{CheegerColding1} and \cite[Theorems 3.23 and 4.6]{CheegerColding3}.
Note that $\meas$ and $\mathcal{H}^k$ are mutually absolutely continuous on $D$.
For example, for $([0, \pi], \dist, \frac{1}{\int_0^{\pi}\sin ^{n-1}\dist t}\int \sin^{n-1} t\dist t)$ as in Remark \ref{100},
we see that $D([0, \pi], \dist, \frac{1}{\int_0^{\pi}\sin ^{n-1}\dist t}\int \sin^{n-1} t\dist t)=(0, 1)$ and that $g(t)= \frac{1}{\int_0^{\pi}\sin ^{n-1}t \dist t} \sin^{n-1}t$.
\begin{theorem}[Push-forward formula]\label{thm:push}
Let $k$ denote the dimension of $(X, x, \meas)$, let $\omega \in L^{\infty}(\bigwedge^kT^*X)$ with $|\omega |(z)=1$ for a.e. $z \in X$, let $C$ be a Borel subset of $D(X, x, \meas)$ and let $\phi:C \hookrightarrow \mathbf{R}^k$ be a bi-Lipschitz embedding.
Assume that the orientation of $(C, \phi)$ is compatible with $\omega$, (recall that this means $\langle \omega, \dist \phi_1 \wedge \cdots \wedge \dist \phi_k\rangle(z) >0$ for a.e. $z \in C$).
Then 
$$
\phi_{\sharp}\left(T_{\omega}\lfloor_C\right)=\left[ 1_{\phi (C)} g \circ \phi^{-1}\right].
$$
\end{theorem}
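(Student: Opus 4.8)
The plan is to test both functionals against an arbitrary tuple $(f, \pi_1, \ldots, \pi_k) \in \mathcal{D}^k(\mathbf{R}^k)$, since two $k$-dimensional metric functionals on $\mathbf{R}^k$ are equal precisely when they agree on all such tuples. First I would fix a componentwise Lipschitz extension of $\phi$ to a map $X \to \mathbf{R}^k$ (via McShane--Whitney); because $T_\omega\lfloor_C$ carries the factor $1_C$ and the differential of a function is local $\meas$-almost everywhere, the value of $\phi_\sharp(T_\omega\lfloor_C)$ does not depend on the chosen extension, and since $\phi$ is bi-Lipschitz on $C$ every integral below runs over a bounded, hence $\meas$-finite, subset of $C$. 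Unwinding the definitions of push-forward and restriction (Theorem~\ref{thm:imply}) gives
\[
\phi_\sharp\!\left(T_\omega\lfloor_C\right)(f, \pi_1, \ldots, \pi_k) = \int_C (f\circ\phi)\left\langle \omega, \dist(\pi_1\circ\phi)\wedge\cdots\wedge\dist(\pi_k\circ\phi)\right\rangle\dist\meas.
\]

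Next I would invoke the chain rule for differentials. Since $\phi$ is bi-Lipschitz, the Jacobian $|\dist\phi_1\wedge\cdots\wedge\dist\phi_k|=\sqrt{\det(\langle\dist\phi_i,\dist\phi_j\rangle)}$ is bounded away from $0$ and $\infty$, so $\phi_\sharp(\mathcal{H}^k\lfloor_C)$ and $\mathcal{H}^k\lfloor_{\phi(C)}$ are mutually absolutely continuous; by Rademacher's theorem $\pi=(\pi_1,\ldots,\pi_k)$ is therefore differentiable at $\phi(z)$ for $\meas$-a.e.\ $z\in C$ (using $\meas\ll\mathcal{H}^k$ on $C\subset D$). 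By locality and the Lipschitz chain rule, $\dist(\pi_j\circ\phi)(z)=\sum_l(\partial_l\pi_j)(\phi(z))\,\dist\phi_l(z)$ for $\meas$-a.e.\ $z$, and writing $J_\pi:=\langle\dist x_1\wedge\cdots\wedge\dist x_k,\dist\pi_1\wedge\cdots\wedge\dist\pi_k\rangle$ this yields
\[
\dist(\pi_1\circ\phi)\wedge\cdots\wedge\dist(\pi_k\circ\phi) = (J_\pi\circ\phi)\,\dist\phi_1\wedge\cdots\wedge\dist\phi_k.
\]
Here the orientation hypothesis enters decisively: because $|\omega|\equiv 1$ and $\omega(z)$ spans the one-dimensional fiber $\bigwedge^k T_z^*X$, the compatibility $\langle\omega,\dist\phi_1\wedge\cdots\wedge\dist\phi_k\rangle>0$ on $C$ forces $\langle\omega,\dist\phi_1\wedge\cdots\wedge\dist\phi_k\rangle=|\dist\phi_1\wedge\cdots\wedge\dist\phi_k|$ there, with no sign ambiguity. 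Thus the left-hand side equals $\int_C(f\circ\phi)(J_\pi\circ\phi)\,|\dist\phi_1\wedge\cdots\wedge\dist\phi_k|\dist\meas$.

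For the right-hand side I would unwind the definition of $[\,\cdot\,]$ to obtain $[1_{\phi(C)}g\circ\phi^{-1}](f,\pi)=\int_{\phi(C)}(g\circ\phi^{-1})\,f\,J_\pi\dist\mathcal{H}^k$, and then change variables by the area formula for the bi-Lipschitz map $\phi$ on the $k$-rectifiable space $X$ (Theorem~\ref{thm:reg}), whose Jacobian relative to the Hausdorff measure $\mathcal{H}^k$ of $(X,\dist)$ is exactly $|\dist\phi_1\wedge\cdots\wedge\dist\phi_k|$. Taking $h=(g\circ\phi^{-1})\,f\,J_\pi$ and using $(g\circ\phi^{-1})\circ\phi=g$ on $C$ gives $\int_C g\,(f\circ\phi)(J_\pi\circ\phi)\,|\dist\phi_1\wedge\cdots\wedge\dist\phi_k|\dist\mathcal{H}^k$. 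Finally I would apply the Cheeger--Colding density identity $\meas=g\,\mathcal{H}^k$ on $D\supset C$, so that $g\dist\mathcal{H}^k=\dist\meas$, turning this into $\int_C(f\circ\phi)(J_\pi\circ\phi)\,|\dist\phi_1\wedge\cdots\wedge\dist\phi_k|\dist\meas$, which is precisely the left-hand side. The same computation shows $1_{\phi(C)}g\circ\phi^{-1}\in L^1_{\mathrm{Loc}}(\mathbf{R}^k)$, since $|\dist\phi_1\wedge\cdots\wedge\dist\phi_k|\le(\mathbf{Lip}\,\phi)^k$ and $\meas$ is finite on bounded sets, so the right-hand side is a legitimate current.

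The hard part is the rigorous justification of the two almost-everywhere identities on the rectifiable space, namely the Lipschitz chain rule $\dist(\pi_j\circ\phi)=\sum_l(\partial_l\pi_j)(\phi)\dist\phi_l$ together with $\meas$-a.e.\ differentiability of $\pi$ along $\phi$, and the area formula with Jacobian $|\dist\phi_1\wedge\cdots\wedge\dist\phi_k|$ relating $\mathcal{H}^k$ on $\mathbf{R}^k$ to $\mathcal{H}^k$ on $X$. Both rest on the strong $\meas$-rectifiability of $X$ and the locality of the first-order calculus recalled in subsection~2.2; the orientation assumption is used only to fix the sign, ensuring that $\langle\omega,\dist\phi_1\wedge\cdots\wedge\dist\phi_k\rangle$ equals the nonnegative Jacobian rather than its negative, which is exactly what makes the two computations match.
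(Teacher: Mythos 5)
Your proposal is correct in substance and reaches the right identity, but it takes a genuinely different route from the paper. You work with the single chart $\phi$ throughout: you use the chain rule to pull $\det J(\pi)$ out, you use the one-dimensionality of $\bigwedge^k T_z^*X$ together with $|\omega|=1$ and the compatibility hypothesis to write $\langle \omega, \dist\phi_1\wedge\cdots\wedge\dist\phi_k\rangle = |\dist\phi_1\wedge\cdots\wedge\dist\phi_k|$ exactly, and you then invoke an exact area formula for the bi-Lipschitz map $\phi$ on the rectifiable space with Jacobian $|\dist\phi_1\wedge\cdots\wedge\dist\phi_k|$ relative to $\mathcal{H}^k$ on $X$. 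The paper never uses (or proves) such an area formula. Instead it applies Lemma \ref{ee} to decompose $C$, up to a null set, into countably many patches $(C_i,\psi_i)$ with $\psi_i$ a $(1\pm\epsilon)$-bi-Lipschitz embedding compatible with $\omega$; on each patch $\langle\omega,\dist\psi_{i,1}\wedge\cdots\wedge\dist\psi_{i,k}\rangle = 1\pm\Psi(\epsilon;k)$, the measure $\meas$ is replaced by $g\,\dist\mathcal{H}^k$, the integral is transported to $\psi_i(C_i)\subset\mathbf{R}^k$ with only a $\Psi(\epsilon;k)$ distortion, and the whole computation reduces to the classical Euclidean change of variables for $\phi\circ\psi_i^{-1}$; the conclusion follows by letting $\epsilon\downarrow 0$. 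What your approach buys is a cleaner, error-free computation in one pass; what the paper's approach buys is that it never leaves the comfort zone of Euclidean calculus, and in fact the $\epsilon$-patch argument is essentially the proof of the very area formula you are citing. So the one step you should not treat as a black box is the identification of the metric Jacobian of $\phi$ (in the Kirchheim/Ambrosio--Kirchheim sense, which governs how $\mathcal{H}^k$ transforms) with the Gram determinant $|\dist\phi_1\wedge\cdots\wedge\dist\phi_k|$ computed from the Riemannian metric $g_X$ of the rectifiable structure: this identification is exactly what the $(1\pm\epsilon)$-almost-isometric charts of Lemma \ref{ee} provide, and writing it out turns your proof into the paper's.
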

\begin{proof}
Fix $\epsilon \in (0, 1)$. By Lemma \ref{ee} there exists a coutable pairwise disjoint rectifiable patches $(C_i, \psi_i)$ such that $C_i \subset C$, that $\meas (C  \setminus \bigcup_iC_i)=0$, that the orientation of each $(C_i, \psi_i)$ is compatible with $\omega$, and that $\psi_i$ is a $(1 \pm \epsilon)$-bi-Lipschitz embedding.
Then for any $(f, \pi ) \in \mathcal{D}^k(\mathbf{R}^k)$, we have
\begin{align*}
&\phi_{\sharp}\left(T_{\omega}\lfloor_C\right)(f, \pi)\\
&=\int_{C}\langle \omega, \dist (\pi_1 \circ \phi) \wedge \cdots \wedge \dist (\pi_k \circ \phi) \rangle f \circ \phi \dist \meas \\
&=\int_{C}\langle \omega, \dist \phi_1 \wedge \cdots \wedge \dist \phi_k \rangle \det J(\pi)(\phi )f \circ \phi \dist \meas \\
&=\sum_i\int_{C_i}\langle \omega, \dist \phi_1 \wedge \cdots \wedge \dist \phi_k \rangle  \det J(\pi) (\phi )f \circ \phi \dist \meas \\
&=\sum_i\int_{C_i}\langle \omega, \dist \psi_{i, 1} \wedge \cdots \wedge \dist \psi_{i, k} \rangle \det J(\phi \circ \psi_i^{-1})(\psi ) \det J(\pi)(\phi )f \circ \phi \dist \meas \\
&=\left(1 \pm \Psi (\epsilon; k)\right)\sum_i\int_{C_i} \det J(\phi \circ \psi_i^{-1})(\psi ) \det J(\pi)(\phi )f \circ \phi \dist \meas \\
&=\left(1 \pm \Psi (\epsilon; k)\right)\sum_i\int_{C_i} \det J(\phi \circ \psi_i^{-1})(\psi ) \det J(\pi )(\phi )f \circ \phi g \dist \mathcal{H}^k \\
&=\left(1 \pm \Psi (\epsilon; k)\right)\sum_i\int_{\psi_i (C_i)} \det J(\phi \circ \psi_i^{-1})(\psi ) \det J(\pi)(\phi \circ \psi_i^{-1})f( \phi \circ \psi_i^{-1})  g (\psi_i^{-1})\dist \mathcal{H}^k \\
&=\left(1 \pm \Psi (\epsilon; k)\right)\sum_i\int_{\phi (C_i)}\det J(\pi) f  g (\phi^{-1})\dist \mathcal{H}^k \\
&=\left(1 \pm \Psi (\epsilon; k)\right)\int_{\bigcup_i\phi (C_i)}\det J(\pi ) f  g (\phi^{-1})\dist \mathcal{H}^k \\
&=\left(1 \pm \Psi (\epsilon; k)\right)\int_{\phi (C)}\det J(\pi ) f  g (\phi^{-1})\dist \mathcal{H}^k
\end{align*}
which completes the proof because $\epsilon$ is arbitrary.
\end{proof}
As a summary of this subsection we have the following.
\begin{theorem}[Stability of canonical currents for noncollapsed sequences]\label{thm:noncollapsed compactness}
Let $(X_i, x_i, \mathcal{H}^n)$ be a sequence of $n$-dimensional oriented Riemannian manifolds with their orientations $\omega_i \in C^{\infty}(X_i)$ satisfying that $\mathrm{Ric}_{X_i} \ge -(n-1)$ and $\mathcal{H}^n(B_1(x_i)) \ge v>0$. 
Then there exist a subsequence $i(j)$, the noncollapsed Ricci limit space $(X, x, \mathcal{H}^n)$ of $(X_{i(j)}, x_{i(j)}, \mathcal{H}^n)$ and an orientation $\omega \in L^{\infty}(\bigwedge^nT^*X)$ of $(X, x, \mathcal{H}^n)$ such that the following hold.
\begin{enumerate}
\item $\omega \in \mathcal{D}^{\infty}_{\mathrm{Loc}}(\delta_n, X)$ with $\delta_n\omega=0$.
\item $T_{\omega}$ is a locally integral current with $\partial T_{\omega} =0$.
\item $\|T_{\omega}\| = \mathcal{H}^n$ on the set of all Borel subsets of $X$. 
\item For any Borel subset $C$ of $X$ and any bi-Lipschitz embedding $\phi: C \hookrightarrow \mathbf{R}^n$ we have \begin{equation}\label{vh}
\phi_{\sharp}(T_{\omega}\lfloor_C)=[1_{\phi (C^+)}-1_{\phi (C^-)}],
\end{equation}
where $C^{\pm}$ are Borel subsets of $C$ such that the orientation of $C^+$ (or $C^-$, respectively) is (not, respectively) compatible with $\omega$ and that $\mathcal{H}^n(C \setminus (C^+ \cup C^-))=0$.
In particular $T_{\omega}$ has multiplicity one in the following sense; for any pairwise disjoint rectifiable atlas $\{(C_i, \phi_i)\}_i$ of $(X, x, \mathcal{H}^n)$ satisfying that the orientation of each patch $(C_i, \phi_i)$ is compatible with $\omega$, we have 
\begin{equation}\label{ppii}
T_{\omega}=\sum_i(\phi_i^{-1})_{\sharp}(1_{\phi_i (C_i)})
\end{equation}
and 
\begin{equation}\label{ppi1}
\|T_{\omega}\|(A)=\sum_i \|(\phi_i^{-1})_{\sharp}(1_{\phi_i(C_i)})\|(A)
\end{equation}
for any Borel subset $A$ of $X$.
\item $\omega_{i(j)}$ $L^p_{\mathrm{loc}}$-strongly converge to $\omega$ for any $p \in (1, \infty)$. 
\item $T_{\omega_{i(j)}}$ converge to $T_{\omega}$ in the sense of (\ref{eq:15}).
\end{enumerate} 
\end{theorem}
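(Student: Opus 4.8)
The plan is to assemble the six assertions from the results already established in this subsection, after first recasting each smooth datum $(X_{i}, x_{i}, \mathcal{H}^{n}, \omega_{i})$ in the abstract framework. First I would verify that the smooth Riemannian volume form $\omega_i$ determined by the given orientation is an orientation of $(X_i, x_i, \mathcal{H}^n)$ in the sense of Definition \ref{def:ori}: one has $|\omega_i|\equiv 1$ and $\nabla^{g_{X_i}}\omega_i=0$, so Proposition \ref{rem:equiv}, the inequality (\ref{eq:fundamental}) and Bochner's inequality \cite[Corollary 3.3.9]{Gigli} give $\langle\omega_i, \dist f_1\wedge\cdots\wedge\dist f_n\rangle\in H^{1,2}(X_i)$ for all $f_j\in\mathrm{Test}F(X_i)$. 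Since the bound $\mathcal{H}^n(B_1(x_i))\ge v$ makes the sequence noncollapsed, Theorem \ref{thm:stability} then produces a subsequence $i(j)$, the noncollapsed limit $(X, x, \mathcal{H}^n)$ with $\dim X=n$, and an orientation $\omega$ such that $\omega_{i(j)}$ converge $L^p_{\mathrm{loc}}$-strongly to $\omega$ for all $p\in(1,\infty)$; this is assertion (5) and furnishes $\omega$.

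Next I would record the divergence structure. Each $\omega_i$, being parallel, is co-closed, so $\omega_i\in\mathcal{D}^\infty(\delta_n, X_i)$ with $\delta_n\omega_i=0$, and $T_{\omega_i}$ is the smooth integration current with $\partial T_{\omega_i}=T_{\delta_n\omega_i}=0$. Applying Corollary \ref{co:stab}(2) to the $L^p_{\mathrm{loc}}$-convergent sequence $\omega_{i(j)}$, with the uniform bound $\sup_i\|\delta_n\omega_i\|_{L^\infty}=0$, yields $\omega\in\mathcal{D}^\infty_{\mathrm{Loc}}(\delta_n, X)$ whose $\delta_n\omega$ is the $L^p_{\mathrm{loc}}$-weak limit of the zero forms, hence $\delta_n\omega=0$; this is assertion (1). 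The same corollary gives the convergence $T_{\omega_{i(j)}}\to T_\omega$ in the sense of (\ref{eq:15}), which is assertion (6).

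Then I would extract the current-theoretic properties. Since $\dim X=n$ and $|\omega|\equiv 1$, Theorem \ref{thm:imply}(1) gives $\|T_\omega\|=|\omega|\,\mathcal{H}^n=\mathcal{H}^n$, assertion (3); Theorem \ref{thm:imply}(2) together with $\delta_n\omega=0$ shows $T_\omega$ is locally normal with $\partial T_\omega=T_0=0$. In the noncollapsed case with reference measure $\mathcal{H}^n$ the density satisfies $g\equiv 1$ a.e. by Cheeger-Colding's volume convergence (the computation of $g$ recalled before Theorem \ref{thm:push}), so Theorem \ref{thm:push} applied separately on the compatible and incompatible parts $C^\pm$ of any bi-Lipschitz chart $\phi:C\hookrightarrow\mathbf{R}^n$ gives $\phi_\sharp(T_\omega\lfloor_C)=[1_{\phi(C^+)}-1_{\phi(C^-)}]$, which is assertion (4), formula (\ref{vh}). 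The integer multiplicity, taking values in $\{-1,0,1\}$, together with a rectifiable atlas covering $X$ up to an $\mathcal{H}^n$-null set (Theorem \ref{thm:reg}), shows $T_\omega$ is locally integer rectifiable; since $\partial T_\omega=0$ is trivially so, $T_\omega$ is a locally integral current, completing assertion (2). Choosing a pairwise disjoint compatible atlas $\{(C_i, \phi_i)\}_i$ and using that $\|T_\omega\|=\mathcal{H}^n$ is additive over the pieces then yields the decomposition (\ref{ppii}) and the mass identity (\ref{ppi1}).

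The genuinely substantive point, beyond bookkeeping, is the upgrade from \emph{locally normal} to \emph{locally integral}: this rests entirely on Theorem \ref{thm:push} delivering an integer (indeed $\{-1,0,1\}$) multiplicity, which in turn requires both the compatibility of $\omega$ with the rectifiable charts (Lemma \ref{ee}) and the fact that the volume density $g$ of the noncollapsed limit equals one a.e. I expect marshalling these density and compatibility inputs to be where the real care is needed, the remaining five assertions being a direct assembly of Theorems \ref{thm:stability}, \ref{thm:imply} and Corollary \ref{co:stab}.
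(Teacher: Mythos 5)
Your proposal is correct and follows essentially the same route as the paper: assertions (1), (3), (5), (6) are assembled from Theorems \ref{thm:stability}, \ref{thm:imply} and Corollary \ref{co:stab} exactly as you describe, and the real work is (4) via Theorem \ref{thm:push} with density $g\equiv 1$, followed by (2). The only point you gloss over is the final passage from ``integer multiplicity on a disjoint compatible atlas'' to the full definition of a locally integer rectifiable current (which requires arbitrary Lipschitz maps into $\mathbf{R}^n$, not just the chart maps); the paper closes this by deriving (\ref{ppii}) and (\ref{ppi1}) explicitly and invoking the representation theorem \cite[Theorem 8.3]{Lang}.
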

\begin{proof}
It suffices to prove (2) and (4).
We first check (4).

Note that (\ref{vh}) is a direct consequence of Theorem \ref{thm:push}.
Then we have 
\begin{align*}
T_{\omega}(f, \pi)&=\int_X\langle \omega, f\dist \pi_1 \wedge \cdots \wedge \dist \pi_n\rangle \dist \mathcal{H}^n\\
&=\sum_i\int_{C_i}\langle \omega, f\dist \pi_1 \wedge \cdots \wedge \dist \pi_n\rangle \dist \mathcal{H}^n\\
&=\sum_i(\phi_i)_{\sharp}(T_{\omega}\lfloor_{C_i})(f \circ \phi_i^{-1}, \pi \circ \phi_i^{-1}) \\
&=\sum_i[1_{\phi_i(C_i)}](f \circ \phi_i^{-1}, \pi \circ \phi_i^{-1}) =\sum_i(\phi_i^{-1})_{\sharp}(1_{\phi_i(C_i)})(f, \pi),
\end{align*}
which proves (\ref{ppii}).
(\ref{ppi1}) follows from (\ref{ppii}) and a fact that $\|T\lfloor_A\|=\|T\|\lfloor_A$ (see \cite[(2.8)]{LW} and the proof of \cite[Theorem 8.3]{Lang}).
Thus we have (4).

Then (2) is a direct consequence of (\ref{ppii}), (\ref{ppi1}) and \cite[Theorem 8.3]{Lang}.
\end{proof}
\subsection{Compatibility with convergence of metric currents}
Let us explain a relationship between Theorem \ref{thm:noncollapsed compactness}, the weak convergence of currents given in \cite{AmbrosioKirchheim} by Ambrosio-Kirchheim, the pointed flat compactness theorem given in \cite{LW} by Lang-Wenger, and the intrinsic flat convergence defined in \cite{SormaniWenger} by Sormani-Wenger.

Wenger defined in \cite{wenger} the \textit{flat distance}, denoted by $\dist_{\mathcal{F}}^Z$, between two $m$-dimensional integral currents $S, T$ on a metric space $Z$, which is a generalization of Federer-Fleming's flat distance on Euclidean spaces to arbitrary metric spaces as follows;
$$
\dist_{\mathcal{F}}^Z(S, T):=\inf \{\|U\|(Z)+\|V\|(Z)\},
$$
where the infimum runs over all $m$-dimensional integral currents $U$ on $Z$ and all $(m+1)$-dimensional integral currents  $V$ on $Z$ with $S-T=U +\partial V$.
He also showed that the convergence of a sequence of integral currents $T_i$ on $Z$ to an integral current $T$ on $Z$ with respect to $\dist_{\mathcal{F}}^Z$ implies the weak convergence, in another word, the pointwise convergence holds;
$$
\lim_{i \to \infty}T_i(f, \pi)=T(f, \pi)
$$
for any bounded Lipschitz function $f$ on $Z$ and any $\pi_i \in \mathrm{LIP}(Z)$.
Moreover the reverse implication also holds under mild additional assumptions. See \cite[Theorems 1.2 and 1.4]{wenger}.

Sormani-Wenger defined in \cite{SormaniWenger} the distance $\dist_{\mathcal{F}}$, so called the \textit{intrinsic flat distance}, between two integral current spaces $(X, T), (Y, S)$ as follows;
$$
\dist_{\mathcal{F}}\left((X, T), (Y, S)\right):=\inf \dist_{\mathcal{F}}^Z\left(\phi_{\sharp}(X, T), \psi_{\sharp}(Y, S)\right),
$$
where the infimum runs over all isometric embeddings to a metric space $Z$; $\phi:X \hookrightarrow Z$, $\psi:Y \hookrightarrow Z$.
They gave fundamental properties of the convergence, which include that a sequence of integral current spaces $(X_i, T_i)$ converge to an integral current space $(X, T)$ with respect to the intrinsic flat distance if and only if 
there exist a complete separable metric space $Z$ and a sequence of isometric embeddings $\phi_i:X_i \hookrightarrow Z$, $\phi:X \hookrightarrow Z$ such that 
\begin{equation}\label{hhuuii1}
\lim_{i \to \infty}\dist_{\mathcal{F}}^Z\left((\phi_i)_{\sharp}(X_i, T_i), \phi_{\sharp}(X, T)\right)=0.
\end{equation}
See \cite[Theorem 4.2]{SormaniWenger}.
Note that Wenger also proved in \cite{wenger2} a compactness that a sequence of integral current spaces $\{(X_i, T_i)\}_i$ with $\sup_i(\|T_i\|(X_i)+\|\partial T_i\|(X_i))<\infty$ and $\sup_i \mathrm{diam}\,(\supp T_i)<\infty$ has a convergent subsequence with respect to $\dist_{\mathcal{F}}$, where $\supp T:=\supp \|T\|$. See \cite[Theorem 1.2]{wenger2}.

On the other hand Lang-Wenger introduced the following convergence; a sequence of pointed $m$-dimensional locally integral current spaces $(X_i, x_i, T_i)$ \textit{converge in the pointed flat sense to} a pointed $m$-dimensional locally integral current space $(X, x, T)$ if there exist a complete metric space $Z$, and a sequence of isometric embeddings $\phi_{i}:X_i \hookrightarrow Z$, $\phi:X \hookrightarrow Z$ such that $\lim_{i \to \infty}\phi_{i}(x_{i})=\phi (x)$ and that $(\phi_{i})_{\sharp}T_{i}$ converge to $\phi_{\sharp}T$ in the local flat topology in $Z$, i.e. for any bounded closed subset $B$ of $Z$ there exists a sequence of $(m+1)$-dimensional integral currents $S_{i}$ on $Z$ such that $\lim_{i \to \infty}(\| \phi_{\sharp}T-(\phi_{i})_{\sharp}T_{i}-\partial S_{i}\|(B)+\|S_{i}\|(B)) =0$.
Then in particular the weak convergence, i.e. the pointwise convergence is satisfied;
\begin{equation}\label{hhuuii}
\lim_{i \to \infty}\left((\phi_{i})_{\sharp}T_{i}\right) (f, \pi)=\left(\phi_{\sharp}T\right) (f, \pi)
\end{equation}
for any $(f, \pi) \in \mathcal{D}^m(Z)$.
They also established in \cite{LW} a compactness that a sequence of pointed $m$-dimensional locally integral current spaces $(X_i, x_i, T_i)$ with $\sup_i(\|T_i\|(B_R(x_i)) +\|\partial T_i\|(B_R(x_i)) )<\infty$ for any $R \in (0, \infty)$ has a convergent subsequence in the pointed flat sense above and showed the uniqueness of such limits.  See \cite[Theorem 1.1 and Proposition 1.2]{LW}.
In particular note that if  a sequence $(X_i, x_i, T_i)$ satisfies $\sup_i (\|T_i\|(X_i)+\|\partial T_i\|(X_i))<\infty$, $x_i \in \supp T_i$ and $\sup_i\mathrm{diam}\,(\supp T_i)<\infty$, then (after dropping the information of base points) the pointed flat convergence coincides with that with respect to $\dist_{\mathcal{F}}$. 

Let us turn to the mGH-convergence.
Gigli-Mondino-Savar\'e defined in \cite{GigliMondinoSavare} a new notion of convergence for metric measure spaces, so called the \textit{pointed measured Gromov convergence} (written by the pmG-convergence, for short), as follows;
a sequence of pointed metric measure spaces $(X_i, x_i, \meas_i)$ \textit{pmG-converge to} a pointed metric measure space $(X, x, \meas)$ if there exist a complete separable metric space $Z$ and a sequence of isometric embeddings $\phi_i:X_i \hookrightarrow Z$, $\phi:X \hookrightarrow Z$ such that 
\begin{equation}\label{hhuuii2}
\lim_{i \to \infty}\phi(x_i)=\phi(x), \lim_{i \to \infty}\int_Zf \dist (\phi_i)_{\sharp}\meas_i=\int_Zf\dist \phi_{\sharp}\meas
\end{equation}
for any continuous function $f$ on $Z$ with bounded support.
They gave fundamental properties of the convergence (\ref{hhuuii2}), which include that for a sequence of pointed metric measure spaces $(X_i, x_i, \meas_i)$ with a locally uniform doubling condition, $(X_i, x_i, \meas_i)$ pmG-converge to a pointed metric measure space $(X, x, \meas)$ if and only if it is a mGH-convergent sequence.
See \cite[Theorem 3.15]{GigliMondinoSavare}.

We are now in a position to introduce the relationship between convergence above.
Let us consider a noncollapsed sequence of $n$-dimensional oriented Riemmanian manifolds 
\begin{equation}\label{kkkk}
(X_i, x_i, \mathcal{H}^n) \stackrel{GH}{\to} (X, x, \mathcal{H}^n)
\end{equation}
with their orientations $\omega_i \in C^{\infty}(\bigwedge^nT^*X_i)$ satisfying $\mathrm{Ric}_{X_i}\ge-(n-1)$ as in Theorem \ref{thm:noncollapsed compactness}.
With no loss of generality we can assume that there exists an orientation $\omega$ of $(X, x, \mathcal{H}^n)$ associated with $\omega_i$. 

The first compatibility result between the intrinsic flat convergence and the mGH-convergence is given in \cite{SormaniWenger2} by Sormani-Wenger for compact manifolds with nonnegative Ricci curvature. 
After that Munn proved in \cite{Mike} similar compatibility for compact manifolds with bounded Ricci curvature.
More recently Matveev-Portegies showed in \cite{MP} the compatibility for compact manifolds with a uniform Ricci bound from below.

In our setting (\ref{kkkk}), Theorem \ref{thm:noncollapsed compactness} can be regarded as a generalization of their compatibilities above to the noncompact case as follows; 

By the equivalence between the pmG-convergence and the mGH-convergence in this setting, with no loss of generality we can assume that (\ref{hhuuii2}) is satisfied. Then for any $(f, \pi) \in \mathcal{D}^n(Z)$ by letting $g_i:=f \circ \phi_i \in \mathrm{LIP}_B(X_i), \hat{\pi}_{i, j}:=\pi_j \circ \phi_i \in \mathrm{LIP}_{\mathrm{Loc}}(X_i), g:=f \circ \phi \in \mathrm{LIP}_B(X)$ and $\hat{\pi}_j :=\pi_j \circ \phi \in \mathrm{LIP}_{\mathrm{Loc}}(X)$, Theorem \ref{thm:noncollapsed compactness} yields 
$$
\lim_{i \to \infty}T_{\omega_i}(g_i, \hat{\pi}_{1, i}, \ldots, \hat{\pi}_{n, i})=T_{\omega}(g, \hat{\pi}_1, \ldots, \hat{\pi}_n), 
$$
which is equivalent to (\ref{hhuuii}).
As a summary we have the following compatibility;
\begin{theorem}[Compatibility with convergence of metric currents]\label{comcom}
Let $(X_i, x_i, \mathcal{H}^n)$ be a noncollapsed mGH-convergent sequence of oriented Riemannian manifolds with their orientations $\omega_i$ to $(X, x, \mathcal{H}^n)$ with the orientation $\omega$ associated with $\omega_i$, and let $(Y, y, T)$ be the limit space of $(X_i, x_i, T_{\omega_i})$ in the sense of the weak convergence (\ref{hhuuii}).
Then $(X, x, T_{\omega})$ is isometric to $(Y, y, T)$, i.e. there exists an isometry $\phi:X \to \supp T$ such that $\phi (x)=y$ and that $\phi_{\sharp}T_{\omega}=T$.
\end{theorem}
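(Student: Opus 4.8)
The plan is to transport the pointed flat limit $(Y,y,T)$ into the ambient space that realizes the measured Gromov--Hausdorff convergence (\ref{kkkk}), and there to identify it with $(X,x,T_\omega)$.

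First I would fix the embeddings. The sequence $(X_i,x_i,\mathcal{H}^n)$ carries a locally uniform doubling constant by Bishop--Gromov, so by the equivalence between pmG- and mGH-convergence \cite[Theorem 3.15]{GigliMondinoSavare} I may assume that (\ref{kkkk}) is realized by isometric embeddings $\phi_i:X_i\hookrightarrow Z$, $\phi:X\hookrightarrow Z$ into a common complete separable metric space $Z$ satisfying (\ref{hhuuii2}). As recorded in the discussion preceding the statement, inserting $g_i:=f\circ\phi_i$ and $\hat\pi_{j,i}:=\pi_j\circ\phi_i$ into Theorem \ref{thm:noncollapsed compactness}(6) shows that for every $(f,\pi)\in\mathcal{D}^n(Z)$,
\[
\lim_{i\to\infty}\bigl((\phi_i)_\sharp T_{\omega_i}\bigr)(f,\pi)=\bigl(\phi_\sharp T_\omega\bigr)(f,\pi);
\]
that is, $(\phi_i)_\sharp T_{\omega_i}$ converge weakly to $\phi_\sharp T_\omega$ in $Z$. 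Here $(X,x,T_\omega)$ is a genuine pointed locally integral current space: $T_\omega$ is locally integral by Theorem \ref{thm:noncollapsed compactness}(2), and $\supp T_\omega=X$ because $\|T_\omega\|=\mathcal{H}^n$ (Theorem \ref{thm:noncollapsed compactness}(3)) has full support, $X=\supp\meas$.

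The second step is to upgrade this weak convergence to convergence in the local flat topology inside $Z$. Since $\|(\phi_i)_\sharp T_{\omega_i}\|(B_R)=\mathcal{H}^n(B_R(x_i))$ is bounded uniformly in $i$ by relative volume comparison, and $\partial T_{\omega_i}=0$ (the $\omega_i$ are parallel volume forms, so $\delta_n\omega_i=0$ and hence $\partial T_{\omega_i}=T_{\delta_n\omega_i}=0$ by Theorem \ref{thm:imply}), the family $\{(\phi_i)_\sharp T_{\omega_i}\}_i$ has uniformly bounded mass and vanishing boundary mass on every bounded subset of $Z$. Hence the pointed flat compactness theorem \cite[Theorem 1.1]{LW}, applied inside $Z$, produces a subsequence converging in the local flat topology to some locally integral current $S$ on $Z$. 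Because local flat convergence implies weak convergence and weak limits are unique, $S=\phi_\sharp T_\omega$; so in fact $(\phi_i)_\sharp T_{\omega_i}$ converge to $\phi_\sharp T_\omega$ in the local flat topology. This exhibits $(X,x,T_\omega)$, via $\phi$, as a pointed flat limit of $(X_i,x_i,T_{\omega_i})$. By hypothesis $(Y,y,T)$ is also a pointed flat limit of this sequence, so by the uniqueness of pointed flat limits up to isometry \cite[Proposition 1.2]{LW} there is an isometry $\phi:X\to\supp T$ with $\phi(x)=y$ and $\phi_\sharp T_\omega=T$, as claimed.

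The main obstacle is the second step: weak convergence of metric currents does not in general imply flat convergence, so one cannot read the flat limit directly off Theorem \ref{thm:noncollapsed compactness}. The device is to let the compactness theorem of \cite{LW} manufacture a flat-convergent subsequence and only then to pin down its limit by matching it against the already known weak limit $\phi_\sharp T_\omega$; uniqueness of weak limits then forces the two to coincide and the whole sequence to converge. A secondary point deserving care is the verification that $(X,x,T_\omega)$ is an honest pointed locally integral current space with $\supp T_\omega=X$, since this is precisely what lets the uniqueness statement of \cite{LW} return an isometry of the underlying spaces rather than a mere equality of functionals.
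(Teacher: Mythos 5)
Your proof is correct and takes essentially the same route as the paper's, which is a one-line appeal to the preceding observation (the weak convergence $(\phi_i)_\sharp T_{\omega_i}\to\phi_\sharp T_\omega$ obtained from Theorem \ref{thm:noncollapsed compactness}), to Theorem \ref{thm:noncollapsed compactness} itself for the local integrality and full support of $T_\omega$, and to \cite[Proposition 1.2]{LW} for uniqueness. You merely make explicit the intermediate step — extracting a flat-convergent subsequence via \cite[Theorem 1.1]{LW} and matching it against the weak limit — that the paper leaves implicit in its citation of the uniqueness proposition.
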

\begin{proof}
It follows from the observation above, Theorem \ref{thm:noncollapsed compactness} and \cite[Proposition 1.2]{LW}.
\end{proof}
\section{Duality and Spectral convergence}
In this section we prove Theorem \ref{abc}.
For that we first give a quick introduction of the Hodge theory for $RCD$-metric measure spaces established in \cite{Gigli} by Gigli.
See \cite{AmbrosioGigliSavare13, AmbrosioGigliSavare14, LottVillani, Sturm06a, Sturm06b} for the detail of the study of metric measure spaces with Ricci curvature bounded from below.

Let $(Y, \meas)$ be an $RCD(K, \infty)$-space.
For any $\eta \in \mathrm{TestForm}_l(Y)$ there exists a unique $\alpha \in L^2(\bigwedge^{l-1}T^*Y)$, denoted by $\delta \eta$, such that 
$$
\int_Y\langle \alpha, \beta \rangle \dist \meas =\int_Y\langle \eta, \dist \beta \rangle \dist \meas
$$
for any $\beta \in \mathrm{TestForm}_{l-1}(Y)$, where
$$
\dist (f_0\dist f_1 \wedge \cdots \wedge \dist f_l) = \dist f_0 \wedge \dist f_1 \wedge \cdots \wedge \dist f_l
$$
for any $f_i \in \mathrm{Test}F(Y)$.
Then let us denote by $H^{1, 2}_H(\bigwedge^lT^*Y)$ the completion of $\mathrm{TestForm}_l(Y)$
with respect to the norm $\|\omega \|_{H^{1, 2}_H}:=(\|\omega \|_{L^2}^2 + \|\dist \omega\|^2_{L^2}+\|\delta \omega \|_{L^2}^2)^{1/2}$.

Note that $\dist \eta \in H^{1, 2}_H(\bigwedge^{l+1}T^*Y)$ and $\delta \eta \in L^2(\bigwedge^{l-1}T^*Y)$ are well-defined for any $\eta \in H^{1, 2}_H(\bigwedge^lT^*Y)$.
Then the \textit{$l$-dimensional de Rham cohomology} $H^l_{\mathrm{dR}}(Y)$ is defined by
$$
H^l_{\mathrm{dR}}(Y):=\frac{\mathrm{Ker} \left( \dist: H^{1, 2}_H(\bigwedge^lT^*Y) \to H^{1, 2}_H(\bigwedge^{l+1}T^*Y) \right)}{\overline{\mathrm{Im} \left(  \dist: H^{1, 2}_H(\bigwedge^{l-1}T^*Y) \to H^{1, 2}_H(\bigwedge^{l}T^*Y)\right)}},
$$
where the closure in the denominator is in the $L^2$-sense.
Then the Hodge theorem is satisfied; the canonical map $\eta \mapsto [\eta ]$ from the space of harmonic $l$-forms
$$
\mathrm{Harm}_l(Y):=\left\{\eta \in H^{1, 2}_H(\bigwedge^lT^*Y); \dist \eta=0, \delta \eta=0\right\}
$$
to $H^l_{\mathrm{dR}}(Y)$ gives an isomorphism. See \cite[Theorem 3.5.15]{Gigli}.

Let us denote by $\mathcal{D}(\Delta_{H, l}, Y)$ the set of $\eta \in H^{1, 2}_H(\bigwedge^lT^*Y)$ such that there exists a unique $\alpha \in L^2(\bigwedge^lT^*Y)$, denoted by $\Delta_{H, 1}\eta$, such that 
$$
\int_Y\langle \alpha, \beta \rangle \dist \meas=\int_Y\langle \dist \eta, \dist \beta \rangle + \langle \delta \eta, \delta \beta \rangle \dist \meas
$$
for any $\beta \in H^{1, 2}_H(\bigwedge^lT^*Y)$. 
We call $\Delta_{H, l}$ the \textit{($l$-dimensional) Hodge Laplacian}.
Note that $\eta \in \mathcal{D}(\Delta_{H, l}, X)$ with $\Delta_{H, l}\eta =0$ if and only if $\eta \in \mathrm{Harm}_l(Y)$.

Similarly we can define the Sobolev space $H^{1, 2}_C(\bigwedge^lT^*Y)$ for $l$-forms by the completion of $\mathrm{TestForm}_l(Y)$ with respect to the norm  $\|\omega \|_{H^{1, 2}_C}:=(\|\omega \|_{L^2}^2 +\|\nabla \omega \|_{L^2}^2)^{1/2}$ and define the connection Laplacian, denoted by $\Delta_{C, l}$, acting on $l$-forms. See also \cite{Honda3}.

If $(Y, \meas)$ is a compact Ricci limit space with $\mathrm{diam}\,Y>0$ (which is also an $RCD(-(n-1), \infty)$-space), then the spectrum of $\Delta_{H, 1}$ is discrete and unbounded. 
In particular $H^1_{\mathrm{dR}}(Y)$ is finite dimensional. 
See \cite[Remark 4.10]{Honda6}.

The following is a main result of \cite{Honda6}, which will also play a key role in this section.
\begin{theorem}[Spectral convergence of $\Delta_{H, 1}, \Delta_{C, 1}$]\cite[Theorems 1.1 and 1.2]{Honda6}\label{thm:spectral conv}
Let $X_i$ be a sequence of $n$-dimensional compact Riemannian manifolds with $|\mathrm{Ric}_{X_i}| \le n-1$, and let $X$ be the noncollapsed GH-limit space (recall that then $(X_i, \mathcal{H}^n) \stackrel{GH}{\to} (X, \mathcal{H}^n)$ is satisfied).
Then spectral convergence of $\Delta_{H, 1}, \Delta_{C, l}$ hold for any $l \in \mathbb{N}$, i.e. 
$$
\lim_{i \to \infty}\lambda^{H, 1}_k(X_i)=\lambda_k^{H, 1}(X)<\infty
$$
and
$$
\lim_{i \to \infty}\lambda_k^{C, l}(X_i)=\lambda_k^{C, l}(X)<\infty
$$
hold for any $k$,
where $\lambda_k^{H, 1}, \lambda_k^{C, l}$ are their $k$-th eigenvalues of $\Delta_{H, 1}, \Delta_{C, l}$, respectively.
\end{theorem}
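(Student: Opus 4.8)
The plan is to deduce the eigenvalue convergence from a Mosco-type convergence of the quadratic forms attached to the two Laplacians, together with an abstract spectral convergence theorem for self-adjoint operators acting on the varying $L^2$-spaces, in the spirit of \cite{KS}. The functional-analytic framework is the $L^2$-convergence of differential forms with respect to the mGH-convergence developed in \cite{Honda2, AmbrosioHonda}, and the noncollapsing of the sequence (forced here by $|\mathrm{Ric}_{X_i}|\le n-1$ via \cite{CheegerColding1}) is exactly what prevents the loss of mass exhibited in Remark \ref{diffe}. Concretely, I would show that the resolvents $(\Id+\Delta_{H,1})^{-1}$ and $(\Id+\Delta_{C,l})^{-1}$ converge \emph{compactly}, and then read off $\lim_{i\to\infty}\lambda_k(X_i)=\lambda_k(X)$ for every $k$; the finiteness of the limit eigenvalues follows from the discreteness of the spectrum on the compact limit space (\cite[Remark 4.10]{Honda6}).

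First I would record the uniform Bochner--Weitzenb\"ock inequality. For $1$-forms the identity $\Delta_{H,1}=\Delta_{C,1}+\mathrm{Ric}$, combined with $|\mathrm{Ric}_{X_i}|\le n-1$ and Bochner's inequality \cite[Corollary 3.3.9]{Gigli}, gives a uniform equivalence between the norms $\norm{\cdot}_{H^{1,2}_H}$ and $\norm{\cdot}_{H^{1,2}_C}$ up to a controlled $L^2$-term, which lets me treat the Hodge and connection energies on the same footing. This also delimits the scope: for the connection Laplacian the energy $\int|\nabla\eta|^2$ is defined directly and carries no curvature term, so every degree $l$ can be handled simultaneously, whereas the Weitzenb\"ock curvature operator on higher-degree forms is \emph{not} controlled by a two-sided Ricci bound, and this is why the Hodge Laplacian is treated only in degree one.

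The technical heart is a Rellich-type compactness: any sequence $\eta_i$ on $X_i$ with $\sup_i\bigl(\norm{\eta_i}_{L^2}+\norm{\nabla\eta_i}_{L^2}\bigr)<\infty$ should admit an $L^2$-strongly convergent subsequence. I would prove this by localizing through a second-order rectifiable atlas built from limit harmonic splitting maps (Theorems \ref{thm:chc} and \ref{thm:can sec}): writing each $\eta_i$ in a frame $\dist\mathbf{b}_1\wedge\cdots\wedge\dist\mathbf{b}_l$ and reducing the compactness of $\eta_i$ to that of its coefficient functions $\langle\eta_i,\dist\mathbf{b}_1\wedge\cdots\wedge\dist\mathbf{b}_l\rangle$, which is handled by the scalar Rellich compactness \cite[Theorem 4.9]{Honda2}. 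The uniform Hessian control of the frame (Theorem \ref{thm:quantitative bound hess}, via Proposition \ref{prop:comp hess}) ensures the change-of-frame terms do not spoil the bound, and noncollapsing together with the codimension-$4$ estimate of \cite{CheegerNaber} guarantees that the regular set carries full measure, so no mass is lost in the reconstruction. The two halves of Mosco convergence for $\mathcal{E}^H_i(\eta)=\int_{X_i}|\dist\eta|^2+|\delta\eta|^2\,\dist\mathcal{H}^n$ and $\mathcal{E}^C_i(\eta)=\int_{X_i}|\nabla\eta|^2\,\dist\mathcal{H}^n$ are then obtained from the weak lower semicontinuity of the $L^2$-norms of $\dist$, $\delta$, $\nabla$ (liminf half) and from approximating a limit form by $\mathrm{TestForm}_l(X)$, lifting each test function through \cite[Proposition 10.2]{AmbrosioHonda} and using continuity of $\dist$, $\delta$, $\nabla$ along the lifts (recovery half).

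The main obstacle I anticipate is the recovery inequality for the codifferential $\delta$. Unlike the exterior derivative $\dist$, the operator $\delta$ is defined through the reference measure, so constructing lifts $\eta_i$ whose codifferential energy converges \emph{exactly} — not merely remaining bounded — forces one to match the Laplacian data of the lifted test functions, and this is precisely where the behavior of $\Delta$ under mGH-convergence \cite[Theorem 1.3]{Honda2} and the compatibility of Hessians (Proposition \ref{prop:comp hess}) must be combined with care. Preventing the energy from leaking into the singular set is the delicate point, and it is exactly here that the two-sided Ricci bound and the noncollapsing hypothesis become indispensable.
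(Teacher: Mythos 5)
You should first be aware that the paper does not prove Theorem \ref{thm:spectral conv}: it is quoted, with an explicit citation, from \cite[Theorems 1.1 and 1.2]{Honda6} and used as a black box throughout Section 7. There is therefore no in-paper argument to compare your proposal against line by line; I can only measure it against the ingredients the surrounding text attributes to \cite{Honda6}. On that score your reconstruction is credible: the Kuwae--Shioya framework (which the paper itself invokes in Corollary \ref{vf} via \cite[Theorem 2.4]{KS}), a Rellich-type compactness for forms with bounded covariant energy, the Weitzenb\"ock identity $\Delta_{H,1}=\Delta_{C,1}+\mathrm{Ric}$ under the two-sided Ricci bound, and the codimension-$4$ estimate of \cite{CheegerNaber} are all visibly pillars of the cited proof, and your explanation of why the Hodge Laplacian is treated only in degree one (the Weitzenb\"ock curvature term in higher degree is not controlled by a Ricci bound alone) is exactly right.

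Two caveats. First, in the liminf half of your Mosco argument the weak limit of a bounded sequence in $H^{1,2}_H$ lands a priori only in $W^{1,2}_H$ (the integration-by-parts space), whereas the quadratic form of $\Delta_{H,1}$ on the limit space has domain $H^{1,2}_H$, the completion of $\mathrm{TestForm}_1$; closing this gap is nontrivial, and the paper's own Theorem \ref{relli} does it by \emph{using} the spectral convergence (expanding in the limit eigenbasis), which you cannot do without circularity. You would need an independent density or approximation argument there --- this is where the quantitative $L^\infty$-bounds on eigenforms and cut-offs adapted to the codimension-$4$ singular set enter in \cite{Honda6}. Second, your recovery sequences lift test forms from $X$ to $X_i$ with exactly convergent $\dist$- and $\delta$-energies, but the existence of such lifts is itself one of the main technical results being reconstructed (cf. the smooth approximation \cite[Theorem 3.5]{Honda6} invoked in the proof of Theorem \ref{ww}); as written, your sketch assumes it rather than proves it. Neither point invalidates the strategy, but both constitute the actual substance of the cited proof rather than details.
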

Note that the $L^2$-strong convergence of eigenforms is also established. In particular all eigenforms have quantitative $L^{\infty}$-bounds. See \cite[Theorem 1.4]{Honda6}.

Let us give a characterization of harmonic $1$-forms. Under the same notation as in the theorem above, recall again that $\mathcal{R}(X)$ is an open subset of $X$ and is a $C^{1, \alpha}$-Riemannian manifold for any $\alpha \in (0, 1)$ (\cite[Theorem 7.2]{CheegerColding}), and that as mentioned in subsection 6.5, since the Hausdorff dimension of the singular set $X \setminus \mathcal{R}(X)$ is at most $4$ (\cite[Theorem 1.4]{CheegerNaber}), the capacity of the singular set is zero; $\mathrm{Cap}^{\mathcal{H}^n}_2(X \setminus \mathcal{R}(X))=0$. See Theorem \ref{abc} for the definition of $\mathrm{Harm}_l^{\infty}$.
\begin{proposition}\label{prop:lin}
Let $X$ be as in Theorem \ref{thm:spectral conv}.
Then $\mathrm{Harm}_1^{\infty}(\mathcal{R}(X))$
coincides with $\mathrm{Harm}_1(X)$.
\end{proposition}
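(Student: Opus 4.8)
The plan is to prove the two inclusions separately, using throughout that the singular set $X \setminus \mathcal{R}(X)$ has Hausdorff codimension $4$ by \cite{CheegerNaber}, so that $\mathrm{Cap}_2^{\mathcal{H}^n}(X \setminus \mathcal{R}(X)) = 0$ (as recorded in subsection 6.5), together with the compatibility between Gigli's exterior calculus and the classical Riemannian calculus on the $C^{1,\alpha}$-manifold $\mathcal{R}(X)$. The compactness of $X$ removes any issue at infinity.

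First I would treat the inclusion $\mathrm{Harm}_1(X) \subseteq \mathrm{Harm}_1^{\infty}(\mathcal{R}(X))$, which is the easy direction. Let $\eta \in \mathrm{Harm}_1(X)$, so $\dist\eta = 0$ and $\delta\eta = 0$. The bound $\|\eta\|_{L^{\infty}} < \infty$ follows from the quantitative $L^{\infty}$-estimate for eigenforms recorded after Theorem \ref{thm:spectral conv}, applied to the zero eigenvalue of $\Delta_{H,1}$. Bochner's inequality on $RCD$-spaces (\cite[Corollary 3.3.9]{Gigli}) gives $\eta \in H^{1,2}_C$, i.e. $\nabla\eta \in L^2$; combined with the pointwise estimate (\ref{eq:fundamental}) and Proposition \ref{prop:comp hess}, this yields $\langle\eta,\omega\rangle \in H^{1,2}(X)$ for every test form $\omega$. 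Finally, since the classical operators $\dist,\delta$ on $\mathcal{R}(X)$ agree a.e.\ with Gigli's operators, $\dist\eta$ and $\delta\eta$ vanish a.e.\ on $\mathcal{R}(X)$, so the weak-harmonicity integral is trivially zero and $\eta \in \mathrm{Harm}_1^{\infty}(\mathcal{R}(X))$.

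The substantial direction is $\mathrm{Harm}_1^{\infty}(\mathcal{R}(X)) \subseteq \mathrm{Harm}_1(X)$. Fix $\alpha \in \mathrm{Harm}_1^{\infty}(\mathcal{R}(X))$. By elliptic regularity on the $C^{1,\alpha}$-manifold $\mathcal{R}(X)$, $\alpha$ is locally in $H^{1,2}_C$ and satisfies the Weitzenb\"ock identity $\nabla^*\nabla\alpha = -\mathrm{Ric}(\alpha)$ weakly, with $|\mathrm{Ric}(\alpha)| \le (n-1)\|\alpha\|_{L^{\infty}}$. To obtain a \emph{global} $L^2$-bound on $\nabla\alpha$, I would choose cutoffs $\phi_j \in H^{1,2}(X) \cap \mathrm{LIP}(X)$ with $0 \le \phi_j \le 1$, equal to $1$ on a neighborhood of $X \setminus \mathcal{R}(X)$ and tending to $0$ in $H^{1,2}(X)$ (available because the capacity vanishes), set $\psi_j := 1-\phi_j$, and test the identity against $\psi_j^2\alpha$. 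Integration by parts gives $\int \psi_j^2|\nabla\alpha|^2 \le (n-1)\|\alpha\|_{\infty}^2\,\mathcal{H}^n(X) + 2\|\alpha\|_{\infty}\,(\int\psi_j^2|\nabla\alpha|^2)^{1/2}\|\dist\phi_j\|_{L^2}$, and absorbing the last term via Young's inequality yields $\sup_j \int\psi_j^2|\nabla\alpha|^2 < \infty$, hence $\nabla\alpha \in L^2(X)$. Then $\psi_j\alpha \to \alpha$ in the $H^{1,2}_H$-norm, using $\dist(\psi_j\alpha) = \dist\psi_j \wedge \alpha + \psi_j\dist\alpha$ and the analogous formula for $\delta$, where the terms $\dist\psi_j\wedge\alpha$ and $\iota_{\nabla\psi_j}\alpha$ vanish in $L^2$ because of the $L^{\infty}$-bound and $\|\dist\phi_j\|_{L^2}\to 0$; since each $\psi_j\alpha$ is supported compactly in the Riemannian region $\mathcal{R}(X)$ it is approximable by test forms, so $\alpha \in H^{1,2}_H(X)$. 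The same cutoffs extend the weak-harmonicity hypothesis: for any test form $\beta = \sum_i f_{0,i}\dist f_{1,i}$ the products $\psi_j f_{0,i}$ lie in $\mathrm{LIP}_c(\mathcal{R}(X))$, so by linearity the hypothesis applies to $\psi_j\beta$, and letting $j\to\infty$ shows $\int_X \langle\dist\alpha,\dist\beta\rangle + \langle\delta\alpha,\delta\beta\rangle\,\dist\mathcal{H}^n = 0$ for all test forms $\beta$, hence for all $\beta \in H^{1,2}_H(X)$. Taking $\beta = \alpha$ gives $\int_X |\dist\alpha|^2 + |\delta\alpha|^2\,\dist\mathcal{H}^n = 0$, so $\dist\alpha = 0$, $\delta\alpha = 0$, and $\alpha \in \mathrm{Harm}_1(X)$.

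The main obstacle I expect is precisely the passage from the local elliptic structure on $\mathcal{R}(X)$ to the global Hodge structure on $X$: controlling $\nabla\alpha$ in $L^2$ up to the singular set and establishing that forms supported in $\mathcal{R}(X)$ are $H^{1,2}_H$-dense. Both rest on the codimension-$4$ (zero-capacity) property of the singular set, and the delicate point is that the cutoff errors in the Weitzenb\"ock estimate and in the density argument must be absorbed purely through $\|\dist\phi_j\|_{L^2}\to 0$, which is where the uniform $L^{\infty}$-bound on $\alpha$ is indispensable.
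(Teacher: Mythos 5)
Your proof is essentially correct, and the second (hard) inclusion rests on the same pillar as the paper's: the vanishing of $\mathrm{Cap}_2^{\mathcal{H}^n}(X\setminus\mathcal{R}(X))$ and the resulting cutoffs $\phi_j\to 1$ (resp.\ $\to 0$) in $H^{1,2}(X)$, whose errors are absorbed through $\|\dist\phi_j\|_{L^2}\to 0$ against the $L^\infty$-bound on the form. But you diverge from the paper in two ways. For the easy inclusion $\mathrm{Harm}_1(X)\subseteq\mathrm{Harm}_1^{\infty}(\mathcal{R}(X))$, the paper does \emph{not} argue intrinsically: it invokes Theorem~\ref{thm:spectral conv} to produce smooth eigenforms $\omega_i$ on the approximating manifolds $X_i$ with $\lambda_i\to 0$ and uniform $L^\infty$-bounds, approximates the test form $\eta$ by smooth $\eta_i$, and passes to the limit in the eigenvalue identity $\int_{X_i}\langle \dist\omega_i,\dist\eta_i\rangle+\langle\delta\omega_i,\delta\eta_i\rangle = \lambda_i\int_{X_i}\langle\omega_i,\eta_i\rangle$ via the Rellich compactness \cite[Theorem 4.9]{Honda2}; your route (the $L^\infty$-eigenform estimate at eigenvalue zero, Bochner on the $RCD$-space to get $\nabla\eta\in L^2$, then (\ref{eq:fundamental})) stays on $X$ and avoids the approximating sequence, at the cost of needing the compatibility between Gigli's covariant derivative and the geometric one for $1$-forms, which the paper only records for Hessians of functions (Proposition~\ref{prop:comp hess}) though it uses the analogous fact elsewhere. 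For the converse inclusion, the paper extracts from the hypothesis the local representation $\omega=\sum f_{0,i}\dist f_{1,i}$ with $f_{0,i}\in H^{1,2}_{\mathrm{loc}}$, $f_{1,i}\in C^\infty$ on $\mathcal{R}(X)$ and asserts directly that $\sup_i\|\phi_i\omega\|_{H^{1,2}_H}<\infty$, whereas you interpose a Caccioppoli-type estimate, testing the weak Weitzenb\"ock identity against $\psi_j^2\alpha$ to first secure the global bound $\nabla\alpha\in L^2(X)$. Your version is more self-contained in how it controls $\dist\alpha,\delta\alpha$ near the singular set and postpones $\dist\alpha=\delta\alpha=0$ to a clean global integration by parts, but it imports one extra input the paper's route does not need: the validity of the weak Weitzenb\"ock identity for a merely weakly harmonic form on the $C^{1,\alpha}$-manifold $\mathcal{R}(X)$, where the metric is not twice differentiable and the Ricci tensor is only an $L^\infty$-object in the sense of \cite[Theorem 1.1]{Honda6}. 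That step is justifiable in the present setting (two-sided Ricci bounds give $W^{2,p}$-regularity of the metric in harmonic coordinates on $\mathcal{R}(X)$), but you should either prove it or cite it explicitly, since it is the one link in your chain that is neither elementary nor contained in the results quoted in this paper.
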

\begin{proof}
It is not difficult to check this by the argument same to the proof of \cite[Proposition 4.5]{Honda6}.
We give a sketch of the proof for reader's convenience.

First we prove $\mathrm{Harm}_1(X) \subset \mathrm{Harm}_1^{\infty}(\mathcal{R}(X))$.
Let $\omega \in \mathrm{Harm}_1(X)$. By Theorem \ref{thm:spectral conv}, there exist a sequence $\omega_i \in C^{\infty}(T^*X_i)$ and a sequence $\lambda_i \in \mathbf{R}_{\ge 0}$ such that $\sup_i\|\omega_i\|_{L^{\infty}}<\infty$, that $\lambda_i \to 0$, that $\Delta_{H, 1}\omega_i =\lambda_i\omega_i$ and that $\omega_i, \dist \omega_i, \delta \omega_i$ $L^2$-strongly converge to $\omega, \dist \omega, \delta \omega$, respectively. 
On the other hand by the existence of an approximate sequence \cite[Theorem 1.11]{Honda3} (or Corolalry \ref{vf}), for any $\eta=f_0\dist f_1 \wedge \cdots \wedge \dist f_k$, where $f_0 \in \mathrm{LIP}_c(\mathcal{R}(X))$ and $f_i \in \mathrm{Test}F(X)(i=1, 2, \ldots, k)$, there exists a smooth approximate sequence $\eta_i \in C^{\infty}(T^*X_i)$ such that $\eta_i, \dist \eta_i, \delta \eta_i$ $L^2$-strongly converge to $\eta, \dist \eta, \delta \eta$, respectively.
Since 
\begin{equation}\label{bbbbbb}
\int_{X_i}\langle \dist \omega_i, \dist \eta_i\rangle +\langle \delta \omega_i, \delta \eta_i\rangle \dist \mathcal{H}^n=\lambda_i\int_{X_i}\langle \omega_i, \eta_i \rangle \dist \mathcal{H}^n,
\end{equation}
and $\langle \omega_i, \eta_i \rangle \in C^{\infty}(X_i)$ $L^2$-converge strongly to $\langle \omega, \eta \rangle$ with $\sup_i\|\langle \omega_i, \eta_i\rangle \|_{H^{1, 2}}<\infty$, where we used that Bochner's inequality implies $\sup_i(\|\nabla \omega_i\|_{L^2}+\|\nabla \eta_i\|_{L^2})<\infty$, 
letting $i \to \infty$ in (\ref{bbbbbb}) with the Rellich compactness \cite[Theorem 4.9]{Honda2} yields $\omega \in \mathrm{Harm}_1^{\infty}(\mathcal{R}(X))$, i.e. $ \mathrm{Harm}_1(X) \subset \mathrm{Harm}_1^{\infty}(\mathcal{R}(X))$.

Next let $\omega \in  \mathrm{Harm}_1^{\infty}(\mathcal{R}(X))$.
Then the condition that $\langle \omega, \eta \rangle \in H^{1, 2}(X)$ for any $\eta=f_0\dist f_1 \wedge \cdots \wedge \dist f_k$, where $f_0 \in \mathrm{LIP}_c(\mathcal{R}(X))$ and $f_i \in \mathrm{Test}F(X)(i=1, 2, \ldots, k)$, 
yields that $\omega$ is a locally $H^{1, 2}$-Sobolev 1-form on $\mathcal{R}(X)$, i.e.
for any open subset $U$ of $\mathcal{R}(X)$ with $\overline{U} \subset \mathcal{R}(X)$, $\omega$ can be written by
$\omega=\sum_{i=1}^Nf_{0, i}\dist f_{1, i}$
on $U$ for some $f_{0, i} \in H^{1, 2}(U)$ and some $f_{1, i} \in C^{\infty}(U)$ (c.f. the proof of \cite[Proposition 4.5]{Honda6}).  

Since $\mathrm{Cap}^{\mathcal{H}^n}_2(X \setminus \mathcal{R}(X))=0$, there exists a sequence of $\phi_i \in \mathrm{LIP}_c(\mathcal{R}(X))$ such that $0 \le \phi_i \le 1$ and that $\phi_i \to 1$ in $H^{1, 2}(X)$.
Then it is easy to check that $\phi_i\omega \in H^{1, 2}_H(T^*X)$, that $\sup_i\|\phi_i\omega\|_{H^{1, 2}_H}<\infty$ and that $\phi_i\omega \to \omega$ in $L^2(T^*X)$. 
Thus we have $\omega \in H^{1, 2}_H(T^*X)$.
Moreover by $\dist (\phi_i \omega)=\dist \phi_i \wedge \omega$ and $\delta (\phi_i\omega) =-\langle \dist \phi_i, \omega \rangle$, the convergence of $\phi_i \omega$ to $\omega$ in $L^2(X)$ implies $\dist \omega=0$ and $\delta \omega=0$, 
which shows $\mathrm{Harm}_1(X) \subset  \mathrm{Harm}_1^{\infty}(\mathcal{R}(X))$, i.e. $\mathrm{Harm}_1(X)=\mathrm{Harm}_1^{\infty}(\mathcal{R}(X))$.
\end{proof}
\begin{remark}\label{hoho}
In Theorem \ref{thm:spectral conv} we can also check $C^{1, \alpha}$-convergence of eigenforms on each compact subset of $\mathcal{R}(X)$ for any $\alpha \in (0, 1)$.
In particular combining this with an argument similar to the proof of Proposition \ref{prop:lin} yields that $\mathrm{Harm}_1(X)$ also coincides with the set of bounded $C^{1, \alpha}$-harmonic 1-forms on $\mathcal{R}(X)$.
\end{remark}
\begin{theorem}[Rellich compactness for differential forms]\label{relli}
Let $X_i, X$ be as in Theorem \ref{thm:spectral conv} and let $\omega_i \in H^{1, 2}_H(T^*X_i)$ be a sequence with $\sup_i\|\omega_i\|_{H^{1, 2}_H}<\infty$.
Then there exist a subsequence $i(j)$ and $\omega \in H^{1, 2}_H(T^*X)$ such that $\omega_{i(j)}$ $L^2$-strongly converge to $\omega$ and that $\dist \omega_{i(j)}, \delta \omega_{i(j)}$ $L^2$-weakly converge to $\dist \omega, \delta \omega$, respectively.
Similar statements for $H^{1, 2}_C$-differential $k$-forms hold for any $k$.
\end{theorem}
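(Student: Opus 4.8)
The plan is to reduce both assertions to a single Rellich-type compactness statement for the connection Sobolev class $H^{1,2}_C$, and to bridge the $H^{1,2}_H$-hypothesis of the first assertion to that setting via the Bochner--Weitzenb\"ock identity. The two-sided curvature condition enters precisely at this bridge, and explains why the first statement is phrased for $H^{1,2}_H$ while the general $k$-form statement is phrased for $H^{1,2}_C$: for $1$-forms the Weitzenb\"ock curvature term is exactly $\mathrm{Ric}$, so a Ricci bound controls it, whereas for $k\ge 2$ the curvature term involves the full Riemann tensor and a Ricci bound no longer suffices---hence there one simply assumes the $\nabla$-bound directly.

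First I would run the Bochner step on each smooth $X_i$. Pairing the classical Weitzenb\"ock formula $\Delta_{H,1}\omega_i=\Delta_{C,1}\omega_i+\mathrm{Ric}(\omega_i,\cdot)$ with $\omega_i$ and integrating gives
\begin{equation*}
\int_{X_i}|\nabla\omega_i|^2\dist\mathcal{H}^n=\int_{X_i}\left(|\dist\omega_i|^2+|\delta\omega_i|^2\right)\dist\mathcal{H}^n-\int_{X_i}\mathrm{Ric}(\omega_i,\omega_i)\dist\mathcal{H}^n.
\end{equation*}
Estimating the last term by $(n-1)\|\omega_i\|_{L^2}^2$ using the lower bound $\mathrm{Ric}_{X_i}\ge-(n-1)$, the uniform $H^{1,2}_H$-bound upgrades to a uniform connection bound $\sup_i\|\omega_i\|_{H^{1,2}_C}<\infty$. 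For the general $k$-form assertion this step is vacuous, since there the $H^{1,2}_C$-bound on $\nabla\omega_i$ is assumed; this is where the two cases merge.

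Next I would extract limits. By the $L^2$-weak compactness \cite[Proposition 3.50]{Honda2}, after passing to a subsequence $i(j)$ there exist $\omega\in L^2(T^*X)$, $T\in L^2(T^*X\otimes T^*X)$, $\alpha\in L^2(\bigwedge^2T^*X)$ and $\beta\in L^2(X)$ to which $\omega_{i(j)},\nabla\omega_{i(j)},\dist\omega_{i(j)},\delta\omega_{i(j)}$ converge $L^2$-weakly. The heart of the matter is to promote the convergence of $\omega_{i(j)}$ to $L^2$-strong convergence and to identify $T=\nabla\omega$, $\alpha=\dist\omega$, $\beta=\delta\omega$. For the strong convergence and membership I would invoke the tensor-field analogue of the Rellich compactness and Sobolev stability (the counterpart of \cite[Theorem 4.9]{Honda2} in the class $H^{1,2}_C$, together with the second-order convergence theory of \cite{AmbrosioHonda}), which yields $\omega\in H^{1,2}_C(T^*X)$ with $\nabla\omega=T$ and $\omega_{i(j)}\to\omega$ strongly in $L^2$. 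Since $|\dist\eta|,|\delta\eta|\le c(n)|\nabla\eta|$ pointwise on test forms gives the continuous inclusion $H^{1,2}_C\subseteq H^{1,2}_H$, this already places $\omega\in H^{1,2}_H(T^*X)$. To identify $\alpha$, for each $\eta\in\mathrm{TestForm}_2(X)$ I would choose an approximating sequence $\eta_i$ with $\eta_i,\dist\eta_i,\delta\eta_i$ all $L^2$-strongly convergent (existence from \cite[Theorem 1.11]{Honda3} / Corollary \ref{vf}) and pass to the limit in $\int_{X_i}\langle\dist\omega_i,\eta_i\rangle\dist\mathcal{H}^n=\int_{X_i}\langle\omega_i,\delta\eta_i\rangle\dist\mathcal{H}^n$ to obtain $\int_X\langle\alpha,\eta\rangle\dist\mathcal{H}^n=\int_X\langle\omega,\delta\eta\rangle\dist\mathcal{H}^n$, i.e. $\alpha=\dist\omega$; testing against $\gamma\in\mathrm{Test}F(X)$ in the identity for $\delta$ gives $\beta=\delta\omega$ in the same way.

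The main obstacle is the tensor Rellich step: proving that a family of forms uniformly bounded in $H^{1,2}_C$ \emph{across} the varying spaces $X_{i(j)}\to X$ is $L^2$-strongly precompact with limit again in $H^{1,2}_C$, and that $\nabla\omega_{i(j)}\to\nabla\omega$ weakly. This is exactly where the noncollapsing hypothesis is indispensable, since it prevents $L^2$-mass from escaping in the limit (as the collapsing example in Remark \ref{diffe} shows), and it is the point requiring the second-order calculus of \cite{AmbrosioHonda, Honda2} rather than soft functional analysis; once it is in place, the remaining passages to the limit are routine.
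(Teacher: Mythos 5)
Your Bochner step and the extraction of an $L^2$-strong limit $\omega$ with $\dist \omega_{i(j)}, \delta \omega_{i(j)}$ converging $L^2$-weakly to $\dist \omega, \delta \omega$ agree with the paper: that part is handled there by Bochner's formula plus the $W^{1,2}_H$-stability results of \cite{Honda3}. The gap is in your key assertion that the tensor Rellich/Sobolev-stability machinery ``yields $\omega\in H^{1,2}_C(T^*X)$''. What those stability results actually give is membership of the limit in the spaces $W^{1,2}_C$, resp.\ $W^{1,2}_H$ --- the classes defined by the integration-by-parts identities --- and not in $H^{1,2}_C$, resp.\ $H^{1,2}_H$, which are by definition the completions of $\mathrm{TestForm}_1$ in the corresponding graph norms. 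In this nonsmooth setting one only knows $H^{1,2}_H\subset W^{1,2}_H$ in general (the paper points this out explicitly inside its proof), and whether the limit of a bounded sequence can be approximated by test forms in the graph norm is precisely the content of the theorem. Your inclusion $H^{1,2}_C\subseteq H^{1,2}_H$ is fine as far as it goes, but it is applied to a membership you have not established.

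The paper closes this gap with an argument your proposal does not contain: it expands $\omega_j$ and $\omega$ in the orthonormal bases of eigenforms of $\Delta_{H,1}$ on $X_j$ and on $X$, uses the spectral convergence of Theorem \ref{thm:spectral conv} (this is where the two-sided Ricci bound is used in an essential way, not only in the Bochner step) to get convergence of the Fourier coefficients $a_{j,i}\to a_i$ and of the eigenvalues, and deduces that the partial sums $\omega^N=\sum_{i\le N}a_i\eta_i$, which lie in $H^{1,2}_H(T^*X)$ by construction, satisfy $\|\omega^N\|_{H^{1,2}_H}^2=\sum_{i\le N}(a_i)^2(1+\lambda_i^{H,1}(X))\le\liminf_j\|\omega_j\|_{H^{1,2}_H}^2$; letting $N\to\infty$ then gives $\omega\in H^{1,2}_H(T^*X)$. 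Without some substitute for this eigenform expansion (or a proof that the $H$- and $W$-classes of forms coincide on these limit spaces, which is not available), your argument does not reach the stated conclusion. The same remark applies to the $H^{1,2}_C$ case for $k$-forms, where the paper's ``similar'' proof runs the identical expansion with the eigenforms of $\Delta_{C,k}$.
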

\begin{proof}
We only give a proof in the $H^{1, 2}_H$-case because the proof in the other case is similar.

Since Bochner's formula yields $\sup_i\|\omega_i\|_{H^{1, 2}_C}<\infty$, by $W^{1, 2}_H$-stability results \cite[Theorems 1.12, 6.9, 7.8 and Proposition 7.1]{Honda3}, with no loss of generality we can assume that there exists the $L^2$-strong limit $\omega$ of $\omega_i$ such that $\omega \in W^{1, 2}_H(T^*X)$ and that $\dist \omega_i, \delta \omega_i$ $L^2$-weakly converge to $\dist \omega, \delta \omega$, respectively (see \cite[Definition 3.5.13]{Gigli} for the definition of $W^{1, 2}_H$-Sobolev spaces for differential forms. Note that in general $H^{1, 2}_H \subset W^{1, 2}_H$ holds).
Thus it suffices to check $\omega \in H^{1, 2}_H(T^*X)$.

Let $\eta_i \in \mathcal{D}(\Delta_{H, 1}, X), \eta_{j, i} \in C^{\infty}(T^*X_j)$ satisfy $\Delta_{H, 1}\eta_i=\lambda_i^{H, 1}(X)\eta_i, \Delta_{H, 1}\eta_{j, i}=\lambda_i^{H, 1}(X_j)\eta_{j, i}$ and $\|\eta_i\|_{L^2}=\|\eta_{j, i}\|_{L^2}=1$.
In particular $\{\eta_i\}_i, \{\eta_{j, i}\}_i$ are orthonormal bases in $L^2(T^*X)$, in $L^2(T^*X_j)$, respectively.
By Theorem \ref{thm:spectral conv} with no loss of generality we can assume that $\eta_{j, i}$ $L^2$-strongly converge to $\eta_j$.
Put $\omega =\sum_i a_i \eta_i$ in $L^2(T^*X)$ and $\omega_j=\sum_ia_{j, i}\eta_{j, i}$ in $L^2(T^*X_j)$.
Then for any $N \in \mathbb{N}$ letting $\omega^N:=\sum_{i=1}^Na_i\eta_i \in H^{1, 2}_H(T^*X)$ and  $\omega^N_j=\sum_{i=1}^Na_{j, i}\eta_{j, i}$ show
\begin{align*}
\|\omega^N\|_{H^{1, 2}_H}^2&= \sum_{i=1}^N(a_i)^2\left(1 + \int_X |\dist \eta_i|^2+|\delta \eta_i|^2 \dist \mathcal{H}^n\right) \\
&=\sum_{i=1}^N(a_i)^2(1+\lambda_i^{H, 1}(X)) \\
&=\lim_{j \to \infty}\sum_{i=1}^N(a_{j, i})^2(1+\lambda_i^{H, 1}(X_j))\\
&=\lim_{j \to \infty}\|\omega_j^N\|_{H^{1, 2}_H}^2 \le \liminf_{j \to \infty}\|\omega_j\|_{H^{1, 2}_H}^2<\infty,
\end{align*}
where we used the convergence $a_{j, i}=\int_{X_j}\langle \omega_j,  \eta_{j, i}\rangle \dist \mathcal{H}^n \to \int_X\langle \omega, \eta_i\rangle \dist \mathcal{H}^n=a_i$ as $j \to \infty$ and a fact that $\omega_j =\sum_ia_{j, i}\eta_{j, i}$ in $H^{1, 2}_H(T^*X_j)$.
Thus letting $N \to \infty$ gives $\omega \in H^{1, 2}_H(T^*X)$.
\end{proof}
Finally we give convergence of heat flows $h_t^{H, 1}, h^{C, l}_t$ associated with $\Delta_{H, 1}, \Delta_{H, l}$, respectively. See pages 133 and 154 of \cite{Gigli} for definitions.
\begin{corollary}[Convergence of heat flows]\label{vf}
Let $X_i, X$ be as in Theorem \ref{thm:spectral conv} and let $h_t^{H, 1}$ be the heat flow associated to $\Delta_{H, 1}$.
Then for any $t \in (0, \infty)$ and any $L^2$-strong convergent sequence of $\omega_i \in L^2(T^*X_i)$ to $\omega \in L^2(T^*X)$ we see that $h_t^{H, 1}\omega_i, \dist h_t^{H, 1}\omega_i, \delta h_t^{H, 1}\omega_i$ $L^2$-strongly converge to $h_t^{H, 1}\omega, \dist h_t^{H, 1}\omega, \delta h_t^{H, 1}\omega$, respectively and that $\Delta_{H, 1}h_t^{H, 1}\omega_i$ $L^2$-weakly converge to $\Delta_{H, 1}h_t^{H, 1}\omega$.
Similar results for $\Delta_{C, l}$ also hold for any $l$.
\end{corollary}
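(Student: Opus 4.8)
The plan is to reduce everything to the spectral decomposition of the Hodge heat semigroup and to combine the spectral convergence of Theorem \ref{thm:spectral conv} with the Rellich compactness of Theorem \ref{relli}. Write $\lambda_{k,i}:=\lambda_k^{H,1}(X_i)$ and $\lambda_k:=\lambda_k^{H,1}(X)$, and fix $L^2$-orthonormal systems of eigenforms $\{\eta_{k,i}\}_k$ of $\Delta_{H,1}$ on $X_i$ and $\{\eta_k\}_k$ on $X$, chosen so that $\eta_{k,i}$ $L^2$-strongly converge to $\eta_k$ for every $k$ (possible by Theorem \ref{thm:spectral conv} together with the $L^2$-strong convergence of eigenforms recorded after it). Setting $a_{k,i}:=\int_{X_i}\langle \omega_i,\eta_{k,i}\rangle\dist\mathcal{H}^n$ and $a_k:=\int_X\langle\omega,\eta_k\rangle\dist\mathcal{H}^n$, the heat flow is given by
\begin{equation}
h_t^{H,1}\omega_i=\sum_k e^{-\lambda_{k,i}t}a_{k,i}\eta_{k,i},\qquad h_t^{H,1}\omega=\sum_k e^{-\lambda_k t}a_k\eta_k,
\end{equation}
and since inner products of $L^2$-strongly convergent sequences converge, $a_{k,i}\to a_k$ for each fixed $k$.

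First I would establish the $L^2$-strong convergence of $h_t^{H,1}\omega_i$ via the criterion ``weak convergence plus convergence of norms''. The weak convergence is coefficientwise: testing against the strongly convergent forms $\eta_{k,i}\to\eta_k$ gives $\langle h_t^{H,1}\omega_i,\eta_{k,i}\rangle=e^{-\lambda_{k,i}t}a_{k,i}\to e^{-\lambda_k t}a_k$, and the uniform $L^2$-bound promotes this to weak convergence to $h_t^{H,1}\omega$. For the norms I would control the spectral tail uniformly in $i$: choosing a threshold $\Lambda>0$ off the spectrum of $\Delta_{H,1}$ on $X$, spectral convergence forces the counting functions below $\Lambda$ to coincide for all large $i$, while
\begin{equation}
\sum_{\lambda_{k,i}\ge \Lambda}e^{-2\lambda_{k,i}t}a_{k,i}^2\le e^{-2\Lambda t}\sum_k a_{k,i}^2=e^{-2\Lambda t}\|\omega_i\|_{L^2}^2,
\end{equation}
which tends to $0$ as $\Lambda\uparrow\infty$ uniformly in $i$, because $\sup_i\|\omega_i\|_{L^2}<\infty$; the analogous bound holds for the limit. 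Since $\|h_t^{H,1}\omega_i\|_{L^2}^2=\sum_k e^{-2\lambda_{k,i}t}a_{k,i}^2$, letting $i\to\infty$ on the (stabilized) finite part and then $\Lambda\uparrow\infty$ on the tail yields $\|h_t^{H,1}\omega_i\|_{L^2}\to\|h_t^{H,1}\omega\|_{L^2}$, hence strong convergence.

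For $\dist h_t^{H,1}\omega_i$ and $\delta h_t^{H,1}\omega_i$, I would first note that $(1+\lambda)e^{-2\lambda t}$ and $\lambda^2e^{-2\lambda t}$ are bounded on $[0,\infty)$ for fixed $t>0$, so that $\sup_i\|h_t^{H,1}\omega_i\|_{H^{1,2}_H}<\infty$ and $\sup_i\|\Delta_{H,1}h_t^{H,1}\omega_i\|_{L^2}<\infty$. Theorem \ref{relli} then gives $L^2$-weak convergence of $\dist h_t^{H,1}\omega_i$ and $\delta h_t^{H,1}\omega_i$ to $\dist h_t^{H,1}\omega$ and $\delta h_t^{H,1}\omega$. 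To upgrade these to strong convergence I would run the same truncation argument on the Dirichlet energy
\begin{equation}
\|\dist h_t^{H,1}\omega_i\|_{L^2}^2+\|\delta h_t^{H,1}\omega_i\|_{L^2}^2=\sum_k\lambda_{k,i}e^{-2\lambda_{k,i}t}a_{k,i}^2,
\end{equation}
whose right-hand side converges to the corresponding quantity for $\omega$; combined with the lower semicontinuity of the $L^2$-norms under weak convergence applied to $\dist$ and $\delta$ separately, this forces $\|\dist h_t^{H,1}\omega_i\|_{L^2}\to\|\dist h_t^{H,1}\omega\|_{L^2}$ and $\|\delta h_t^{H,1}\omega_i\|_{L^2}\to\|\delta h_t^{H,1}\omega\|_{L^2}$, hence strong convergence. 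The $L^2$-weak convergence of $\Delta_{H,1}h_t^{H,1}\omega_i$ follows from the uniform $L^2$-bound and the coefficientwise convergence $\lambda_{k,i}e^{-\lambda_{k,i}t}a_{k,i}\to\lambda_k e^{-\lambda_k t}a_k$. The case of $\Delta_{C,l}$ is entirely parallel, using the spectral convergence of $\Delta_{C,l}$ in Theorem \ref{thm:spectral conv} and the $H^{1,2}_C$-part of Theorem \ref{relli}. The main obstacle is precisely the uniform control of the spectral tail across the varying spaces: one must place the threshold $\Lambda$ off the limit spectrum so that the truncation index stabilizes, after which the uniform exponential estimate above does the rest.
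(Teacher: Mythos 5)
Your argument is correct, and it reaches the same conclusion by opening up a black box that the paper leaves closed: the paper's proof is a three-line citation, invoking the Kuwae--Shioya equivalence between spectral convergence and convergence of the associated semigroups (\cite[Theorem 2.4]{KS}) together with the continuity of the Hodge Laplacian under mGH-convergence and Theorem \ref{relli}, whereas you re-derive the semigroup convergence directly from the eigenvalue/eigenform convergence of Theorem \ref{thm:spectral conv} via the explicit expansion $h_t^{H,1}\omega_i=\sum_k e^{-\lambda_{k,i}t}a_{k,i}\eta_{k,i}$ and a tail truncation at a threshold $\Lambda$ off the limit spectrum. The mechanism is the same (everything is driven by spectral convergence plus Rellich compactness), but your version is self-contained and makes visible exactly where the uniform bound $\sup_i\|\omega_i\|_{L^2}<\infty$ and the boundedness of $\lambda\mapsto(1+\lambda+\lambda^2)e^{-2\lambda t}$ enter; the cost is that you must justify the simultaneous choice of eigenforms $\eta_{k,i}$ $L^2$-strongly converging to $\eta_k$ (handling multiplicities), which is exactly the content of the $L^2$-strong convergence of eigenforms recorded after Theorem \ref{thm:spectral conv}, and your squeeze combining $\lim(\|\dist h_t\omega_i\|^2+\|\delta h_t\omega_i\|^2)=\|\dist h_t\omega\|^2+\|\delta h_t\omega\|^2$ with the separate lower semicontinuity of each norm is a clean way to split the Dirichlet energy into its two strongly convergent pieces. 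No gap.
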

\begin{proof}
It is a direct consequence of the continuity of the Hodge Laplacian with respect to the mGH-convergence \cite[Theorem 7.17]{Honda3}, the equivalence between the spectral convergence and the convergence of the heat flow \cite[Theorem 2.4]{KS}, and Theorem \ref{relli}.
\end{proof}
\begin{remark}\label{kl}
By the spectral convergence of the connection Laplacian acting on tensor fields \cite[Theorem 1.2]{Honda6}, Theorem \ref{relli} and Corollary \ref{vf} not only hold for differential forms but also hold for all tensor fields.
\end{remark}
\subsection{Duality between $H^0_{\mathrm{dR}}$ and $H_{\mathrm{dR}}^n$}
For an $RCD(K, \infty)$-space $(Y, \meas)$ and given $f \in H^{1, 2}(Y)$, $f$ is harmonic on $Y$ if and only if $\dist f =0$.
In particular if $f$ is harmonic, then $f$ has a Lipschitz representative with the Lipschitz constant $0$, i.e. $f$ is constant.
Combining this with the Hodge theorem shows $H^0_{\mathrm{dR}}(Y) \cong \mathbf{R}$.

In this subsection we prove a part of Theorem \ref{abc};
\begin{theorem}[Duality between $H^0_{\mathrm{dR}}$ and $H_{\mathrm{dR}}^n$]\label{ww}
Under the same assumptions as in Theorem \ref{thm:spectral conv}, if each $X_i$ is orientable,
then $H^n_{\mathrm{dR}}(X) \cong \mathbf{R}\omega \cong \mathbf{R}$, where $\omega$ is an orientation of $(X, \mathcal{H}^n)$.
\end{theorem}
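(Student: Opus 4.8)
The plan is to combine the Hodge theorem with the fact that the top exterior power of a noncollapsed $n$-dimensional limit is pointwise one-dimensional. Since $H^n_{\mathrm{dR}}(X) \cong \mathrm{Harm}_n(X)$ by the Hodge theorem (\cite[Theorem 3.5.15]{Gigli}), and $\mathbf{R}\omega \cong \mathbf{R}$ because $|\omega| \equiv 1$ forces $\omega \neq 0$, it suffices to prove $\mathrm{Harm}_n(X) = \mathbf{R}\omega$. I would establish the two inclusions separately.

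For $\mathbf{R}\omega \subset \mathrm{Harm}_n(X)$ I would argue by approximation. Each $X_i$ being an oriented closed Riemannian manifold, its orientation $\omega_i \in C^{\infty}(\bigwedge^nT^*X_i)$ is a parallel unit volume form, so $\dist\omega_i = 0$ (it is top-dimensional) and $\delta\omega_i = 0$, i.e. $\omega_i \in \mathrm{Harm}_n(X_i)$; moreover $\|\omega_i\|_{H^{1,2}_H} = \|\omega_i\|_{L^2} = \mathcal{H}^n(X_i)^{1/2}$ stays bounded as $\mathcal{H}^n(X_i) \to \mathcal{H}^n(X) < \infty$. By Theorem \ref{thm:noncollapsed compactness} the associated orientation $\omega$ is the $L^2$-strong limit of the $\omega_i$, so the Rellich compactness for forms (Theorem \ref{relli}, extended to $n$-forms by Remark \ref{kl}) yields $\omega \in H^{1,2}_H(\bigwedge^nT^*X)$ with $\dist\omega_i, \delta\omega_i$ converging $L^2$-weakly to $\dist\omega, \delta\omega$. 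As $\dist\omega_i \equiv 0 \equiv \delta\omega_i$, this gives $\dist\omega = 0 = \delta\omega$, i.e. $\omega \in \mathrm{Harm}_n(X)$.

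For the reverse inclusion, let $\alpha \in \mathrm{Harm}_n(X)$. Since $\bigwedge^nT^*_zX$ is one-dimensional for a.e. $z$, I can write $\alpha = f\omega$ with $f := \langle\alpha, \omega\rangle \in L^2 \cap L^{\infty}(X)$ (using $|\omega| \equiv 1$ and the $L^{\infty}$-bound on harmonic forms from Theorem \ref{thm:spectral conv}). The form $\alpha$ is automatically $\dist$-closed, so harmonicity reduces to $\delta\alpha = 0$, from which I would extract constancy of $f$. Working on the open regular set $\mathcal{R}(X)$, where $\omega$ is a $C^{1,\alpha}$ parallel top-form (Remark \ref{hoho}) and $f$ is locally $H^{1,2}$, the Leibniz rule of \eqref{eq:12} together with $\nabla^{g_X}\omega = 0$ (Proposition \ref{prop:prop1}) gives $\delta\alpha = \delta(f\omega) = -\omega(\nabla f, \cdot)$ weakly on $\mathcal{R}(X)$. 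Since $\omega$ is a nonvanishing top-form, the contraction $\omega(\nabla f,\cdot)$ vanishes only when $\nabla f = 0$; hence $\nabla f = 0$ a.e. on $\mathcal{R}(X)$, and therefore a.e. on $X$. Because $\mathrm{Cap}^{\mathcal{H}^n}_2(X \setminus \mathcal{R}(X)) = 0$ (Cheeger--Naber codimension four, cf. \cite[Theorem 4.13]{KM}), the bounded locally-Sobolev function $f$ extends to $f \in H^{1,2}(X)$ with $\nabla f = 0$, exactly as in the capacity arguments of Propositions \ref{prop:comp2} and \ref{prop:lin}. The Poincar\'e inequality on the connected space $X$ then forces $f$ to be constant, so $\alpha \in \mathbf{R}\omega$, completing $\mathrm{Harm}_n(X) = \mathbf{R}\omega$.

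The main obstacle I anticipate is the identity $\delta(f\omega) = -\omega(\nabla f,\cdot)$ in the nonsmooth setting: one must justify the Leibniz rule for the Hodge codifferential against a merely $H^{1,2}$ (not test) scalar factor, reconcile the Hodge $\delta$ with the metric-current codifferential $\delta_k$ appearing in \eqref{eq:12}, and localize to $\mathcal{R}(X)$ before promoting the conclusion across the capacity-zero singular set; a secondary subtlety is upgrading the a priori $W^{1,2}_H$-limit of the $\omega_i$ to genuine membership in $H^{1,2}_H$, which is where Theorem \ref{relli} and Remark \ref{kl} are essential. An equivalent route is to introduce the Hodge star $*$ associated with $\omega$ and verify that it intertwines $\dist$ and $\delta$, whence $\mathrm{Harm}_n(X) = *\,\mathrm{Harm}_0(X) = \mathbf{R}\cdot{*}1 = \mathbf{R}\omega$; but checking these commutation relations a.e. and across the singular set is precisely the delicate point, and it is the same machinery required for the $H^1_{\mathrm{dR}}(X) \cong H^{n-1}_{\mathrm{dR}}(X)$ duality.
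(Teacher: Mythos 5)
Your first inclusion, $\mathbf{R}\omega\subset\mathrm{Harm}_n(X)$, is exactly the paper's Step~1: push $\dist\omega_i=\delta\omega_i=\nabla\omega_i=0$ through the Rellich compactness of Theorem \ref{relli} (extended to $n$-forms via Remark \ref{kl}) to get $\omega\in H^{1,2}_H(\bigwedge^nT^*X)$ with $\dist\omega=\delta\omega=0$. For the reverse inclusion you take a genuinely different route. The paper never localizes to $\mathcal{R}(X)$: it first proves \emph{globally} that $f\omega\in H^{1,2}_H$ forces $f\in H^{1,2}(X)$ (Step~2), by approximating $f\omega$ with smooth forms $\eta_j$ on the manifolds $X_j$, using $\Delta_{H,n}=\Delta_{C,n}$ on manifolds to obtain a uniform $H^{1,2}_C$-bound and then $|\nabla\langle\eta_j,\omega_j\rangle|\le|\nabla\eta_j|$ plus Rellich; it then establishes $\delta\eta=(-1)^n\star_{n-1}^{-1}\dist\star_n\eta$ (Step~3) by the same limiting procedure, deduces $\Delta_{H,n}(f\omega)=(\Delta f)\omega$ (Step~4), and finishes with the Hodge theorem and constancy of harmonic functions. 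So the ``equivalent route via the Hodge star'' that you mention only in passing is in fact the paper's actual argument; your main line (read off $\nabla f=0$ from $\delta\alpha=0$ on the $C^{1,\alpha}$ regular set, then cross the singular set by the capacity-zero argument as in Propositions \ref{prop:comp2} and \ref{prop:lin}) is an alternative.

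That alternative can be made to work, but as written it has one concrete unjustified step. You apply the Leibniz rule (\ref{eq:12}) to $f=\langle\alpha,\omega\rangle$, yet (\ref{eq:12}) is stated for $f\in\mathrm{LIP}_{\mathrm{Loc}}$ and for the current-theoretic codifferential $\delta_k$, not the Hodge $\delta$; and the assertion that $f$ is ``locally $H^{1,2}$'' on $\mathcal{R}(X)$ is precisely what needs proof, since $\alpha\in H^{1,2}_H$ only controls $\delta\alpha$, not $\nabla\alpha$ or $\nabla f$ --- extracting $\nabla f$ from $\delta\alpha$ presupposes the Leibniz identity you are trying to use. The repair is either the paper's global Step~2, or a purely distributional argument on $\mathcal{R}(X)$: for test forms $\beta$ compactly supported in a $C^{1,\alpha}$ chart, $0=\int\langle\alpha,\dist\beta\rangle\dist\mathcal{H}^n=\int f\,\langle\omega,\dist\beta\rangle\dist\mathcal{H}^n$ recovers the vanishing of all distributional partial derivatives of $f$, so $f$ is locally constant without first knowing $f\in H^{1,2}_{\mathrm{loc}}$. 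With that fix, your capacity step and the Poincar\'e inequality close the argument. The trade-off is that the paper's route also delivers the quantitative Steps~2--4 (reused for Theorem \ref{nnm} and Corollary \ref{coco}), whereas yours is shorter if one only wants $\mathrm{Harm}_n(X)=\mathbf{R}\omega$.
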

\begin{proof}
Let $\omega_i$ be an orientation of $X_i$. Then by Theorem \ref{thm:stability} with no loss of generality there exists the orientation $\omega \in L^{\infty}(\bigwedge^nT^*X)$ associated with $\omega_i$.
Moreover since the Hodge star operator $\star_{j, i}: \bigwedge^iT_x^*X_j \to \bigwedge^{n-i}T^*_xX_j$ can be regarded as $\star_{j, i} \in H^{1, 2}_C(T^i_{n-i}X_j)$ with $\nabla \star_{j, i}=0$, by Remark \ref{kl}, with no loss of generality we can assume that there exists $\star_i \in H^{1, 2}_C(T^i_{n-i}X)$ with $\nabla \star_i=0$ such that $\star_{j, i}$ $L^2$-strongly converge to $\star_i$.
Note that $\star_i$ is also the Hodge star operator from $\bigwedge^iT^*X$ to $\bigwedge^{n-i}T^*X$ associated with $\omega$.

Then the proof consists of 4-steps as follows;

\textbf{Step 1.} We see $\omega \in H^{1, 2}_H(\bigwedge^nT^*X) \cap H^{1, 2}_C(\bigwedge^nT^*X)$ with $\dist \omega=0$, $\delta \omega=0$ and $\nabla \omega =0$.

Because
since $\nabla \omega_i \equiv 0, \dist \omega_i \equiv0$ and $\delta \omega_i \equiv 0$, Theorem \ref{relli} (or Theorem \ref{thm:spectral conv}) yields that $\omega \in H^{1, 2}_C(\bigwedge^nT^*X) \cap H^{1, 2}_H(\bigwedge^nT^*X)$ with $\nabla \omega =0$, $\dist \omega=0$ and $\delta \omega=0$.

\textbf{Step 2.} For a Borel function $f$ on $X$, the following three conditions are equivalent:
\begin{enumerate}
\item $f\omega \in H^{1, 2}_H(\bigwedge^nT^*X)$,
\item $f\omega \in H^{1, 2}_C(\bigwedge^nT^*X)$,
\item $f \in H^{1, 2}(X)$.
\end{enumerate}
Moreover if the above hold, then $\dist (f\omega ) =\dist f \wedge \omega$ and $\nabla (f\omega)=\omega \otimes \dist f$.

The proof is as follows.
Let $f\omega \in H^{1, 2}_H(\bigwedge^nT^*X)$.
Then by the existence of an approximate sequence \cite[Theorem 3.5]{Honda6} (or using Corollary \ref{vf}), there exists a smooth approximation $\eta_j \in C^{\infty}(\bigwedge^nT^*X_j)$ such that $\eta_j, \dist \eta_j, \delta \eta_j$ $L^2$-strongly converge to $f\omega, \dist (f\omega ), \delta (f\omega )$, respectively. 
Since $\Delta_{H, n}=\Delta_{C, n}$ on $X_j$, we have $\sup_j\|\eta_j\|_{H^{1, 2}_C}<\infty$. 
In particular since $|\nabla \langle \eta_j, \omega_j \rangle| \le |\nabla \eta_j|$, the Rellich compactness \cite[Theorem 4.9]{Honda2} with the $L^2$-strong convergence of $\langle \eta_j, \omega_j \rangle$ to $\langle f\omega, \omega \rangle =f$ yields $f \in H^{1, 2}(X)$, which completes the proof of the implication from (1) to (3). 
Next we take smooth approximation $f_j \in C^{\infty}(X_j)$ such that $f_j, \dist f_j$ $L^2$-strongly converge to $f, \dist f$, respectively (c.f. \cite[Theorem 4.2]{Honda4}). Then letting $j \to \infty$ in the identity $\dist (f_j\omega_j ) =\dist f_j \wedge \omega_j$ shows $\dist (f\omega ) =\dist f \wedge \omega$.

Similarly we can easily prove the remaining implications and $\nabla (f\omega)=\omega \otimes \dist f$ via taking smooth approximation.

\textbf{Step 3.} For any $\eta \in H^{1, 2}_H(\bigwedge^nT^*X)$, we have $\delta \eta=(-1)^n\star_{n-1}^{-1}\dist \star_n \eta$.

Take a smooth approximation $\eta_j \in C^{\infty}(\bigwedge^nT^*X_j)$ such that $\eta_j, \dist \eta_j, \delta \eta_j$ $L^2$-strongly converge to $\eta, \dist \eta, \delta \eta$, respectively.
Since
\begin{equation}\label{wwe}
\delta \eta_j=(-1)^n\star_{j, n-1}^{-1}\dist \star_{j, n}\eta_j
\end{equation} 
and the left hand side above is $L^2$-strong convergent sequence, $\dist \star_{j, n}\eta_j$ is also $L^2$-strong convergent sequence.
Since $\star_{j, n}\eta_j$ $L^2$-strong converge to $\star_n\eta$, Theorem \ref{relli} shows that $L^2$-strong limit of $\dist \star_{j, n}\eta_j$ is $\dist \star_n\eta$.
Thus letting $j \to \infty$ in (\ref{wwe}) completes the proof.

\textbf{Step 4.} For a Borel function $f$ on $X$, the following are equivalent:
\begin{enumerate}
\item $f\omega \in \mathcal{D} (\Delta_{H, n}, X)$,
\item $f\omega \in \mathcal{D} (\Delta_{C, n}, X)$,
\item $f \in \mathcal{D}(\Delta, X)$.
\end{enumerate}
Moreover if the above holds, then $\Delta_{H, n}(f\omega)=\Delta_{C, n}(f\omega )=(\Delta f) \omega$.

The proof is as follows.
Assume $f\omega \in \mathcal{D} (\Delta_{H, n}, X)$.
Note that step 2 yields $f \in H^{1, 2}(X)$.
For any $g \in \mathrm{Test}F(X)$ since step 2 shows $g\omega \in H^{1, 2}_H(\bigwedge^nT^*X)$, we have by step 3
\begin{align}\label{kkkkkkk}
\int_X\langle \Delta_{H, n}(f\omega), g\omega \rangle \dist \mathcal{H}^n&=\int_X\langle \dist (f\omega), \dist (g\omega) \rangle + \langle \delta (f\omega), \delta (g\omega) \rangle \dist \mathcal{H}^n\nonumber \\
&=\int_X\langle \star_{n-1}^{-1}\dist \star_n (f\omega), \star_{n-1}^{-1}\dist \star_n (g\omega) \rangle \dist \mathcal{H}^n\nonumber \\
&=\int_X\langle \dist f, \dist g \rangle \dist \mathcal{H}^n.
\end{align}
On the other hand letting $\Delta_{H, n}(f\omega)=h\omega$ for some $h \in L^2(X)$, (\ref{kkkkkkk}) implies $f \in \mathcal{D}(\Delta, X)$ with $\Delta f=h$.

Similarly we can easily prove the remaining implications.

Step 4 and the Hodge theorem for $RCD$-spaces \cite[Theorem 3.5.15]{Gigli} complete the proof of Theorem \ref{ww}.
\end{proof}
\subsection{Duality between  $H^1_{\mathrm{dR}}$ and $H_{\mathrm{dR}}^{n-1}$}
In this section we prove the remaining statements in Theorem \ref{abc};
\begin{theorem}[Duality between  $H^1_{\mathrm{dR}}$ and $H_{\mathrm{dR}}^{n-1}$]\label{nnm}
Under the same assumption as in Theorem \ref{ww}, we have $H^1_{\mathrm{dR}}(X) \cong H^{n-1}_{\mathrm{dR}}(X)$.
\end{theorem}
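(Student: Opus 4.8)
The plan is to use the Hodge theorem for $RCD$-spaces, which supplies canonical isomorphisms $H^1_{\mathrm{dR}}(X) \cong \mathrm{Harm}_1(X)$ and $H^{n-1}_{\mathrm{dR}}(X) \cong \mathrm{Harm}_{n-1}(X)$ (\cite[Theorem 3.5.15]{Gigli}), and then to show that the Hodge star operator $\star_1$ associated with the orientation $\omega$ restricts to a linear isomorphism from $\mathrm{Harm}_1(X)$ onto $\mathrm{Harm}_{n-1}(X)$. Recall from the proof of Theorem \ref{ww} that $\star_i \in H^{1,2}_C(T^i_{n-i}X)$ with $\nabla \star_i = 0$ arises as the $L^2$-strong limit of the smooth Hodge star operators $\star_{j,i}$ on $X_j$; in particular $\star_1$ is a parallel, fiberwise isometric bundle isomorphism $\bigwedge^1 T^*X \to \bigwedge^{n-1}T^*X$, whose inverse is, up to sign, the parallel isometry $\star_{n-1}$.

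The key step, carried out exactly as in Step 3 of the proof of Theorem \ref{ww} but now in degrees $1$ and $n-1$, is to transport the smooth commutation relations between $d$, $\delta$ and $\star$ to the limit. For $\alpha \in H^{1,2}_H(\bigwedge^1 T^*X)$ I would first take, via the approximation results (Corollary \ref{vf} together with \cite[Theorem 3.5]{Honda6}), a sequence $\alpha_j \in C^{\infty}(\bigwedge^1 T^*X_j)$ such that $\alpha_j, d\alpha_j, \delta\alpha_j$ converge $L^2$-strongly to $\alpha, d\alpha, \delta\alpha$, respectively. On each smooth $X_j$ one has the pointwise identities $d\star_{j,1} = \pm\,\star_{j,0}\,\delta$ and $\delta\star_{j,1} = \pm\,\star_{j,2}\,d$ acting on $1$-forms. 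Since $\star_{j,i}$ converge $L^2$-strongly to $\star_i$, the sequences $\star_{j,1}\alpha_j$, $\star_{j,0}\delta\alpha_j$, $\star_{j,2}d\alpha_j$ converge $L^2$-strongly to $\star_1\alpha$, $\star_0\delta\alpha$, $\star_2 d\alpha$. Applying the Rellich compactness for differential forms (Theorem \ref{relli}) to $\star_{j,1}\alpha_j$ identifies the $L^2$-weak limits of $d\star_{j,1}\alpha_j$ and $\delta\star_{j,1}\alpha_j$ as $d\star_1\alpha$ and $\delta\star_1\alpha$; passing to the limit in the two smooth identities then yields $\star_1\alpha \in H^{1,2}_H(\bigwedge^{n-1}T^*X)$ together with
$$
d(\star_1\alpha) = \pm\,\star_0\,\delta\alpha, \qquad \delta(\star_1\alpha) = \pm\,\star_2\,d\alpha.
$$

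With these identities the conclusion is immediate. If $\alpha \in \mathrm{Harm}_1(X)$, i.e. $d\alpha = 0$ and $\delta\alpha = 0$, then both right-hand sides vanish, so $\star_1\alpha \in \mathrm{Harm}_{n-1}(X)$; thus $\star_1$ maps $\mathrm{Harm}_1(X)$ into $\mathrm{Harm}_{n-1}(X)$. Since $\star_1$ is fiberwise isometric, it is injective on $L^2$. Running the same argument with the parallel isometry $\star_{n-1}$ shows that $\star_{n-1}$ maps $\mathrm{Harm}_{n-1}(X)$ into $\mathrm{Harm}_1(X)$ and is, up to sign, inverse to $\star_1$, so $\star_1$ is bijective. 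Composing with the two Hodge isomorphisms gives $H^1_{\mathrm{dR}}(X) \cong H^{n-1}_{\mathrm{dR}}(X)$, realized by the Hodge star operator associated with $\omega$. Finiteness of these dimensions follows from the discreteness of the spectrum of $\Delta_{H,1}$ recalled above, and the identifications with $\mathrm{Harm}_1^{\infty}(\mathcal{R}(X))$ and $\mathrm{Harm}_{n-1}^{\infty}(\mathcal{R}(X))$ follow from Proposition \ref{prop:lin} and its evident analogue in degree $n-1$.

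The main obstacle I anticipate is precisely this approximation step establishing the commutation identities in the limit: one must produce smooth approximants whose differentials and codifferentials all converge strongly, and verify that the limiting Hodge star genuinely intertwines $d$ and $\delta$ across the two degrees. This is where the parallelism $\nabla\star_i = 0$ and the spectral/Rellich convergence machinery of \cite{Honda6} (Theorem \ref{relli}, Corollary \ref{vf}) are essential, since on the singular space none of $d$, $\delta$, $\star$ can be manipulated pointwise in the classical manner.
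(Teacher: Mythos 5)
Your proposal is correct and follows essentially the same route as the paper: both construct the parallel limit Hodge star as the $L^2$-strong limit of the smooth ones, transport the smooth intertwining of $\dist$ and $\delta$ by $\star$ to the limit via strong approximation and Rellich compactness, and conclude through the Hodge theorem for $RCD$-spaces, the only cosmetic difference being that the paper works with $\star_{n-1}$ mapping $(n-1)$-forms to $1$-forms (and records the resulting commutation with the Hodge Laplacian) rather than with $\star_1$ going the other way. The one point to watch is that Theorem \ref{relli} states the $H^{1,2}_H$-compactness only for $1$-forms (for general degree only the $H^{1,2}_C$ version is available before the present theorem is proved, since spectral convergence of $\Delta_{H,n-1}$ is itself a corollary), so to place $\star_1\alpha$ in $H^{1,2}_H(\bigwedge^{n-1}T^*X)$ you should, as the paper does, exploit that $\star$ is a parallel fiberwise isometry to reduce the degree-$(n-1)$ statement to the degree-$1$ one --- a detail the paper itself dispatches as ``easily checked''.
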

\begin{proof}
Let us use the same notation as in the proof of Theorem \ref{ww}.

\textbf{Step 1.} For an $(n-1)$-form $\eta$ on $X$, the following are equivalent;
\begin{enumerate}
\item $\eta \in H^{1, 2}_H(\bigwedge^{n-1}T^*X)$,
\item $\eta \in H^{1, 2}_C(\bigwedge^{n-1}T^*X)$,
\item $\star_{n-1}\eta \in H^{1, 2}_H(T^*X)$ (recall that it was proven in \cite[Theorem 7.12]{Honda3} that $H^{1, 2}_C(T^*X)=H^{1, 2}_H(T^*X)$).
\end{enumerate}
Moreover if the above hold, then $\delta \eta =(-1)^{n-1}\star_{n-2}^{-1} \dist \star_{n-1} \eta$.

The proof is as follows.
Assume $\eta \in H^{1, 2}_H(\bigwedge^{n-1}T^*X)$. Then take a smooth approximation $\eta_i \in C^{\infty}(\bigwedge^{n-1}T^*X_i)$ such that $\eta_i, \dist \eta_i, \delta \eta_i$ $L^2$-strongly converge to $\eta, \dist \eta, \delta \eta$.
Since $\|\dist \eta_i\|_{L^2}^2+\|\delta \eta_i\|_{L^2}^2=\|\dist \star_{n-1}\eta_i\|_{L^2}^2+\|\delta \star_{n-1}\eta_i\|^2_{L^2}$, we have
$\sup_i\|\eta_i\|_{H^{1, 2}_C}=\sup_i\|\star_{n-1}\eta_i\|_{H^{1, 2}_C}<\infty$, where we used the Bochner inequality.
In particular 
Theorem \ref{relli} yields that $\eta \in H^{1, 2}_C(\bigwedge^{n-1}T^*X)$, that $\star_{n-1}\eta \in H^{1, 2}_H(T^*X)$, and that $\dist \star_{i, n-1}\eta_i, \delta \star_{i, n-1}\eta_i$ $L^2$-weakly converge to $\dist \star_{n-1}\eta, \delta \star_{n-1}\eta$, respectively.
Thus letting $i \to \infty$ in the identity $\delta \eta_i=(-1)^{n-1}\star_{i, n-2}^{-1}\dist \star_{i, n-1}\eta_i$ shows  $\delta \eta =(-1)^n\star_{n-2}^{-1} \dist \star_{n-1} \eta$.
These complete the proof of the implication from (1) to (2) and (3).
Similarly the remaining implications are easily checked.

\textbf{Step 2.} For an $(n-1)$-from $\eta$ on $X$, the following are equivalent;
\begin{enumerate}
\item $\eta \in \mathcal{D}(\Delta_{H, n-1}, X)$,
\item $\eta \in \mathcal{D}(\Delta_{C, n-1}, X)$,
\item $\star_{n-1}\eta \in \mathcal{D}(\Delta_{H, 1}, X)$ (recall that it was proven in \cite[Theorem 1.1]{Honda6} that this is equivalent to $\star_{n-1}\eta \in \mathcal{D}(\Delta_{C, 1}, X)$).
\end{enumerate}
Moreover if the above hold, then $\Delta_{H, n-1}\eta=\Delta_{C, n-1}\eta +\star_{n-1}^{-1}\mathrm{Ric}_X((\star_{n-1}\eta)^*, \cdot )$, $\Delta_{H, 1}\star_{n-1}\eta=\star_{n-1}\Delta_{H, n-1}\eta$, $\star_{n-1}\Delta_{C, n-1}\eta =\Delta_{C, 1}\star_{n-1}\eta$, where $\mathrm{Ric}_X \in L^{\infty}(T^0_2X)$ is the Ricci curvature defined in \cite[Theorem 1.1]{Honda6}.

Since the proof is essentially same to that of step 4 in the proof of Theorem \ref{ww}, we skip the proof.

Then Theorem \ref{nnm} follows from step 2 and the Hodge theorem for $RCD$-spaces \cite[Theorem 3.5.15]{Gigli}.
\end{proof}
Remark \ref{hoho} and proofs of Theorems \ref{ww}, \ref{nnm} yield the following:
\begin{corollary}\label{coco}
$\mathrm{Harm}_{n-1}(X)$ coincides with $\mathrm{Harm}_{n-1}^{\infty}(\mathcal{R}(X))$.
Moreover it is also isometric to the space of bounded $C^{1, \alpha}$-harmonic $(n-1)$-forms on $\mathcal{R}(X)$.
\end{corollary}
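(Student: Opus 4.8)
The plan is to reduce the whole statement to the degree-one case, which is already settled by Proposition \ref{prop:lin} and Remark \ref{hoho}, by transporting everything through the Hodge star operator $\star_{n-1}\colon \bigwedge^{n-1}T^*X \to T^*X$ associated with the limit orientation $\omega$. The proofs of Theorems \ref{ww} and \ref{nnm} already produce $\star_{n-1}$ as an element of $H^{1,2}_C(T^{n-1}_1X)$ with $\nabla\star_{n-1}=0$, arising as the $L^2$-strong limit of the smooth Hodge stars $\star_{j,n-1}$ on the approximating manifolds. The first thing I would record is that, since $|\omega|\equiv 1$, this $\star_{n-1}$ is a fiberwise isometry; in particular it preserves pointwise norms, so it maps bounded forms to bounded forms with $\|\star_{n-1}\eta\|_{L^\infty}=\|\eta\|_{L^\infty}$, and it is invertible with inverse of the same type.

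First I would invoke Step 1 of the proof of Theorem \ref{nnm}, which shows that $\eta\in H^{1,2}_H(\bigwedge^{n-1}T^*X)$ if and only if $\star_{n-1}\eta\in H^{1,2}_H(T^*X)$, together with the identity $\delta\eta=(-1)^{n-1}\star_{n-2}^{-1}\dist\star_{n-1}\eta$. Next I would use Step 2 of the same proof, namely $\Delta_{H,1}\star_{n-1}\eta=\star_{n-1}\Delta_{H,n-1}\eta$; combined with the characterisation $\mathrm{Harm}_l(X)=\ker\Delta_{H,l}$ this shows at once that $\star_{n-1}$ restricts to a linear isometric isomorphism of $\mathrm{Harm}_{n-1}(X)$ onto $\mathrm{Harm}_1(X)$. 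The key point is that the same correspondence survives on the regular set: on the $C^{1,\alpha}$-Riemannian manifold $\mathcal{R}(X)$ the classical relations $\dist\star=\pm\star\delta$ and $\delta\star=\pm\star\dist$ hold pointwise, so an $(n-1)$-form $\eta$ is weakly harmonic in the sense defining $\mathrm{Harm}_{n-1}^{\infty}(\mathcal{R}(X))$ if and only if $\star_{n-1}\eta$ is weakly harmonic in the sense defining $\mathrm{Harm}_1^{\infty}(\mathcal{R}(X))$, with boundedness preserved since $\star_{n-1}$ is a pointwise isometry.

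With these correspondences in place I would simply compose. Proposition \ref{prop:lin} gives $\mathrm{Harm}_1(X)=\mathrm{Harm}_1^{\infty}(\mathcal{R}(X))$, and applying $\star_{n-1}^{-1}$ to both sides yields $\mathrm{Harm}_{n-1}(X)=\mathrm{Harm}_{n-1}^{\infty}(\mathcal{R}(X))$. For the last assertion I would feed Remark \ref{hoho}, which identifies $\mathrm{Harm}_1(X)$ with the bounded $C^{1,\alpha}$-harmonic $1$-forms on $\mathcal{R}(X)$, through the same isometry; since $\omega$ is itself a $C^{1,\alpha}$-form on $\mathcal{R}(X)$, the operator $\star_{n-1}$ preserves $C^{1,\alpha}$-regularity on every compact subset of $\mathcal{R}(X)$, so bounded $C^{1,\alpha}$-harmonic $(n-1)$-forms correspond exactly to bounded $C^{1,\alpha}$-harmonic $1$-forms.

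The main obstacle I anticipate is the transfer of the weak-harmonicity condition between degrees, because the test objects in $\mathrm{Harm}_{n-1}^{\infty}$ and $\mathrm{Harm}_1^{\infty}$ are forms $f_0\dist f_1\wedge\cdots\wedge\dist f_{n-1}$ and $g_0\dist g_1$ respectively, and $\star_{n-1}$ of a degree-$(n-1)$ test form need not again be a test form of that special shape. I would handle this by working on relatively compact opens $U$ with $\overline{U}\subset\mathcal{R}(X)$, where, exactly as in the proof of Proposition \ref{prop:lin}, a bounded weakly harmonic form is a locally $H^{1,2}$-Sobolev form in the ordinary sense; then I would exploit that the bilinear pairing $\int\langle\dist\alpha,\dist\beta\rangle+\langle\delta\alpha,\delta\beta\rangle\,\dist\mathcal{H}^n$ is $\star$-invariant, together with the density of the test class and the vanishing capacity of $X\setminus\mathcal{R}(X)$ (through cut-offs $\phi_i\to 1$ in $H^{1,2}$, as in Proposition \ref{prop:lin}), to reduce the degree-$(n-1)$ weak equation to the degree-one one already controlled. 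Should this bookkeeping prove delicate, one can bypass the Hodge star entirely and repeat the proof of Proposition \ref{prop:lin} verbatim in degree $n-1$, since the spectral convergence and the Rellich compactness it uses are available for forms of every degree by Theorem \ref{relli} and Remark \ref{kl}.
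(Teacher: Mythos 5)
Your proposal is correct and takes essentially the same route as the paper, whose entire proof of this corollary is the citation ``Remark \ref{hoho} and proofs of Theorems \ref{ww}, \ref{nnm} yield the following'' --- that is, exactly the transfer of Proposition \ref{prop:lin} and Remark \ref{hoho} through the parallel, fiberwise-isometric Hodge star $\star_{n-1}$ via Steps 1--2 of the proof of Theorem \ref{nnm}. Your extra care about moving the weak-harmonicity test condition between degrees, and the fallback of rerunning the proof of Proposition \ref{prop:lin} verbatim in degree $n-1$ using Theorem \ref{relli} and Remark \ref{kl}, merely makes explicit what the paper leaves implicit.
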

Finally by Theorem \ref{thm:spectral conv} and the proof of Theorem \ref{nnm}, we have the following;
\begin{corollary}
Under the same assumption as in Theorem \ref{ww}, spectral convergence of $\Delta_{H, n-1}, \Delta_{C, n-1}, \Delta_{H, n}=\Delta_{C, n}$ hold.
\end{corollary}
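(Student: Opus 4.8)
The plan is to reduce each of the three asserted convergences to the spectral convergences already furnished by Theorem \ref{thm:spectral conv}, namely those of $\Delta_{H, 1}$ and of $\Delta_{C, l}$ for every $l$. Observe first that the spectral convergences of $\Delta_{C, n-1}$ and of $\Delta_{C, n}$ are simply the cases $l = n-1$ and $l = n$ of the second statement of Theorem \ref{thm:spectral conv}, so nothing further is needed for these. For $\Delta_{H, n}$ I would use that on top-degree forms the Hodge and connection Laplacians agree: on each smooth $X_i$ one has $\Delta_{H, n} = \Delta_{C, n}$, and the same identity persists on the limit $X$ — this is exactly what Step 4 in the proof of Theorem \ref{ww} records, together with $\Delta_{H, n}(f\omega) = \Delta_{C, n}(f\omega) = (\Delta f)\omega$. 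Hence $\lambda_k^{H, n}(X_i) = \lambda_k^{C, n}(X_i)$ and $\lambda_k^{H, n}(X) = \lambda_k^{C, n}(X)$ for all $k$, and the spectral convergence of $\Delta_{H, n}$ follows from that of $\Delta_{C, n}$.

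The only genuinely new point concerns $\Delta_{H, n-1}$, and here the strategy is to exploit the Hodge star operator $\star_{n-1}$ associated with the orientation $\omega$. On each oriented $X_i$ the pointwise isometry $\star_{i, n-1}\colon \bigwedge^{n-1}T^*X_i \to T^*X_i$ induces an $L^2$-isometry intertwining $\Delta_{H, n-1}$ with $\Delta_{H, 1}$, whence $\lambda_k^{H, n-1}(X_i) = \lambda_k^{H, 1}(X_i)$ for every $k$. On the limit space $X$ the analogous statement is precisely Step 2 of the proof of Theorem \ref{nnm}: an $(n-1)$-form $\eta$ lies in $\mathcal{D}(\Delta_{H, n-1}, X)$ if and only if $\star_{n-1}\eta \in \mathcal{D}(\Delta_{H, 1}, X)$, with $\Delta_{H, 1}\star_{n-1}\eta = \star_{n-1}\Delta_{H, n-1}\eta$. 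Since $\star_{n-1}$ satisfies $\nabla \star_{n-1} = 0$ and preserves the fiberwise inner product, it is an $L^2$-isometry, and therefore realizes a unitary equivalence of the self-adjoint operators $\Delta_{H, n-1}$ and $\Delta_{H, 1}$ on $X$; consequently $\lambda_k^{H, n-1}(X) = \lambda_k^{H, 1}(X)$ for all $k$.

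Combining these identities with the spectral convergence of $\Delta_{H, 1}$ from Theorem \ref{thm:spectral conv} then yields
\[
\lim_{i \to \infty}\lambda_k^{H, n-1}(X_i) = \lim_{i \to \infty}\lambda_k^{H, 1}(X_i) = \lambda_k^{H, 1}(X) = \lambda_k^{H, n-1}(X)
\]
for every $k$, which is the claimed spectral convergence of $\Delta_{H, n-1}$. The main obstacle, to the extent there is one, is to be sure that $\star_{n-1}$ genuinely identifies the domains and spectra on the singular space $X$ — that is, that it is an honest $L^2$-isometric intertwiner rather than merely a pointwise algebraic identity valid on $\mathcal{R}(X)$. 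This is supplied by Step 2 of Theorem \ref{nnm} together with $\nabla \star_{n-1} = 0$, so no analysis beyond what has already been assembled in the preceding subsections is required, and the corollary is immediate.
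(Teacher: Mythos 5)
Your proposal is correct and follows essentially the same route as the paper, which derives the corollary directly from Theorem \ref{thm:spectral conv} together with the Hodge-star intertwining relations recorded in Step 2 of the proof of Theorem \ref{nnm} (and, for $\Delta_{H,n}=\Delta_{C,n}$, Step 4 of the proof of Theorem \ref{ww}). You have merely spelled out the unitary-equivalence argument that the paper leaves implicit, and the justification that $\star_{n-1}$ is an honest $L^2$-isometric intertwiner (via $\nabla\star_{n-1}=0$ and the fiberwise isometry) is exactly what the cited steps supply.
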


\end{document}